\documentclass[english]{article}
\usepackage{mathptmx}
\usepackage[T1]{fontenc}
\usepackage[latin9]{inputenc}
\usepackage{geometry}
\usepackage{babel}
\usepackage{float}
\usepackage{dsfont}
\usepackage{amsmath}
\usepackage{amsthm}
\usepackage{amssymb}
\usepackage{graphicx}
\usepackage[unicode=true,pdfusetitle,
 bookmarks=true,bookmarksnumbered=false,bookmarksopen=false,
 breaklinks=false,pdfborder={0 0 0},pdfborderstyle={},backref=false,colorlinks=false]
 {hyperref}

\makeatletter
\numberwithin{equation}{section}
\numberwithin{figure}{section}
\theoremstyle{plain}
\newtheorem{thm}{\protect\theoremname}[section]
\theoremstyle{remark}
\newtheorem{rem}[thm]{\protect\remarkname}
\theoremstyle{definition}
\newtheorem{defn}[thm]{\protect\definitionname}
\theoremstyle{plain}
\newtheorem{prop}[thm]{\protect\propositionname}
\theoremstyle{plain}
\newtheorem{lem}[thm]{\protect\lemmaname}
\theoremstyle{plain}
\newtheorem{cor}[thm]{\protect\corollaryname}

\@ifundefined{date}{}{\date{}}
\makeatother

\providecommand{\corollaryname}{Corollary}
\providecommand{\definitionname}{Definition}
\providecommand{\lemmaname}{Lemma}
\providecommand{\propositionname}{Proposition}
\providecommand{\remarkname}{Remark}
\providecommand{\theoremname}{Theorem}

\begin{document}
\title{Structural limit of volatility target under stochastic volatility
and discrete corrections}
\author{Xuan Liu\thanks{Nomura Securities, Hong Kong SAR. Email: xuan.liu@nomura.com}\enskip{}and
Michel Gauthier\thanks{Nomura Securities, Tokyo, Japan. Email: michel.gauthier@nomura.com}}
\maketitle
\begin{abstract}
The exact limiting distribution of a volatility target index has recently
been established under a log-normal assumption on the risky asset.
In this paper, we show that the structural limit remains valid under
a broad class of stochastic volatility processes. In addition, we
show that, when the limit of the joint distribution of the volatility
target index and the driving Brownian motions is considered, an extra
independent Brownian motion must be involved. Besides the structural
limit theorem, we derive a correction formula for the expected quadratic
variation of the discrete rebalancing volatility target index, under
the assumption that the volatility of the underlying risky asset is
a deterministic function of time. As a consequence of the discrete
correction formula, it is shown that, as the rebalancing time step
$\Delta t\to0$ and the observation window parameter $\lambda\to1$
simultaneously along a path, the expected quadratic variation converges
to the target variance if and only if $(1-\lambda)^{-1}\Delta t\to0$
along the same path. Numerical results are provided to support the
structural limit, using the local volatility model and the Heston
stochastic volatility model as examples, as well as the effectiveness
of the discrete correction formula.
\end{abstract}

\section{Introduction\label{sec:-3}}

Volatility target mechanisms have become a cornerstone of modern portfolio
construction, underpinning risk-managed indices, volatility-controlled
funds, and insurance-linked investment products. They are routinely
utilized in fixed indexed annuities (FIA) and quantitative investment
strategies (QIS). Volatility target strategies dynamically adjust
their exposures to the risky assets in order to maintain a stable
level of volatility. As an investment strategy, volatility target
usually provides better risk-return balance; there have been numerous
empirical studies reporting improved Sharpe ratios generated by volatility
target strategies (see for example \cite{Haeusler88}). As the underlying
assets for financial derivatives, volatility target indices reduce
the degree of volatility uncertainty for pricing and risk management.
Despite its extensive use in the financial industry, modeling of the
dynamics of volatility target indices as an It\^{o} diffusion has
relied on heuristic approaches, since the limiting behavior for infinitesimally
small rebalancing time steps was not sufficiently understood. In a
recent work \cite{LG26} by the authors, the exact limiting distribution
of volatility target is established under the assumption that the
volatility of the underlying risky asset is deterministic, which shows
a consistent bias between the volatility of the limiting distribution
and the target level even when $\Delta t\to0$. The structural limit
obtain in \cite[Theorem 1.5]{LG26} serves as a base line for the
pricing of financial derivatives written on volatility target indices.
However, it has certain limitations from a practitioner's point of
view. Firstly, it has been verified by empirical data that the volatility
of risky assets is not deterministic, which corresponds to the so-called
volatility smile widely observed in financial markets. Secondly, even
when the log-normal assumption is a good approximation to the reality,
the structural limit of volatility target may deviate from its discrete
rebalacing implementation. The purpose of this paper is to generalize
the structural limit of volatility target to a broad class of stochastic
volatility dynamics, and to derive a correction formula which compensates
the deviation from the limiting distribution of the volatility target
to its discrete rebalancing implementation. 

\subsection{\label{subsec:-1}Summary of Main Results}

Let $d\ge0$ and $\{\mathcal{F}_{t}\}_{t\ge0}$ be the filtration
generated by a $(d+1)$-dimensional Brownian motion $\{(W_{t},\tilde{W}_{t})\}_{t\ge0}=\{(W_{t},\tilde{W}_{t}^{1},\dots,\tilde{W}_{t}^{d})\}_{t\ge0}$
on a probability space $(\Omega,\mathcal{F}_{\infty},\mathbb{P})$.
We assume that $\{S_{t}\}_{t\ge0}$ is a risky asset with dynamics
given by
\begin{equation}
dS_{t}=\rho(t)S_{t}dt+\sigma_{t}S_{t}dW_{t},\label{eq:-87}
\end{equation}
where 
\begin{itemize}
\item $\rho(t)\in L^{\infty}(0,T)$ is a deterministic function of $t$
and
\begin{equation}
|\rho(t)|\le M_{\rho},\quad t\ge0\label{eq:-184}
\end{equation}
for some constant $M_{\rho}$.
\item $\{\sigma_{t}\}_{t\ge0}$ is a progressively measurable process satisfying
the condition
\begin{equation}
0<\sigma_{\ast}\le\sigma_{t}\le\sigma^{\ast}<\infty,\quad t\ge0\quad\mathrm{a.s.}\label{eq:-10}
\end{equation}
for some constants $\sigma_{\ast}$ and $\sigma^{\ast}$.
\end{itemize}
\begin{rem}
\label{rem:-3}(i) Examples for which $d=0$ include the Black--Scholes
model and the local volatility model for which $\sigma_{t}=\sigma(t,S_{t})$,
while stochastic volatility models such as the Heston model correspond
to the case $d\ge1$.

(ii) The boundedness condition (\ref{eq:-10}), also called \emph{uniform
ellipticity} in the study of partial differential equations, is not
a very restrictive assumption in practice. Indeed, one may replace
(\ref{eq:-87}) by $dS_{t}=\rho(t)S_{t}dt+\bar{\sigma}_{t}S_{t}dW_{t}$,
where $\bar{\sigma}_{t}=\max(\sigma_{\ast},\min(\sigma^{\ast},\sigma_{t}))$
with $\sigma_{\ast}>0$ and $\sigma^{\ast}>0$ being small and large
constants such as $\sigma_{\ast}=0.001$ and $\sigma^{\ast}=100$. 
\end{rem}

We first recall the following definition of volatility target index.
\begin{defn}
\label{def:-3}Let $r(t)$ be a deterministic function representing
a\emph{ risk free rate} and
\begin{equation}
|r(t)|\le M_{r},\quad t\ge0,\label{eq:-190}
\end{equation}
for some constant $M_{r}$. Let $T>0$ be a fixed time, $N\in\mathbb{N}_{+}$
be the number of time steps, and $\Delta t=T/N$ be the \emph{rebalancing
time step}. Denote $t_{n}=n\Delta t$, $n=0,1,\dots,N$. The \emph{volatility
target index} $\{I_{t}\}_{t\ge0}$ (with initial value $1$) is the
process defined by 
\begin{equation}
\begin{aligned}\dfrac{dI_{t}}{I_{t}} & =(1-\bar{\sigma}v_{n-1}^{-1/2})r(t)dt+\bar{\sigma}v_{n-1}^{-1/2}\frac{dS_{t}}{S_{t}},\quad t\in[t_{n-1},t_{n}),\\
I_{0} & =1,
\end{aligned}
\label{eq:-86}
\end{equation}
where $\bar{\sigma}>0$ is a constant called the \emph{target volatility},
and $v_{n}$ is the \emph{realized variance }defined by
\begin{equation}
\begin{aligned}v_{n} & =\lambda v_{n-1}+\frac{1-\lambda}{\Delta t}\Big(\frac{S_{t_{n}}}{S_{t_{n-1}}}-1\Big)^{2},\quad n\ge1,\end{aligned}
\label{eq:-2}
\end{equation}
with $v_{0}>0$ being the \emph{initial realized variance} and $\lambda\in(0,1)$
being the \emph{observation window parameter}. 
\end{defn}

\begin{rem}
\label{rem:}We should point out that the definition (\ref{eq:-86})
is slightly different from the one used in the industry, which is
defined by the following recursion
\[
\begin{aligned}\frac{I_{t_{n}}}{I_{t_{n-1}}} & =1+(1-\bar{\sigma}v_{n-1}^{-1/2})\int_{t_{n-1}}^{t_{n}}r(t)dt+\bar{\sigma}v_{n-1}^{-1/2}\Big(\frac{S_{t_{n}}}{S_{t_{n-1}}}-1\Big),\quad n\ge1,\\
I_{t_{0}} & =1,
\end{aligned}
\]
and setting $I_{t}=I_{t_{n-1}}$ for $t\in[t_{n-1},t_{n})$. It can
be shown that these two definitions converge to the same limiting
distribution as $\Delta t\to0$ (see \cite[Section 1]{LG26}). Therefore,
we will adopt the definition (\ref{eq:-86}) which is easier to work
with.
\end{rem}

When $\sigma_{t}$ is a deterministic function satisfying the condition
(\ref{eq:-10}), it is shown in \cite[Theorem 1.5]{LG26} that the
process $\{I_{t}\}_{t\ge0}$ converges in distribution to an It\^{o}
diffusion as $\Delta t\to0$. The following theorem extends the conclusion
of \cite[Theorem 1.5]{LG26} to any stochastic process $\sigma_{t}$
satisfying (\ref{eq:-10}). When pricing a financial derivative on
a basket containing a volatility target index as a constituent, it
is necessary to consider the joint distribution of $\{I_{t}\}_{t\ge0}$
and the Brownian motions driving all other risky assets. Note that,
by Theorem \ref{thm:}, an extra independent Brownian motion $\{W_{t}^{\prime}\}_{t\ge0}$
must be involved when the limiting distribution of the joint process
$\{(I_{t},W_{t},\tilde{W}_{t})\}_{t\ge0}$ is considered, and \cite[Theorem 1.5]{LG26}
can be regarded as a degenerate case of Theorem \ref{thm:} when the
limiting distribution of $\{I_{t}\}_{t\ge0}$ itself is considered.
\begin{thm}
\label{thm:}Suppose that $\{\sigma_{t}\}_{t\ge0}$ is a progressively
measurable process satisfying the condition (\ref{eq:-10}). Let $\{W_{t}^{\prime}\}_{t\ge0}$
be a Brownian motion independent of $\{(W_{t},\tilde{W}_{t})\}_{t\ge0}$.
For any fixed $\lambda\in(0,1)$, let $\{X_{t}\}_{t\ge0}$ be the
continuous process given by the stochastic differential equation
\begin{equation}
\begin{aligned}dX_{t} & =\big(r(t)+(\rho(t)-r(t))\sigma_{t}^{-1}\bar{\sigma}U(\lambda)\big)X_{t}dt+\bar{\sigma}V(\lambda)^{1/2}X_{t}dB_{t},\\
X_{0} & =1,
\end{aligned}
\label{eq:}
\end{equation}
where $\{B_{t}\}_{t\ge0}$ is the Brownian motion 
\begin{equation}
B_{t}=\frac{U(\lambda)}{V(\lambda)^{1/2}}W_{t}+\sqrt{1-\frac{U(\lambda)^{2}}{V(\lambda)}}W_{t}^{\prime},\label{eq:-35}
\end{equation}
and $U(\lambda)$ and $V(\lambda)$ are the functions given by
\begin{equation}
U(\lambda)=\sqrt{\frac{2}{\pi(1-\lambda)}}\int_{0}^{\infty}\prod_{k=0}^{\infty}(1+t^{2}\lambda^{k})^{-1/2}dt,\label{eq:-16}
\end{equation}
and
\begin{equation}
V(\lambda)=\frac{1}{2(1-\lambda)}\int_{0}^{\infty}\prod_{k=0}^{\infty}(1+t\lambda^{k})^{-1/2}dt.\label{eq:-3}
\end{equation}
 Then the joint process $\{(I_{t},W_{t},\tilde{W}_{t})\}_{t\ge0}$
converges in distribution to $\{(X_{t},W_{t},\tilde{W}_{t})\}_{t\ge0}$
as $\Delta t\to0$.
\end{thm}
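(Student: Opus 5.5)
We write $\log I_t$ explicitly and reduce everything to martingale functional central limit theorems for a triangular array. For $t\in[t_{n-1},t_n)$, It\^{o}'s formula applied to (\ref{eq:-86}) gives
\[
d\log I_t=\Big(r(t)+\bar{\sigma}v_{n-1}^{-1/2}(\rho(t)-r(t))-\tfrac12\bar{\sigma}^2v_{n-1}^{-1}\sigma_t^2\Big)dt+\bar{\sigma}v_{n-1}^{-1/2}\sigma_t\,dW_t .
\]
The key observation is that, over one step, $\frac{1-\lambda}{\Delta t}(S_{t_n}/S_{t_{n-1}}-1)^2\approx(1-\lambda)\sigma_{t_{n-1}}^2\xi_n^2$, where $\xi_n=(W_{t_n}-W_{t_{n-1}})/\sqrt{\Delta t}$. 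Since $\sigma$ is essentially frozen over the $O(1/(1-\lambda))$ steps that matter for fixed $\lambda$, we get
\[
v_{n-1}\approx\sigma_{t_{n-1}}^2 Z_{n-1},\qquad Z_{n-1}=(1-\lambda)\sum_{k\ge1}\lambda^{k-1}\xi_{n-k}^2 .
\]
Here $Z_{n-1}$ is a stationary-type functional of the i.i.d. standard normal increments, and the contribution of $v_0$ decays geometrically. The normalized exposure is therefore $\bar{\sigma}v_{n-1}^{-1/2}\sigma_t\approx\bar{\sigma}Z_{n-1}^{-1/2}$. One checks that $\mathbb{E}[Z^{-1/2}]=U(\lambda)$ and $\mathbb{E}[Z^{-1}]=V(\lambda)$ under the stationary law $Z=(1-\lambda)\sum_{k\ge0}\lambda^k\xi_k^2$, via $x^{-1/2}=\pi^{-1/2}\int_0^\infty s^{-1/2}e^{-sx}ds$ and the Gaussian Laplace transform $\prod_k(1+2s(1-\lambda)\lambda^k)^{-1/2}$. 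This identification is presumably the computation behind \cite[Theorem 1.5]{LG26}, and I would quote it.

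The first step is an error bound. I would define the approximating process $\hat Z$ exactly as above and show that $\mathbb{E}\sup_{n\le N}|\log I_{t_n}-\log \hat I_{t_n}|\to0$, where $\hat I$ uses $\sigma_{t_{n-1}}^2\hat Z_{n-1}$ in place of $v_{n-1}$. The ingredients are:
\begin{itemize}
\item the ellipticity bounds (\ref{eq:-10});
\item moment bounds on $Z^{-1}$, which are finite since $Z\ge(1-\lambda)\xi_{n-1}^2$ and further terms make negative moments finite;
\item BDG inequalities.
\end{itemize}
The hard part within this step is that $\sigma_t$ is merely progressively measurable, not continuous, so $\sigma_t/\sigma_{t_{n-k}}\neq1$ in general. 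I would avoid freezing $\sigma$ at all. Instead, I would keep $v_{n-1}$ as it is and note that the integrand $\bar\sigma v_{n-1}^{-1/2}\sigma_t$ is what enters. The limit then arises through the averaging of $\bar\sigma v_{n-1}^{-1/2}\sigma_t\,dW_t$ against $dW$, and of its square against $dt$. This is the main obstacle. I would first try to prove it for piecewise constant $\sigma$ adapted to a coarse grid, and then approximate a general progressive $\sigma$ in $L^2(dt\otimes d\mathbb{P})$ by such processes. The remaining difficulty is to check that the map $\sigma\mapsto$ (law of the limit) is continuous uniformly in $\Delta t$; this uses the bound $\sigma_*\le\sigma\le\sigma^*$ and the fact that the dependence of $v$ on $\sigma$ is Lipschitz in the region $v\ge\sigma_*^2\hat Z$.

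The second step is the limit itself. Write $M^N_t=\int_0^t\bar{\sigma}v^{-1/2}\sigma\,dW$. The joint process $(M^N,W,\tilde W)$ is a continuous martingale with brackets
\[
\langle M^N\rangle_t\approx\bar\sigma^2\Delta t\sum_n Z_{n-1}^{-1},\qquad \langle M^N,W\rangle_t\approx\bar\sigma\Delta t\sum_n Z_{n-1}^{-1/2},\qquad \langle M^N,\tilde W\rangle=0 .
\]
Because $Z_n$ is a geometrically mixing functional of i.i.d. variables, a weak law of large numbers along $N\to\infty$ gives convergence in probability of these brackets to $\bar\sigma^2V(\lambda)t$ and $\bar\sigma U(\lambda)t$, uniformly in $t$. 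For the drift terms, $\Delta t\sum_n(\rho-r)\sigma_t^{-1}\bar\sigma Z_{n-1}^{-1/2}$ converges to $\int(\rho-r)\sigma^{-1}\bar\sigma U\,dt$ by an $L^2$ variance estimate, since covariances decay like $\lambda^{|n-m|/2}$. The term $\tfrac12\bar\sigma^2 Z^{-1}$ is handled in the same way.

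By the martingale functional CLT (for instance Jacod--Shiryaev, Theorem VIII.3.11 with continuous martingales), $(M^N,W,\tilde W)$ converges to a continuous Gaussian martingale $(M,W,\tilde W)$ with the limiting brackets. Any such $M$ can be represented as $\bar\sigma V^{1/2}B$, with $B$ as in (\ref{eq:-35}) and an auxiliary independent $W'$. The drift functionals are continuous in the Skorokhod topology, so the continuous mapping theorem applied to $\log I$ and exponentiation yields (\ref{eq:}) jointly with $(W,\tilde W)$. For random $\sigma$, the drift $\int(\rho-r)\sigma_t^{-1}dt$ depends on $\sigma$, which is a functional of $(W,\tilde W)$. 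The convergence is therefore stable convergence rather than mere convergence in law, and I would invoke the stable version of the CLT to keep that dependence.
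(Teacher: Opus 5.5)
Your limit step takes a different route from the paper's, and for simple $\sigma$ it is sound. Suppose $\sigma$ is constant on the blocks $[\tau_{k-1},\tau_k)$ of a coarse grid and $\mathcal{F}_{\tau_{k-1}}$-measurable there. Conditionally on $\mathcal{F}_{\tau_{k-1}}$ the realized variance inside a block is then driven by i.i.d.\ normals, so $\langle M^N\rangle_t\to\bar\sigma^2V(\lambda)t$ and $\langle M^N,W\rangle_t\to\bar\sigma U(\lambda)t$ in probability, and $\langle M^N,\tilde W\rangle=0$. The martingale FCLT then gives process-level convergence directly. Stability is automatic here: the $(W,\tilde W)$-marginal is fixed, so joint convergence extends to bounded measurable functionals of $(W,\tilde W)$, as in Remark~\ref{rem:-8}. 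This replaces the realized-variance reset, Haeusler's Berry--Esseen bound, the Fourier estimate of Proposition~\ref{prop:-5} and the backward conditioning.

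The gap is the reduction from a general progressively measurable $\sigma$ to such simple processes. You correctly flag this as the main obstacle but do not resolve it. The uniform-in-$\Delta t$ continuity you need (Proposition~\ref{prop:-6}) requires $\sup_{n\le N}\mathbb{E}(v_n^{-p})<\infty$ uniformly in $N$, for some $p>1$, for the realized variance driven by the \emph{general} $\sigma$. On $\{v_n<\tilde v_n/2\}$ the differences $|v_n^{-1}-\tilde v_n^{-1}|$ and $|v_n^{-1/2}-\tilde v_n^{-1/2}|$ are as large as $v_n^{-1}$ and $v_n^{-1/2}$, so bounds for the simple approximant $\tilde v$ do not help. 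Even a convergence-in-probability version needs uniform integrability of $\Delta t\sum_n v_n^{-1}$.

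Your justification, Lipschitz dependence ``in the region $v\ge\sigma_*^2\hat Z$'', assumes a pathwise lower bound that is false. The quantity $(\int_{t_{k-1}}^{t_k}\sigma_u\,dW_u)^2$ is not bounded below by any multiple of $(W_{t_k}-W_{t_{k-1}})^2$ once $\sigma$ varies inside a cell. Take $\sigma=\sigma_*$ on the first half of a cell and $\sigma^*$ on the second, with half-increments $a,b$. Then $\sigma_*a+\sigma^*b$ can be arbitrarily small while $|a+b|$ is of order $\Delta t^{1/2}$, on an event of positive probability. For adapted $\sigma$ the integrand can also react to the path and push $\int\sigma\,dW$ toward $0$, so no Gaussian comparison is available either.

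The missing ingredient is a conditional small-ball estimate for stochastic integrals with adapted, uniformly elliptic integrands. This is the paper's key Lemma~\ref{lem:-1}: $\mathbb{E}[\exp(-u(h+\int_{\tau_1}^{\tau_2}\sigma_t\,dW_t)^2)\mid\mathcal{F}_{\tau_1}]\le[1+2u(\sigma^{*})^2(\tau_2-\tau_1)]^{-(\sigma_*/\sigma^*)^2/2}$. It is proved by freezing $\sigma$ on a fine partition, conditioning step by step and iterating a M\"obius map. Lemma~\ref{lem:-3} extends it to include the drift of $S$ via Girsanov. Lemma~\ref{lem:-4} then iterates it over roughly $4\alpha(\sigma^*/\sigma_*)^2$ consecutive cells through $x^{-\alpha}=\Gamma(\alpha)^{-1}\int_0^\infty e^{-xt}t^{\alpha-1}\,dt$, giving $\sup_{n\le N}\mathbb{E}(v_n^{-\alpha})\le c(1+v_0^{-\alpha})$. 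Many cells are needed because each one yields only a reduced power of $u$ rather than the Gaussian $u^{-1/2}$. Without this estimate or an equivalent one, the approximation step fails, and with it the theorem for non-simple $\sigma$.
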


\begin{rem}
\label{rem:-4}(i) As shown by (\ref{eq:-35}), the Brownian motion
$\{B_{t}\}_{t\ge0}$ driving the limiting process $\{X_{t}\}_{t\ge0}$
involves an extra independent Brownian motion $\{W_{t}^{\prime}\}_{t\ge0}$.
For values of $\lambda$ commonly used in practice, the variance $1-U(\lambda)^{2}/V(\lambda)$
in front of $W_{t}^{\prime}$ in (\ref{eq:-35}) is small; for example,
$1-U(0.9)^{2}/V(0.9)\approx0.026$.

(ii) The equation (\ref{eq:}) should be interpreted as a dynamics
supported on the product probability space $(\Omega\times\Omega^{\prime},\mathcal{F}_{\infty}\times\mathcal{F}_{\infty}^{\prime},\mathbb{P}\times\mathbb{P}^{\prime})$,
with $(\Omega^{\prime},\mathcal{F}_{\infty}^{\prime},\mathbb{P}^{\prime})$
being the probability space where the Brownian motion $\{W_{t}^{\prime}\}_{t\ge0}$
is defined. The limiting process $\{X_{t}\}_{t\ge0}$ can be defined
on any probability space which supports a $(d+2)$-dimensional Brownian
motion. For convenience, we construct it directly on a product space
with the original probability space $(\Omega,\mathcal{F}_{\infty},\mathbb{P})$. 

(iii) For the definition (\ref{eq:-35}), note that $U(\lambda)^{2}<V(\lambda)$
is a consequence of H\"{o}lder's inequality and probabilistic representations
of $U(\lambda)$ and $V(\lambda)$. In fact, let $\xi_{n}=(1-\lambda)\sum_{k=1}^{n}\lambda^{k}Z_{n}^{2}$
with $\{Z_{n}\}_{n\ge1}$ being an i.i.d. sequence of normal distributions.
Then $U(\lambda)=\lim_{n\to\infty}\mathbb{E}(\xi_{n})$ and $V(\lambda)=\lim_{n\to\infty}\mathbb{E}(\xi_{n}^{2})$
(see \cite[Proposition 2.7]{LG26}). Alternatively, $U(\lambda)^{2}<V(\lambda)$
can also be seen by the bounds given in \cite[Theorem 1.5]{LG26}.
In Lemma \ref{lem:-6}, we give a direct proof of $U(\lambda)^{2}<V(\lambda)$
without using their probabilistic representation or the delicate bounds
in \cite[Theorem 1.5]{LG26}.

(iv) It is shown in \cite[Theorem 1.5]{LG26} that $\lim_{\lambda\to1-}U(\lambda)=\lim_{\lambda\to1-}V(\lambda)=1$.
However, this does not imply the convergence in distribution of $\{I_{t}\}_{t\ge0}$
to the diffusion 
\[
dX_{t}=\big(r(t)+(\rho(t)-r(t))\sigma_{t}^{-1}\bar{\sigma}\big)X_{t}dt+\bar{\sigma}X_{t}dB_{t}
\]
with the target volatility as $(\Delta t,\lambda)\to(0,1)$. In fact,
the expected quadratic variation $\mathbb{E}(\langle\log I\rangle_{T})$
does not converge as $(\Delta t,\lambda)\to(0,1)$ unless $\sigma_{t}=v_{0}^{-1/2}$
for all $t$ (see \cite[Section 4]{LG26}). In Theorem \ref{thm:-4},
we will give a sufficient and necessary condition for $\mathbb{E}(\langle\log I\rangle_{T})\to\bar{\sigma}^{2}T$
as $\Delta t\to0$ and $\lambda\to1$ simultaneously.
\end{rem}

As we will see in Proposition \ref{prop:-1}, the convergence in distribution
of $\{I_{t}\}_{t\ge0}$ by Theorem \ref{thm:} cannot be improved
to a.s. pathwise convergence or even pathwise convergence in probability.
More precisely, let $\mathbb{R}^{[0,T]}$ be the space of real-valued
functions and equipped with the pointwise convergence topology; that
is, a sequence $\{f_{n}\}_{n}\subseteq\mathbb{R}^{[0,T]}$ converges
to $f\in\mathbb{R}^{[0,T]}$ in the space $\mathbb{R}^{[0,T]}$ if
and only if $\lim_{n\to\infty}f_{n}(t)=f(t)$ for all $t\in[0,T]$.
The following proposition shows that, as $\mathbb{R}^{[0,T]}$-valued
random variables, $\{I_{t}^{(N)}\}_{t\ge0}$ does not converge a.s.
to a limit, where we denote $\{I_{t}\}_{t\ge0}$ by $\{I_{t}^{(N)}\}_{t\ge0}$
to indicate the dependence on $N$. 
\begin{prop}
\label{prop:-1}There exist no subsequence $\{I_{t}^{(n)}\}_{t\ge0}$
of $\{I_{t}^{(N)}\}_{t\ge0}$ and no process $\{\tilde{X}_{t}\}_{t\ge0}$
on the probability space $(\Omega,\mathcal{F},\mathbb{P})$ such that
$\lim_{n\to\infty}I_{t}^{(n)}=\tilde{X}_{t}$ a.s. for all $t\in[0,T]$.
Consequently, $\{I_{t}^{(N)}\}_{t\ge0}$ does not converge a.s. as
$\mathbb{R}^{[0,T]}$-valued random variables, and does not converge
in probability as $C([0,T])$-valued random variables.
\end{prop}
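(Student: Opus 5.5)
We argue by contradiction, using Theorem \ref{thm:} together with the fact that a.s. limits of $\mathcal{F}_{\infty}$-measurable random variables are themselves $\mathcal{F}_{\infty}$-measurable. Suppose a subsequence $I^{(n)}$ and a process $\tilde{X}$ on $(\Omega,\mathcal{F},\mathbb{P})$ exist with $I_{t}^{(n)}\to\tilde{X}_{t}$ a.s. for every $t\in[0,T]$. Each $I_{t}^{(n)}$ is measurable with respect to $\mathcal{F}_{t}$, the filtration of $(W,\tilde{W})$. Hence $\tilde{X}_{t}$ is (up to null sets) $\mathcal{F}_{T}$-measurable, so $\tilde{X}_{t}=F_{t}(W,\tilde{W})$ for some measurable functional $F_{t}$.

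Next we identify joint laws. Fix finitely many times $0<s_{1}<\dots<s_{m}\le T$. By the assumed a.s. convergence,
\[
\big(I_{s_{1}}^{(n)},\dots,I_{s_{m}}^{(n)},W,\tilde{W}\big)\to\big(\tilde{X}_{s_{1}},\dots,\tilde{X}_{s_{m}},W,\tilde{W}\big)
\]
a.s. as random elements of $\mathbb{R}^{m}\times C([0,T])^{d+1}$, hence in distribution. By Theorem \ref{thm:}, using continuity of the limit $X$ so that finite-dimensional projections are continuous at it, the same vector converges in distribution to $(X_{s_{1}},\dots,X_{s_{m}},W,\tilde{W})$. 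Therefore the joint law of $(\tilde{X}_{t},W,\tilde{W})$ equals that of $(X_{t},W,\tilde{W})$ for each $t$.

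The main step is to show that $X_{t}$ is \emph{not} a.s. a function of $(W,\tilde{W})$. Then no $F_{t}$ can exist, since equality of joint laws preserves the property ``first coordinate is a.s. a measurable function of the second''. For a fixed $t>0$, I would compute the conditional variance of $\log X_{t}$ given $\mathcal{G}=\sigma(W,\tilde{W})$ on the product space. Solving \eqref{eq:} explicitly gives
\[
\log X_{t}=\int_{0}^{t}\Big(r+(\rho-r)\sigma_{s}^{-1}\bar{\sigma}U(\lambda)-\tfrac12\bar{\sigma}^{2}V(\lambda)\Big)ds+\bar{\sigma}U(\lambda)W_{t}+\bar{\sigma}\sqrt{V(\lambda)-U(\lambda)^{2}}\,W_{t}^{\prime}.
\]
All terms except the last are $\mathcal{G}$-measurable, because $\sigma_{s}$ is progressively measurable with respect to $\mathcal{F}$. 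The last term is independent of $\mathcal{G}$, so
\[
\mathrm{Var}(\log X_{t}\mid\mathcal{G})=\bar{\sigma}^{2}\big(V(\lambda)-U(\lambda)^{2}\big)t>0
\]
by Lemma \ref{lem:-6}. On the other hand, if $\tilde{X}_{t}=F_{t}(W,\tilde{W})$, the transferred law forces $X_{t}=F_{t}(W,\tilde{W})$ a.s. This is where care is needed: one uses that $\mathbb{P}((X_{t},W,\tilde{W})\in\mathrm{graph}(F_{t}))=\mathbb{P}((\tilde{X}_{t},W,\tilde{W})\in\mathrm{graph}(F_{t}))=1$, where the graph of a measurable map is a measurable set. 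That gives zero conditional variance, a contradiction. Strict positivity of $\tilde{X}_{t}$ is not an issue, since both are positive and we can work with $\log$ or directly with $X_{t}$.

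Finally, the consequences follow. A.s. convergence in $\mathbb{R}^{[0,T]}$ of the full sequence would in particular yield such a subsequence limit, which is impossible. Convergence in probability in $C([0,T])$ would give a further subsequence converging a.s. uniformly, hence pointwise, which is again excluded. The only delicate point is the measurable-graph argument transferring ``functional of $(W,\tilde{W})$'' through equality of laws. The non-degeneracy $U(\lambda)^{2}<V(\lambda)$ is supplied by Lemma \ref{lem:-6}.
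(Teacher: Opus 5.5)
Your argument is correct, including the measurable-graph step and the reduction of the two consequences. It follows a genuinely different route from the paper's.

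\textbf{Your route.} You combine two soft facts. First, an a.s.\ limit of $\sigma(W,\tilde{W})$-measurable variables is, up to null sets, a functional of $(W,\tilde{W})$. Second, by the joint limit in Theorem~\ref{thm:}, $\log X_{t}$ carries the independent component $\bar{\sigma}\sqrt{V(\lambda)-U(\lambda)^{2}}\,W_{t}^{\prime}$, so it is not such a functional.

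\textbf{The paper's route.} The paper restricts to $\rho=r=0$ and $d=0$, and takes a continuous adapted modification of $\tilde{X}$ via Corollary~\ref{cor:}. By martingale representation it writes $\log\tilde{X}_{t}=-\frac{1}{2}\bar{\sigma}^{2}V(\lambda)t+\int_{0}^{t}\beta_{s}dW_{s}$. It then uses Banach--Saks in the Hilbert space $\mathcal{H}$ of $L^{2}$ semimartingales to obtain three facts: $\mathbb{E}(\beta_{t})=\bar{\sigma}U(\lambda)$ (from an explicitly computed pairing with $W^{\tau}$), $\beta_{t}\ge0$, and $\beta_{t}^{2}=\bar{\sigma}^{2}V(\lambda)$. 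Together these contradict $U(\lambda)^{2}<V(\lambda)$.

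\textbf{Comparison.} The paper's route uses moment information about $\log I$ rather than the full joint limit and the identification of $W^{\prime}$. Yours is shorter and works directly for any $d$, time-dependent $\rho,r$ and stochastic $\sigma$, at a single time $t>0$. It needs no moment bounds, uniform integrability or martingale representation, and it makes explicit that the obstruction is exactly the extra Brownian motion of Remark~\ref{rem:-4}(i).

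It also avoids a delicate point in the paper's route. There, $\beta_{t}^{2}=\bar{\sigma}^{2}V(\lambda)$ is needed pointwise, but the second-moment identity cited for it only gives $\mathbb{E}(\beta_{t}^{2})=\bar{\sigma}^{2}V(\lambda)$. That is perfectly compatible with $\mathbb{E}(\beta_{t})=\bar{\sigma}U(\lambda)$. An extra argument is needed, for example identifying $\langle\log\tilde{X}\rangle_{t}=\bar{\sigma}^{2}V(\lambda)t$ through the law of $\tilde{X}$.

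\textbf{A caution on $U(\lambda)^{2}<V(\lambda)$.} This is your one quantitative input. As printed, (\ref{eq:-107}) carries the constant $\pi/2$, which would only give $U(\lambda)^{2}\le2V(\lambda)$. The polar-coordinate computation in the proof of Lemma~\ref{lem:-6} actually yields $\pi/4$, and that is what gives $U(\lambda)^{2}<V(\lambda)$. Alternatively, apply Cauchy--Schwarz to the representations $U(\lambda)=\mathbb{E}(\xi^{-1/2})$ and $V(\lambda)=\mathbb{E}(\xi^{-1})$, where $\xi=(1-\lambda)\sum_{k\ge0}\lambda^{k}Z_{k}^{2}$ (cf.\ Remark~\ref{rem:-4}(iii)). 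The inequality is strict because $\xi$ is not a.s.\ constant.
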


Let $C^{1,1}([0,T])$ be the space of differentiable functions $f$
on $[0,T]$ such that $f^{\prime}$ is Lipschitz continuous, that
is, 
\[
\Vert f^{\prime}\Vert_{C^{0,1}}=\sup_{0\le s<t\le T}\frac{|f^{\prime}(t)-f^{\prime}(s)|}{t-s}<\infty.
\]
Proposition \ref{prop:} below gives a correction formula which gives
the first order approximation of the expected quadratic variation
of $\{\log I_{t}\}_{t\ge0}$ with respect to $\Delta t$, or more
precisely $(1-\lambda)^{-1}\Delta t$. The correction formula (\ref{eq:-7})
below can be regarded as a moment matching result. Moreover, it gives
a characterization of the competing effect between $\Delta t$ and
$\lambda$ when $\Delta t\to0$ and $\lambda\to1$ simultaneously
(see \cite[Section 4]{LG26} for details).
\begin{prop}
\label{prop:}Suppose that $\sigma_{t}=\sigma(t)$ is a deterministic
$C^{1,1}([0,T])$ function satisfying (\ref{eq:-10}). Let 
\begin{equation}
\delta(\Delta t,\lambda)=(1-\lambda)^{-1}\Delta t,\label{eq:-307}
\end{equation}
and $\lambda_{\ast}\in(0,1)$ be a fixed constant. Then, for any $\lambda\in[\lambda_{\ast},1)$,
we have 
\begin{equation}
\mathbb{E}[\langle\log I\rangle_{T}]=V(\lambda)\bar{\sigma}^{2}T+\Big[Q_{V}(\lambda,v_{0}\sigma(0)^{-2})+2\Big(\frac{V(\lambda)}{1-\lambda}-\lambda V^{\prime}(\lambda)\Big)\log\frac{\sigma(T)}{\sigma(0)}\Big]\bar{\sigma}^{2}\Delta t+R(T;\Delta t,\lambda),\label{eq:-22}
\end{equation}
with $R(\tau;\Delta t,\lambda)=\mathrm{O}(\delta(\Delta t,\lambda)^{2})$
in the sense that $|R(T;\Delta t,\lambda)|\le c_{\lambda_{\ast},\sigma,v_{0},T}\delta(\Delta t,\lambda)^{2}$,
where $c_{\lambda_{\ast},\sigma,v_{0},T}$ is a constant depending
only on $\lambda_{\ast}$, $\sigma_{\ast}$, $\sigma^{\ast}$, $\Vert\sigma^{\prime}\Vert_{L^{\infty}}$,
$\Vert\sigma^{\prime}\Vert_{C^{0,1}}$, $v_{0}$ and $T$, and $Q_{V}(\lambda,\theta)$
is the absolutely convergent series 
\begin{equation}
Q_{V}(\lambda,\theta)=\sum_{k=0}^{\infty}[V_{k}(\lambda,\theta)-V(\lambda)],\quad\theta>0,\label{eq:-4}
\end{equation}
where
\begin{align}
V_{k}(\lambda,\theta) & =\frac{1}{2(1-\lambda)}\int_{0}^{\infty}e^{-t\theta(1-\lambda)^{-1}\lambda^{k}/2}\prod_{j=0}^{k-1}(1+t\lambda^{j})^{-1/2}dt,\label{eq:-5}
\end{align}
with $\prod_{j=0}^{k-1}(1+t\lambda^{j})^{-1/2}=1$ for $k=0$. Moreover,
let
\begin{equation}
\tau_{\ast}=\lambda_{\ast}^{-1}\delta_{1}(\Delta t,\lambda)\log\delta_{1}(\Delta,\lambda)^{-1}.\label{eq:-305}
\end{equation}
Then, for any $\lambda\in[\lambda_{\ast},1)$ and any $\tau\in[\tau_{\ast},T]$,
we have 
\begin{equation}
\mathbb{E}[\langle\log I\rangle_{\tau}]=V(\lambda)\bar{\sigma}^{2}\tau+\Big[Q_{V}(\lambda,v_{0}\sigma(0)^{-2})+2\Big(\frac{V(\lambda)}{1-\lambda}-\lambda V^{\prime}(\lambda)\Big)\log\frac{\sigma(\tau)}{\sigma(0)}\Big]\bar{\sigma}^{2}\Delta t+R(\tau;\Delta t,\lambda),\label{eq:-7}
\end{equation}
with 
\begin{equation}
|R(\tau;\Delta t,\lambda)|\le c_{\lambda_{\ast},\sigma,v_{0},T}\delta(\Delta t,\lambda)^{2},\label{eq:-308}
\end{equation}
where $c_{\lambda_{\ast},\sigma,v_{0},T}$ is again a constant depending
only on $\lambda_{\ast}$, $\sigma_{\ast}$, $\sigma^{\ast}$, $\Vert\sigma^{\prime}\Vert_{L^{\infty}}$,
$\Vert\sigma^{\prime}\Vert_{C^{0,1}}$, $v_{0}$ and $T$.
\end{prop}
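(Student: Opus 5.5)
We start from the exact expression for the quadratic variation of the discretely rebalanced index. By (\ref{eq:-86}), on $[t_{n-1},t_{n})$ we have $d\langle\log I\rangle_{t}=\bar{\sigma}^{2}v_{n-1}^{-1}\sigma(t)^{2}dt$. Hence
\[
\mathbb{E}[\langle\log I\rangle_{T}]=\bar{\sigma}^{2}\sum_{n=1}^{N}\mathbb{E}[v_{n-1}^{-1}]\int_{t_{n-1}}^{t_{n}}\sigma(t)^{2}dt .
\]
Since $\sigma$ is deterministic, the returns $S_{t_{k}}/S_{t_{k-1}}$ are independent log-normals. Unrolling (\ref{eq:-2}) gives
\[
v_{n}=\lambda^{n}v_{0}+\frac{1-\lambda}{\Delta t}\sum_{k=1}^{n}\lambda^{n-k}\Big(\frac{S_{t_{k}}}{S_{t_{k-1}}}-1\Big)^{2}.
\]
We then use $x^{-1}=\int_{0}^{\infty}e^{-sx}ds$, which by independence turns $\mathbb{E}[v_{n}^{-1}]$ into a one-dimensional integral of a product of Laplace transforms. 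Replacing each return $S_{t_{k}}/S_{t_{k-1}}-1$ by $\sigma(t_{k})\sqrt{\Delta t}Z_{k}$ with $Z_{k}$ standard normal, the Laplace transform of $\tfrac{1-\lambda}{\Delta t}\lambda^{j}\sigma^{2}\Delta t Z^{2}$ is $(1+2s(1-\lambda)\lambda^{j}\sigma^{2})^{-1/2}$. After the change of variables $t=2s(1-\lambda)\sigma(t_{n})^{2}$, the leading term of $\sigma(t_{n})^{2}\mathbb{E}[v_{n}^{-1}]$ is exactly $V_{n}(\lambda,v_{0}\sigma(0)^{-2})$ from (\ref{eq:-5}), at least when $\sigma$ is constant. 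This identifies the zeroth order term $V(\lambda)\bar{\sigma}^{2}T$ and the initial-transient term $Q_{V}$, since $\sum_{n}(V_{n}-V)\Delta t$ picks up exactly $Q_{V}\Delta t$. The convergence of $Q_{V}$ follows from $|V_{k}-V|\lesssim\lambda^{k}$-type bounds obtained by comparing the truncated and infinite products.

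Second, we account for the two genuine corrections, each of relative size $\mathrm{O}(\delta)$.

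(a) \emph{Non-Gaussian returns.} Write $S_{t_{k}}/S_{t_{k-1}}-1=\sigma\sqrt{\Delta t}Z+\mathrm{O}(\Delta t)$ and expand the Laplace transform of its square to first order in $\Delta t$. Each factor is then perturbed by $\mathrm{O}(\Delta t\cdot s(1-\lambda)\lambda^{j})$, and summing over $j$ gives an $\mathrm{O}(\Delta t)$ correction per step. I expect these terms either to cancel against the $\rho$-drift contributions or to be absorbed into $R$ after summation. This needs checking, because the stated formula contains no $\rho$ term.

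(b) \emph{Time variation of $\sigma$.} Since $\sigma\in C^{1,1}$, we write $\sigma(t_{n-j})^{2}=\sigma(t_{n})^{2}(1-2j\Delta t\,\sigma'/\sigma+\mathrm{O}(j^{2}\Delta t^{2}))$. Inserting this into the product $\prod_{j}(1+t\lambda^{j}\sigma(t_{n-j})^{2}/\sigma(t_{n})^{2})^{-1/2}$ and differentiating to first order gives a correction
\[
2\Delta t\,\frac{\sigma'(t_{n})}{\sigma(t_{n})}\sum_{j}j\lambda^{j}\,\partial(\cdot).
\]
The identity $\sum_{j}j\lambda^{j}\partial_{\lambda^{j}}\prod=\lambda\partial_{\lambda}\prod$ should express this through $\lambda V'(\lambda)$ together with the $(1-\lambda)^{-1}$ prefactor derivative, which produces the combination $\frac{V}{1-\lambda}-\lambda V'$. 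Summing over $n$ then gives $\sum_{n}\Delta t\,\sigma'/\sigma\approx\log(\sigma(T)/\sigma(0))$, and hence the coefficient $\Delta t$ in (\ref{eq:-22}). The discretisation error of replacing $\int_{t_{n-1}}^{t_{n}}\sigma^{2}$ by $\sigma(t_{n-1})^{2}\Delta t$, together with the index shift $v_{n-1}$ versus $v_{n}$, is handled by the same Taylor expansion. These contributions either telescope or combine into the log term.

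Third, we bound the remainders uniformly. Since $\sum_{j}j^{2}\lambda^{j}\sim(1-\lambda)^{-3}$ and each such term appears with weight $\Delta t^{2}(1-\lambda)$ after rescaling, the second-order terms per step are $\mathrm{O}(\delta^{2})$ in relative size. The bound (\ref{eq:-10}) keeps all Laplace integrals uniformly integrable, and restricting to $\lambda\ge\lambda_{\ast}$ gives uniform constants. For (\ref{eq:-7}) we proceed identically with the sum truncated at $\tau$. The condition $\tau\ge\tau_{\ast}$ guarantees $\lambda^{\tau/\Delta t}\le\delta^{2}$, roughly. This makes the tail of the $Q_{V}$ series beyond $\tau/\Delta t$ and the missing history-start effects negligible at order $\delta^{2}$.

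The main obstacle is step (b) combined with (a): the algebra must produce exactly $\frac{V(\lambda)}{1-\lambda}-\lambda V'(\lambda)$ and show the non-Gaussian drift corrections vanish at first order. Alongside this, all error terms must be controlled uniformly as $\lambda\to1$, where $(1-\lambda)^{-1}$ factors proliferate. I would first verify the coefficient with $\sigma(t)=e^{at}$, where the product is explicit.
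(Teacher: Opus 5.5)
The points you leave open in (a), and the index bookkeeping in (b), are effects of order $\Delta t$, not $\mathrm{O}(\delta^2)$, so they cannot be settled the way you suggest.

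\textbf{Step (a).} The log-normal corrections neither cancel nor fit into $R$. For constant coefficients, $\Delta t^{-1}\mathbb{E}[(S_{t_k}/S_{t_{k-1}}-1)^2]=\sigma^2+(\rho^2+2\rho\sigma^2+\sigma^4/2)\Delta t+\mathrm{O}(\Delta t^2)$. The $\mathrm{O}(\Delta t^{1/2})$ terms are odd in the Gaussian increment and drop out at first order. The remaining $\mathrm{O}(\Delta t)$ terms, however, shift $\mathbb{E}[\langle\log I\rangle_T]$ by an amount of order $\Delta t$ whose coefficient depends on $\rho$. Nothing else in $\langle\log I\rangle$ depends on $\rho$, and $\Delta t/\delta^2=(1-\lambda)^2/\Delta t\to\infty$ for fixed $\lambda$. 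The paper's proof never meets these terms, because (\ref{eq:-18}) is written with $v_k$ built from the Gaussian increments $\int_{t_{j-1}}^{t_j}\sigma(u)\,dW_u$, i.e.\ the $u_k$ of (\ref{eq:-75}). You have to make that reduction at the outset, rather than hope the corrections disappear at the end. It is exactly what allows a $\rho$-free formula with a $\rho$-free constant.

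\textbf{Step (b).} The index offset cannot ``telescope''. $v_k$ is paired with $A_{k+1}$ and weights $A_{k-j}$ by $\lambda^j$. Lemma \ref{lem:-24} gives $A_{k-j}/A_{k+1}=e^{-2(j+1)\Delta t\,\sigma'(t_k)/\sigma(t_k)}+\mathrm{O}((j+1)^2\Delta t^2)$, so the lag is $(j+1)\Delta t$, not $j\Delta t$. On the stationary product $P_\infty$, the first-order operator is therefore $\sum_j(j+1)\lambda^j\partial_{\lambda^j}=\lambda\partial_\lambda+t\partial_t$. The extra term $\frac{1}{2(1-\lambda)}\int_0^\infty t\partial_tP_\infty\,dt=-V(\lambda)$ is exactly what turns $\frac{\lambda V}{1-\lambda}-\lambda V'$ into $\frac{V}{1-\lambda}-\lambda V'$. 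The former is what your identity with weights $j$ produces. The discrepancy $2V(\lambda)\log\frac{\sigma(T)}{\sigma(0)}\bar{\sigma}^2\Delta t$ is again of order $\Delta t$.

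\textbf{Uniformity in $\lambda$.} This is the other missing ingredient, and uniform integrability does not provide it. The paper avoids linearising in infinitely many variables. Since $\prod_j(1+t\beta_k^{(j+1)\Delta t}\lambda^j)^{-1/2}=\prod_j(1+t\beta_k^{\Delta t}(\beta_k^{\Delta t}\lambda)^j)^{-1/2}$ with $\beta_k=e^{-2\sigma'(t_k)/\sigma(t_k)}$, rescaling $t$ turns each step, including the $v_0$-factor, into $f_k(\beta_k^{\Delta t})$ with $f_k$ as in (\ref{eq:-266}). One then Taylor-expands in the single variable $x$ at $x=1$. The coefficient is $f_k'(1)=-\frac{V_k}{1-\lambda}+\lambda\partial_\lambda V_k-\frac{\lambda\theta_0}{1-\lambda}\partial_\theta V_k$, evaluated at $(\lambda,\theta_0)$ with $\theta_0=v_0\sigma(0)^{-2}$. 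It involves the finite-$k$ functions. Controlling all of this at cost $\mathrm{O}(\delta^2)$ needs the following.
\begin{itemize}
\item Replacing $V_k$, $\partial_\lambda V_k$, $\partial_\theta V_k$ by $V$, $V'$, $0$ needs $\sum_k|V_k-V|\lesssim(1-\lambda)^{-1}$, $\sum_k|\partial_\lambda V_k-V'|\lesssim(1-\lambda)^{-2}$ and $\sum_k|\partial_\theta V_k|\lesssim(1-\lambda)^{-1}$ (Propositions \ref{prop:-9}, \ref{prop:-8}, \ref{prop:-11}).
\item The second-order remainder needs Proposition \ref{prop:-10} together with $|V'|\lesssim(1-\lambda)^{-1}$ and $|V''|\lesssim(1-\lambda)^{-2}$ (Corollary \ref{cor:-4}).
\item Passing from $A_{k-j}/A_{k+1}$ and $v_0/A_{k+1}$ to their geometric approximations needs the weighted Lipschitz bound of Lemma \ref{lem:-22} and Lemma \ref{lem:-23}. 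For transient $k$, the decay comes from the factor $e^{-t\theta(1-\lambda)^{-1}\lambda^k/2}$ rather than from the product.
\end{itemize}
All of these rest on $\int_0^\infty t^{n-1}P_\infty(t)\,dt\lesssim\lambda^{-n^2}(1-\lambda)^n$ and its summed transient analogues (Lemmas \ref{lem:-21}, \ref{lem}, \ref{lem:-23}). Your power counting tacitly assumes bounds of this kind, but the proposal has no mechanism to produce them. With $\lambda$-dependent constants, the statement in terms of $\delta$, and hence Theorem \ref{thm:-4}, is lost.

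A minor point: $\tau\ge\tau_\ast$ gives $\lambda^{\tau/\Delta t}\le\delta$, not $\delta^2$. That suffices, because $\Delta t\sum_{k\ge n}|V_k-V|$ is already $\mathrm{O}(\lambda^n\delta)$.
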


As a direct consequence of Proposition \ref{prop:} and Corollary
\ref{cor:-4} and Proposition \ref{prop:-9} in Section \ref{subsec:-3},
we obtain the following sufficient and necessary condition for the
expected quadratic variation to converge to the target variance as
$\Delta t\to0$ and $\lambda\to1$ simultaneously along a path. 
\begin{thm}
\label{thm:-4}Suppose that $\sigma_{t}=\sigma(t)$ is a deterministic
$C^{1,1}([0,T])$ function satisfying (\ref{eq:-10}). Let $\lambda_{\ast}\in(0,1)$
be a fixed constant, and $\delta(\Delta t,\lambda)$ be defined by
(\ref{eq:-307}). Then
\[
|Q_{V}(\lambda,v_{0}\sigma(0)^{-2})|+|(1-\lambda)^{-1}V(\lambda)-\lambda V^{\prime}(\lambda)|\le c_{\lambda_{\ast},\sigma,v_{0}}(1-\lambda)^{-1},
\]
where $c_{\lambda_{\ast},\sigma,v_{0}}$ is a constant depending only
on $\lambda_{\ast}$, $\sigma_{\ast}$, $\sigma^{\ast}$, $\Vert\sigma^{\prime}\Vert_{L^{\infty}}$,
$\Vert\sigma^{\prime}\Vert_{C^{0,1}}$ and $v_{0}$. Therefore, 
\[
\mathbb{E}[\langle\log I\rangle_{T}]\to\bar{\sigma}^{2}T
\]
 as $(\Delta t,\lambda)\to(0,1)$ along a path $\lambda=\lambda(\Delta t)$
if and only if $\delta(\Delta t,\lambda)\to0$ along the same path.
\end{thm}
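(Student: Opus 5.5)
The plan is to split Theorem \ref{thm:-4} into two parts: a uniform bound on the two correction coefficients, and an asymptotic argument built on the expansion (\ref{eq:-22}) of Proposition \ref{prop:}.

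\textbf{Step 1: bound on $Q_V$.} Write $\theta=v_0\sigma(0)^{-2}$, which lies in a compact subset of $(0,\infty)$ determined by $v_0,\sigma_\ast,\sigma^\ast$. I would bound each term $V_k(\lambda,\theta)-V(\lambda)$ in (\ref{eq:-4}). The two integrands in $V_k$ and $V$ share the factor $\prod_{j<k}(1+t\lambda^j)^{-1/2}$ and differ by the factor
\[
e^{-t\theta(1-\lambda)^{-1}\lambda^{k}/2}-\prod_{j\ge k}(1+t\lambda^{j})^{-1/2}.
\]
Both pieces of this difference are of size $\mathrm{O}(t\lambda^{k}(1-\lambda)^{-1})$ for small argument. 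Moreover both are close to $\exp(-c\,t\lambda^{k}/(1-\lambda))$, with $c=\theta/2$ for the first and $c=1/2$ for the second. Integrating against the decaying common factor should give $|V_k-V|\le C\min(1,\lambda^{k}\cdot\text{const})$ uniformly in $\lambda\in[\lambda_\ast,1)$. I would rescale by $s=t/(1-\lambda)$ as in the asymptotics of \cite{LG26}. Summing over $k$ then gives $\sum_k\lambda^k\sim(1-\lambda)^{-1}$. This is presumably the content of Corollary \ref{cor:-4}, which I would invoke.

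\textbf{Step 2: bound on the second coefficient.} For $(1-\lambda)^{-1}V(\lambda)-\lambda V'(\lambda)$, note first that $V$ is bounded on $[\lambda_\ast,1)$, since $V(\lambda)\to1$. So the first term is $\mathrm{O}((1-\lambda)^{-1})$. For $V'$, I would differentiate (\ref{eq:-3}) under the integral. The derivative of the log of the product is $-\tfrac12\sum_k k t\lambda^{k-1}/(1+t\lambda^k)$. After the substitution $t=(1-\lambda)s$, this is bounded by $C(1-\lambda)^{-1}$ times an integrable weight. This is what I take Proposition \ref{prop:-9} to assert; it gives $|V'(\lambda)|\le C(1-\lambda)^{-1}$.

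\textbf{Step 3: the equivalence.} Insert the bounds into (\ref{eq:-22}). The first-order correction is $\mathrm{O}((1-\lambda)^{-1}\Delta t)=\mathrm{O}(\delta)$, and the remainder is $\mathrm{O}(\delta^2)$. Also $V(\lambda)\bar\sigma^2T\to\bar\sigma^2T$ by Remark \ref{rem:-4}(iv). Hence $\delta\to0$ implies $\mathbb{E}[\langle\log I\rangle_T]\to\bar\sigma^2T$.

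For the converse, suppose $\delta\not\to0$ along the path. Then a subsequence has $\delta$ bounded away from zero; either it stays bounded, or it tends to infinity. The expansion alone cannot then control the error, so I would need a matching lower bound. The natural route is the non-convergence result of \cite[Section 4]{LG26}, combined with Proposition \ref{prop:-9}, which I expect supplies a lower bound for $(1-\lambda)[\ldots]$ in terms of $v_0\sigma(0)^{-2}$ and $\log(\sigma(T)/\sigma(0))$. Along such a subsequence one then has to show that $\mathbb{E}\langle\log I\rangle_T$ stays away from $\bar\sigma^2T$.

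\textbf{Main obstacle.} The difficulty is this necessity direction. Even if the leading coefficient $\bar\sigma^2 T\lim(1-\lambda)[\ldots]$ is nonzero, the $\mathrm{O}(\delta^2)$ remainder may dominate when $\delta$ is not small. I would therefore try to argue directly from the exact representation of $\mathbb{E}\langle\log I\rangle_T$ as $\bar\sigma^2\sum_n\Delta t\,\sigma(t)^2\,\mathbb{E}[v_{n-1}^{-1}]$. When $\Delta t\gtrsim(1-\lambda)$, the realized variance $v_n$ remains strongly dependent on the few recent squared returns, so $\mathbb{E}[v^{-1}]\sigma^2$ stays bounded away from $1$ by Jensen-type strictness.
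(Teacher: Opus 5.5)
Your sufficiency half is the paper's argument. Proposition \ref{prop:-9} with $n=0$ gives $|Q_V(\lambda,\theta)|\le c_0(\theta+\theta^{-1})\lambda_\ast^{-4}(1-\lambda)^{-1}$, where $\theta=v_0\sigma(0)^{-2}\in[v_0(\sigma^\ast)^{-2},v_0\sigma_\ast^{-2}]$. Corollary \ref{cor:-4} gives $V\le c_0\lambda^{-1}$ and $|V'|\le c_0\lambda^{-5}(1-\lambda)^{-1}$. Then (\ref{eq:-22}) together with $V(\lambda)\to1$ gives convergence when $\delta\to0$. You have the two citations swapped: the bound on $\sum_k|V_k-V|$ is Proposition \ref{prop:-9}, and the bounds on $V$ and $V'$ are Corollary \ref{cor:-4}.

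The genuine gap is the ``only if'' direction. You do not prove it, and the route you suggest rests on a backwards mechanism. Each new squared return enters $v_n$ with weight $1-\lambda\to0$. So for deterministic $\sigma$ the random part of $v_n$ has variance $\mathrm{O}(1-\lambda)$, and $v_n$ does not depend strongly on a few recent returns, whatever $\Delta t$ is. The Jensen gap $\mathbb{E}(v_n^{-1})-(\mathbb{E}v_n)^{-1}$ therefore disappears as $\lambda\to1$; in the stationary regime it is $V(\lambda)-1\to0$. What survives when $\delta\not\to0$ is a deterministic initial-value and lag effect. Along a subsequence with $\delta\to\delta_0\in(0,\infty]$, $v_n$ concentrates on $w_{\delta_0}(t_n)$, where $\delta_0 w'=\sigma^2-w$, $w(0)=v_0$, and $w_\infty\equiv v_0$. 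Hence $\mathbb{E}[\langle\log I\rangle_T]\to\bar\sigma^2\int_0^T\sigma(t)^2w_{\delta_0}(t)^{-1}dt$.

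This limit can equal $\bar\sigma^2T$, so necessity is false without an extra hypothesis. Take $\sigma\equiv v_0^{1/2}$ and $1-\lambda=\Delta t^2$, so that $\delta=\Delta t^{-1}\to\infty$.
\begin{itemize}
\item Since $v_k\ge\lambda^kv_0$, we have $v_k\ge\lambda^Nv_0$ with $\lambda^N=(1-\Delta t^2)^{T/\Delta t}\to1$.
\item Also $\mathbb{E}v_k=v_0+\mathrm{O}(\Delta t)$, so by Jensen $(1+\mathrm{O}(\Delta t))^{-1}\le\mathbb{E}(v_0/v_k)\le\lambda^{-N}$, uniformly in $k<N$.
\item Hence $\mathbb{E}[\langle\log I\rangle_T]=\bar\sigma^2\Delta t\sum_{k<N}\mathbb{E}(v_0/v_k)\to\bar\sigma^2T$, although $\delta\to\infty$.
\end{itemize}
Any $\sigma$ with $\int_0^T\sigma^2dt=Tv_0$ behaves the same way along this path.

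The paper does not close this gap either. It calls the equivalence a direct consequence of Proposition \ref{prop:}, Corollary \ref{cor:-4} and Proposition \ref{prop:-9}. But upper bounds on the correction coefficients plus an $\mathrm{O}(\delta^2)$ remainder say nothing once $\delta$ is bounded away from $0$. Remark \ref{rem:-4}(iv) itself flags an exceptional constant-volatility case, which the theorem does not exclude. A correct necessity statement needs a non-degeneracy condition, for instance $\int_0^T\sigma^2w_{\delta_0}^{-1}dt\neq T$ for every $\delta_0\in(0,\infty]$. Its proof would be a law of large numbers for the exponentially weighted average as $\lambda\to1$, not a Jensen-strictness argument.
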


\begin{rem}
\label{rem:-1}(i) By Proposition \ref{prop:-9}, the convergence
of (\ref{eq:-4}) is exponentially fast with a remainder estimate
given by $\sum_{k=n}^{\infty}|V_{k}(\lambda,\theta)-V(\lambda)|\le c_{0}(\theta+\theta^{-1})\lambda^{n-4}(1-\lambda)^{-1}$,
where $c_{0}$ is a universal constant.

(ii) In practice, the volatility term structure $\sigma(t)$ is usually
constructed using (monotone) cubic splines or smooth parametrization
such as $\sigma(t)=\sigma_{1}(1-e^{-\kappa t})/(\kappa t)+\sigma_{2}$
with $\sigma_{1},\sigma_{2},\kappa>0$. Therefore the smoothness assumption
of Proposition \ref{prop:} is usually satisfied.

(iii) A condition such as $\tau\ge\tau_{\ast}$ is necessary for the
asymptotic correction formula (\ref{eq:-7}) to be valid. This is
because the asymptotic behavior is a phenomenon when $\tau$ is much
larger than $\Delta t$. When $\tau$ is comparable to $\Delta t$,
or more precisely $\tau<\tau_{\ast}$, it can be seen from the proof
of Proposition \ref{prop:} that (\ref{eq:-7}) should be replaced
by 
\begin{equation}
\mathbb{E}[\langle\log I\rangle_{\tau}]=V(\lambda)\bar{\sigma}^{2}\tau+\Big[\sum_{0\le k<\tau/\Delta t}[V_{k}(\lambda,v_{0}\sigma(0)^{-2})-V(\lambda)]+2\Big(\frac{V(\lambda)}{1-\lambda}-\lambda V^{\prime}(\lambda)\Big)\log\frac{\sigma(\tau)}{\sigma(0)}\Big]\bar{\sigma}^{2}\Delta t+\text{O}(\delta(\Delta t,\lambda)^{2}).\label{eq:-306}
\end{equation}

(iv) Differentiating (\ref{eq:-7}) with respect to $\tau$ gives
the following instantaneous discrete correction formula 
\begin{equation}
\frac{d\mathbb{E}[\langle\log I\rangle_{\tau}]}{d\tau}=V(\lambda)\bar{\sigma}^{2}+2\Big(\frac{V(\lambda)}{1-\lambda}-\lambda V^{\prime}(\lambda)\Big)\frac{\sigma^{\prime}(\tau)}{\sigma(\tau)}\bar{\sigma}^{2}\Delta t+\text{O}(\delta(\Delta t,\lambda)^{2}),\label{eq:-309}
\end{equation}
for $\tau\ge\tau_{\ast}$. For $\tau<\tau_{\ast}$, differentiating
(\ref{eq:-306}) with respect to $\tau$ gives
\begin{equation}
\frac{d\mathbb{E}[\langle\log I\rangle_{\tau}]}{d\tau}=\bar{\sigma}^{2}\sum_{k=0}^{\infty}V_{k}(\lambda,v_{0}\sigma(0)^{-2})\mathds{1}_{(k,k+1]}(\tau/\Delta t)+2\Big(\frac{V(\lambda)}{1-\lambda}-\lambda V^{\prime}(\lambda)\Big)\frac{\sigma^{\prime}(\tau)}{\sigma(\tau)}\bar{\sigma}^{2}\Delta t+\text{O}(\delta(\Delta t,\lambda)^{2}).\label{eq:-310}
\end{equation}
Note that the sum over $k$ in (\ref{eq:-310}) is in fact a finite
sum over $0\le k<\tau_{\ast}/\Delta t$. 

(v) Using an argument similar to the proof of Proposition \ref{prop:},
together with the estimates in Lemma \ref{lem:-21}, Lemma \ref{lem}
and Lemma \ref{lem:-23} with $\gamma=1/2$, we have the following
correction formula for expected drift 
\begin{equation}
\begin{aligned} & \mathbb{E}(\log I_{\tau})+2^{-1}\mathbb{E}(\langle\log I\rangle_{\tau})-\int_{0}^{\tau}r(t)dt\\
 & =\bar{\sigma}U(\lambda)\int_{0}^{\tau}\frac{q(t)}{\sigma(t)}dt+\Big[Q_{U}(\lambda,v_{0}\sigma(0)^{-2})\frac{q(0)}{\sigma(0)}+\Big(\frac{U(\lambda)}{1-\lambda}-2\lambda U^{\prime}(\lambda)\Big)\int_{0}^{\tau}\frac{q(t)\sigma^{\prime}(t)}{\sigma(t)^{2}}dt\Big]\bar{\sigma}\Delta t+\text{O}(\delta(\Delta t,\lambda)^{2}),
\end{aligned}
\label{eq:-21}
\end{equation}
where
\begin{equation}
Q_{U}(\lambda,\theta)=\sum_{k=0}^{\infty}[U_{k}(\lambda,\theta)-U(\lambda)],\label{eq:-311}
\end{equation}
and
\begin{equation}
U_{k}(\lambda,\theta)=\sqrt{\frac{2}{\pi(1-\lambda)}}\int_{0}^{\infty}e^{-t^{2}\theta(1-\lambda)^{-1}\lambda^{k}/2}\prod_{j=0}^{k-1}(1+t^{2}\lambda^{j})^{-1/2}dt.\label{eq:-312}
\end{equation}
\end{rem}

\subsection{\label{subsec:-2}Organization of the Paper}

The rest of this paper will be organized as follows. In Section \ref{sec:-1},
we establish several expectation estimates, including estimates for
negative moments of the realized variance $v_{n}$ defined by (\ref{eq:-2}),
which will be needed in the proof of the main results. The key estimates
in Section \ref{sec:-1} are Lemma \ref{lem:-1} and Lemma \ref{lem:-3};
once these are obtained, the negative moment estimates for $v_{n}$
follow from arguments similar to \cite[Section 2.2]{LG26} with small
modification to adapt for the case where independence is not available. 

Section \ref{sec:-5} is devoted to the proof of Theorem \ref{thm:}.
The strategy of the proof is to pass from the structural limit for
constant $\{\sigma_{t}\}_{t\ge0}$ to that for general progressively
measurable $\{\sigma_{t}\}_{t\ge0}$ by a conditioning argument, together
with an estimate on the convergence rate independent of realizations
of $\{\sigma_{t}\}_{t\ge0}$. In Section \ref{subsec:-9}, we introduce
the concept of realized variance reset, which will be useful in the
conditioning argument in Section \ref{subsec:-8}. In Section \ref{subsec:-10},
we establish a continuity in coefficients result for $\{\log I_{t}\}_{t\ge0}$
(Proposition \ref{prop:-6}), which, together with Lemma \ref{lem:-7},
reduces the case for general progressively measurable $\{\sigma_{t}\}_{t\ge0}$
to that for Wiener functionals with explicit simple forms. In Section
\ref{subsec:-5}, we derive a Barry--Esseen type convergence rate
(Proposition \ref{prop:-4}) for marginal cumulative distribution
functions of $(\log I_{T},W_{T},\tilde{W}_{T})$ when $r(t)$, $\rho(t)$
and $\sigma_{t}$ are constants, as an application of \cite[Theorem 1]{Haeusler88}
for one-dimensional martingale difference sequences and a small generalization
of \cite[Theorem 2.4]{LG26}. In Section \ref{subsec:-6}, using properties
of Fourier transform and a truncation argument, we establish a result
for convergence rate of expectations (Proposition \ref{prop:-5})
for random vectors satisfying the marginal Barry--Esseen type bound
(\ref{eq:-207}) in Proposition \ref{prop:-4}. Compared to \cite[Theorem 1.5]{LG26}
for deterministic $\{\sigma_{t}\}_{t\ge0}$, the explicit convergence
rate of expectations for $(\log I_{T},W_{T},\tilde{W}_{T})$, as an
application of Proposition \ref{prop:-5}, will be crucial to the
conditioning argument in Section \ref{subsec:-8} when $\{\sigma_{t}\}_{t\ge0}$
is stochastic. In Section \ref{subsec:-8}, we give the proof of Theorem
\ref{thm:} by combining the ingredients established in the previous
subsections. We end Section \ref{sec:-5} by the proof of Proposition
\ref{prop:-1} on the non-pathwise convergence of $\{I_{t}\}_{t\ge0}$. 

Section \ref{sec:-2} is devoted to the derivation of Proposition
\ref{prop:} for the discrete correction formula. In Section \ref{subsec:-3},
we establish estimates related to the functions $V(\lambda)$ and
$V_{k}(\lambda,\theta)$ defined by (\ref{eq:-3}) and (\ref{eq:-5})
respectively. These estimates will be crucial to the order estimate
(\ref{eq:-7}) in terms of $\delta(\Delta t,\lambda)$ defined by
(\ref{eq:-307}). In Section \ref{subsec:}, we give the derivation
of the discrete correction formula (\ref{eq:-7}) under the assumption
that $\sigma_{t}=\sigma(t)$ is a $C^{1,1}([0,T])$ function. 

In Section \ref{sec:}, we present numerical results which support
the main results of this paper. More specifically, numerical evidence
for Theorem \ref{thm:} on the structural limit using the local volatility
model and the Heston stochastic volatility model and numerical results
for the approximation quality of the discrete correction formula are
given. 

\section{\label{sec:-1}Technical Estimates}

In this section, we establish some technical estimates which will
be needed in the proof of the main results. Throughout this section,
without otherwise stated, we will use the notation $c_{q_{1},\dots,q_{m}}$
to denote positive constants depending only on the quantities $q_{1},\dots,q_{m}$,
and the constant $c_{q_{1},\dots,q_{m}}$ may vary at different appearances. 
\begin{lem}
\label{lem:-1}Suppose that $\{\sigma_{t}\}_{t\ge0}$ is a progressively
measurable process satisfying the condition (\ref{eq:-10}). Then,
for any $0\le\tau_{1}<\tau_{2}<\infty$, $u>0$ and $h\in\mathbb{R}$,
\begin{equation}
\mathbb{E}\Big[\exp\Big(-u\Big(h+\int_{\tau_{1}}^{\tau_{2}}\sigma_{t}dW_{t}\Big)^{2}\Big)\Big|\mathcal{F}_{\tau_{1}}\Big]\le[1+2u(\sigma^{\ast})^{2}(\tau_{2}-\tau_{1})]^{-(\sigma_{\ast}/\sigma^{\ast})^{2}/2}\exp\Big(-\frac{uh^{2}}{1+2u(\sigma^{\ast})^{2}(\tau_{2}-\tau_{1})}\Big).\label{eq:-1}
\end{equation}
\end{lem}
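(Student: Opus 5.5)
The plan is to prove (\ref{eq:-1}) with a comparison argument via It\^{o}'s formula. The idea is to construct an explicit space--time function $\varphi(t,x)$ such that $\varphi(\tau_{2},x)=e^{-ux^{2}}$ and $\varphi(t,h+\int_{\tau_{1}}^{t}\sigma_{s}dW_{s})$ is a supermartingale on $[\tau_{1},\tau_{2}]$. Write $a=(\sigma^{\ast})^{2}$ and $p=(\sigma_{\ast}/\sigma^{\ast})^{2}/2$, and set
\[
g(t)=1+2ua(\tau_{2}-t),\qquad\varphi(t,x)=g(t)^{-p}\exp\Big(-\frac{ux^{2}}{g(t)}\Big),\quad t\in[\tau_{1},\tau_{2}],\ x\in\mathbb{R}.
\]
Then $\varphi(\tau_{2},x)=e^{-ux^{2}}$, and $\varphi(\tau_{1},h)$ is exactly the right-hand side of (\ref{eq:-1}). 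The ansatz comes from the Gaussian heat kernel: for constant volatility $\sigma$ one has $\mathbb{E}e^{-u(h+\sigma\sqrt{s}Z)^{2}}=(1+2u\sigma^{2}s)^{-1/2}e^{-uh^{2}/(1+2u\sigma^{2}s)}$. Here the exponent $-1/2$ is weakened to $-p$ and $\sigma^{2}$ is replaced by its upper bound $a$, so that both error terms have a favourable sign.

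The key step is the generator computation. With $g^{\prime}=-2ua$,
\[
\partial_{t}\varphi=\varphi\Big[\frac{2uap}{g}-\frac{2u^{2}ax^{2}}{g^{2}}\Big],\qquad\partial_{xx}\varphi=\varphi\Big[\frac{4u^{2}x^{2}}{g^{2}}-\frac{2u}{g}\Big].
\]
Hence, for $\sigma\in[\sigma_{\ast},\sigma^{\ast}]$,
\[
\partial_{t}\varphi+\tfrac{1}{2}\sigma^{2}\partial_{xx}\varphi=\varphi\Big[\frac{u}{g}(2ap-\sigma^{2})+\frac{2u^{2}x^{2}}{g^{2}}(\sigma^{2}-a)\Big]\le0.
\]
Both brackets are nonpositive because $2ap=\sigma_{\ast}^{2}\le\sigma^{2}\le(\sigma^{\ast})^{2}=a$ by (\ref{eq:-10}). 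This is the one point where both the lower and the upper bound in (\ref{eq:-10}) are used, and where the specific exponent $(\sigma_{\ast}/\sigma^{\ast})^{2}/2$ comes from.

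Next, let $Y_{t}=h+\int_{\tau_{1}}^{t}\sigma_{s}dW_{s}$ and apply It\^{o}'s formula to $\varphi(t,Y_{t})$ on $[\tau_{1},\tau_{2}]$. This gives $\varphi(\tau_{2},Y_{\tau_{2}})=\varphi(\tau_{1},h)+\int_{\tau_{1}}^{\tau_{2}}(\partial_{t}\varphi+\frac{1}{2}\sigma_{t}^{2}\partial_{xx}\varphi)(t,Y_{t})dt+\int_{\tau_{1}}^{\tau_{2}}\partial_{x}\varphi(t,Y_{t})\sigma_{t}dW_{t}$. The $dt$-integral is pathwise nonpositive. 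The stochastic integral is a true martingale with zero $\mathcal{F}_{\tau_{1}}$-conditional mean, because $|\partial_{x}\varphi|=2u|x|g^{-1}\varphi\le c_{u}$ uniformly (using $g\ge1$ and $\sup_{x}|x|e^{-ux^{2}/g}\le c\sqrt{g/u}$ on the bounded range of $g$) and $\sigma_{t}\le\sigma^{\ast}$. Taking conditional expectations then yields
\[
\mathbb{E}[e^{-uY_{\tau_{2}}^{2}}\mid\mathcal{F}_{\tau_{1}}]\le\varphi(\tau_{1},h).
\]
This is (\ref{eq:-1}); it holds for any deterministic $h$, and would hold equally for $\mathcal{F}_{\tau_{1}}$-measurable $h$.

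I do not expect a genuine obstacle, since the whole argument rests on choosing $\varphi$ correctly. The only technical care needed is to justify the martingale property of the stochastic integral, which the bounded-integrand observation above handles, together with the progressive measurability of $\sigma$ so that It\^{o}'s formula applies.
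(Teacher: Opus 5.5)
Your proof is correct and gives exactly the bound (\ref{eq:-1}), but by a different route from the paper.

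\textbf{The paper's route.} The paper argues in discrete time. For continuous $\sigma$ it freezes the volatility on a partition of $[\tau_{1},\tau_{2}]$ and conditions one step at a time, using the Gaussian identity (\ref{eq:-54}) and the bounds $\sigma_{\ast}\le\sigma_{s_{k-1}}\le\sigma^{\ast}$. It then iterates the M\"{o}bius map $T^{k}(u)=u/(1+2ku(\sigma^{\ast})^{2}\Delta\tau)$ and evaluates the limit of the resulting product as a Riemann sum. Finally it extends to general progressively measurable $\sigma$ by a moving-average approximation, an a.s.\ convergent subsequence and Fatou's lemma.

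\textbf{Relation to your argument.} Your $\varphi$ is exactly the continuum limit of that iteration: $T^{k}(u)$ becomes $u/g(t)$ with $t=\tau_{2}-k\Delta\tau$, and the partial products become $g(t)^{-p}$. The chain of one-step Gaussian computations therefore collapses into the single generator inequality $\partial_{t}\varphi+\frac{1}{2}\sigma^{2}\partial_{xx}\varphi\le0$.

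\textbf{What your route buys.} The argument is shorter and has no limiting procedures. It\^{o}'s formula needs no continuity of $\sigma$, so the approximation step disappears. You also get the stronger statement that $\varphi(t,Y_{t})$ is a supermartingale on all of $[\tau_{1},\tau_{2}]$. The constants become transparent: $a=(\sigma^{\ast})^{2}$ is forced by the $x^{2}$-term, and $p=(\sigma_{\ast}/\sigma^{\ast})^{2}/2$ is the largest exponent making the $x$-independent term nonpositive. When $\sigma_{\ast}=\sigma^{\ast}$ the drift vanishes identically, which recovers the equality case of Remark \ref{rem:-5}(i).

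\textbf{What the paper's route buys.} It uses only conditioning and the Gaussian Laplace transform. Along the way it yields the finite-$n$ bound (\ref{eq:-70}) for the Euler sums $M_{n}$ themselves.
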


\begin{proof}
We first prove (\ref{eq:-1}) for continuous processes $\{\sigma_{t}\}_{t\ge0}$.
Let $n$ be a positive integer, $\Delta\tau=(\tau_{2}-\tau_{1})/n$,
and $s_{k}=\tau_{1}+k\Delta\tau$, $0\le k\le n$. Let $M_{0}=h$,
$M_{n}=h+\sum_{k=1}^{n}\sigma_{s_{k-1}}(W_{s_{k}}-W_{s_{k-1}})$.
By (\ref{eq:-10}) and the continuity of $\sigma_{t}$, we have
\begin{equation}
\lim_{n\to\infty}\Big\Vert M_{n}-h-\int_{\tau_{1}}^{\tau_{2}}\sigma_{t}dW_{t}\Big\Vert_{L^{2}}=0.\label{eq:-36}
\end{equation}
By (\ref{eq:-36}), there exists a subsequence $\{M_{n_{k}}\}_{k\ge0}$
of $\{M_{n}\}_{n\ge0}$ such that $M_{n_{k}}\to h+\int_{\tau_{1}}^{\tau_{2}}\sigma_{t}dW_{t}$
a.s. By the dominated convergence theorem, 
\[
\mathbb{E}\Big[\exp\Big(-u\Big(h+\int_{\tau_{1}}^{\tau_{2}}\sigma_{t}dW_{t}\Big)^{2}\Big)\Big|\mathcal{F}_{\tau_{1}}\Big]=\lim_{k\to\infty}\mathbb{E}[\exp(-uM_{n_{k}}^{2})|\mathcal{F}_{\tau_{1}}].
\]
Therefore, it suffices to show that
\begin{equation}
\limsup_{n\to\infty}\mathbb{E}[\exp(-uM_{n}^{2})|\mathcal{F}_{\tau_{1}}]\le[1+2u(\sigma^{\ast})^{2}(\tau_{2}-\tau_{1})]^{-(\sigma_{\ast}/\sigma^{\ast})^{2}/2}\exp\Big(-\frac{uh^{2}}{1+2u(\sigma^{\ast})^{2}(\tau_{2}-\tau_{1})}\Big).\label{eq:-47}
\end{equation}

Note that, 
\begin{equation}
\mathbb{E}\big(\exp[-u(a+bZ)^{2}]\big)=(1+2ub^{2})^{-1/2}\exp\Big(-\frac{ua^{2}}{1+2ub^{2}}\Big),\label{eq:-54}
\end{equation}
for any $a,b\in\mathbb{R}$, where $Z$ is a standard normal distribution.
By (\ref{eq:-10}) and (\ref{eq:-54}), 
\[
\begin{aligned}\mathbb{E}[\exp(-uM_{n}^{2})|\mathcal{F}_{\tau_{1}}] & =\mathbb{E}\big[\mathbb{E}\big[\exp\big[-u\big(M_{n-1}+\sigma_{s_{n-1}}(W_{s_{n}}-W_{s_{n-1}})\big)^{2}\big]\big|\mathcal{F}_{s_{n-1}}\big]\big|\mathcal{F}_{\tau_{1}}\big]\\
 & =\mathbb{E}\Big[(1+2u\sigma_{s_{n-1}}^{2}\Delta\tau)^{-1/2}\exp\Big(-\frac{uM_{n-1}^{2}}{1+2u\sigma_{s_{n-1}}^{2}\Delta\tau}\Big)\Big|\mathcal{F}_{\tau_{1}}\Big]\\
 & \le(1+2u\sigma_{\ast}^{2}\Delta\tau)^{-1/2}\mathbb{E}\Big[\exp\Big(-\frac{uM_{n-1}^{2}}{1+2u(\sigma^{\ast})^{2}\Delta\tau}\Big)\Big|\mathcal{F}_{\tau_{1}}\Big].
\end{aligned}
\]
Let $T$ be the M\"{o}bius transformation
\begin{equation}
T(u)=\frac{u}{1+2u(\sigma^{\ast})^{2}\Delta\tau}.\label{eq:-55}
\end{equation}
Then
\begin{equation}
\mathbb{E}[\exp(-uM_{n}^{2})|\mathcal{F}_{\tau_{1}}]\le(1+2u\sigma_{\ast}^{2}\Delta\tau)^{-1/2}\mathbb{E}[\exp(-T(u)M_{n-1}^{2})|\mathcal{F}_{\tau_{1}}].\label{eq:-69}
\end{equation}
By (\ref{eq:-69}) and a simple induction,
\[
\begin{aligned}\mathbb{E}[\exp(-uM_{n}^{2})|\mathcal{F}_{\tau_{1}}] & \le(1+2u\sigma_{\ast}^{2}\Delta\tau)^{-1/2}\mathbb{E}[\exp(-T(u)M_{n-1}^{2})|\mathcal{F}_{\tau_{1}}]\\
 & \le(1+2u\sigma_{\ast}^{2}\Delta\tau)^{-1/2}(1+2T(u)\sigma_{\ast}^{2}\Delta\tau)^{-1/2}\mathbb{E}[\exp(-T^{2}(u)M_{n-2}^{2})|\mathcal{F}_{\tau_{1}}]\\
 & \le\cdots\\
 & \le\Big[\prod_{k=0}^{n-1}(1+2T^{k}(u)\sigma_{\ast}^{2}\Delta\tau)^{-1/2}\Big]\exp(-T^{n}(u)h^{2}),
\end{aligned}
\]
where $T^{0}$ is the identity function. It is easily seen by induction
that
\[
T^{k}(u)=\frac{u}{1+2ku(\sigma^{\ast})^{2}\Delta\tau},\quad k\ge0.
\]
Therefore, 
\begin{equation}
\mathbb{E}[\exp(-uM_{n}^{2})|\mathcal{F}_{\tau_{1}}]\le\Big[\prod_{k=0}^{n-1}\Big(1+\frac{2u\sigma_{\ast}^{2}\Delta\tau}{1+2uk(\sigma^{\ast})^{2}\Delta\tau}\Big)^{-1/2}\Big]\exp\Big(-\frac{uh^{2}}{1+2u(\sigma^{\ast})^{2}(\tau_{2}-\tau_{1})}\Big).\label{eq:-70}
\end{equation}

We now show that 
\begin{equation}
\lim_{n\to\infty}\prod_{k=0}^{n-1}\Big(1+\frac{2u\sigma_{\ast}^{2}\Delta\tau}{1+2uk(\sigma^{\ast})^{2}\Delta\tau}\Big)=[1+2u(\sigma^{\ast})^{2}(\tau_{2}-\tau_{1})]^{(\sigma_{\ast}/\sigma^{\ast})^{2}},\label{eq:-71}
\end{equation}
which, together with (\ref{eq:-70}), implies (\ref{eq:-47}). Let
$a=2u(\sigma^{\ast})^{2}$, $\theta=(\sigma_{\ast}/\sigma^{\ast})^{2}$,
and $f(x)=\log(1+ax)$. Then $f^{\prime\prime}(x)=-a^{2}(1+ax)^{-2}$.
Clearly, $|f^{\prime\prime}(x)|\le a^{2}$ for all $x\ge0$. Therefore,
by Taylor's theorem, 
\[
\begin{aligned}\sum_{k=0}^{n-1}\log\Big(1+\frac{2u\sigma_{\ast}^{2}\Delta\tau}{1+2uk(\sigma^{\ast})^{2}\Delta\tau}\Big) & =\sum_{k=0}^{n-1}\log\Big(1+\frac{a\theta\Delta\tau}{1+ak\Delta\tau}\Big)\\
 & =\sum_{k=0}^{n-1}\big[f\big((k+\theta)\Delta\tau\big)-f(k\Delta\tau)\big]=\theta\sum_{k=0}^{n-1}f^{\prime}(k\Delta\tau)\Delta\tau+\text{O}(\Delta\tau),
\end{aligned}
\]
which implies that
\[
\lim_{n\to\infty}\sum_{k=0}^{n-1}\log\Big(1+\frac{2u\sigma_{\ast}^{2}\Delta\tau}{1+2uk(\sigma^{\ast})^{2}\Delta\tau}\Big)=\theta\int_{0}^{\tau_{2}-\tau_{1}}f^{\prime}(x)dx=\theta\log[1+a(\tau_{2}-\tau_{1})].
\]
This proves (\ref{eq:-71}) and therefore completes the proof of (\ref{eq:-1})
for continuous $\{\sigma_{t}\}_{t\ge0}$.

For general progressively measurable processes $\{\sigma_{t}\}_{t\ge0}$
satisfying (\ref{eq:-10}). Let 
\begin{equation}
\sigma_{t}^{n}=\frac{1}{n}\int_{t-n^{-1}}^{t}\sigma_{s}ds,\label{eq:-136}
\end{equation}
where we adopt the convention that $\sigma_{t}=\sigma_{0}$ for $t<0$.
Clearly, $\{\sigma_{t}^{n}\}_{t\ge0}$ is a sequence of continuous
adapted processes satisfying (\ref{eq:-10}) with the same bounds
$\sigma_{\ast}$ and $\sigma^{\ast}$. By the Lebesgue differentiation
theorem (see \cite[Corollary 1, p. 5]{Stein70}) and the dominated
convergence theorem, $\mathbb{E}\big[\int_{0}^{T}(\sigma_{t}-\sigma_{t}^{n})^{2}dt\big]\to0$
for any $T\in(0,\infty)$. Consequently, $\int_{\tau_{1}}^{\tau_{2}}\sigma_{t}^{n}dW_{t}\to\int_{\tau_{1}}^{\tau_{2}}\sigma_{t}dW_{t}$
in $L^{2}$. By considering a subsequence of $\{\sigma^{n}\}_{n\ge1}$
if necessary, we may assume that $\int_{\tau_{1}}^{\tau_{2}}\sigma_{t}^{n}dW_{t}\to\int_{\tau_{1}}^{\tau_{2}}\sigma_{t}dW_{t}$
a.s. The inequality (\ref{eq:-1}) for $\{\sigma_{t}\}_{t\ge0}$ follows
readily from the same inequality for $\{\sigma_{t}^{n}\}_{t\ge0}$
and Fatou's lemma.
\end{proof}
\begin{rem}
\label{rem:-5}(i) When $\sigma_{\ast}=\sigma^{\ast}$, the process
$\sigma_{t}$ is a constant and $h+\int_{\tau_{1}}^{\tau_{2}}\sigma_{t}dW_{t}$
is normal distribution with mean $h$ and variance $(\sigma^{\ast})^{2}(\tau_{2}-\tau_{1})$.
It is easily seen that the equality in (\ref{eq:-1}) holds in this
case.

(ii) When $\{\sigma_{t}\}_{t\ge0}$ is the coefficient of a Markov
diffusion as in the local volatility model, an estimate similar to
(\ref{eq:-1}) can be given using the heat kernel estimate of linear
parabolic equations. More specifically, suppose that $\{\sigma_{t}\}_{t\ge0}$
is given by $\sigma_{t}=\sigma(t,Y_{t})$ with $\{Y_{t}\}_{t\ge0}$
being the solution of the stochastic differential equation
\[
dY_{t}=\sigma(t,Y_{t})dW_{t}.
\]
The transition probability density $p(t,x;\tau,y)=p(Y_{t}=x|Y_{\tau}=y)$
of $\{Y_{t}\}_{t\ge0}$ is given by the fundamental solution of the
Fokker--Planck equation
\[
\partial_{t}u(t,x)-\frac{1}{2}\sigma(t,x)^{2}\partial_{x}^{2}u(t,x)=0.
\]
By the heat kernel estimate for linear parabolic equations (see \cite[Theorem 7 and Theorem 8]{Aronson68}),
there exist positive constants $c_{1}$, $c_{2}$, $c_{3}$ and $c_{4}$
depending only on $\sigma_{\ast}$, $\sigma^{\ast}$ such that
\begin{equation}
\frac{c_{1}}{(t-\tau)^{1/2}}\exp\Big(-\frac{c_{2}|x-y|^{2}}{t-\tau}\Big)\le p(t,x;\tau,y)\le\frac{c_{3}}{(t-\tau)^{1/2}}\exp\Big(-\frac{c_{4}|x-y|^{2}}{t-\tau}\Big),\label{eq:-91}
\end{equation}
for all $x,y\in\mathbb{R}$ and $t>\tau$. It follows easily from
(\ref{eq:-91}) that
\begin{equation}
\mathbb{E}\big(\exp[-u(Y_{\tau_{2}}-Y_{\tau_{1}})^{2}]\big|Y_{\tau_{1}}=y\big)=\int_{-\infty}^{\infty}e^{-u(x-y)^{2}}p(\tau_{2},x;\tau_{1},y)dx\le c_{3}c_{4}^{-1/2}[1+c_{4}^{-1}u(\tau_{2}-\tau_{1})]^{-1/2}.\label{eq:-92}
\end{equation}

(iii) Let us give some comparison for the two estimates (\ref{eq:-1})
and (\ref{eq:-92}). The estimate (\ref{eq:-1}) applies to general
progressively measurable processes $\{\sigma_{t}\}_{t\ge0}$; while
(\ref{eq:-92}) requires that the coefficient $\sigma_{t}$ to be
Markov, and when this is satisfied, it gives the exact decay rate
$\text{O}(u^{-1/2})$. On the other hand, the heat kernel estimate
(\ref{eq:-91}) provides little information about the constant $c_{3}c_{4}^{-1/2}$
in (\ref{eq:-92}), which will be useful when considering products
of expectations of the form $\prod_{k=1}^{n}\mathbb{E}\big(\exp[-u(Y_{\tau_{k}}-Y_{\tau_{k-1}})^{2}]\big)$
for large $n$, where $\tau_{k}=kT/n$. More specifically, it follows
from (\ref{eq:-1}) that 
\[
\lim_{n\to\infty}\prod_{k=1}^{n}\mathbb{E}\big(\exp[-u(Y_{\tau_{k}}-Y_{\tau_{k-1}})^{2}]\big)\le\lim_{n\to\infty}\Big(1+\frac{2u(\sigma^{\ast})^{2}T}{n}\Big)^{-n(\sigma_{\ast}/\sigma^{\ast})^{2}/2}=e^{-u\sigma_{\ast}^{2}T}.
\]
However, the estimate (\ref{eq:-92}) does not yield any valid bound
for the above limit due to the presence of the factor $(c_{3}c_{4}^{-1/2})^{n}$.
\end{rem}

\begin{lem}
\label{lem:-3}Let $\{h_{t}\}_{t\ge0}$ and $\{\sigma_{t}\}_{t\ge0}$
be progressively measurable processes such that $\{\sigma_{t}\}_{t\ge0}$
satisfies the condition (\ref{eq:-10}), and that 
\begin{equation}
|h_{t}|\le M,\quad t\ge0\quad\mathrm{a.s.}\label{eq:-85}
\end{equation}
for some constant $M>0$. Then, for any $0\le\tau_{1}<\tau_{2}<\infty$,
$u>0$ and $p>1$,
\begin{equation}
\mathbb{E}\Big[\exp\Big(-u\Big(\int_{\tau_{1}}^{\tau_{2}}h_{t}dt+\int_{\tau_{1}}^{\tau_{2}}\sigma_{t}dW_{t}\Big)^{2}\Big)\Big|\mathcal{F}_{\tau_{1}}\Big]\le e^{(p-1)M^{2}\sigma_{\ast}^{-2}(\tau_{2}-\tau_{1})/2}[1+2p^{\prime}u(\sigma^{\ast})^{2}(\tau_{2}-\tau_{1})]^{-(\sigma_{\ast}/\sigma^{\ast})^{2}/(2p^{\prime})},\label{eq:-77}
\end{equation}
where $p^{\prime}=p/(p-1)$ is the conjugate exponent of $p$.
\end{lem}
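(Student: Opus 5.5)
The plan is to remove the drift $\int_{\tau_1}^{\tau_2}h_t\,dt$ by a Girsanov change of measure, apply Lemma \ref{lem:-1} with $h=0$ under the new measure, and return to $\mathbb{P}$ with H\"older's inequality. The exponents $p$ and $p'$ in (\ref{eq:-77}) are exactly those that H\"older produces.

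\textbf{Step 1 (change of measure).} Set $\theta_t=h_t/\sigma_t$ on $[\tau_1,\tau_2]$. By (\ref{eq:-10}) and (\ref{eq:-85}), $|\theta_t|\le M\sigma_\ast^{-1}$. Define the Dol\'eans exponential
\[
\mathcal{E}=\exp\Big(-\int_{\tau_1}^{\tau_2}\theta_t\,dW_t-\frac12\int_{\tau_1}^{\tau_2}\theta_t^2\,dt\Big).
\]
Novikov's condition holds because $\theta$ is bounded, so $d\mathbb{Q}=\mathcal{E}\,d\mathbb{P}$ defines a probability measure. Under $\mathbb{Q}$, $\hat W_t=W_t+\int_{\tau_1}^{t}\theta_s\,ds$ for $t\in[\tau_1,\tau_2]$ is a Brownian motion, and $\int_{\tau_1}^{\tau_2}h_t\,dt+\int_{\tau_1}^{\tau_2}\sigma_t\,dW_t=\int_{\tau_1}^{\tau_2}\sigma_t\,d\hat W_t$. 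Also $\mathbb{E}[\mathcal{E}\mid\mathcal{F}_{\tau_1}]=1$, so $\mathbb{Q}$ agrees with $\mathbb{P}$ on $\mathcal{F}_{\tau_1}$.

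\textbf{Step 2 (H\"older).} Write $X$ for the exponential inside the conditional expectation in (\ref{eq:-77}). Then $X=\mathcal{E}^{1/p'}X\cdot\mathcal{E}^{-1/p'}$, and conditional H\"older with exponents $p'$ and $p$ gives
\[
\mathbb{E}[X\mid\mathcal{F}_{\tau_1}]\le \mathbb{E}[\mathcal{E}X^{p'}\mid\mathcal{F}_{\tau_1}]^{1/p'}\,\mathbb{E}[\mathcal{E}^{-p/p'}\mid\mathcal{F}_{\tau_1}]^{1/p}.
\]

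The first factor equals $\mathbb{E}_{\mathbb{Q}}\big[\exp\big(-p'u(\int_{\tau_1}^{\tau_2}\sigma_t\,d\hat W_t)^2\big)\mid\mathcal{F}_{\tau_1}\big]^{1/p'}$. Apply Lemma \ref{lem:-1} with $h=0$, with $p'u$ in place of $u$, and with $\mathbb{Q}$ and $\hat W$ in place of $\mathbb{P}$ and $W$. The lemma's proof only uses that $\hat W$ is a Brownian motion with respect to the filtration and that $\sigma$ is progressively measurable and obeys (\ref{eq:-10}), both of which hold under $\mathbb{Q}$. This gives $[1+2p'u(\sigma^\ast)^2(\tau_2-\tau_1)]^{-(\sigma_\ast/\sigma^\ast)^2/(2p')}$.

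\textbf{Step 3 (moment of the density).} Since $p/p'=p-1$, the second factor needs $\mathbb{E}[\mathcal{E}^{-(p-1)}\mid\mathcal{F}_{\tau_1}]$. Write
\[
\mathcal{E}^{-(p-1)}=\exp\Big((p-1)\int\theta\,dW-\frac{(p-1)^2}{2}\int\theta^2\,dt\Big)\exp\Big(\frac{(p-1)^2+(p-1)}{2}\int\theta^2\,dt\Big).
\]
The first factor is again a Dol\'eans exponential of a bounded integrand, so it has conditional expectation $1$. The second is at most $\exp\big(p(p-1)M^2\sigma_\ast^{-2}(\tau_2-\tau_1)/2\big)$. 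Taking the $1/p$ power yields $e^{(p-1)M^2\sigma_\ast^{-2}(\tau_2-\tau_1)/2}$, which is exactly the prefactor in (\ref{eq:-77}).

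\textbf{Main obstacle.} The delicate point is Step 3: getting exactly the constant $(p-1)$ in the prefactor requires that the exponent $p/p'$ from H\"older collapse to $p-1$, and that the resulting power of the inverse density be split into a martingale part and a bounded remainder. The other point needing care is the legitimacy of applying Lemma \ref{lem:-1} under $\mathbb{Q}$. This is handled by checking that its proof carries over verbatim to any filtered space on which $\hat W$ is a Brownian motion, together with the fact that $\mathbb{Q}=\mathbb{P}$ on $\mathcal{F}_{\tau_1}$, so the conditional expectations are compatible.
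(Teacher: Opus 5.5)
Your proposal is correct and is essentially the paper's proof: the Girsanov density built from $h_t/\sigma_t$, Lemma \ref{lem:-1} applied under $\mathbb{Q}$ to the $p'$-factor, and splitting off an exponential martingale so that the $p$-factor is bounded by $e^{p(p-1)M^{2}\sigma_{\ast}^{-2}(\tau_{2}-\tau_{1})/2}$. The only difference is cosmetic. You apply H\"older under $\mathbb{P}$ to $\mathcal{E}^{1/p'}X\cdot\mathcal{E}^{-1/p'}$, while the paper applies it under $\mathbb{Q}$ to $X\mathcal{E}^{-1}$; by Bayes' formula these are the same inequality.
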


\begin{proof}
We prove (\ref{eq:-77}) by change of measure. Let 
\begin{equation}
\mathcal{E}_{t}=\exp\Big(-\int_{0}^{t}\frac{h_{s}}{\sigma_{s}}dW_{s}-\frac{1}{2}\int_{0}^{t}\frac{h_{s}^{2}}{\sigma_{s}^{2}}ds\Big).\label{eq:-78}
\end{equation}
By the martingale convergence theorem, there exists an equivalent
probability measure $\mathbb{Q}$ on $(\Omega,\mathcal{F}_{\infty})$
such that 
\begin{equation}
\mathbb{E}^{\mathbb{P}}\Big(\frac{d\mathbb{Q}}{d\mathbb{P}}\Big|\mathcal{F}_{t}\Big)=\mathcal{E}_{t}.\label{eq:-79}
\end{equation}
Let $Y=\exp\big(-u\big(\int_{\tau_{1}}^{\tau_{2}}h_{t}dt+\int_{\tau_{1}}^{\tau_{2}}\sigma_{t}dW_{t}\big)^{2}\big)$.
Then
\begin{equation}
\begin{aligned}\mathbb{E}^{\mathbb{P}}(Y|\mathcal{F}_{\tau_{1}}) & =\mathbb{E}^{\mathbb{P}}\Big[Y\mathcal{E}_{\tau_{2}}^{-1}\mathbb{E}^{\mathbb{P}}\Big(\frac{d\mathbb{Q}}{d\mathbb{P}}\Big|\mathcal{F}_{\tau_{2}}\Big)\Big|\mathcal{F}_{\tau_{1}}\Big]\\
 & =\mathbb{E}^{\mathbb{P}}\Big(Y\mathcal{E}_{\tau_{2}}^{-1}\frac{d\mathbb{Q}}{d\mathbb{P}}\Big|\mathcal{F}_{\tau_{1}}\Big)\\
 & =\mathbb{E}^{\mathbb{Q}}\Big(Y\mathcal{E}_{\tau_{2}}^{-1}\Big|\mathcal{F}_{\tau_{1}}\Big)\mathbb{E}^{\mathbb{P}}\Big(\frac{d\mathbb{Q}}{d\mathbb{P}}\Big|\mathcal{F}_{\tau_{1}}\Big)\\
 & =\mathbb{E}^{\mathbb{Q}}\Big(Y\mathcal{E}_{\tau_{1}}\mathcal{E}_{\tau_{2}}^{-1}\Big|\mathcal{F}_{\tau_{1}}\Big),
\end{aligned}
\label{eq:-81}
\end{equation}
where the second equality is due to $Y\mathcal{E}_{\tau_{2}}^{-1}\in\mathcal{F}_{\tau_{2}}$,
and the third equality follows from Bayes' theorem. Let
\begin{equation}
B_{t}=W_{t}+\int_{0}^{t}\frac{h_{s}}{\sigma_{s}}ds.\label{eq:-80}
\end{equation}
Then
\[
Y\mathcal{E}_{\tau_{1}}\mathcal{E}_{\tau_{2}}^{-1}=\exp\Big(-u\Big(\int_{\tau_{1}}^{\tau_{2}}\sigma_{t}dB_{t}\Big)^{2}\Big)\exp\Big(\int_{\tau_{1}}^{\tau_{2}}\frac{h_{s}}{\sigma_{s}}dB_{s}-\frac{1}{2}\int_{\tau_{1}}^{\tau_{2}}\frac{h_{s}^{2}}{\sigma_{s}^{2}}ds\Big).
\]
Therefore, by (\ref{eq:-81}),
\begin{equation}
\begin{aligned}\mathbb{E}^{\mathbb{P}}(Y|\mathcal{F}_{\tau_{1}}) & \le\mathbb{E}^{\mathbb{Q}}\Big[\exp\Big(-p^{\prime}u\Big(\int_{\tau_{1}}^{\tau_{2}}\sigma_{t}dB_{t}\Big)^{2}\Big)\Big|\mathcal{F}_{\tau_{1}}\Big]^{1/p^{\prime}}\mathbb{E}^{\mathbb{Q}}\Big[\exp\Big(p\int_{\tau_{1}}^{\tau_{2}}\frac{h_{s}}{\sigma_{s}}dB_{s}-\frac{p}{2}\int_{\tau_{1}}^{\tau_{2}}\frac{h_{s}^{2}}{\sigma_{s}^{2}}ds\Big)\Big|\mathcal{F}_{\tau_{1}}\Big]^{1/p}.\end{aligned}
\label{eq:-84}
\end{equation}
By the Girsanov theorem, $\{B_{t}\}_{t\ge0}$ is a Brownian motion
under the probability measure $\mathbb{Q}$. It follows from Lemma
\ref{lem:-1} that
\begin{equation}
\mathbb{E}^{\mathbb{Q}}\Big[\exp\Big(-p^{\prime}u\Big(\int_{\tau_{1}}^{\tau_{2}}\sigma_{t}dB_{t}\Big)^{2}\Big)\Big|\mathcal{F}_{\tau_{1}}\Big]\le[1+2p^{\prime}u(\sigma^{\ast})^{2}(\tau_{2}-\tau_{1})]^{-(\sigma_{\ast}/\sigma^{\ast})^{2}/2}.\label{eq:-82}
\end{equation}
Note that $\exp\big(p\int_{\tau_{1}}^{\tau_{2}}h_{s}\sigma_{s}^{-1}dB_{s}-2^{-1}p^{2}\int_{\tau_{1}}^{\tau_{2}}h_{s}^{2}\sigma_{s}^{-2}ds\big)$
is a martingale under the probability measure $\mathbb{Q}$. By $|h_{t}\sigma_{t}^{-1}|\le M\sigma_{\ast}^{-1}$,
we deduce that
\begin{equation}
\begin{aligned} & \mathbb{E}^{\mathbb{Q}}\Big[\exp\Big(p\int_{\tau_{1}}^{\tau_{2}}\frac{h_{s}}{\sigma_{s}}dB_{s}-\frac{p}{2}\int_{\tau_{1}}^{\tau_{2}}\frac{h_{s}^{2}}{\sigma_{s}^{2}}ds\Big)\Big|\mathcal{F}_{\tau_{1}}\Big]\\
 & =\mathbb{E}^{\mathbb{Q}}\Big[\exp\Big(p\int_{\tau_{1}}^{\tau_{2}}\frac{h_{s}}{\sigma_{s}}dB_{s}-\frac{p^{2}}{2}\int_{\tau_{1}}^{\tau_{2}}\frac{h_{s}^{2}}{\sigma_{s}^{2}}ds\Big)\exp\Big(\frac{p(p-1)}{2}\int_{\tau_{1}}^{\tau_{2}}\frac{h_{s}^{2}}{\sigma_{s}^{2}}ds\Big)\Big|\mathcal{F}_{\tau_{1}}\Big]\\
 & \le e^{p(p-1)M^{2}\sigma_{\ast}^{-2}(\tau_{2}-\tau_{1})/2}\mathbb{E}^{\mathbb{Q}}\Big[\exp\Big(p\int_{\tau_{1}}^{\tau_{2}}\frac{h_{s}}{\sigma_{s}}dB_{s}-\frac{p^{2}}{2}\int_{\tau_{1}}^{\tau_{2}}\frac{h_{s}^{2}}{\sigma_{s}^{2}}ds\Big)\Big|\mathcal{F}_{\tau_{1}}\Big]\\
 & =e^{p(p-1)M^{2}\sigma_{\ast}^{-2}(\tau_{2}-\tau_{1})/2}.
\end{aligned}
\label{eq:-83}
\end{equation}
Combining (\ref{eq:-84}), (\ref{eq:-82}) and (\ref{eq:-83}) completes
the proof.
\end{proof}
We now establish several estimates for the realized variance $v_{n}$
defined in (\ref{eq:-2}), which will be needed in the proof of the
main results. Let 
\begin{align}
\zeta_{k} & =(\Delta t)^{-1/2}\int_{t_{k-1}}^{t_{k}}[\rho(t)-\sigma_{t}^{2}/2]dt,\label{eq:-148}\\
\xi_{k} & =(\Delta t)^{-1/2}\int_{t_{k-1}}^{t_{k}}\sigma_{t}dW_{t},\label{eq:-149}\\
\eta_{k} & =\Delta t^{-1/2}[e^{\Delta t^{1/2}(\zeta_{k}+\xi_{k})}-1],\label{eq:-150}
\end{align}
where $t_{k}=k\Delta t$, $0\le k\le N$. Clearly, $v_{n}$ can be
written as
\begin{equation}
v_{n}=\lambda^{n}v_{0}+(1-\lambda)\sum_{k=1}^{n}\lambda^{n-k}\eta_{k}^{2},\label{eq:-74}
\end{equation}
It will be convenient to consider the following approximation of $v_{n}$,
\begin{equation}
u_{n}=\lambda^{n}v_{0}+(1-\lambda)\sum_{k=1}^{n}\lambda^{n-k}\xi_{k}^{2}.\label{eq:-75}
\end{equation}

We start with some estimates for $\zeta_{k}$, $\xi_{k}$, and $\eta_{k}$.
\begin{lem}
\label{lem:-17}Let $\zeta_{k}$, $\xi_{k}$, $\eta_{k}$ be the random
variables defined in (\ref{eq:-148}), (\ref{eq:-149}) and (\ref{eq:-150})
respectively. Then
\begin{equation}
|\zeta_{k}|\le c_{\sigma^{\ast},M_{\rho}}\Delta t^{1/2},\quad\mathrm{a.s.}\label{eq:-102}
\end{equation}
and, for any $p\ge1$,
\begin{equation}
\mathbb{E}(|\xi_{k}|^{p})\le c_{p,\sigma^{\ast}}<\infty,\label{eq:-101}
\end{equation}
and
\begin{equation}
\mathbb{E}(|\eta_{k}|^{p})\le c_{p,\sigma^{\ast},M_{\rho},T}<\infty,\label{eq:-105}
\end{equation}
where $c_{\sigma^{\ast},M_{\rho}}$, $c_{p,\sigma^{\ast}}$ and $c_{p,\sigma^{\ast},M_{\rho},T}$
are constants depending only on the quantities in their subscripts.
\end{lem}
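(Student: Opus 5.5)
The three bounds can be proved in the order stated, each using only the uniform bounds (\ref{eq:-184}) and (\ref{eq:-10}) together with standard martingale moment inequalities.

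\emph{Bound (\ref{eq:-102}).} This is immediate. Since $|\rho(t)|\le M_{\rho}$ and $\sigma_{t}^{2}\le(\sigma^{\ast})^{2}$ a.s., the integrand satisfies $|\rho(t)-\sigma_{t}^{2}/2|\le M_{\rho}+(\sigma^{\ast})^{2}/2$. Hence
\[
|\zeta_{k}|\le(\Delta t)^{-1/2}\big(M_{\rho}+(\sigma^{\ast})^{2}/2\big)\Delta t=c_{\sigma^{\ast},M_{\rho}}\Delta t^{1/2}.
\]

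\emph{Bound (\ref{eq:-101}).} Without independence, $\xi_{k}$ is not Gaussian, so I would use the Burkholder--Davis--Gundy inequality applied to the martingale $s\mapsto\int_{t_{k-1}}^{s}\sigma_{t}dW_{t}$. This gives
\[
\mathbb{E}|\xi_{k}|^{p}\le c_{p}(\Delta t)^{-p/2}\,\mathbb{E}\Big[\Big(\int_{t_{k-1}}^{t_{k}}\sigma_{t}^{2}dt\Big)^{p/2}\Big]\le c_{p}(\sigma^{\ast})^{p}.
\]
Alternatively, one can avoid BDG by using an exponential martingale: $\mathbb{E}e^{\pm a\xi_{k}}\le e^{a^{2}(\sigma^{\ast})^{2}/2}$, obtained from the Novikov/Dol\'eans exponential with bounded quadratic variation. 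This also yields every polynomial moment.

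\emph{Bound (\ref{eq:-105}).} Write $x=\Delta t^{1/2}(\zeta_{k}+\xi_{k})$ and use the elementary inequality $|e^{x}-1|\le|x|e^{|x|}$. This gives
\[
|\eta_{k}|\le|\zeta_{k}+\xi_{k}|\,e^{\Delta t^{1/2}(|\zeta_{k}|+|\xi_{k}|)}.
\]
By (\ref{eq:-102}) and $\Delta t\le T$, we have $\Delta t^{1/2}|\zeta_{k}|\le c_{\sigma^{\ast},M_{\rho}}T$, so that factor is bounded by a constant depending on $\sigma^{\ast},M_{\rho},T$. Applying Cauchy--Schwarz,
\[
\mathbb{E}|\eta_{k}|^{p}\le c\,\big(\mathbb{E}|\zeta_{k}+\xi_{k}|^{2p}\big)^{1/2}\big(\mathbb{E}e^{2pT^{1/2}|\xi_{k}|}\big)^{1/2}.
\]
The first factor is controlled by (\ref{eq:-102}) and (\ref{eq:-101}). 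For the second, use $e^{|y|}\le e^{y}+e^{-y}$ together with the exponential martingale bound
\[
\mathbb{E}e^{\pm a\int\sigma dW}\le\big(\mathbb{E}e^{\pm2a\int\sigma dW-2a^{2}\int\sigma^{2}dt}\big)^{1/2}e^{a^{2}(\sigma^{\ast})^{2}\Delta t}\le e^{a^{2}(\sigma^{\ast})^{2}\Delta t}.
\]
This bound holds because the stochastic exponential of a bounded integrand is a true martingale with expectation $1$. With $a$ depending on $p$ and $T$, the second factor is bounded by a constant $c_{p,\sigma^{\ast},T}$.

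The only step that needs any care is this exponential moment of $\xi_{k}$. A naive route would treat $\xi_{k}$ as Gaussian, which fails in the stochastic volatility setting. The martingale argument above handles it, and no step presents a real obstacle.
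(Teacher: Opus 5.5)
Your proposal is correct and follows the paper's proof essentially step for step. You bound $\zeta_k$ trivially, use BDG for the moments of $\xi_k$, and for $\eta_k$ combine $|e^{x}-1|\le|x|e^{|x|}$ with Cauchy--Schwarz to reduce to an exponential moment of $|\xi_k|$, which the exponential martingale controls. The only harmless difference is your extra Cauchy--Schwarz step for $\mathbb{E}e^{\pm a\int\sigma\,dW}$: the paper instead bounds the compensator factor $e^{\frac{a^{2}}{2}\int\sigma^{2}dt}$ pointwise by $e^{a^{2}(\sigma^{\ast})^{2}\Delta t/2}$ and uses that the stochastic exponential has expectation $1$.
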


\begin{proof}
The inequality (\ref{eq:-102}) is obvious. The inequality (\ref{eq:-101})
follows easily from the Burkholder--Davis--Gundy inequality. 

We now prove (\ref{eq:-105}). For any $r\ge0$, By the fact that
$\exp\big(r\Delta t^{-1/2}\int_{0}^{t}\sigma_{s}dW_{s}-2^{-1}r^{2}\Delta t^{-1}\int_{0}^{t}\sigma_{s}^{2}ds\big)$
is a martingale, we have
\begin{align*}
\mathbb{E}(e^{r\xi_{k}}) & =\mathbb{E}\Big[\exp\Big(r\Delta t^{-1/2}\int_{t_{k-1}}^{t_{k}}\sigma_{s}dW_{s}-2^{-1}r^{2}\Delta t^{-1}\int_{t_{k-1}}^{t_{k}}\sigma_{s}^{2}ds\big)\exp\Big(2^{-1}r^{2}\Delta t^{-1}\int_{t_{k-1}}^{t_{k}}\sigma_{s}^{2}ds\big)\Big]\\
 & \le e^{(r\sigma^{\ast})^{2}/2}\mathbb{E}\Big[\exp\Big(r\Delta t^{-1/2}\int_{t_{k-1}}^{t_{k}}\sigma_{s}dW_{s}-2^{-1}r^{2}\Delta t^{-1}\int_{t_{k-1}}^{t_{k}}\sigma_{s}^{2}ds\big)\Big]\\
 & =e^{(r\sigma^{\ast})^{2}/2}.
\end{align*}
Similarly, we have $\mathbb{E}(e^{-r\xi_{k}})\le e^{(r\sigma^{\ast})^{2}/2}$.
Therefore,
\begin{equation}
\mathbb{E}(e^{r|\xi_{k}|})\le2e^{(r\sigma^{\ast})^{2}/2}\le c_{r,\sigma^{\ast},T},\quad r\ge0.\label{eq:-103}
\end{equation}
By $|e^{x}-1|\le|x|\sum_{k=0}^{\infty}|x|^{k}/(k+1)!\le|x|e^{|x|}$
and (\ref{eq:-102}) and (\ref{eq:-101}), we have
\[
\begin{aligned}\mathbb{E}(|\eta_{k}|^{p}) & =\Delta t^{-p/2}\mathbb{E}(|e^{\Delta t^{1/2}(\zeta_{k}+\xi_{k})}-1|^{p})\le\mathbb{E}(|\zeta_{k}+\xi_{k}|^{p}e^{p\Delta t^{1/2}|\zeta_{k}+\xi_{k}|})\\
 & \le\mathbb{E}(|\zeta_{k}+\xi_{k}|^{2p})^{1/2}\mathbb{E}(e^{2p\Delta t^{1/2}|\zeta_{k}+\xi_{k}|})^{1/2}\le c_{p,\sigma^{\ast},M_{\rho},T}\mathbb{E}(e^{2pT^{1/2}|\xi_{k}|})^{1/2},
\end{aligned}
\]
which, together with (\ref{eq:-103}) applied to $r=2pT^{1/2}$, gives
(\ref{eq:-105}).
\end{proof}
We will need the following negative moment estimates for $u_{n}$
and $v_{n}$, where the explicit dependence on the initial variance
$v_{0}$ will be needed for realized variance reset in section \ref{subsec:-9}.
\begin{lem}
\label{lem:-2}Let $u_{n}$ be the random variable defined in (\ref{eq:-75}).
Then, for any $\alpha\in\mathbb{R}$,
\begin{equation}
\sup_{1\le n\le N<\infty}\mathbb{E}(u_{n}^{-\alpha})\le c_{\alpha,\lambda,\sigma_{\ast},\sigma^{\ast}}(1+v_{0}^{-\alpha})<\infty,\label{eq:-76}
\end{equation}
where $c_{\alpha,\lambda,\sigma_{\ast},\sigma^{\ast}}$ is a constant
depending only on $\alpha$, $\lambda$, $\sigma_{\ast}$ and $\sigma^{\ast}$.
\end{lem}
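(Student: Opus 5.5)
Split by the sign of $\alpha$. For $\alpha\le0$ the quantity is a positive moment: $u_n^{-\alpha}=u_n^{|\alpha|}$. Since $\lambda^n+(1-\lambda)\sum_{k=1}^n\lambda^{n-k}\le 1$, convexity (or Minkowski) gives
\[
u_n^{|\alpha|}\le c_{\alpha}\Big(v_0^{|\alpha|}+\sum_{k}(1-\lambda)\lambda^{n-k}|\xi_k|^{2|\alpha|}\Big).
\]
Together with (\ref{eq:-101}) this yields $\mathbb{E}(u_n^{|\alpha|})\le c(1+v_0^{|\alpha|})$. The right side $1+v_0^{-\alpha}$ of (\ref{eq:-76}) is indeed of this form for $\alpha\le0$, so this case is routine.

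For $\alpha>0$, I use the Gamma-function identity
\[
u^{-\alpha}=\Gamma(\alpha)^{-1}\int_0^\infty s^{\alpha-1}e^{-su}\,ds,
\]
which gives
\[
\mathbb{E}(u_n^{-\alpha})=\Gamma(\alpha)^{-1}\int_0^\infty s^{\alpha-1}e^{-s\lambda^nv_0}\,\mathbb{E}\Big[\exp\Big(-s(1-\lambda)\sum_{k=1}^n\lambda^{n-k}\xi_k^2\Big)\Big]ds .
\]
To bound the inner expectation I condition successively on $\mathcal{F}_{t_{n-1}},\mathcal{F}_{t_{n-2}},\dots$. At each step I apply Lemma \ref{lem:-1} with $h=0$, $\tau_1=t_{k-1}$, $\tau_2=t_k$ and $u=s(1-\lambda)\lambda^{n-k}\Delta t^{-1}$. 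This gives the factor
\[
\big[1+2s(1-\lambda)\lambda^{n-k}(\sigma^\ast)^2\big]^{-\theta/2},\qquad \theta=(\sigma_\ast/\sigma^\ast)^2,
\]
which does not depend on $\Delta t$. The tower property then yields
\[
\mathbb{E}\big[e^{-s(u_n-\lambda^nv_0)}\big]\le\prod_{j=0}^{n-1}\big(1+a s\lambda^{j}\big)^{-\theta/2},\qquad a=2(1-\lambda)(\sigma^\ast)^2 .
\]
The key point is that conditioning from the last step backwards is legitimate because all previous $\xi_k$'s are $\mathcal{F}_{t_{k-1}}$-measurable. Independence is not needed; this is exactly why Lemma \ref{lem:-1} was proved in conditional form.

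It remains to bound
\[
\int_0^\infty s^{\alpha-1}e^{-s\lambda^nv_0}\prod_{j=0}^{n-1}(1+as\lambda^j)^{-\theta/2}\,ds
\]
uniformly in $n$. Split the integral at a threshold. On $s\le1$ the integrand is at most $s^{\alpha-1}$, which is integrable since $\alpha>0$. For large $s$, choose $m=m(\alpha,\lambda,\theta)$ with $m\theta/2>\alpha+1$.

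If $n\ge m$, keep only the first $m$ factors. Each is at least $as\lambda^{m}$, so the product is bounded by $c\,s^{-m\theta/2}$, and the integral over $s\ge1$ is at most a constant $c_{\alpha,\lambda,\sigma_\ast,\sigma^\ast}$, uniformly in $n\ge m$ and independent of $v_0$.

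If $n<m$ (finitely many $n$), drop the product and use the exponential:
\[
\int_0^\infty s^{\alpha-1}e^{-s\lambda^nv_0}\,ds=\Gamma(\alpha)(\lambda^nv_0)^{-\alpha}\le\Gamma(\alpha)\lambda^{-m\alpha}v_0^{-\alpha}.
\]
Combining both cases gives $c(1+v_0^{-\alpha})$, and the supremum over $1\le n\le N$ is finite and independent of $N$.

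The main obstacle I expect is the iterated conditioning. Lemma \ref{lem:-1} bounds only one increment at a time, so the chain of conditional expectations must be ordered carefully: the exponent weights are deterministic and the remaining terms are measurable at each stage. Once that product bound is in hand, the rest is an elementary tail estimate on a product like those behind $V(\lambda)$.
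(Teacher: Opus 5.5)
Your proposal is correct and follows the paper's proof essentially step for step. It uses the convex-combination bound for $\alpha<0$, and the Gamma-function representation with iterated conditioning via Lemma \ref{lem:-1} (with $h=0$) to obtain $\prod_{j=0}^{n-1}(1+2s(1-\lambda)\lambda^{j}(\sigma^{\ast})^{2})^{-\theta/2}$. It also splits into the same two cases: $n\ge m$ (keep $m$ factors) and $n<m$ (use $e^{-s\lambda^{n}v_{0}}$ to get $\lambda^{-\alpha m}v_{0}^{-\alpha}$).

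One small slip: for $-1<\alpha<0$ the map $x\mapsto x^{-\alpha}$ is concave, so your pointwise convexity bound fails there. You should interpolate as the paper does, e.g.\ $\mathbb{E}(u_{n}^{-\alpha})\le(\mathbb{E}u_{n})^{-\alpha}\le(v_{0}+(\sigma^{\ast})^{2})^{-\alpha}\le v_{0}^{-\alpha}+(\sigma^{\ast})^{-2\alpha}$.
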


\begin{proof}
For $\alpha<0$, it suffices to prove (\ref{eq:-76}) for $\alpha\le-1$;
the case for $-1<\alpha<0$ follows easily by interpolation. By H\"{o}lder's
inequality,
\[
u_{n}^{-\alpha}\le\lambda^{n}v_{0}^{-\alpha}+(1-\lambda)\sum_{k=1}^{n}\lambda^{n-k}|\xi_{k}|^{-2\alpha}.
\]
Combining (\ref{eq:-101}) and the above proves (\ref{eq:-76}) for
$\alpha<0$.

For $\alpha>0$, note that $x^{-\alpha}=\Gamma(\alpha)^{-1}\int_{0}^{\infty}e^{-xt}t^{\alpha-1}dt$
for $x>0$, where $\Gamma(z)$ is the Gamma function. By Fubini's
theorem,
\begin{equation}
\begin{aligned}\mathbb{E}(u_{n}^{-\alpha}) & =\frac{1}{\Gamma(\alpha)}\mathbb{E}\Big(\int_{0}^{\infty}e^{-u_{n}t}t^{\alpha-1}dt\Big)=\frac{1}{\Gamma(\alpha)}\int_{0}^{\infty}e^{-t\lambda^{n}v_{0}}t^{\alpha-1}\mathbb{E}\Big(\prod_{k=1}^{n}e^{-t(1-\lambda)\lambda^{n-k}\xi_{k}^{2}}\Big)dt,\end{aligned}
\label{eq:-94}
\end{equation}
By Lemma \ref{lem:-1},
\[
\begin{aligned}\mathbb{E}\Big(\prod_{k=1}^{n}e^{-t(1-\lambda)\lambda^{n-k}\xi_{k}^{2}}\Big) & =\mathbb{E}\Big(\prod_{k=1}^{n-1}e^{-t(1-\lambda)\lambda^{n-k}\xi_{k}^{2}}\mathbb{E}\big(e^{-t(1-\lambda)\xi_{n}^{2}}\big|\mathcal{F}_{t_{n-1}}\big)\Big)\\
 & \le[1+2t(1-\lambda)(\sigma^{\ast})^{2}]^{-(\sigma_{\ast}/\sigma^{\ast})^{2}/2}\mathbb{E}\Big(\prod_{k=1}^{n-1}e^{-t(1-\lambda)\lambda^{n-k}\xi_{k}^{2}}\Big)\\
 & \le\cdots\\
 & \le\prod_{k=1}^{n}[1+2t(1-\lambda)\lambda^{n-k}(\sigma^{\ast})^{2}]^{-\theta/2},
\end{aligned}
\]
where $\theta=(\sigma_{\ast}/\sigma^{\ast})^{2}$. Therefore,
\begin{equation}
\mathbb{E}(u_{n}^{-\alpha})\le\frac{1}{\Gamma(\alpha)}\int_{0}^{\infty}e^{-t\lambda^{n}v_{0}}t^{\alpha-1}\prod_{k=0}^{n-1}[1+2t(1-\lambda)\lambda^{k}(\sigma^{\ast})^{2}]^{-\theta/2}dt.\label{eq:-89}
\end{equation}
Let $m=\lfloor2\alpha\theta^{-1}\rfloor+1$. Then $m\theta/2>\alpha$,
and therefore, for any $n\ge m$ and $N\ge n$, we have
\begin{equation}
\mathbb{E}(u_{n}^{-\alpha})\le\frac{1}{\Gamma(\alpha)}\int_{0}^{\infty}t^{\alpha-1}[1+2t(1-\lambda)\lambda^{m-1}(\sigma^{\ast})^{2}]^{-\theta m/2}dt<\infty.\label{eq:-90}
\end{equation}
Clearly, by (\ref{eq:-94}),
\begin{equation}
\mathbb{E}(u_{n}^{-\alpha})\le\frac{1}{\Gamma(\alpha)}\int_{0}^{\infty}e^{-t\lambda^{m}v_{0}}t^{\alpha-1}dt=\lambda^{-\alpha m}v_{0}^{-\alpha},\label{eq:-73}
\end{equation}
for any $n<m$ and $N\ge n$. The uniform bound (\ref{eq:-76}) follows
readily from (\ref{eq:-90}) and (\ref{eq:-73}), since $\theta$
and $m$ are constants depending only on $\alpha$, $\sigma_{\ast}$
and $\sigma^{\ast}$.
\end{proof}
\begin{lem}
\label{lem:-4}Let $v_{n}$ be the random variable defined in (\ref{eq:-74}).
Then, for any $\alpha\in\mathbb{R}$,
\begin{equation}
\sup_{1\le n\le N<\infty}\mathbb{E}(v_{n}^{-\alpha})\le c_{\alpha,\lambda,\sigma_{\ast},\sigma^{\ast},M_{\rho},T}(1+v_{0}^{-\alpha})<\infty,\label{eq:-93}
\end{equation}
where $c_{\alpha,\lambda,\sigma_{\ast},\sigma^{\ast},M_{\rho},T}$
is a constant depending only on $\alpha$, $\lambda$, $\sigma_{\ast}$,
$\sigma^{\ast}$, $M_{\rho}$ and $T$.
\end{lem}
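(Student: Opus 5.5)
The plan is to mirror the proof of Lemma \ref{lem:-2}, replacing the Gaussian-type increments $\xi_{k}$ by $\eta_{k}$. The drift in $\eta_{k}$ will be handled through Lemma \ref{lem:-3} instead of Lemma \ref{lem:-1}.

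For $\alpha<0$ the argument is the convexity (H\"{o}lder) bound used before,
\[
v_{n}^{-\alpha}\le\lambda^{n}v_{0}^{-\alpha}+(1-\lambda)\sum_{k=1}^{n}\lambda^{n-k}|\eta_{k}|^{-2\alpha},
\]
for $\alpha\le-1$, with interpolation for $-1<\alpha<0$. Combined with the moment bound (\ref{eq:-105}) of Lemma \ref{lem:-17}, this gives (\ref{eq:-93}) with a constant depending on $\sigma^{\ast},M_{\rho},T$.

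For $\alpha>0$ I first need a pointwise lower bound of $\eta_{k}^{2}$ by a quadratic in the log-increment. Write $x_{k}=\Delta t^{1/2}(\zeta_{k}+\xi_{k})=\int_{t_{k-1}}^{t_{k}}(\rho-\sigma^{2}/2)dt+\int_{t_{k-1}}^{t_{k}}\sigma dW$, so that $\eta_{k}^{2}=\Delta t^{-1}(e^{x_{k}}-1)^{2}$. Since $|e^{x}-1|\ge|x|$ for $x\ge0$, and $|e^{x}-1|\ge|x|e^{x}\ge$ const$\cdot|x|$ only on bounded negative ranges, I will not use a pointwise inequality. Instead I use $(e^{x}-1)^{2}\ge(1-e^{-|x|})^{2}\ge c\min(x^{2},1)$, which gives
\[
e^{-s\eta_{k}^{2}}\le e^{-sc\Delta t^{-1}x_{k}^{2}}+e^{-sc\Delta t^{-1}}.
\]
Rather than carry the extra term along, I take a cleaner route: since $\min(x^{2},1)\ge x^{2}/(1+x^{2})$, it suffices to bound the Laplace transform $\mathbb{E}[e^{-s\eta_{k}^{2}}|\mathcal{F}_{t_{k-1}}]$ by a quantity of the form $C_{0}[1+a s(1-\lambda)\cdots]^{-\beta}$ with $\beta>0$ and constants uniform in $\Delta t$. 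Here the argument $s$ stands for $t(1-\lambda)\lambda^{n-k}$.

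The conditional Laplace bound proceeds in three steps.
\begin{enumerate}
\item Bound $e^{-s\eta_{k}^{2}}\le e^{-cs\zeta^{\prime 2}}+e^{-cs/\Delta t}$, where $\zeta^{\prime}=\zeta_{k}+\xi_{k}=\Delta t^{-1/2}\bigl(\int h\,dt+\int\sigma\,dW\bigr)$ with $h=\rho-\sigma^{2}/2$ bounded by $M=M_{\rho}+(\sigma^{\ast})^{2}/2$.
\item Apply Lemma \ref{lem:-3} with $u=cs/\Delta t$ on $[t_{k-1},t_{k}]$. This gives the bound $e^{(p-1)M^{2}\sigma_{\ast}^{-2}\Delta t/2}[1+2p^{\prime}cs(\sigma^{\ast})^{2}]^{-\theta/(2p^{\prime})}$, whose prefactor is at most $e^{c\Delta t}$.
\item Absorb the term $e^{-cs/\Delta t}$, which is at most $c^{\prime}(1+s)^{-\theta/(2p^{\prime})}$ with $c^{\prime}$ uniform in $\Delta t\le T$.
\end{enumerate}
Together these give
\[
\mathbb{E}[e^{-s\eta_{k}^{2}}|\mathcal{F}_{t_{k-1}}]\le C(1+c^{\prime\prime}s)^{-\beta},\qquad\beta=\theta/(2p^{\prime}),
\]
with $C$ depending on $\sigma_{\ast},\sigma^{\ast},M_{\rho},T$. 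The constant $C$ may exceed $1$.

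With this bound in hand, I follow (\ref{eq:-94})--(\ref{eq:-73}) but only peel off the last $m$ factors, where $m=\lfloor\alpha/\beta\rfloor+1$. The remaining factors $e^{-t(1-\lambda)\lambda^{n-k}\eta_{k}^{2}}\le1$ are simply dropped, and the initial term $e^{-t\lambda^{n}v_{0}}\le1$ is also dropped. Iterated conditioning then gives
\[
\mathbb{E}(v_{n}^{-\alpha})\le\Gamma(\alpha)^{-1}C^{m}\int_{0}^{\infty}t^{\alpha-1}\bigl[1+c^{\prime\prime}t(1-\lambda)\lambda^{m-1}\bigr]^{-m\beta}dt<\infty,
\]
which holds for $n\ge m$. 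For $n<m$ I use $v_{n}\ge\lambda^{n}v_{0}$, so $\mathbb{E}(v_{n}^{-\alpha})\le\lambda^{-\alpha m}v_{0}^{-\alpha}$.

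The main obstacle is the first step, converting $\eta_{k}^{2}$ into a Gaussian-quadratic form, because $e^{x}-1$ is not comparable to $x$ for large negative $x$. The point to verify is that the resulting constant $C>1$ is harmless. This holds because only a bounded number $m$ of factors is used, so $C^{m}$ remains a constant depending only on $\alpha,\lambda,\sigma_{\ast},\sigma^{\ast},M_{\rho},T$.
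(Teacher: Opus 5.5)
Your proposal is correct and follows the paper's proof essentially step for step. For $\alpha<0$ you use convexity together with (\ref{eq:-105}). For $\alpha>0$ you use the Laplace representation, a lower bound $|e^{x}-1|\ge c\min(1,|x|)$ that splits each factor into a deterministic exponential plus a Gaussian-quadratic term handled by Lemma \ref{lem:-3}, and you peel off only the last $m$ factors so that $C^{m}$ (the paper's $2^{m}e^{M^{2}\sigma_{\ast}^{-2}T/2}$) stays a fixed constant, with $v_{n}\ge\lambda^{n}v_{0}$ covering $n<m$. The stray aside about $\min(x^{2},1)\ge x^{2}/(1+x^{2})$ is never used and can simply be deleted.
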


\begin{proof}
The proof is similar to that of Lemma \ref{lem:-2}. Let $\zeta_{k}$,
$\xi_{k}$ and $\eta_{k}$ be the random variables defined in (\ref{eq:-148}),
(\ref{eq:-149}) and (\ref{eq:-150}) respectively. 

For $\alpha<0$, it suffices to prove (\ref{eq:-93}) for $\alpha\le-1$;
the case for $-1<\alpha<0$ follows easily by interpolation. By H\"{o}lder's
inequality,
\[
v_{n}^{-\alpha}\le\lambda^{n}v_{0}^{-\alpha}+(1-\lambda)\sum_{k=1}^{n}\lambda^{n-k}|\eta_{k}|^{-2\alpha}.
\]
Therefore, the inequality (\ref{eq:-93}) for $\alpha<0$ follows
from (\ref{eq:-105}) immediately. 

For $\alpha>0$, similar to (\ref{eq:-94}), we have
\begin{equation}
\begin{aligned}\mathbb{E}(v_{n}^{-\alpha}) & =\frac{1}{\Gamma(\alpha)}\int_{0}^{\infty}e^{-t\lambda^{n}v_{0}}t^{\alpha-1}\mathbb{E}\Big(\prod_{k=1}^{n}e^{-t(1-\lambda)\lambda^{n-k}\eta_{k}^{2}}\Big)dt.\end{aligned}
\label{eq:-96}
\end{equation}
It is easily seen that $|e^{x}-1|\ge\min(1,|x|)/2$ for all $x\in\mathbb{R}$.
Therefore, $|\eta_{k}|\ge\Delta t^{-1/2}\min(1,\Delta t^{1/2}|\zeta_{k}+\xi_{k}|)/2$.
Consequently,
\begin{equation}
\begin{aligned}e^{-t(1-\lambda)\lambda^{n-k}\eta_{k}^{2}} & \le\max\big(e^{-t(1-\lambda)\lambda^{n-k}\Delta t^{-1}/4},e^{-t(1-\lambda)\lambda^{n-k}(\zeta_{k}+\xi_{k})^{2}/4}\big)\\
 & \le e^{-t(1-\lambda)\lambda^{n-k}T^{-1}/4}+e^{-t(1-\lambda)\lambda^{n-k}(\zeta_{k}+\xi_{k})^{2}/4}\\
 & \le[1+t\mu(1-\lambda)\lambda^{n-k}]^{-\theta}+e^{-t(1-\lambda)\lambda^{n-k}(\zeta_{k}+\xi_{k})^{2}/4},
\end{aligned}
\label{eq:-72}
\end{equation}
where $\theta=(\sigma_{\ast}/\sigma^{\ast})^{2}$ and $\mu=\min[(\sigma^{\ast})^{2},(\sigma^{\ast}/\sigma_{\ast})^{2}T^{-1}]$.
Let $M=M_{\rho}+(\sigma^{\ast})^{2}/2$. By (\ref{eq:-72}) and Lemma
\ref{lem:-3} with $p=2$,
\[
\begin{aligned}\mathbb{E}\big(e^{-t(1-\lambda)\lambda^{n-k}\eta_{k}^{2}}\big|\mathcal{F}_{t_{k-1}}\big) & \le[1+t\mu(1-\lambda)\lambda^{n-k}]^{-\theta/4}+e^{M^{2}\sigma_{\ast}^{-2}\Delta t/2}[1+t(1-\lambda)\lambda^{n-k}(\sigma^{\ast})^{2}]^{-\theta/4}\\
 & \le2e^{M^{2}\sigma_{\ast}^{-2}\Delta t/2}[1+t\mu(1-\lambda)\lambda^{n-k}]^{-\theta/4}.
\end{aligned}
\]
Let $m=\lfloor4\alpha\theta^{-1}\rfloor+1$. Then, for any $n\ge m$,
\[
\begin{aligned}\mathbb{E}\Big(\prod_{k=1}^{n}e^{-t(1-\lambda)\lambda^{n-k}\eta_{k}^{2}}\Big) & =\mathbb{E}\Big(\prod_{k=1}^{n-1}e^{-t(1-\lambda)\lambda^{n-k}\eta_{k}^{2}}\mathbb{E}\big(e^{-t(1-\lambda)\eta_{n}^{2}}\big|\mathcal{F}_{t_{n-1}}\big)\Big)\\
 & \le2e^{M^{2}\sigma_{\ast}^{-2}\Delta t/2}[1+t\mu(1-\lambda)]^{-\theta/4}\mathbb{E}\Big(\prod_{k=1}^{n-1}e^{-t(1-\lambda)\lambda^{n-k}\eta_{k}^{2}}\Big)\\
 & \le\cdots\\
 & \le2^{m}e^{M^{2}\sigma_{\ast}^{-2}m\Delta t/2}\prod_{j=0}^{m-1}[1+t\mu(1-\lambda)\lambda^{j}]^{-\theta/4}\mathbb{E}\Big(\prod_{k=1}^{n-m}e^{-t(1-\lambda)\lambda^{n-k}\eta_{k}^{2}}\Big)\\
 & \le2^{m}e^{M^{2}\sigma_{\ast}^{-2}T/2}[1+t\mu(1-\lambda)\lambda^{m-1}]^{-m\theta/4}.
\end{aligned}
\]
Since $m\theta/4>\alpha$, for any $n\ge m$ and $N\ge n$, we have
\begin{equation}
\mathbb{E}(v_{n}^{-\alpha})\le\frac{2^{m}e^{M^{2}\sigma_{\ast}^{-2}T/2}}{\Gamma(\alpha)}\int_{0}^{\infty}t^{\alpha-1}[1+t\mu(1-\lambda)\lambda^{m-1}]^{-m\theta/4}dt<\infty,\label{eq:-88}
\end{equation}
Clearly, by (\ref{eq:-96}),
\begin{equation}
\mathbb{E}(v_{n}^{-\alpha})\le\frac{1}{\Gamma(\alpha)}\int_{0}^{\infty}e^{-t\lambda^{m}v_{0}}t^{\alpha-1}dt=\lambda^{-\alpha m}v_{0}^{-\alpha},\label{eq:-95}
\end{equation}
for any $n<m$ and $N\ge n$. The uniform bound (\ref{eq:-93}) follows
from (\ref{eq:-88}) and (\ref{eq:-95}), since $\theta$, $\mu$,
$m$ and $M$ are constants depending only on $\alpha$, $\sigma_{\ast}$,
$\sigma^{\ast}$, $M_{\rho}$ and $T$.
\end{proof}
The following Komogorov continuity property of $\{\log I_{t}\}_{t\ge0}$
is a simple corollary of Lemma \ref{lem:-4}. Its proof is the same
as \cite[Lemma 2.9]{LG26} once the uniform bound (\ref{eq:-93})
is obtained.
\begin{cor}
\label{cor:}Let $0\le s\le t\le T$ and $1\le p<\infty$. Then 
\begin{equation}
\mathbb{E}\big(|\log I_{t}-\log I_{s}|^{p}\big)^{1/p}\le c_{p,\lambda,v_{0},\sigma_{\ast},\sigma^{\ast},M_{r},M_{\rho},T}|t-s|^{1/2},\label{eq:-115}
\end{equation}
where $c_{p,\lambda,v_{0},\sigma_{\ast},\sigma^{\ast},M_{r},M_{\rho},T}$
is a constant depending only on $p$, $\lambda$, $v_{0}$, $\sigma_{\ast}$,
$\sigma^{\ast}$, $M_{r}$, $M_{\rho}$ and $T$.
\end{cor}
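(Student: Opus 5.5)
We write $\log I_t-\log I_s$ explicitly from the dynamics (\ref{eq:-86}) and (\ref{eq:-87}). On each interval $[t_{n-1},t_n)$ the leverage $\bar\sigma v_{n-1}^{-1/2}$ is $\mathcal{F}_{t_{n-1}}$-measurable and constant. It\^{o}'s formula therefore gives
\[
\log I_t-\log I_s=\int_s^t\Big[r(u)+\bar\sigma v_{n(u)-1}^{-1/2}\big(\rho(u)-r(u)\big)-\tfrac12\bar\sigma^2v_{n(u)-1}^{-1}\sigma_u^2\Big]du+\int_s^t\bar\sigma v_{n(u)-1}^{-1/2}\sigma_u\,dW_u ,
\]
where $n(u)$ is the index with $u\in[t_{n(u)-1},t_{n(u)})$. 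We bound the $L^p$ norms of the drift part $A$ and the martingale part $M$ separately, and may assume $p\ge2$ by monotonicity of $L^p$ norms.

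For the drift, we use (\ref{eq:-184}), (\ref{eq:-190}) and (\ref{eq:-10}) to bound the integrand pointwise by $c\,(1+v_{n(u)-1}^{-1/2}+v_{n(u)-1}^{-1})$. Jensen's (or H\"{o}lder's) inequality in the time variable gives
\[
\mathbb{E}|A|^p\le (t-s)^{p-1}\int_s^t c^p\,\mathbb{E}\big(1+v_{n(u)-1}^{-1/2}+v_{n(u)-1}^{-1}\big)^p\,du .
\]
The expectation is bounded uniformly in $n$ and $N$ by Lemma \ref{lem:-4}, applied with $\alpha=p/2$ and $\alpha=p$; the index $n=0$ is trivial since $v_0$ is deterministic. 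Hence $\|A\|_{L^p}\le c(t-s)\le cT^{1/2}(t-s)^{1/2}$.

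For the martingale part, the Burkholder--Davis--Gundy inequality gives
\[
\mathbb{E}|M|^p\le c_p\,\mathbb{E}\Big(\int_s^t\bar\sigma^2(\sigma^\ast)^2v_{n(u)-1}^{-1}du\Big)^{p/2}\le c_p(t-s)^{p/2-1}\int_s^t\mathbb{E}\big(v_{n(u)-1}^{-p/2}\big)du .
\]
This uses Jensen's inequality, valid since $p/2\ge1$. Applying Lemma \ref{lem:-4} with $\alpha=p/2$ again, we get $\|M\|_{L^p}\le c(t-s)^{1/2}$. Adding the two bounds yields (\ref{eq:-115}). The constant depends only on $p,\lambda,v_0,\sigma_\ast,\sigma^\ast,M_r,M_\rho,T$ (and on $\bar\sigma$, which is regarded as fixed).

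The only genuinely nontrivial input is the uniform negative-moment bound for $v_n$. This is exactly Lemma \ref{lem:-4}, which in turn rests on the conditional Laplace estimates of Lemmas \ref{lem:-1} and \ref{lem:-3} and replaces the independence used in \cite{LG26}. Once that bound is granted, the remaining steps are routine. The one point needing care is to make sure the estimates are uniform over the interval pieces, including intervals that straddle rebalancing times; this is automatic because all bounds are applied pointwise in $u$ before integrating.
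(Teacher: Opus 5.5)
Your proof is correct and follows the paper's route. The paper simply defers to the argument of \cite[Lemma 2.9]{LG26}, with the uniform negative-moment bound of Lemma \ref{lem:-4} substituted in: It\^{o} decomposition of $\log I_t-\log I_s$, H\"{o}lder for the drift part, and Burkholder--Davis--Gundy plus Jensen for the martingale part, with all integrands controlled pointwise by powers of $v_{n-1}^{-1/2}$. That is exactly your argument, and your remark that the constant also depends on $\bar{\sigma}$ is a fair observation about the statement as written.
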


\begin{lem}
\label{lem:-5}Let $u_{n}$ and $v_{n}$ be the random variables defined
in (\ref{eq:-75}) and (\ref{eq:-74}) respectively. Then, for any
$\alpha>0$ and any $p>0$, 
\begin{equation}
\mathbb{E}(|u_{n}^{-\alpha}-v_{n}^{-\alpha}|^{p})^{1/p}\le c_{\alpha,p,\lambda,v_{0},\sigma_{\ast},\sigma^{\ast},M_{\rho},T}\Delta t^{1/2},\label{eq:-97}
\end{equation}
where $c_{\alpha,p,\lambda,v_{0},\sigma_{\ast},\sigma^{\ast},M_{\rho},T}$
is a constant depending only on $\alpha$, $p$, $v_{0}$, $\sigma_{\ast}$,
$\sigma^{\ast}$, $M_{\rho}$ and $T$.
\end{lem}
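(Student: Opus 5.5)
We bound the difference of negative powers by the mean value theorem and then separate the negative-moment factor from the increment $|u_n-v_n|$ by H\"{o}lder's inequality. For $x,y>0$,
\[
|x^{-\alpha}-y^{-\alpha}|\le\alpha|x-y|\big(x^{-\alpha-1}+y^{-\alpha-1}\big).
\]
Applying this with $x=u_n$, $y=v_n$ gives
\[
\mathbb{E}(|u_{n}^{-\alpha}-v_{n}^{-\alpha}|^{p})^{1/p}\le c_{\alpha,p}\,\mathbb{E}(|u_{n}-v_{n}|^{2p})^{1/(2p)}\Big[\mathbb{E}(u_{n}^{-2p(\alpha+1)})^{1/(2p)}+\mathbb{E}(v_{n}^{-2p(\alpha+1)})^{1/(2p)}\Big].
\]
For $p<1$ the quantity is not a norm, but the elementary inequality $(a+b)^p\le a^p+b^p$ handles the split with a harmless constant. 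Alternatively, we may assume $p\ge1$ by monotonicity of $L^p$ norms, since the probability space has mass one. The two negative-moment factors are bounded uniformly in $1\le n\le N$ by Lemma \ref{lem:-2} and Lemma \ref{lem:-4} with exponent $2p(\alpha+1)$, which gives a constant depending on $\alpha,p,\lambda,v_0,\sigma_\ast,\sigma^\ast,M_\rho,T$.

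It remains to show $\mathbb{E}(|u_n-v_n|^{q})^{1/q}\le c\,\Delta t^{1/2}$ for $q=2p$. By (\ref{eq:-74}) and (\ref{eq:-75}),
\[
u_n-v_n=(1-\lambda)\sum_{k=1}^n\lambda^{n-k}(\xi_k^2-\eta_k^2).
\]
By Minkowski's inequality and $(1-\lambda)\sum_k\lambda^{n-k}\le1$, it suffices to bound $\Vert\xi_k^2-\eta_k^2\Vert_{L^q}$ uniformly in $k$. We write $\xi_k^2-\eta_k^2=(\xi_k-\eta_k)(\xi_k+\eta_k)$. By Cauchy--Schwarz together with (\ref{eq:-101}) and (\ref{eq:-105}), it is then enough to show $\Vert\eta_k-\xi_k\Vert_{L^{2q}}\le c\,\Delta t^{1/2}$.

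For this, set $x=\Delta t^{1/2}(\zeta_k+\xi_k)$, so that $\eta_k=\Delta t^{-1/2}(e^x-1)$. Then
\[
\eta_k-\xi_k=\zeta_k+\Delta t^{-1/2}(e^x-1-x).
\]
The first term satisfies $|\zeta_k|\le c\,\Delta t^{1/2}$ a.s. by (\ref{eq:-102}). For the second term, $|e^x-1-x|\le x^2e^{|x|}$, so
\[
\Delta t^{-1/2}|e^x-1-x|\le\Delta t^{1/2}(\zeta_k+\xi_k)^2e^{\Delta t^{1/2}|\zeta_k+\xi_k|}.
\]
Its $L^{2q}$ norm is at most $c\,\Delta t^{1/2}$: apply Cauchy--Schwarz, then control the polynomial moments by (\ref{eq:-101}) and the exponential factor by (\ref{eq:-103}) with $r=4qT^{1/2}$, exactly as in the proof of Lemma \ref{lem:-17}.

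The only mildly delicate point is making the uniform negative moment bounds available at the higher exponent $2p(\alpha+1)$ and tracking that the constants depend only on the listed quantities. This is already furnished by Lemmas \ref{lem:-2} and \ref{lem:-4}, so no real obstacle remains.
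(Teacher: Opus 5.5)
Your proposal is correct and follows essentially the same route as the paper. The paper uses the same mean-value bound for $x^{-\alpha}$ plus H\"{o}lder to split off the negative moments (controlled by Lemmas \ref{lem:-2} and \ref{lem:-4}), then the same decomposition $u_n-v_n=(1-\lambda)\sum_k\lambda^{n-k}(\xi_k^2-\eta_k^2)$ with Cauchy--Schwarz, and the same bound $\Vert\xi_k-\eta_k\Vert_{L^r}\le c\,\Delta t^{1/2}$ obtained from $|e^x-1-x|\le x^2e^{|x|}$, (\ref{eq:-102}), (\ref{eq:-101}) and (\ref{eq:-103}). The only difference is cosmetic: you apply Minkowski to the weighted sum, whereas the paper applies H\"{o}lder/Jensen to the convex combination, and both give the same uniform bound.
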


\begin{proof}
We may assume $p\ge1$. The case $0<p<1$ follows easily from H\"{o}lder's
inequality. Let $f(x)=x^{-\alpha}$. By Taylor's theorem and the monotonicity
of $f^{\prime}(x)=-\alpha x^{-\alpha-1}$, we have
\[
|f(u_{n})-f(v_{n})|\le\big(|f^{\prime}(u_{n})|+|f^{\prime}(v_{n})|\big)|u_{n}-v_{n}|.
\]
By H\"{o}lder's inequality,
\[
\mathbb{E}(|u_{n}^{-\alpha}-v_{n}^{-\alpha}|^{p})\le\big[\mathbb{E}(|f^{\prime}(u_{n})|^{2p})^{1/2}+\mathbb{E}(|f^{\prime}(v_{n})|^{2p})^{1/2}\big]\mathbb{E}(|u_{n}-v_{n}|^{2p})^{1/2}.
\]
By Lemma \ref{lem:-2} and Lemma \ref{lem:-4}, 
\[
\sup_{1\le n\le N<\infty}\big[\mathbb{E}(|f^{\prime}(u_{n})|^{2p})^{1/2}+\mathbb{E}(|f^{\prime}(v_{n})|^{2p})^{1/2}\big]\le c_{\alpha,p,\lambda,v_{0},\sigma_{\ast},\sigma^{\ast},M_{\rho},T}.
\]
Therefore, it suffices to show that
\begin{equation}
\mathbb{E}(|u_{n}-v_{n}|^{2p})^{1/(2p)}\le c_{p,\lambda,v_{0},\sigma_{\ast},\sigma^{\ast},M_{\rho},T}\Delta t^{1/2}.\label{eq:-99}
\end{equation}

Let $\xi_{k}$, $\zeta_{k}$ and $\eta_{k}$ be the random variables
defined in (\ref{eq:-149}), (\ref{eq:-148}) and (\ref{eq:-150})
respectively. Then 
\begin{equation}
u_{n}-v_{n}=(1-\lambda)\sum_{k=1}^{n}\lambda^{n-k}(\xi_{k}^{2}-\eta_{k}^{2}).\label{eq:-100}
\end{equation}
We show that 
\begin{equation}
\mathbb{E}(|\xi_{k}-\eta_{k}|^{r})^{1/r}\le c_{r,\sigma^{\ast},M_{\rho},T}\Delta t^{1/2}.\label{eq:-104}
\end{equation}
Once (\ref{eq:-104}) is obtained, by H\"{o}lder's inequality and
the estimates (\ref{eq:-101}), (\ref{eq:-105}) and (\ref{eq:-104}),
we have
\[
\begin{aligned}\mathbb{E}(|u_{n}-v_{n}|^{2p}) & \le(1-\lambda)\sum_{k=1}^{n}\lambda^{n-k}\mathbb{E}(|\xi_{k}^{2}-\eta_{k}^{2}|^{2p})\\
 & \le(1-\lambda)\sum_{k=1}^{n}\lambda^{n-k}\mathbb{E}(|\xi_{k}+\eta_{k}|^{4p})^{1/2}\mathbb{E}(|\xi_{k}-\eta_{k}|^{4p})^{1/2}\\
 & \le c_{p,\sigma^{\ast},M_{\rho},T}(1-\lambda)\sum_{k=1}^{n}\lambda^{n-k}\Delta t^{p}\\
 & \le c_{p,\sigma^{\ast},M_{\rho},T}\Delta t^{p},
\end{aligned}
\]
which proves (\ref{eq:-99}) and therefore completes the proof.

We now turn to the proof of (\ref{eq:-105}) and (\ref{eq:-104}).
To see (\ref{eq:-104}), by $|e^{x}-1-x|\le|x|^{2}\sum_{k=0}^{\infty}|x|^{k}/(k+2)!\le|x|^{2}e^{|x|}$
and (\ref{eq:-102}), we have 
\[
\begin{aligned}|\xi_{k}-\eta_{k}| & =\Delta t^{-1/2}\big|e^{\Delta t^{1/2}(\zeta_{k}+\xi_{k})}-1-\Delta t^{1/2}\xi_{k}\big|\\
 & \le\Delta t^{-1/2}\big|e^{\Delta t^{1/2}(\zeta_{k}+\xi_{k})}-1-\Delta t^{1/2}(\zeta_{k}+\xi_{k})\big|+|\zeta_{k}|\\
 & \le\Delta t^{1/2}|\zeta_{k}+\xi_{k}|^{2}e^{\Delta t^{1/2}|\zeta_{k}+\xi_{k}|}+c_{\sigma^{\ast},\rho}\Delta t^{1/2}\\
 & \le c_{\sigma^{\ast},M_{\rho},T}(1+|\zeta_{k}+\xi_{k}|^{2}e^{\Delta t^{1/2}|\xi_{k}|})\Delta t^{1/2}.
\end{aligned}
\]
Therefore,
\[
\begin{aligned}\mathbb{E}(|\xi_{k}-\eta_{k}|^{r}) & \le c_{r,\sigma^{\ast},M_{\rho},T}\Delta t^{r/2}[1+\mathbb{E}(|\zeta_{k}+\xi_{k}|^{2r}e^{r\Delta t^{1/2}|\xi_{k}|})]\\
 & \le c_{r,\sigma^{\ast},M_{\rho},T}\Delta t^{r/2}[1+\mathbb{E}(|\zeta_{k}+\xi_{k}|^{4r})^{1/2}\mathbb{E}(e^{2rT^{1/2}|\xi_{k}|})^{1/2}],
\end{aligned}
\]
which, together with (\ref{eq:-102}), (\ref{eq:-101}) and (\ref{eq:-103}),
proves (\ref{eq:-104}). 
\end{proof}
The following estimate will be used repeatedly. We skip its proof
since it is a simple application of Doob's maximal inequality and
the Burkholder--Davis--Gundy inequality.
\begin{lem}
\label{lem:-9}Let 
\[
Y_{t}=\int_{0}^{t}a_{s}ds+\int_{0}^{t}b_{s}dW_{s},\quad t\in[0,T],
\]
for some progressively measurable processes $\{a_{t}\}_{t\ge0}$ and
$\{b_{t}\}_{t\ge0}$. Then, for any $p\ge1$,
\[
\mathbb{E}\Big(\sup_{t\in[0,T]}|Y_{t}|^{p}\Big)^{1/p}\le c_{p,T}\Big[\Big(\int_{0}^{T}\mathbb{E}(|a_{t}|^{p})dt\Big)^{1/p}+\Big(\int_{0}^{T}\mathbb{E}(|b_{t}|^{p})dt\Big)^{1/p}\Big],
\]
where $c_{p,T}$ is a constant depending on $p$ and $T$.
\end{lem}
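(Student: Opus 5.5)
The plan is to treat the drift and martingale parts separately, using the maximal inequalities available for each, and then combine them with Minkowski's inequality. Write $A_t=\int_0^t a_s\,ds$ and $M_t=\int_0^t b_s\,dW_s$. Then $\sup_t|Y_t|\le \sup_t|A_t|+\sup_t|M_t|$, and Minkowski gives
\[
\mathbb{E}\big(\sup_{t\in[0,T]}|Y_t|^p\big)^{1/p}\le \mathbb{E}\big(\sup_{t\in[0,T]}|A_t|^p\big)^{1/p}+\mathbb{E}\big(\sup_{t\in[0,T]}|M_t|^p\big)^{1/p}.
\]
We may assume the right-hand side of the claimed inequality is finite; otherwise there is nothing to prove.

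For the drift term, $\sup_t|A_t|\le\int_0^T|a_s|\,ds$. Hölder's inequality in time gives
\[
\Big(\int_0^T|a_s|\,ds\Big)^p\le T^{p-1}\int_0^T|a_s|^p\,ds.
\]
Taking expectations and applying Fubini yields $\mathbb{E}(\sup_t|A_t|^p)^{1/p}\le T^{1-1/p}\big(\int_0^T\mathbb{E}|a_t|^p\,dt\big)^{1/p}$.

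For the martingale term, the argument depends on $p$.
\begin{itemize}
\item If $p\ge2$, the Burkholder--Davis--Gundy inequality gives $\mathbb{E}(\sup_t|M_t|^p)\le c_p\,\mathbb{E}\big[(\int_0^T b_s^2\,ds)^{p/2}\big]$. Since $p/2\ge1$, Hölder in time bounds this by $c_pT^{p/2-1}\int_0^T\mathbb{E}|b_s|^p\,ds$.
\item If $1\le p<2$, this time-Hölder step goes the wrong way. Instead I would use Jensen or Lyapunov in $\omega$: $\mathbb{E}(\sup_t|M_t|^p)^{1/p}\le\mathbb{E}(\sup_t|M_t|^2)^{1/2}$. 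Doob's $L^2$ inequality and the Itô isometry then bound this by $2\big(\int_0^T\mathbb{E}\,b_t^2\,dt\big)^{1/2}$. This is an $L^2$ quantity rather than the stated $L^p$ one.
\end{itemize}

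The main obstacle is therefore the range $1\le p<2$. For a general progressively measurable $b$, the stated inequality with $(\int_0^T\mathbb{E}|b_t|^p\,dt)^{1/p}$ on the right can fail there. For example, take $b$ deterministic in $\omega$: then $M_T$ is Gaussian with variance $\int_0^T b^2\,dt$, and $\int_0^T b^2\,dt$ is not controlled by $\int_0^T|b|^p\,dt$ when $p<2$.

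In the applications, $b$ is a bounded process multiplied by negative powers of $v_n$, whose moments of every order are controlled uniformly by Lemma~\ref{lem:-4}. So I would establish and use the lemma in the form valid for $p\ge2$. For $1\le p<2$, I would first apply Lyapunov to reduce to the case $p=2$, and state the resulting bound with $(\int_0^T\mathbb{E}|b_t|^{2}\,dt)^{1/2}$ in place of the $L^p$ norm. This is harmless downstream, since all moments of the integrands are finite.

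Combining the drift bound with the $p\ge2$ martingale bound gives the stated inequality for $p\ge2$, with $c_{p,T}=\max(T^{1-1/p},\,c_p^{1/p}T^{1/2-1/p})$.
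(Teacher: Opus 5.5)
Your proof for $p\ge2$ is the ``Doob plus BDG'' argument the paper points to without writing out: a Minkowski split, H\"{o}lder in time for the drift, and BDG followed by H\"{o}lder in time for the stochastic integral. Your objection for $1\le p<2$ is also correct, and the lemma as stated is false in that range. Take $a=0$ and $b_t=\epsilon^{-1/2}\mathds{1}_{[0,\epsilon]}(t)$ with $\epsilon\le T$. The left side is at least $\mathbb{E}(|Z|^p)^{1/p}$ for a standard normal $Z$, because $Y_T=\epsilon^{-1/2}W_\epsilon$. The right side equals $c_{p,T}\,\epsilon^{1/p-1/2}$, which tends to $0$ as $\epsilon\to0$. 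Your $L^2$ substitute for that range is valid. Restricting the lemma to $p\ge2$ costs nothing, since the paper only invokes it with $p=2$ (in Proposition~\ref{prop:-3} and Lemma~\ref{lem:-10}).
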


The following proposition reduces the study of convergence of $\{I_{t}\}_{t\ge0}$
as $\Delta t\to0$ to that of the process obtained by replacing the
realized variance $v_{n}$ by $u_{n}$, where $u_{n}$ is the random
variable defined in (\ref{eq:-75}). The process with realized variance
given by $u_{n}$ is more convenient for the computation of the drift
and diffusion coefficients of the limiting process.
\begin{prop}
\label{prop:-3}Let $\{J_{t}\}_{t\ge0}$ be the process defined by
\begin{equation}
\begin{aligned}\dfrac{dJ_{t}}{J_{t}} & =(1-\bar{\sigma}u_{n-1}^{-1/2})r(t)dt+\bar{\sigma}u_{n-1}^{-1/2}\frac{dS_{t}}{S_{t}},\quad t\in[t_{n-1},t_{n}),\\
J_{0} & =1,
\end{aligned}
\label{eq:-109}
\end{equation}
where $u_{n}$ is given by (\ref{eq:-75}). Then
\begin{equation}
\lim_{\Delta t\to0}\mathbb{E}\Big(\sup_{t\in[0,T]}|\log I_{t}-\log J_{t}|^{2}\Big)^{1/2}=0.\label{eq:-110}
\end{equation}
Therefore, $\{(I_{t},W_{t},\tilde{W}_{t})\}_{t\ge0}$ converges in
distribution if and only if $\{(J_{t},W_{t},\tilde{W}_{t})\}_{t\ge0}$
converges in distribution to the same limiting process.
\end{prop}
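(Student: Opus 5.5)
We write both log-processes explicitly and compare them term by term. By It\^{o}'s formula, on $[t_{n-1},t_{n})$,
\[
d\log I_{t}=\Big[r(t)+\bar{\sigma}v_{n-1}^{-1/2}(\rho(t)-r(t))-\tfrac{1}{2}\bar{\sigma}^{2}v_{n-1}^{-1}\sigma_{t}^{2}\Big]dt+\bar{\sigma}v_{n-1}^{-1/2}\sigma_{t}dW_{t},
\]
and the same holds for $\log J_{t}$ with $v_{n-1}$ replaced by $u_{n-1}$. Define the piecewise constant adapted processes $a_{t}=v_{n-1}^{-1/2}-u_{n-1}^{-1/2}$ and $b_{t}=v_{n-1}^{-1}-u_{n-1}^{-1}$ for $t\in[t_{n-1},t_{n})$. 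Then
\[
\log I_{t}-\log J_{t}=\int_{0}^{t}\Big[\bar{\sigma}(\rho(s)-r(s))a_{s}-\tfrac{1}{2}\bar{\sigma}^{2}\sigma_{s}^{2}b_{s}\Big]ds+\int_{0}^{t}\bar{\sigma}\sigma_{s}a_{s}dW_{s}.
\]
These processes are progressively measurable, because $u_{n-1}$ and $v_{n-1}$ are $\mathcal{F}_{t_{n-1}}$-measurable. This puts the difference in the form required by Lemma \ref{lem:-9}.

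Next we apply Lemma \ref{lem:-9} with $p=2$. Using (\ref{eq:-184}), (\ref{eq:-190}) and (\ref{eq:-10}), this gives
\[
\mathbb{E}\Big(\sup_{t\in[0,T]}|\log I_{t}-\log J_{t}|^{2}\Big)^{1/2}\le c\Big[\sup_{0\le n\le N-1}\mathbb{E}(a_{t_{n}}^{2})^{1/2}+\sup_{0\le n\le N-1}\mathbb{E}(b_{t_{n}}^{2})^{1/2}\Big],
\]
where $c$ depends only on $T$, $\bar{\sigma}$, $M_{r}$, $M_{\rho}$ and $\sigma^{\ast}$. For $n=0$ both terms vanish, since $u_{0}=v_{0}$. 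For $n\ge1$ we invoke Lemma \ref{lem:-5}: with $\alpha=1/2$ and $p=2$ it bounds the first term, and with $\alpha=1$ and $p=2$ it bounds the second, each by $c^{\prime}\Delta t^{1/2}$. The constant $c^{\prime}$ is uniform in $n\le N$ and depends only on $\lambda$, $v_{0}$, $\sigma_{\ast}$, $\sigma^{\ast}$, $M_{\rho}$ and $T$. Hence the left-hand side is $\mathrm{O}(\Delta t^{1/2})$, which proves (\ref{eq:-110}).

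For the final assertion, (\ref{eq:-110}) gives $\sup_{t}|\log I_{t}-\log J_{t}|\to0$ in probability. Because $x\mapsto e^{x}$ is locally uniformly continuous, $\sup_{t\in[0,T]}|I_{t}-J_{t}|\to0$ in probability as well. To see this, note that $\sup_{t}|\log J_{t}|$ is tight, which follows from Lemma \ref{lem:-9} together with Lemma \ref{lem:-2}. Then $|I_{t}-J_{t}|\le e^{\sup|\log J|+\sup|\log I-\log J|}\,\sup|\log I-\log J|$. The same $W$ and $\tilde{W}$ appear in both triples $(I,W,\tilde{W})$ and $(J,W,\tilde{W})$. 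So the two triples differ by a quantity tending to zero in probability in $C([0,T])$, whose topology is that of uniform convergence on compacts. By Slutsky's theorem (the converging-together lemma), one triple converges in distribution if and only if the other does, and the limits coincide.

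The main obstacle is the uniformity in $n$ of the estimate for $a$ and $b$. This rests entirely on the negative-moment bounds of Lemma \ref{lem:-2} and Lemma \ref{lem:-4}, which feed into Lemma \ref{lem:-5}. Since these lemmas are already available, the proof itself is routine.
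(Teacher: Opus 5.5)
Your proof is correct and follows the paper's argument: It\^{o}'s formula for $\log I_t-\log J_t$, then Lemma \ref{lem:-9} with $p=2$, then the uniform-in-$n$ bound of Lemma \ref{lem:-5} (with $\alpha=1/2$ and $\alpha=1$), which together give an $\mathrm{O}(\Delta t^{1/2})$ rate. Your converging-together argument for the final equivalence, which uses tightness of $\sup_t|\log J_t|$ from Lemma \ref{lem:-9} and Lemma \ref{lem:-2}, spells out a step the paper only asserts with ``Therefore''.
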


\begin{proof}
By It\"{o}'s formula, for $t\in[t_{n-1},t_{n})$,
\[
d(\log I_{t}-\log J_{t})=[\bar{\sigma}(r(t)-\rho(t))a_{n-1}+\bar{\sigma}^{2}\sigma_{t}^{2}b_{n-1}/2]dt+\bar{\sigma}a_{n-1}\sigma_{t}dW_{t},
\]
where $a_{n-1}=u_{n-1}^{-1/2}-v_{n-1}^{-1/2}$ and $b_{n-1}=u_{n-1}^{-1}-v_{n-1}^{-1}$.
By Lemma \ref{lem:-9} and the boundedness of $\rho(t)$, $r(t)$
and $\sigma_{t}$,
\begin{equation}
\begin{aligned}\mathbb{E}\Big(\sup_{t\in[0,T]}|\log I_{t}-\log J_{t}|^{2}\Big)^{1/2} & \le c_{M_{r},M_{\rho},\sigma^{\ast},\bar{\sigma}}\Big[\Big(\sum_{n=1}^{N}\mathbb{E}(|a_{n-1}|^{2})\Delta t\Big)^{1/2}+\Big(\sum_{n=1}^{N}\mathbb{E}(|b_{n-1}|^{2})\Delta t\Big)^{1/2}\Big]\end{aligned}
\label{eq:-112}
\end{equation}
By Lemma \ref{lem:-5},
\begin{equation}
\sum_{n=1}^{N}\mathbb{E}(a_{n-1}^{2})\le c_{\lambda,v_{0},\sigma_{\ast},\sigma^{\ast},M_{\rho},T}N\Delta t=c_{\lambda,v_{0},\sigma_{\ast},\sigma^{\ast},M_{\rho},T}.\label{eq:-114}
\end{equation}
and similarly,
\begin{equation}
\sum_{n=1}^{N}\mathbb{E}(b_{n-1}^{2})\le c_{\lambda,v_{0},\sigma_{\ast},\sigma^{\ast},M_{\rho},T}N\Delta t=c_{\lambda,v_{0},\sigma_{\ast},\sigma^{\ast},M_{\rho},T},\label{eq:-113}
\end{equation}
Combining (\ref{eq:-112}), (\ref{eq:-113}) and (\ref{eq:-114})
completes the proof.
\end{proof}
We end this section with the following lemma which, when applied to
$f(t)=\prod_{k=0}^{\infty}(1+t\lambda^{k})^{-1/2}$, gives that $U(\lambda)^{2}<V(\lambda)$.
\begin{lem}
\label{lem:-6}Suppose that $f(x)$ is a positive Borel function on
$[0,\infty)$ such that
\begin{equation}
f(x+y)\ge f(x)f(y).\label{eq:-106}
\end{equation}
Then
\begin{equation}
\Big(\int_{0}^{\infty}f(x^{2})dx\Big)^{2}\le\frac{\pi}{2}\int_{0}^{\infty}f(x)dx.\label{eq:-107}
\end{equation}
The equality of (\ref{eq:-107}) holds if and only if $f(x+y)=f(x)f(y)$
a.e.
\end{lem}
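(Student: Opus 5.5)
The approach is to square the left-hand side, apply the supermultiplicativity (\ref{eq:-106}) pointwise inside a double integral, and pass to polar coordinates. All integrands are nonnegative Borel functions, so Tonelli's theorem justifies every interchange and the argument works in $[0,\infty]$. If $\int_{0}^{\infty}f(x)dx=\infty$ there is nothing to prove, so assume it is finite.

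\emph{Step 1.} Write the square as a double integral and apply (\ref{eq:-106}) with $x^{2},y^{2}$ in place of $x,y$:
\[
\Big(\int_{0}^{\infty}f(x^{2})dx\Big)^{2}=\int_{0}^{\infty}\int_{0}^{\infty}f(x^{2})f(y^{2})\,dx\,dy\le\int_{0}^{\infty}\int_{0}^{\infty}f(x^{2}+y^{2})\,dx\,dy .
\]

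\emph{Step 2.} In polar coordinates $x=r\cos\phi$, $y=r\sin\phi$ with $\phi\in(0,\pi/2)$, the last integral equals
\[
\frac{\pi}{2}\int_{0}^{\infty}f(r^{2})\,r\,dr=\frac{\pi}{4}\int_{0}^{\infty}f(s)\,ds ,
\]
using the substitution $s=r^{2}$. This gives the bound with constant $\pi/4$, which implies (\ref{eq:-107}).

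The constant $\pi/4$ is the sharp one: for $f(x)=e^{-x}$ both sides equal $\pi/4$. It is also exactly what is needed for $U(\lambda)^{2}<V(\lambda)$. By (\ref{eq:-16}) and (\ref{eq:-3}), $U^{2}=\frac{2}{\pi(1-\lambda)}A^{2}$ and $V=\frac{1}{2(1-\lambda)}B$, where $A=\int_{0}^{\infty}f(t^{2})dt$ and $B=\int_{0}^{\infty}f(t)dt$, so $U^{2}<V$ is equivalent to $A^{2}<\frac{\pi}{4}B$. I would therefore state and prove the lemma with $\pi/4$ in place of $\pi/2$.

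\emph{Step 3 (equality case).} The equality statement is the main obstacle, because it only makes sense for the sharp constant. With the printed constant $\pi/2$, equality can never hold when $0<\int f<\infty$, since Step 2 gives $A^{2}\le\frac{\pi}{4}B<\frac{\pi}{2}B$. So I would read the equality assertion as referring to the $\pi/4$ bound.

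For that bound, equality holds if and only if the inequality in Step 1 is an equality. Since $f(x^{2}+y^{2})-f(x^{2})f(y^{2})\ge0$ and the integrals are finite, this happens if and only if $f(x^{2}+y^{2})=f(x^{2})f(y^{2})$ for a.e. $(x,y)\in(0,\infty)^{2}$.

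The map $(x,y)\mapsto(x^{2},y^{2})$ is a diffeomorphism of $(0,\infty)^{2}$ onto itself with nonvanishing Jacobian. Hence it maps null sets to null sets in both directions, and the condition is equivalent to $f(a+b)=f(a)f(b)$ for a.e. $(a,b)\in(0,\infty)^{2}$. That is the stated characterization.

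Finally, to obtain $U(\lambda)^{2}<V(\lambda)$ one applies the lemma to $f(t)=\prod_{k\ge0}(1+t\lambda^{k})^{-1/2}$. Each factor is supermultiplicative, since
\[
1+(a+b)\lambda^{k}\le(1+a\lambda^{k})(1+b\lambda^{k}),
\]
and the inequality is strict whenever $a,b>0$. Thus $f(a+b)>f(a)f(b)$ on the whole open quadrant, so equality in the lemma fails and the inequality $A^{2}<\frac{\pi}{4}B$ is strict.
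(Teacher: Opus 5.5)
Your proof is correct and follows the paper's argument: square the integral, pass to polar coordinates, and apply (\ref{eq:-106}) pointwise; you simply apply (\ref{eq:-106}) before the change of variables rather than after. You are also right about the constant. The paper's last step drops the factor $\tfrac12$ in $\int_0^\infty f(r^2)\,r\,dr=\tfrac12\int_0^\infty f(s)\,ds$, so the sharp bound is $\tfrac{\pi}{4}\int_0^\infty f$. That is the version $U(\lambda)^2<V(\lambda)$ actually needs, and the only one in which the equality clause makes sense, with one small caveat: the equality characterization also requires $\int_0^\infty f<\infty$, since for example $f\equiv\tfrac12$ gives $\infty=\infty$ without being multiplicative, and your ``nothing to prove'' remark covers only the inequality.
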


\begin{proof}
By change of variables,
\[
\begin{aligned}\Big(\int_{0}^{\infty}f(x^{2})dx\Big)^{2} & =\int_{0}^{\infty}\int_{0}^{\infty}f(x^{2})f(y^{2})dxdy=\int_{0}^{\pi/2}\int_{0}^{\infty}f(r^{2}\cos^{2}\theta)f(r^{2}\sin^{2}\theta)rdrd\theta.\end{aligned}
\]
By (\ref{eq:-106}), $f(r^{2}\cos^{2}\theta)f(r^{2}\sin^{2}\theta)\le f(r^{2})$.
Therefore,
\begin{equation}
\Big(\int_{0}^{\infty}f(x^{2})dx\Big)^{2}\le\int_{0}^{\pi/2}\int_{0}^{\infty}f(r^{2})rdrd\theta=\frac{\pi}{2}\int_{0}^{\infty}f(x)dx.\label{eq:-108}
\end{equation}
Clearly, the equality of (\ref{eq:-108}) holds if and only if $f(x+y)=f(x)f(y)$
a.e.
\end{proof}

\section{\label{sec:-5}The Structural Limit Theorem}

\subsection{\label{subsec:-9}Volatility Targeting with Realized Variance Reset}

In this subsection, we introduce the realized variance reset for the
volatility target index $\{I_{t}\}_{t\ge0}$, which will be useful
in reducing convergence of finite dimensional distribution to that
of one dimensional distribution.
\begin{lem}
\label{lem:-10}Let $\{I_{t}\}_{t\ge0}$ and $\{\tilde{I}_{t}\}_{t\ge0}$
be two volatility target indices with the same initial level $I_{0}$,
the same observation window parameter $\lambda$, and different initial
realized variances $v_{0}$ and $\tilde{v}_{0}$ respectively. Then
\begin{equation}
\mathbb{E}\Big(\sup_{t\in[0,T]}|\log I_{t}-\log\tilde{I}_{t}|^{2}\Big)^{1/2}\le c_{\lambda,M_{\rho},M_{r},\sigma_{\ast},\sigma^{\ast},T}(1+v_{0}^{-2}+\tilde{v}_{0}^{-2})|v_{0}-\tilde{v}_{0}|\Delta t^{1/2},\label{eq:-147}
\end{equation}
where $c_{\lambda,M_{\rho},M_{r},\sigma_{\ast},\sigma^{\ast},T}$
is a constant depending only on $\lambda$, $M_{\rho}$, $M_{r}$,
$\sigma_{\ast}$, $\sigma^{\ast}$ and $T$.
\end{lem}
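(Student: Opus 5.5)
Writing $v_n$ and $\tilde v_n$ for the two realized variances, the whole plan rests on one observation: both recursions (\ref{eq:-2}) are driven by the same returns $\eta_k^2$. From (\ref{eq:-74}) we therefore get the exact identity
\[
v_{n}-\tilde{v}_{n}=\lambda^{n}(v_{0}-\tilde{v}_{0}),
\]
so the discrepancy is deterministic and decays geometrically in $n$.

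I will then follow the proof of Proposition \ref{prop:-3}. By It\^{o}'s formula on $[t_{n-1},t_{n})$,
\[
d(\log I_{t}-\log\tilde{I}_{t})=[\bar{\sigma}(r(t)-\rho(t))a_{n-1}+\bar{\sigma}^{2}\sigma_{t}^{2}b_{n-1}/2]dt+\bar{\sigma}a_{n-1}\sigma_{t}dW_{t},
\]
where $a_{n-1}=\tilde{v}_{n-1}^{-1/2}-v_{n-1}^{-1/2}$ and $b_{n-1}=\tilde{v}_{n-1}^{-1}-v_{n-1}^{-1}$. Lemma \ref{lem:-9} with $p=2$ then gives
\[
\mathbb{E}\Big(\sup_{t\in[0,T]}|\log I_{t}-\log\tilde{I}_{t}|^{2}\Big)^{1/2}\le c\Big[\Big(\sum_{n=1}^{N}\mathbb{E}(a_{n-1}^{2})\Delta t\Big)^{1/2}+\Big(\sum_{n=1}^{N}\mathbb{E}(b_{n-1}^{2})\Delta t\Big)^{1/2}\Big].
\]

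To bound $a_{n-1}$ and $b_{n-1}$, I apply the mean value theorem to $x^{-\alpha}$ with $\alpha\in\{1/2,1\}$, using the monotonicity of the derivative as in Lemma \ref{lem:-5}:
\[
|\tilde v_{n}^{-\alpha}-v_{n}^{-\alpha}|\le\alpha(v_{n}^{-\alpha-1}+\tilde v_{n}^{-\alpha-1})\lambda^{n}|v_{0}-\tilde v_{0}|.
\]
Squaring and applying Lemma \ref{lem:-4} with exponent $2\alpha+2\le4$ yields
\[
\mathbb{E}(a_n^2)+\mathbb{E}(b_n^2)\le c\,(1+v_0^{-4}+\tilde v_0^{-4})\lambda^{2n}|v_0-\tilde v_0|^2.
\]
Lemma \ref{lem:-4} is stated for $1\le n\le N$; for $n=0$ the bound is trivial from the explicit values $v_0,\tilde v_0$. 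The exponent $4$ here will need care at the end.

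Summing over $n$, the geometric factor gives $\sum_{n\ge0}\lambda^{2n}\Delta t\le(1-\lambda^{2})^{-1}\Delta t$. This is the step that produces $\Delta t^{1/2}$ after taking the square root: the difference only survives for $O((1-\lambda)^{-1})$ rebalancing periods, rather than over all of $[0,T]$. Taking square roots, the result is
\[
c_{\lambda,M_{\rho},M_{r},\sigma_{\ast},\sigma^{\ast},T}(1+v_0^{-4}+\tilde v_0^{-4})^{1/2}|v_0-\tilde v_0|\Delta t^{1/2}.
\]
Since $(1+x^{2})^{1/2}\le1+x$, this becomes $(1+v_0^{-2}+\tilde v_0^{-2})$, exactly the factor in (\ref{eq:-147}).

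The main obstacle I anticipate is keeping the constant free of $v_0$ and $\tilde v_0$ except through that explicit factor. This requires the precise form $c(1+v_0^{-\alpha})$ of Lemma \ref{lem:-4}, which is why that lemma tracks the $v_0$ dependence. The power count is the delicate part: the derivative of $x^{-1}$ produces $v^{-2}$, whose square needs the fourth negative moment, and this is what fixes the exponent $2$ in (\ref{eq:-147}). If the moment powers fail to line up as stated, I would fall back on Cauchy--Schwarz and Lemma \ref{lem:-4} with a different $\alpha$. Everything else is routine bookkeeping identical to Proposition \ref{prop:-3}.
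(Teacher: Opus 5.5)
Your proposal is correct and follows the paper's proof essentially step for step. Both use the deterministic identity $v_n-\tilde v_n=\lambda^n(v_0-\tilde v_0)$, It\^{o}'s formula with Lemma \ref{lem:-9}, the mean-value bound on $x^{-\alpha}$ combined with Lemma \ref{lem:-4}, and the geometric sum $\sum_n\lambda^{2n}\Delta t\le(1-\lambda^2)^{-1}\Delta t$ that produces the factor $\Delta t^{1/2}$. Your direct squaring with the third and fourth negative moments is only a different bookkeeping of the paper's Minkowski step, which yields $(1+v_0^{-\alpha-1}+\tilde v_0^{-\alpha-1})\lambda^n$.
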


\begin{proof}
Throughout the proof, $c_{q_{1},\dots,q_{m}}$ will denote constants
depending only on the quantities $q_{1},\dots,q_{m}$, and it may
be different at different appearances. Let $\{\tilde{v}_{n}\}_{n\ge0}$
be realized variance process corresponding to the initial variance
$\tilde{v}_{0}$; that is, 
\begin{equation}
\tilde{v}_{n}=\lambda^{n}\tilde{v}_{0}+\frac{1-\lambda}{\Delta t}\sum_{k=1}^{n}\lambda^{n-k}\Big(\frac{S_{t_{n}}}{S_{t_{n-1}}}-1\Big)^{2},\quad1\le n\le N.\label{eq:-142}
\end{equation}
Then $v_{n}-\tilde{v}_{n}=\lambda^{n}(v_{0}-\tilde{v}_{0})$. By It\"{o}'s
formula,
\begin{equation}
\log I_{t}-\log\tilde{I}_{t}=[\bar{\sigma}(\rho(t)-r(t))a_{n-1}-\bar{\sigma}^{2}\sigma_{t}^{2}b_{n-1}/2]dt+\bar{\sigma}\sigma_{t}a_{n-1}dW_{t},\label{eq:-143}
\end{equation}
for $t\in[t_{n-1},t_{n})$, where $a_{n}=v_{n}^{-1/2}-\tilde{v}_{n}^{-1/2}$
and $b_{n}=v_{n}^{-1}-\tilde{v}_{n}^{-1}$. By Lemma \ref{lem:-9}
and the boundedness of $\rho(t)$, $r(t)$ and $\sigma_{t}$,
\begin{equation}
\begin{aligned}\mathbb{E}\Big(\sup_{t\in[0,T]}|\log I_{t}-\log\tilde{I}_{t}|^{2}\Big)^{1/2} & \le c_{M_{\rho},M_{r},\sigma^{\ast},T}\Big[\Big(\sum_{n=1}^{N}\mathbb{E}(a_{n-1}^{2})\Delta t\Big)^{1/2}+\Big(\sum_{n=1}^{N}\mathbb{E}(b_{n-1}^{2})\Delta t\Big)^{1/2}\Big].\end{aligned}
\label{eq:-144}
\end{equation}
For any $\alpha>0$, by Taylor's theorem,
\[
\begin{aligned}|v_{n}^{-\alpha}-\tilde{v}_{n}^{-\alpha}| & \le\alpha(v_{n}^{-\alpha-1}+\tilde{v}_{n}^{-\alpha-1})|v_{n}-\tilde{v}_{n}|=\alpha|v_{0}-\tilde{v}_{0}|\lambda^{n}(v_{n}^{-\alpha-1}+\tilde{v}_{n}^{-\alpha-1})|.\end{aligned}
\]
Therefore,
\[
\mathbb{E}(|v_{n}^{-\alpha}-\tilde{v}_{n}^{-\alpha}|^{2})^{1/2}\le\alpha|v_{0}-\tilde{v}_{0}|\lambda^{n}[\mathbb{E}(v_{n}^{-2\alpha-2})^{1/2}+\mathbb{E}(\tilde{v}_{n}^{-2\alpha-2})^{1/2}],
\]
which, together with Lemma \ref{lem:-4}, implies that 
\[
\mathbb{E}(|v_{n}^{-\alpha}-\tilde{v}_{n}^{-\alpha}|^{2})^{1/2}\le c_{\alpha,\lambda,\sigma_{\ast},\sigma^{\ast},M_{\rho},T}|v_{0}-\tilde{v}_{0}|(1+v_{0}^{-\alpha-1}+\tilde{v}_{0}^{-\alpha-1})\lambda^{n}.
\]
Therefore,
\begin{equation}
\mathbb{E}(a_{n-1}^{2})^{1/2}\le c_{\lambda,\sigma_{\ast},\sigma^{\ast},M_{\rho},T}|v_{0}-\tilde{v}_{0}|(1+v_{0}^{-3/2}+\tilde{v}_{0}^{-3/2})\lambda^{2n},\label{eq:-145}
\end{equation}
and
\begin{equation}
\mathbb{E}(b_{n-1}^{2})^{1/2}\le c_{\lambda,\sigma_{\ast},\sigma^{\ast},M_{\rho},T}|v_{0}-\tilde{v}_{0}|(1+v_{0}^{-2}+\tilde{v}_{0}^{-2})\lambda^{2n}.\label{eq:-146}
\end{equation}
Combining (\ref{eq:-144}), (\ref{eq:-145}) and (\ref{eq:-146})
completes the proof.
\end{proof}
\begin{defn}
\label{def:}Let $0<\tau_{1}<\cdots<\tau_{m}<T$ be fixed constants,
and 
\begin{equation}
n(\tau_{k})=\inf\{n\ge0:t_{n}\ge\tau_{k}\},\quad1\le k\le m.\label{eq:-116}
\end{equation}
The \emph{volatility target index with realized variance reset at
times $\tau_{1},\dots,\tau_{m}$} is the continuous process $\{\tilde{I}_{t}\}_{t\ge0}$
defined by 

\begin{equation}
\begin{aligned}\dfrac{d\tilde{I}_{t}}{\tilde{I}_{t}} & =(1-\bar{\sigma}\tilde{v}_{n-1}^{-1/2})r(t)dt+\bar{\sigma}\tilde{v}_{n-1}^{-1/2}\frac{dS_{t}}{S_{t}},\quad t\in[t_{n-1},t_{n}),\\
\tilde{I}_{0} & =1,
\end{aligned}
\label{eq:-98}
\end{equation}
where the realized variance process is defined by
\begin{equation}
\tilde{v}_{n}=\lambda\tilde{v}_{n-1}+\frac{1-\lambda}{\Delta t}\Big(\frac{S_{t_{n}}}{S_{t_{n-1}}}-1\Big)^{2},\quad1\le n\le N,\;n\not\in\{n(\tau_{1}),\dots n(\tau_{m})\},\label{eq:-139}
\end{equation}
and 
\begin{equation}
\tilde{v}_{0}=\tilde{v}_{n(\tau_{1})}=\dots=\tilde{v}_{n(\tau_{m})}=v_{0}.\label{eq:-140}
\end{equation}
\end{defn}

The following lemma allows to remove the dependence on the realized
variance $v_{n}$ with $t_{n-1}<s\le t_{n}$ when considering the
conditional limiting distribution of $\log I_{t}-\log I_{s}$ with
respect to $\mathcal{F}_{s}$.
\begin{lem}
\label{lem:-8}Let $0<\tau_{1}<\cdots<\tau_{m}<T$ be fixed constants,
and $\{\tilde{I}_{t}\}_{t\ge0}$ be the volatility target index with
realized variance reset at times \emph{$\tau_{1},\dots,\tau_{m}$.
Then
\begin{equation}
\mathbb{E}\Big(\sup_{t\in[0,T]}|\log I_{t}-\log\tilde{I}_{t}|^{2}\Big)^{1/2}\le c_{m,\lambda,v_{0},M_{\rho},M_{r},\sigma_{\ast},\sigma^{\ast},T}\Delta t^{1/2},\label{eq:-141}
\end{equation}
where $c_{m,\lambda,v_{0},M_{\rho},M_{r},\sigma_{\ast},\sigma^{\ast},T}$
is a constant depending only on $m$, $\lambda$, $v_{0}$, $M_{\rho}$,
$M_{r}$, $\sigma_{\ast}$, $\sigma^{\ast}$ and $T$. Therefore,
\begin{equation}
\lim_{\Delta t\to0}\mathbb{E}\Big(\sup_{t\in[0,T]}|\log I_{t}-\log\tilde{I}_{t}|^{2}\Big)^{1/2}=0.\label{eq:-155}
\end{equation}
}
\end{lem}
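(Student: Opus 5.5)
The plan is to follow the template of Lemma \ref{lem:-10}, now applied segment by segment between consecutive reset times. Set $n_0=0$, $n_k=n(\tau_k)$ for $1\le k\le m$, and $n_{m+1}=N+1$.

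The first step is to compare the two realized variance processes. On each block $n_k\le n<n_{k+1}$ both $v_n$ and $\tilde v_n$ obey the same recursion (\ref{eq:-2}) with the same increments $(S_{t_n}/S_{t_{n-1}}-1)^2$. Hence
\[
v_n-\tilde v_n=\lambda^{n-n_k}(v_{n_k}-\tilde v_{n_k})=\lambda^{n-n_k}(v_{n_k}-v_0),\qquad n_k\le n<n_{k+1}.
\]

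The second step is a pointwise Taylor bound. As in the proof of Lemma \ref{lem:-10}, for $\alpha\in\{1/2,1\}$,
\[
|v_n^{-\alpha}-\tilde v_n^{-\alpha}|\le\alpha\big(v_n^{-\alpha-1}+\tilde v_n^{-\alpha-1}\big)\lambda^{n-n_k}|v_{n_k}-v_0|.
\]
By H\"older's inequality,
\[
\mathbb{E}\big(|v_n^{-\alpha}-\tilde v_n^{-\alpha}|^2\big)^{1/2}\le c\,\lambda^{n-n_k}\big[\mathbb{E}(v_n^{-4\alpha-4})^{1/4}+\mathbb{E}(\tilde v_n^{-4\alpha-4})^{1/4}\big]\,\mathbb{E}\big(|v_{n_k}-v_0|^4\big)^{1/4}.
\]
The factor $\mathbb{E}(|v_{n_k}-v_0|^4)$ is bounded by Lemma \ref{lem:-4} with negative $\alpha$, together with (\ref{eq:-105}). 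The factor $\mathbb{E}(v_n^{-4\alpha-4})$ is bounded by Lemma \ref{lem:-4} directly.

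The main obstacle is the negative moment bound for $\tilde v_n$. On block $k$, $\tilde v_n$ is a realized variance started afresh at time $t_{n_k}$ with initial value $v_0$. I will rerun the proof of Lemma \ref{lem:-4} conditionally on $\mathcal{F}_{t_{n_k}}$. That proof uses only the conditional estimates of Lemma \ref{lem:-1} and Lemma \ref{lem:-3}, iterated backwards in time, and these hold with $\mathcal{F}_{t_{n_k}}$ in place of $\mathcal{F}_0$. The output is the same bound $c(1+v_0^{-\alpha})$, uniform in $k$ and $n$. The one thing to check is that the constants depend only on $\lambda,\sigma_*,\sigma^*,M_\rho,T$ and not on the starting time; this holds because the bounds in Lemma \ref{lem:-1} and Lemma \ref{lem:-3} are deterministic.

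The third step sums the contributions. As in (\ref{eq:-143}), write
\[
d(\log I_t-\log\tilde I_t)=\big[\bar\sigma(\rho-r)a_{n-1}-\bar\sigma^2\sigma_t^2b_{n-1}/2\big]dt+\bar\sigma\sigma_ta_{n-1}dW_t
\]
on $[t_{n-1},t_n)$, with $a_n=v_n^{-1/2}-\tilde v_n^{-1/2}$ and $b_n=v_n^{-1}-\tilde v_n^{-1}$. Lemma \ref{lem:-9} bounds the left side of (\ref{eq:-141}) by
\[
c\Big(\sum_n\mathbb{E}(a_{n-1}^2+b_{n-1}^2)\Delta t\Big)^{1/2}.
\]
Using the second step,
\[
\sum_n\mathbb{E}(a_{n-1}^2+b_{n-1}^2)\le c\sum_{k=0}^{m}\sum_{j\ge0}\lambda^{2j}\le c\,(m+1)(1-\lambda^2)^{-1}.
\]
Therefore the left side of (\ref{eq:-141}) is at most $c_{m,\lambda,v_0,\dots}\Delta t^{1/2}$. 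This proves (\ref{eq:-141}), and (\ref{eq:-155}) follows at once.
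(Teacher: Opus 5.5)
Your proof is correct; it uses the same mechanism as the paper but organizes it differently.

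\textbf{The paper's route.} It reduces to a single reset time $\tau$ and notes that $I=\tilde I$ on $[0,t_{n(\tau)}]$. It then applies Lemma \ref{lem:-10} conditionally on $\mathcal{F}_{t_{n(\tau)}}$ to the normalized indices $I_t/I_{t_{n(\tau)}}$ and $\tilde I_t/\tilde I_{t_{n(\tau)}}$, viewed as volatility target indices with initial variances $v_{n(\tau)}$ and $v_0$. The random factor $(1+v_0^{-4}+v_{n(\tau)}^{-4})|v_0-v_{n(\tau)}|^2$ is integrated out with Cauchy--Schwarz and Lemma \ref{lem:-4}. The case of $m$ resets follows by resetting one time after another.

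\textbf{Your route.} You inline the proof of Lemma \ref{lem:-10} instead of invoking it. The block identity $v_n-\tilde v_n=\lambda^{n-n_k}(v_{n_k}-v_0)$ handles all $m$ resets in one pass. An unconditional H\"older step then separates $|v_{n_k}-v_0|$ from the negative moments of $v_n$ and $\tilde v_n$, and summing $\lambda^{2(n-n_k)}$ over the $m+1$ blocks gives the $\mathrm{O}(\Delta t)$ bound.

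\textbf{What each buys.} The paper's argument is shorter because it reuses Lemma \ref{lem:-10}. However, it requires reading Lemma \ref{lem:-10}, and hence Lemma \ref{lem:-4}, under the conditional law given $\mathcal{F}_{t_{n(\tau)}}$. There the initial variance is random and the volatility is only progressively measurable, and the iteration over resets also needs moments of an already-reset variance. Your version avoids any conditional interpretation. It isolates the one ingredient not literally stated in the paper: the bound $\mathbb{E}(\tilde v_n^{-\alpha})\le c(1+v_0^{-\alpha})$ for the restarted variance, uniform in the block. As you observe, this follows verbatim from the backward-peeling argument of Lemma \ref{lem:-4}, because Lemmas \ref{lem:-1} and \ref{lem:-3} give deterministic bounds on the conditional expectations.
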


\begin{proof}
It suffices to prove (\ref{eq:-141}) for the case $m=1$; for general
$m\ge1$, it follows easily from the $m=1$ case and resetting the
realized variance of $\tilde{I}_{t}$. Let $\tau=\tau_{1}$ be the
reset time and $n(\tau)=\inf\{n\ge0:t_{n}\ge\tau\}$. Clearly, $I_{t}=\tilde{I}_{t}$
for $t\in[0,t_{n(\tau)}]$. Consequently,
\begin{equation}
\mathbb{E}\Big(\sup_{t\in[0,T]}|\log I_{t}-\log\tilde{I}_{t}|^{2}\Big)=\mathbb{E}\Big[\mathbb{E}\Big(\sup_{t\in[t_{n(\tau)},T]}|\log I_{t}-\log\tilde{I}_{t}|^{2}\Big|\mathcal{F}_{t_{n(\tau)}}\Big)\Big].\label{eq:-152}
\end{equation}
Let $J_{t}=I_{t}/I_{t_{n}(\tau)}$ and $\tilde{J}_{t}=\tilde{I}_{t}/\tilde{I}_{t_{n}(\tau)}$.
When conditional on $\mathcal{F}_{t_{n(\tau)}}$, $\{J_{t}\}_{t\ge t_{n(\tau)}}$
and $\{\tilde{J}_{t}\}_{t\ge t_{n(\tau)}}$ are volatility target
indices with initial realized variances $v_{n(\tau)}$ and $v_{0}$
respectively. By Lemma \ref{lem:-10}, 
\[
\mathbb{E}\Big(\sup_{t\in[t_{n(\tau)},T]}|\log J_{t}-\log\tilde{J}_{t}|^{2}\Big|\mathcal{F}_{t_{n(\tau)}}\Big)\le c_{\lambda,M_{\rho},M_{r},\sigma_{\ast},\sigma^{\ast},T}(1+v_{0}^{-4}+v_{n(\tau)}^{-4})|v_{0}-v_{n(\tau)}|^{2}\Delta t.
\]
Note that $I_{t_{n}(\tau)}=\tilde{I}_{t_{n}(\tau)}$, by (\ref{eq:-152})
and the above,
\begin{equation}
\begin{aligned}\mathbb{E}\Big(\sup_{t\in[0,T]}|\log I_{t}-\log\tilde{I}_{t}|^{2}\Big) & \le c_{\lambda,M_{\rho},M_{r},\sigma_{\ast},\sigma^{\ast},T}\Delta t\mathbb{E}[(1+v_{0}^{-2}+v_{n(\tau)}^{-4})|v_{0}-v_{n(\tau)}|^{2}]\\
 & \le c_{\lambda,M_{\rho},M_{r},\sigma_{\ast},\sigma^{\ast},T}\Delta t[1+v_{0}^{-2}+\mathbb{E}(v_{n(\tau)}^{-8})^{1/2}][v_{0}^{2}+\mathbb{E}(v_{n(\tau)})^{4})^{1/2}].
\end{aligned}
\label{eq:-153}
\end{equation}
By Lemma \ref{lem:-4} and (\ref{eq:-153}), we deduce that
\[
\mathbb{E}\Big(\sup_{t\in[0,T]}|\log I_{t}-\log\tilde{I}_{t}|^{2}\Big)^{1/2}\le c_{\lambda,v_{0},M_{\rho},M_{r},\sigma_{\ast},\sigma^{\ast},T}\Delta t^{1/2}.
\]
This completes the proof.
\end{proof}

\subsection{\label{subsec:-10}Continuity in $L^{2}(\Omega;C([0,T]))$ with Respect
to Coefficients}

We start with the continuity of the variables $\zeta_{k}$, $\xi_{k}$,
$\eta_{k}$ defined by (\ref{eq:-148}), (\ref{eq:-149}) and (\ref{eq:-150})
with respect to $\rho(t)$ and $\{\sigma_{t}\}_{t\ge0}$.
\begin{lem}
\label{lem:-18}Let $\zeta_{k}$, $\xi_{k}$, $\eta_{k}$ be the random
variables defined in (\ref{eq:-148}), (\ref{eq:-149}) and (\ref{eq:-150})
respectively. Suppose that $\tilde{\rho}(t)$ is a deterministic function
with $|\tilde{\rho}(t)|\le M_{\rho}$, and that $\{\tilde{\sigma}_{t}\}_{t\ge0}$
is a progressively measurable process with $\sigma_{\ast}\le\tilde{\sigma}_{t}\le\sigma^{\ast}$.
Let $\tilde{\zeta}_{k}$, $\tilde{\xi}_{k}$, $\tilde{\eta}_{k}$
be the random variables defined by replacing $(\rho(t),\sigma_{t})$
by $(\tilde{\rho}(t),\tilde{\sigma}_{t})$ in (\ref{eq:-148}), (\ref{eq:-149})
and (\ref{eq:-150}). 

(i) For any $1\le p<\infty$,
\begin{align}
\mathbb{E}(|\zeta_{k}-\tilde{\zeta}_{k}|^{2p}) & \le c_{p,\sigma^{\ast},M_{\rho}}\Delta t^{p-1}\Big(\int_{t_{k-1}}^{t_{k}}|\rho(t)-\tilde{\rho}(t)|^{2}dt+\int_{t_{k-1}}^{t_{k}}\mathbb{E}(|\sigma_{t}-\tilde{\sigma}_{t}|^{2})dt\Big),\label{eq:-111}\\
\mathbb{E}(|\xi_{k}-\tilde{\xi}_{k}|^{2p}) & \le c_{p,\sigma^{\ast}}\Delta t^{-1}\int_{t_{k-1}}^{t_{k}}\mathbb{E}(|\sigma_{t}-\tilde{\sigma}_{t}|^{2})dt,\label{eq:-191}\\
\mathbb{E}(|\eta_{k}-\tilde{\eta}_{k}|^{2p}) & \le c_{p,\sigma^{\ast},M_{\rho},T}\Delta t^{-1/2}\Big(\int_{t_{k-1}}^{t_{k}}|\rho(t)-\tilde{\rho}(t)|^{2}dt+\int_{t_{k-1}}^{t_{k}}\mathbb{E}(|\sigma_{t}-\tilde{\sigma}_{t}|^{2})dt\Big)^{1/2},\label{eq:-192}
\end{align}
where $c_{p,\sigma^{\ast},M_{\rho}}$, $c_{p,\sigma^{\ast}}$ and
$c_{p,\sigma^{\ast},M_{\rho},T}$ are constants depending only on
the quantities in their subscripts.

(ii) Let $v_{n}$ be the realized variance defined in (\ref{eq:-2}),
or equivalently (\ref{eq:-74}). Let $\tilde{v}_{n}$ be the random
variable obtained by replacing $\eta_{k}$ by $\tilde{\eta}_{k}$
in (\ref{eq:-74}); that is,
\[
\tilde{v}_{n}=\lambda^{n}v_{0}+(1-\lambda)\sum_{k=1}^{n}\lambda^{n-k}\tilde{\eta}_{k}^{2}.
\]
Then, for any $1\le p<\infty$ and $\alpha\in\mathbb{R}$, 
\begin{equation}
\sum_{n=1}^{N}\mathbb{E}(|v_{n}^{-\alpha}-\tilde{v}_{n}^{-\alpha}|^{2p})^{1/p}\le c_{\alpha,p,\sigma_{\ast},\sigma^{\ast},M_{\rho},v_{0},T}\Delta t^{-1}\big(\Vert\rho-\tilde{\rho}\Vert_{L^{2}}+\Vert\sigma-\tilde{\sigma}\Vert_{L^{2}(\Omega;L^{2}([0,T]))}\big)^{1/(4p)},\label{eq:-151}
\end{equation}
where $c_{\alpha,p,\sigma^{\ast},M_{\rho},v_{0},T}$ is a constant
depending only on $\alpha$, $p$, $\sigma_{\ast}$, $\sigma^{\ast}$,
$M_{\rho}$, $v_{0}$ and $T$, and the norm $\Vert\cdot\Vert_{L^{2}(\Omega;L^{2}([0,T]))}$
is given by 
\[
\Vert\sigma\Vert_{L^{2}(\Omega;L^{2}([0,T]))}=\mathbb{E}\Big(\int_{0}^{T}\sigma_{t}^{2}dt\Big)^{1/2}.
\]
\end{lem}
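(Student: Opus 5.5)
Part (i) is handled term by term, and part (ii) follows from (i) by the Taylor/H\"{o}lder argument of Lemma \ref{lem:-5}, with one interpolation step to absorb the weak $L^{1}$-type control into high moments.

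For (\ref{eq:-111}), write
\[
\zeta_{k}-\tilde{\zeta}_{k}=\Delta t^{-1/2}\int_{t_{k-1}}^{t_{k}}\big[(\rho-\tilde{\rho})-(\sigma_{t}^{2}-\tilde{\sigma}_{t}^{2})/2\big]dt.
\]
The integrand is bounded by $c_{\sigma^{\ast},M_{\rho}}$, so $|\zeta_{k}-\tilde{\zeta}_{k}|\le c\Delta t^{1/2}$. Hence
\[
|\zeta_{k}-\tilde{\zeta}_{k}|^{2p}\le c\Delta t^{p-1}|\zeta_{k}-\tilde{\zeta}_{k}|^{2}.
\]
Cauchy--Schwarz gives $|\zeta_{k}-\tilde{\zeta}_{k}|^{2}\le\int_{t_{k-1}}^{t_{k}}|\cdots|^{2}dt$, and $|\sigma^{2}-\tilde{\sigma}^{2}|\le2\sigma^{\ast}|\sigma-\tilde{\sigma}|$; taking expectations yields (\ref{eq:-111}).

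For (\ref{eq:-191}), set $D=\xi_{k}-\tilde{\xi}_{k}=\Delta t^{-1/2}\int_{t_{k-1}}^{t_{k}}(\sigma_{t}-\tilde{\sigma}_{t})dW_{t}$. By Cauchy--Schwarz,
\[
\mathbb{E}|D|^{2p}\le\mathbb{E}(|D|^{4p-2})^{1/2}\mathbb{E}(|D|^{2})^{1/2}.
\]
The first factor is bounded by BDG, since the integrand is bounded by $2\sigma^{\ast}$. The second factor equals $\big(\Delta t^{-1}\int\mathbb{E}|\sigma-\tilde{\sigma}|^{2}\big)^{1/2}$ by the It\^{o} isometry. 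This gives a square root, which is weaker than the stated bound. To get the stated bound I would instead use $\mathbb{E}|D|^{2p}\le\Vert D\Vert_{\infty\text{-ish}}$-free interpolation via BDG:
\[
\mathbb{E}|D|^{2p}\le c_{p}\Delta t^{-p}\,\mathbb{E}\Big(\int|\sigma-\tilde{\sigma}|^{2}dt\Big)^{p}\le c_{p}\Delta t^{-p}(4(\sigma^{\ast})^{2}\Delta t)^{p-1}\int\mathbb{E}|\sigma-\tilde{\sigma}|^{2}dt,
\]
which is exactly (\ref{eq:-191}).

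For (\ref{eq:-192}), use $|e^{a}-e^{b}|\le|a-b|(e^{|a|}+e^{|b|})$ to get
\[
|\eta_{k}-\tilde{\eta}_{k}|\le(|\zeta_{k}-\tilde{\zeta}_{k}|+|\xi_{k}-\tilde{\xi}_{k}|)\big(e^{\Delta t^{1/2}|\zeta_{k}+\xi_{k}|}+e^{\Delta t^{1/2}|\tilde{\zeta}_{k}+\tilde{\xi}_{k}|}\big).
\]
Apply Cauchy--Schwarz: the exponential factors are controlled by (\ref{eq:-103}). The $4p$-th moments of the differences are bounded by (\ref{eq:-111}) and (\ref{eq:-191}) with $2p$ in place of $p$, after taking square roots. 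Since $\Delta t^{p}\le c_{T}\Delta t^{-1}$, and since the combined quantity $\Delta t^{-1}\int(\cdots)$ is bounded by a constant, the square root of a sum of the form $\Delta t^{-1}(\cdots)$ dominates and produces the $\Delta t^{-1/2}(\cdots)^{1/2}$ of (\ref{eq:-192}).

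For (ii), proceed as in Lemma \ref{lem:-5}. Write $|v_{n}^{-\alpha}-\tilde{v}_{n}^{-\alpha}|\le|\alpha|(v_{n}^{-\alpha-1}+\tilde{v}_{n}^{-\alpha-1})|v_{n}-\tilde{v}_{n}|$; for $\alpha<0$ the same mean-value bound holds with $v^{-\alpha-1}$ possibly a positive power. Both factors have moments controlled by Lemma \ref{lem:-4}, which applies to $\tilde{v}_{n}$ because $(\tilde{\rho},\tilde{\sigma})$ satisfy the same bounds. By H\"{o}lder it then suffices to bound $\mathbb{E}(|v_{n}-\tilde{v}_{n}|^{4p})^{1/(4p)}$.

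Convexity gives
\[
\mathbb{E}|v_{n}-\tilde{v}_{n}|^{4p}\le(1-\lambda)\sum_{k}\lambda^{n-k}\mathbb{E}|\eta_{k}^{2}-\tilde{\eta}_{k}^{2}|^{4p}.
\]
Applying Cauchy--Schwarz with (\ref{eq:-105}) and (\ref{eq:-192}) bounds each term by $c\,\Delta t^{-1/4}(\epsilon_{k})^{1/4}$, where $\epsilon_{k}=\int_{t_{k-1}}^{t_{k}}\big(|\rho-\tilde{\rho}|^{2}+\mathbb{E}|\sigma-\tilde{\sigma}|^{2}\big)dt\le\epsilon:=\Vert\rho-\tilde{\rho}\Vert_{L^{2}}^{2}+\Vert\sigma-\tilde{\sigma}\Vert^{2}$.

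Raising to the power $1/p$ after these H\"{o}lder steps gives per-$n$ bounds of order $\Delta t^{-c}\epsilon^{1/(8p)}\le\Delta t^{-1}(\ldots)^{1/(4p)}$, using $\Delta t\le T$. Summing over the $N=T/\Delta t$ values of $n$ gives an extra factor of $N$. Bookkeeping the powers of $\Delta t$ to land inside $\Delta t^{-1}$ is where I expect the main friction.

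Two points are delicate. First, obtaining exponents exactly matching (\ref{eq:-151}) requires trading the crude $\sup_{k}\epsilon_{k}\le\epsilon$ against the sum over $n$. Second, the negative powers of $\Delta t$ coming from (\ref{eq:-192}) must be absorbed; I would use $\epsilon_{k}\le c\Delta t$ to convert the surplus negative powers of $\Delta t$ into positive ones.
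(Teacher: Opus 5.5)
Your part (i) is fine and is essentially the paper's argument. For $\zeta_k$ you use the bounded integrand plus Cauchy--Schwarz. For $\xi_k$ you use BDG with $(\int|\sigma-\tilde\sigma|^2dt)^{p}\le((\sigma^{\ast})^2\Delta t)^{p-1}\int|\sigma-\tilde\sigma|^2dt$. For $\eta_k$ you use a mean-value bound and H\"{o}lder, with $\Delta t^{p-1/2}\le c_T\Delta t^{-1/2}$.

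\textbf{The gap is in (ii).} It sits exactly at the step you flag as friction, and your proposed remedy does not close it. Suppose you bound your $\epsilon_k$ by $\epsilon$ inside each $n$. Then every $n$ gives $\mathbb{E}(|v_n-\tilde v_n|^{4p})^{1/(2p)}\le c\,\Delta t^{-1/(8p)}\epsilon^{1/(8p)}$. Summing $N=T/\Delta t$ such terms yields $\Delta t^{-1-1/(8p)}\epsilon^{1/(8p)}$, which is strictly worse than (\ref{eq:-151}).

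Using $\epsilon_k\le c\Delta t$ cannot absorb this surplus. The interpolation $\epsilon_k^{1/4}\le(c\Delta t)^{\theta/4}\epsilon^{(1-\theta)/4}$ turns $\Delta t^{-1/4}\epsilon_k^{1/4}$ into $c\,\Delta t^{-(1-\theta)/4}\epsilon^{(1-\theta)/4}$. Removing the negative power of $\Delta t$ therefore forces $\theta=1$, which destroys all dependence on the coefficient difference.

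\textbf{The missing idea.} Keep the $k$-dependence and use that the local errors add up to the global one, $\sum_{k=1}^{N}\epsilon_k=\epsilon$. Concretely:
\begin{itemize}
\item Set $y_k=\mathbb{E}|\eta_k^2-\tilde\eta_k^2|^{4p}\le c\,\Delta t^{-1/4}\epsilon_k^{1/4}$.
\item Subadditivity of $x\mapsto x^{1/(2p)}$ gives $\mathbb{E}(|v_n-\tilde v_n|^{4p})^{1/(2p)}\le c\sum_{k\le n}\lambda^{(n-k)/(2p)}y_k^{1/(2p)}$.
\item Sum over $n$ and interchange the order of summation. The geometric weights contribute only $c_{\lambda,p}$, so it remains to bound $\sum_{k=1}^{N}y_k^{1/(2p)}\le c\,\Delta t^{-1/(8p)}\sum_{k=1}^{N}\epsilon_k^{1/(8p)}$.
\item Apply H\"{o}lder (concavity) in $k$: $\sum_{k=1}^{N}\epsilon_k^{1/(8p)}\le N^{1-1/(8p)}\epsilon^{1/(8p)}$.
\item Now $N^{1-1/(8p)}\Delta t^{-1/(8p)}=T^{1-1/(8p)}\Delta t^{-1}$, which is exactly the factor in (\ref{eq:-151}).
\item Finally, $\epsilon^{1/(8p)}\le(\Vert\rho-\tilde\rho\Vert_{L^2}+\Vert\sigma-\tilde\sigma\Vert_{L^{2}(\Omega;L^{2}([0,T]))})^{1/(4p)}$.
\end{itemize}

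The paper does precisely this for $\alpha=-1$, then passes to general $\alpha$ by the mean-value bound and Lemma \ref{lem:-4}, as you also do. After exchanging the sums, it applies H\"{o}lder in $k$ with exponent $4p$ to $\sum_k\mathbb{E}(|\eta_k-\tilde\eta_k|^{4p})^{1/(2p)}$. It then uses (\ref{eq:-192}) in the summed form $\sum_k\mathbb{E}(|\eta_k-\tilde\eta_k|^{4p})^{2}\le c\,\Delta t^{-1}\epsilon$. The crude $\sup_k\epsilon_k\le\epsilon$ discards exactly this summability.
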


\begin{proof}
(i) By H\"{o}lder's inequality and Minkowski's inequality,
\[
\begin{aligned}|\zeta_{k}-\tilde{\zeta}_{k}| & \le\Big(\int_{t_{k-1}}^{t_{k}}|\rho(t)-\tilde{\rho}(t)|^{2}dt\Big)^{1/2}+\Big(\int_{t_{k-1}}^{t_{k}}|\sigma_{t}^{2}-\tilde{\sigma}_{t}^{2}|^{2}dt\Big)\\
 & \le c_{\sigma^{\ast}}\Big(\int_{t_{k-1}}^{t_{k}}|\rho(t)-\tilde{\rho}(t)|^{2}dt+\int_{t_{k-1}}^{t_{k}}|\sigma_{t}-\tilde{\sigma}_{t}|^{2}dt\Big)^{1/2}.
\end{aligned}
\]
Therefore,
\[
\begin{aligned}\mathbb{E}(|\zeta_{k}-\tilde{\zeta}_{k}|^{2p}) & \le c_{p,\sigma^{\ast}}\mathbb{E}\Big[\Big(\int_{t_{k-1}}^{t_{k}}|\rho(t)-\tilde{\rho}(t)|^{2}dt+\int_{t_{k-1}}^{t_{k}}|\sigma_{t}-\tilde{\sigma}_{t}|^{2}dt\Big)^{p}\Big]\\
 & \le c_{p,\sigma^{\ast}}[M_{\rho}^{2}\Delta t+(\sigma^{\ast})^{2}\Delta t]^{p-1}\mathbb{E}\Big(\int_{t_{k-1}}^{t_{k}}|\rho(t)-\tilde{\rho}(t)|^{2}dt+\int_{t_{k-1}}^{t_{k}}|\sigma_{t}-\tilde{\sigma}_{t}|^{2}dt\Big)\\
 & \le c_{p,\sigma^{\ast},M_{\rho}}\Delta t^{p-1}\mathbb{E}\Big(\int_{t_{k-1}}^{t_{k}}|\rho(t)-\tilde{\rho}(t)|^{2}dt+\int_{t_{k-1}}^{t_{k}}|\sigma_{t}-\tilde{\sigma}_{t}|^{2}dt\Big).
\end{aligned}
\]
This proves (\ref{eq:-111}).

By the Burkholder--Davis--Gundy inequality,
\[
\begin{aligned}\mathbb{E}(|\xi_{k}-\tilde{\xi}_{k}|^{2p}) & \le c_{p}\Delta t^{-p}\mathbb{E}\Big[\Big(\int_{t_{k-1}}^{t_{k}}|\sigma_{t}-\tilde{\sigma}_{t}|^{2}dt\Big)^{p}\Big]\\
 & \le c_{p}\Delta t^{-p}[(\sigma^{\ast})^{2}\Delta t]^{p-1}\mathbb{E}\Big(\int_{t_{k-1}}^{t_{k}}|\sigma_{t}-\tilde{\sigma}_{t}|^{2}dt\Big)\\
 & \le c_{p,\sigma^{\ast}}\Delta t^{-1}\int_{t_{k-1}}^{t_{k}}\mathbb{E}(|\sigma_{t}-\tilde{\sigma}_{t}|^{2})dt.
\end{aligned}
\]
This proves (\ref{eq:-191}).

For (\ref{eq:-192}), let $f(x)=\Delta t^{-1/2}(e^{\Delta t^{1/2}x}-1)$.
Then $f^{\prime}(x)>0$ is a monotone function, and $f^{\prime}=1+\Delta t^{1/2}f$.
By Taylor's theorem,
\[
\begin{aligned}|\eta_{k}-\tilde{\eta_{k}}| & =|f(\zeta_{k}+\xi_{k})-f(\tilde{\zeta}_{k}+\tilde{\xi}_{k})|\\
 & \le[f^{\prime}(\zeta_{k}+\xi_{k})+f^{\prime}(\tilde{\zeta}_{k}+\tilde{\xi}_{k})](|\zeta_{k}-\tilde{\zeta}_{k}|+|\xi_{k}-\tilde{\xi}_{k}|)\\
 & =[2+\Delta t^{1/2}(\eta_{k}+\tilde{\eta}_{k})](|\zeta_{k}-\tilde{\zeta}_{k}|+|\xi_{k}-\tilde{\xi}_{k}|)\\
 & \le c_{T}(1+\eta_{k}+\tilde{\eta}_{k})(|\zeta_{k}-\tilde{\zeta}_{k}|+|\xi_{k}-\tilde{\xi}_{k}|).
\end{aligned}
\]
By Lemma \ref{lem:-17}, 
\[
\sup_{1\le k\le N<\infty}\mathbb{E}(|\eta_{k}|^{p})\le c_{p,\sigma^{\ast},M_{\rho},T},\quad\sup_{1\le k\le N<\infty}\mathbb{E}(|\eta_{k}|^{p})\le c_{p,\sigma^{\ast},M_{\rho},T}.
\]
Therefore, by H\"{o}lder's inequality, 
\[
\mathbb{E}(|\eta_{k}-\tilde{\eta}_{k}|^{2p})\le c_{p,\sigma^{\ast},M_{\rho},T}[\mathbb{E}(|\zeta_{k}-\tilde{\zeta}_{k}|^{4p})^{1/2}+\mathbb{E}(|\xi_{k}-\tilde{\xi}_{k}|^{4p})^{1/2}].
\]
The inequality (\ref{eq:-192}) follows readily from (\ref{eq:-111}),
(\ref{eq:-191}) and the above.

(ii) We first prove (\ref{eq:-151}) for $\alpha=-1$. Clearly, $v_{n}-\tilde{v}_{n}=(1-\lambda)\sum_{k=1}^{n}\lambda^{n-k}(\eta_{k}^{2}-\tilde{\eta}_{k}^{2})$.
By H\"{o}lder's inequality,
\[
|v_{n}-\tilde{v}_{n}|^{2p}\le c_{p}(1-\lambda)\sum_{k=1}^{n}\lambda^{n-k}|\eta_{k}+\tilde{\eta}_{k}|^{2p}|\eta_{k}-\tilde{\eta}_{k}|^{2p}.
\]
By Lemma \ref{lem:-17}, we have
\[
\sup_{k}\mathbb{E}(|\eta_{k}|^{4p})\le c_{p,\sigma^{\ast},M_{\rho},T},\quad\sup_{k}\mathbb{E}(|\tilde{\eta}_{k}|^{4p})\le c_{p,\sigma^{\ast},M_{\rho},T},
\]
for a constant $c_{p,\sigma^{\ast},M_{\rho},T}<\infty$ depending
only on $p$, $\sigma^{\ast}$, $M_{\rho}$ and $T$. Therefore, by
H\"{o}lder's inequality,
\[
\begin{aligned}\mathbb{E}(|v_{n}-\tilde{v}_{n}|^{2p})^{1/p} & \le c_{p,\sigma^{\ast},M_{\rho},T}(1-\lambda)\Big(\sum_{k=1}^{n}\lambda^{n-k}\mathbb{E}(|\eta_{k}-\tilde{\eta}_{k}|^{4p})^{1/2}\Big)^{1/p}\\
 & \le c_{p,\sigma^{\ast},M_{\rho},T}(1-\lambda)\sum_{k=1}^{n}\lambda^{(n-k)/p}\mathbb{E}(|\eta_{k}-\tilde{\eta}_{k}|^{4p})^{1/(2p)},
\end{aligned}
\]
where the last inequality is due to $p\ge1$. Consequently,
\[
\begin{aligned}\sum_{n=1}^{N}\mathbb{E}(|v_{n}-\tilde{v}_{n}|^{2p})^{1/p} & \le c_{p,\sigma^{\ast},M_{\rho},T}(1-\lambda)\sum_{n=1}^{N}\sum_{k=1}^{n}\lambda^{(n-k)/p}\mathbb{E}(|\eta_{k}-\tilde{\eta}_{k}|^{4p})^{1/(2p)}\\
 & \le c_{p,\sigma^{\ast},M_{\rho},T}(1-\lambda)\sum_{k=1}^{N}\sum_{n=k}^{N}\lambda^{(n-k)/p}\mathbb{E}(|\eta_{k}-\tilde{\eta}_{k}|^{4p})^{1/(2p)}\\
 & \le c_{\lambda,p,\sigma^{\ast},M_{\rho},T}\sum_{k=1}^{N}\mathbb{E}(|\eta_{k}-\tilde{\eta}_{k}|^{4p})^{1/(2p)}\\
 & \le c_{\lambda,p,\sigma^{\ast},M_{\rho},T}N^{1-1/(4p)}\Big(\sum_{k=1}^{N}\mathbb{E}(|\eta_{k}-\tilde{\eta}_{k}|^{4p})^{2}\Big)^{1/(4p)}.
\end{aligned}
\]
By (\ref{eq:-192}),
\[
\sum_{k=1}^{N}\mathbb{E}(|\eta_{k}-\tilde{\eta}_{k}|^{4p})^{2}\le c_{p,\sigma^{\ast},M_{\rho},T}\Delta t^{-1}\big(\Vert\rho-\tilde{\rho}\Vert_{L^{2}}^{2}+\Vert\sigma-\tilde{\sigma}\Vert_{L^{2}(\Omega;L^{2}([0,T]))}^{2}\big).
\]
Therefore,
\begin{equation}
\begin{aligned}\sum_{n=1}^{N}\mathbb{E}(|v_{n}-\tilde{v}_{n}|^{2p})^{1/p} & \le c_{\lambda,p,\sigma^{\ast},M_{\rho},T}N^{1-1/(4p)}\Delta t^{-1/(4p)}\big(\Vert\rho-\tilde{\rho}\Vert_{L^{2}}+\Vert\sigma-\tilde{\sigma}\Vert_{L^{2}(\Omega;L^{2}([0,T]))}\big)^{1/(2p)}\\
 & \le c_{\lambda,p,\sigma^{\ast},M_{\rho},T}\Delta t^{-1}\big(\Vert\rho-\tilde{\rho}\Vert_{L^{2}}+\Vert\sigma-\tilde{\sigma}\Vert_{L^{2}(\Omega;L^{2}([0,T]))}\big)^{1/(2p)}.
\end{aligned}
\label{eq:-194}
\end{equation}
This proves (\ref{eq:-151}) for $\alpha=-1$.

For general $\alpha\in\mathbb{R}$, let $h(x)=x^{-\alpha}$. By the
monotonicity of $h^{\prime}(x)=-\alpha x^{-\alpha-1}$ and Taylor's
theorem, 
\begin{equation}
|v_{n}^{-\alpha}-\tilde{v}_{n}^{-\alpha}|\le\alpha(v_{n}^{-\alpha-1}+\tilde{v}_{n}^{-\alpha-1})|v_{n}-\tilde{v}_{n}|.\label{eq:-193}
\end{equation}
By Lemma \ref{lem:-4},
\[
\sup_{n}\mathbb{E}(v_{n}^{-4p(\alpha+1)})\le c_{\alpha,p,\lambda,\sigma_{\ast},\sigma^{\ast},M_{\rho},v_{0},T},\quad\sup_{n}\mathbb{E}(\tilde{v}_{n}^{-4p(\alpha+1)})\le c_{\alpha,p,\lambda,\sigma_{\ast},\sigma^{\ast},M_{\rho},v_{0},T},
\]
where $c_{\alpha,\lambda,\sigma_{\ast},\sigma^{\ast},M_{\rho},v_{0},T}<\infty$
is a constant depending only on $\alpha$, $p$, $\lambda$, $\sigma_{\ast}$,
$\sigma^{\ast}$, $M_{\rho}$, $v_{0}$ and $T$. Therefore, by (\ref{eq:-193})
and H\"{o}lder's inequality, 
\[
\mathbb{E}(|v_{n}^{-\alpha}-\tilde{v}_{n}^{-\alpha}|^{2p})^{1/p}\le c_{\alpha,p,\lambda,\sigma_{\ast},\sigma^{\ast},M_{\rho},v_{0},T}\mathbb{E}(|v_{n}-\tilde{v}_{n}|^{4p})^{1/(2p)}.
\]
Combining the above (\ref{eq:-194}) gives (\ref{eq:-151}) for general
$\alpha\in\mathbb{R}$.
\end{proof}
To indicate the dependence on the coefficients $r(t)$, $\rho(t)$,
$\{\sigma_{t}\}_{t\ge0}$, we denote the volatility target index $I=\{I_{t}\}_{t\ge0}$
by $I(r,\rho,\sigma)=\{I(r,\rho,\sigma)_{t}\}_{t\ge0}$. We are now
in a position to prove the following continuity of $I(r,\rho,\sigma)$
with respect to $(r,\rho,\sigma)$. 
\begin{prop}
\label{prop:-6}Let $\tilde{r}(t)$, $\tilde{\rho}(t)$ be deterministic
functions satisfying (\ref{eq:-184}) and (\ref{eq:-190}) respectively,
and $\{\tilde{\sigma}_{t}\}_{t\ge0}$ be a progressively measurable
process (on the same filtered probability space $(\Omega,\{\mathcal{F}_{t}\}_{t\ge0},\mathbb{P})$)
satisfying (\ref{eq:-10}). Let $\{\tilde{I}_{t}\}_{t\ge0}=\{I(\tilde{r},\tilde{\rho},\tilde{\sigma})_{t}\}_{t\ge0}$
be the volatility target index with coefficients $\tilde{r}(t)$,
$\tilde{\rho}(t)$, $\{\tilde{\sigma}_{t}\}_{t\ge0}$. Then 
\begin{equation}
\mathbb{E}\Big(\sup_{t\in[0,T]}|\log I_{t}-\log\tilde{I}_{t}|^{2}\Big)\le c_{\lambda,\bar{\sigma},\sigma_{\ast},\sigma^{\ast},M_{\rho},M_{r},v_{0},T}\big(\Vert r-\tilde{r}\Vert_{L^{2}}+\Vert\rho-\tilde{\rho}\Vert_{L^{2}}+\Vert\sigma-\tilde{\sigma}\Vert_{L^{2}(\Omega;L^{2}([0,T]))}\big)^{1/4},\label{eq:-195}
\end{equation}
where $c_{\lambda,\bar{\sigma},\sigma_{\ast},\sigma^{\ast},M_{\rho},M_{r},v_{0},T}$
is a constant depending only on $\lambda$, $\bar{\sigma}$, $\sigma_{\ast}$,
$\sigma^{\ast}$, $M_{\rho}$, $M_{r}$, $v_{0}$ and $T$.
\end{prop}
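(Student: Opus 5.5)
The plan is to write $\log I_t-\log\tilde I_t$ via It\^{o}'s formula and bound each piece with Lemma \ref{lem:-9}, controlling the realized-variance differences through Lemma \ref{lem:-18}(ii). For $t\in[t_{n-1},t_n)$ we have
\[
d\log I_t=\big[r(t)+\bar\sigma v_{n-1}^{-1/2}(\rho(t)-r(t))-\tfrac12\bar\sigma^2v_{n-1}^{-1}\sigma_t^2\big]dt+\bar\sigma v_{n-1}^{-1/2}\sigma_tdW_t,
\]
and the analogous expression for $\tilde I$ with $(\tilde r,\tilde\rho,\tilde\sigma,\tilde v_{n-1})$. Here $\tilde v_n$ is built from $\tilde\eta_k$ exactly as in Lemma \ref{lem:-18}(ii); the realized variance does not depend on $r$. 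Subtracting, the drift difference is
\[
(r-\tilde r)(1-\bar\sigma v_{n-1}^{-1/2})+\bar\sigma(v_{n-1}^{-1/2}-\tilde v_{n-1}^{-1/2})(\rho-r)+\bar\sigma\tilde v_{n-1}^{-1/2}\big[(\rho-\tilde\rho)-(r-\tilde r)\big]-\tfrac12\bar\sigma^2\big[(v_{n-1}^{-1}-\tilde v_{n-1}^{-1})\sigma_t^2+\tilde v_{n-1}^{-1}(\sigma_t^2-\tilde\sigma_t^2)\big].
\]
The diffusion difference is
\[
\bar\sigma(v_{n-1}^{-1/2}-\tilde v_{n-1}^{-1/2})\sigma_t+\bar\sigma\tilde v_{n-1}^{-1/2}(\sigma_t-\tilde\sigma_t).
\]
Applying Lemma \ref{lem:-9} with $p=2$ reduces the proof to bounding $\int_0^T\mathbb{E}|a_t|^2dt$ and $\int_0^T\mathbb{E}|b_t|^2dt$ term by term.

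There are two types of terms.

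\emph{Terms with a coefficient difference times a negative moment of $v$.} Examples are $(r-\tilde r)\tilde v_{n-1}^{-1/2}$ and $\tilde v_{n-1}^{-1}(\sigma_t^2-\tilde\sigma_t^2)$. For the deterministic differences we use Lemma \ref{lem:-4} directly, giving a bound $c\Vert r-\tilde r\Vert_{L^2}^2$. For the random ones, Cauchy--Schwarz would require a fourth moment of $\sigma-\tilde\sigma$. We avoid this using boundedness: $|\sigma-\tilde\sigma|^4\le(2\sigma^\ast)^2|\sigma-\tilde\sigma|^2$. Hence
\[
\mathbb{E}\big(\tilde v^{-2}|\sigma_t-\tilde\sigma_t|^2\big)\le\mathbb{E}(\tilde v^{-4})^{1/2}\,c\,\mathbb{E}(|\sigma_t-\tilde\sigma_t|^2)^{1/2}.
\]
Integrating in $t$ and applying Cauchy--Schwarz in $t$ gives $c\Vert\sigma-\tilde\sigma\Vert_{L^2(\Omega;L^2)}$. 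This is a power $\ge 1/4$ of the small quantity, after normalizing, i.e.\ using $x^a\le c x^{1/4}$ for bounded $x$. The norms are a priori bounded by constants depending on $M_r$, $M_\rho$, $\sigma^\ast$ and $T$.

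\emph{Terms with a difference of negative powers of $v$.} These are $(v_{n-1}^{-\alpha}-\tilde v_{n-1}^{-\alpha})$ with $\alpha\in\{1/2,1\}$, multiplied by bounded coefficients. Their contribution is
\[
c\sum_{n=1}^N\mathbb{E}|v_{n-1}^{-\alpha}-\tilde v_{n-1}^{-\alpha}|^2\,\Delta t.
\]
Lemma \ref{lem:-18}(ii) with $p=1$ bounds $\sum_n\mathbb{E}|\cdot|^2$ by $c\Delta t^{-1}(\Vert\rho-\tilde\rho\Vert+\Vert\sigma-\tilde\sigma\Vert)^{1/4}$. The factor $\Delta t$ cancels the $\Delta t^{-1}$, yielding the exponent $1/4$ in (\ref{eq:-195}) with a constant uniform in $N$. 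The $n=1$ term vanishes since $v_0=\tilde v_0$.

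The main obstacle is this second type of term. The crude $\Delta t^{-1}$ loss in (\ref{eq:-151}) must be exactly compensated by the $dt$-length of each rebalancing interval. We also need the H\"older exponents arranged so that every remaining term is at least of order $(\cdot)^{1/4}$. Summing all contributions and absorbing the larger powers into the $1/4$ power via the a priori bounds completes the argument.
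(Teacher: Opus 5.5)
Your proposal is correct and is essentially the paper's proof. You use It\^{o}'s formula plus a maximal/BDG bound (packaged as Lemma \ref{lem:-9}, where the paper writes out $E_1,E_2,E_3$), then split each term into two types. Differences of negative powers of the realized variance are controlled by (\ref{eq:-151}) with $p=1$, whose $\Delta t^{-1}$ loss is cancelled by the interval length $\Delta t$. Coefficient differences times negative moments are controlled by Lemma \ref{lem:-4} together with $|\sigma_t-\tilde\sigma_t|^{4}\le c|\sigma_t-\tilde\sigma_t|^{2}$.

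One harmless slip: in your drift difference the coefficient of $(r-\tilde r)$ should be $1$, not $1-\bar\sigma v_{n-1}^{-1/2}$. As written you subtract $\bar\sigma v_{n-1}^{-1/2}(r-\tilde r)$ once too often. The stray term is of your first type and is bounded the same way.
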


\begin{rem}
\label{rem:-7}We should point out that the exponent $1/4$ on the
right-hand side of (\ref{eq:-195}) is not optimal. This can be seen
by a scaling argument; the optimal exponent should be $2$ (see \cite[p. 28]{LG26}
for the case when $\sigma_{t}$ is deterministic). 
\end{rem}

\begin{proof}[Proof of Proposition \ref{prop:-6}]
Let $q=\rho-r$ and $\tilde{q}=\tilde{\rho}-\tilde{r}$. For $t\in[t_{n-1},t_{n})$,
by It\^{o}'s formula, 
\[
\begin{aligned}\log I_{t}-\log\tilde{I}_{t} & =(r(t)-\tilde{r}(t))dt-\bar{\sigma}(v_{n-1}^{-1/2}q(t)-\tilde{v}_{n-1}^{-1/2}\tilde{q}(t))dt\\
 & \quad-\frac{\bar{\sigma}^{2}}{2}(v_{n-1}^{-1}\sigma_{t}^{2}-\tilde{v}_{n-1}^{-1}\tilde{\sigma}_{t}^{2})dt+\bar{\sigma}(v_{n-1}^{-1/2}\sigma_{t}-\tilde{v}_{n-1}^{-1/2}\tilde{\sigma}_{t})dW_{t},
\end{aligned}
\]
where $\tilde{v}_{n}$ is the realized variance corresponding to $(\tilde{r},\tilde{\rho},\tilde{\sigma})$.
Therefore, by Doob's maximal inequality,
\begin{equation}
\begin{aligned}\mathbb{E}\Big(\sup_{t\in[0,T]}|\log I_{t}-\log\tilde{I}_{t}|^{2}\Big)\le c_{\bar{\sigma}}(\Vert r-\tilde{r}\Vert_{L^{2}}^{2}+E_{1}+E_{2}+E_{3}),\end{aligned}
\label{eq:-196}
\end{equation}
where
\[
\begin{aligned}E_{1} & =\mathbb{E}\Big[\Big(\sum_{n=1}^{N}\int_{t_{n-1}}^{t_{n}}|v_{n-1}^{-1/2}q(t)-\tilde{v}_{n-1}^{-1/2}\tilde{q}(t)|dt\Big)^{2}\Big],\\
E_{2} & =\mathbb{E}\Big[\Big(\sum_{n=1}^{N}\int_{t_{n-1}}^{t_{n}}|v_{n-1}^{-1}\sigma_{t}^{2}-\tilde{v}_{n-1}^{-1}\tilde{\sigma}_{t}^{2}|dt\Big)^{2}\Big],\\
E_{3} & =\mathbb{E}\Big[\Big(\sum_{n=1}^{N}\int_{t_{n-1}}^{t_{n}}|v_{n-1}^{-1/2}\sigma_{t}-\tilde{v}_{n-1}^{-1/2}\tilde{\sigma}_{t}|dW_{t}\Big)^{2}\Big].
\end{aligned}
\]

For $E_{1}$, we have
\begin{equation}
E_{1}\le2\mathbb{E}\Big[\Big(\sum_{n=1}^{N}\int_{t_{n-1}}^{t_{n}}|(v_{n-1}^{-1/2}-\tilde{v}_{n-1}^{-1/2})q(t)|dt\Big)^{2}\Big]+2\mathbb{E}\Big[\Big(\sum_{n=1}^{N}\int_{t_{n-1}}^{t_{n}}|\tilde{v}_{n-1}^{-1/2}(q(t)-\tilde{q}(t))|dt\Big)^{2}\Big].\label{eq:-199}
\end{equation}
For the first term on the right-hand side, we have
\begin{equation}
\begin{aligned}\mathbb{E}\Big[\Big(\sum_{n=1}^{N}\int_{t_{n-1}}^{t_{n}}|(v_{n-1}^{-1/2}-\tilde{v}_{n-1}^{-1/2})q(t)|dt\Big)^{2}\Big] & \le c_{M_{\rho},M_{r}}\Delta t^{2}\mathbb{E}\Big[\Big(\sum_{n=1}^{N}|v_{n-1}^{-1/2}-\tilde{v}_{n-1}^{-1/2}|\Big)^{2}\Big]\\
 & \le c_{M_{\rho},M_{r}}\Delta t^{2}\mathbb{E}\Big(N\sum_{n=1}^{N}|v_{n-1}^{-1/2}-\tilde{v}_{n-1}^{-1/2}|^{2}\Big)\\
 & \le c_{M_{\rho},M_{r},T}\Delta t\sum_{n=1}^{N}\mathbb{E}(|v_{n-1}^{-1/2}-\tilde{v}_{n-1}^{-1/2}|^{2}).
\end{aligned}
\label{eq:-198}
\end{equation}
By (\ref{eq:-151}) with $p=1$ and $\alpha=1/2$, we see that
\[
\mathbb{E}\Big[\Big(\sum_{n=1}^{N}\int_{t_{n-1}}^{t_{n}}|(v_{n-1}^{-1/2}-\tilde{v}_{n-1}^{-1/2})q(t)|dt\Big)^{2}\Big]\le c_{\sigma_{\ast},\sigma^{\ast},M_{\rho},M_{r},v_{0},T}\big(\Vert\rho-\tilde{\rho}\Vert_{L^{2}}+\Vert\sigma-\tilde{\sigma}\Vert_{L^{2}(\Omega;L^{2}([0,T]))}\big)^{1/4}.
\]
For the second term on the right-hand side of (\ref{eq:-199}), by
H\"{o}lder's inequality, we have
\begin{equation}
\begin{aligned}\mathbb{E}\Big[\Big(\sum_{n=1}^{N}\int_{t_{n-1}}^{t_{n}}|\tilde{v}_{n-1}^{-1/2}(q(t)-\tilde{q}(t))|dt\Big)^{2}\Big] & \le\mathbb{E}\Big[\Big(\sum_{n=1}^{N}\tilde{v}_{n-1}^{-1}\Big)\Big(\sum_{n=1}^{N}\Big(\int_{t_{n-1}}^{t_{n}}|q(t)-\tilde{q}(t)|dt\Big)^{2}\Big)\Big]\\
 & \le\mathbb{E}\Big[\Big(\sum_{n=1}^{N}\tilde{v}_{n-1}^{-1}\Big)\Big(\Delta t\int_{0}^{T}|q(t)-\tilde{q}(t)|^{2}dt\Big)\Big]\\
 & =\Delta t\sum_{n=1}^{N}\mathbb{E}\Big(\tilde{v}_{n-1}^{-1}\int_{0}^{T}|q(t)-\tilde{q}(t)|^{2}dt\Big).
\end{aligned}
\label{eq:-201}
\end{equation}
By Lemma \ref{lem:-4}, $\sup_{n}\mathbb{E}(\tilde{v}_{n}^{-1})\le c_{\lambda,\sigma_{\ast},\sigma^{\ast},M_{\rho},v_{0},T}<\infty$,
and we deduce that
\[
\mathbb{E}\Big[\Big(\sum_{n=1}^{N}\int_{t_{n-1}}^{t_{n}}|\tilde{v}_{n-1}^{-1/2}(q(t)-\tilde{q}(t))|dt\Big)^{2}\Big]\le c_{\lambda,\sigma_{\ast},\sigma^{\ast},M_{\rho},v_{0},T}\Vert q-\tilde{q}\Vert_{L^{2}}^{2}\le c_{\lambda,\sigma_{\ast},\sigma^{\ast},M_{\rho},v_{0},T}\Vert q-\tilde{q}\Vert_{L^{2}}^{1/4},
\]
where the last inequality is due to $\Vert q-\tilde{q}\Vert_{L^{2}}\le2T^{1/2}(M_{\rho}+M_{r})$.
Therefore, 
\begin{equation}
E_{1}\le c_{\sigma_{\ast},\sigma^{\ast},M_{\rho},M_{r},v_{0},T}\big(\Vert\rho-\tilde{\rho}\Vert_{L^{2}}+\Vert\sigma-\tilde{\sigma}\Vert_{L^{2}(\Omega;L^{2}([0,T]))}\big)^{1/4}.\label{eq:-197}
\end{equation}

The expectation $E_{2}$ can be estimated similarly. We have
\begin{equation}
E_{2}\le2\mathbb{E}\Big[\Big(\sum_{n=1}^{N}\int_{t_{n-1}}^{t_{n}}|v_{n-1}^{-1}-\tilde{v}_{n-1}^{-1}|\sigma_{t}^{2}dt\Big)^{2}\Big]+2\mathbb{E}\Big[\Big(\sum_{n=1}^{N}\int_{t_{n-1}}^{t_{n}}\tilde{v}_{n-1}^{-1}|\sigma_{t}^{2}-\tilde{\sigma}_{t}^{2}|dt\Big)^{2}\Big].\label{eq:-200}
\end{equation}
For the first term, similar to (\ref{eq:-198}), we have
\[
\mathbb{E}\Big[\Big(\sum_{n=1}^{N}\int_{t_{n-1}}^{t_{n}}|v_{n-1}^{-1}-\tilde{v}_{n-1}^{-1}|\sigma_{t}^{2}dt\Big)^{2}\Big]\le c_{\sigma^{\ast},T}\Delta t\sum_{n=1}^{N}\mathbb{E}(|v_{n-1}^{-1}-\tilde{v}_{n-1}^{-1}|^{2}).
\]
By (\ref{eq:-151}) with $p=1$ and $\alpha=1$, we obtain that
\[
\mathbb{E}\Big[\Big(\sum_{n=1}^{N}\int_{t_{n-1}}^{t_{n}}|v_{n-1}^{-1}-\tilde{v}_{n-1}^{-1}|\sigma_{t}^{2}dt\Big)^{2}\Big]\le c_{\sigma_{\ast},\sigma^{\ast},M_{\rho},v_{0},T}\big(\Vert\rho-\tilde{\rho}\Vert_{L^{2}}+\Vert\sigma-\tilde{\sigma}\Vert_{L^{2}(\Omega;L^{2}([0,T]))}\big)^{1/4}.
\]
For the second term on the right-hand side of (\ref{eq:-199}), similar
to (\ref{eq:-201}), we have
\[
\begin{aligned}\mathbb{E}\Big[\Big(\sum_{n=1}^{N}\int_{t_{n-1}}^{t_{n}}\tilde{v}_{n-1}^{-1}|\sigma_{t}^{2}-\tilde{\sigma}_{t}^{2}|dt\Big)^{2}\Big] & \le\Delta t\sum_{n=1}^{N}\mathbb{E}\Big(\tilde{v}_{n-1}^{-2}\int_{0}^{T}|\sigma_{t}^{2}-\tilde{\sigma}_{t}^{2}|^{2}dt\Big).\end{aligned}
\]
Note that $|\sigma_{t}^{2}-\tilde{\sigma}_{t}^{2}|\le2(\sigma^{\ast})^{3/2}|\sigma_{t}-\tilde{\sigma}_{t}|^{1/2}$.
Therefore, 
\[
\begin{aligned}\mathbb{E}\Big[\Big(\sum_{n=1}^{N}\int_{t_{n-1}}^{t_{n}}\tilde{v}_{n-1}^{-1}|\sigma_{t}^{2}-\tilde{\sigma}_{t}^{2}|dt\Big)^{2}\Big] & \le c_{\sigma^{\ast}}\Delta t\sum_{n=1}^{N}\mathbb{E}\Big(\tilde{v}_{n-1}^{-2}\int_{0}^{T}|\sigma_{t}-\tilde{\sigma}_{t}|^{1/2}dt\Big)\\
 & \le c_{\sigma^{\ast},T}\Delta t\sum_{n=1}^{N}\mathbb{E}\Big[\tilde{v}_{n-1}^{-2}\Big(\int_{0}^{T}|\sigma_{t}-\tilde{\sigma}_{t}|^{2}dt\Big)^{1/2}\Big]\\
 & \le c_{\sigma^{\ast}}\Delta t\sum_{n=1}^{N}\mathbb{E}(\tilde{v}_{n-1}^{-4})^{1/2}\Vert\sigma-\tilde{\sigma}\Vert_{L^{2}(\Omega;L^{2}([0,T]))}\\
 & \le c_{\sigma^{\ast}}\Delta t\sum_{n=1}^{N}\mathbb{E}(\tilde{v}_{n-1}^{-4})^{1/2}\Vert\sigma-\tilde{\sigma}\Vert_{L^{2}(\Omega;L^{2}([0,T]))}^{1/4},
\end{aligned}
\]
where the last inequality is due to the boundedness of $\sigma_{t}$
and $\tilde{\sigma}_{t}$. By Lemma \ref{lem:-4}, $\sup_{n}\mathbb{E}(\tilde{v}_{n}^{-4})\le c_{\lambda,\sigma_{\ast},\sigma^{\ast},M_{\rho},v_{0},T}<\infty$,
and we deduce that
\[
\begin{aligned}\mathbb{E}\Big[\Big(\sum_{n=1}^{N}\int_{t_{n-1}}^{t_{n}}\tilde{v}_{n-1}^{-1}|\sigma_{t}^{2}-\tilde{\sigma}_{t}^{2}|dt\Big)^{2}\Big] & \le c_{\lambda,\sigma_{\ast},\sigma^{\ast},M_{\rho},v_{0},T}\Vert\sigma-\tilde{\sigma}\Vert_{L^{2}(\Omega;L^{2}([0,T]))}^{1/4}.\end{aligned}
\]
This proves that 
\begin{equation}
E_{2}\le c_{\sigma_{\ast},\sigma^{\ast},M_{\rho},v_{0},T}\big(\Vert\rho-\tilde{\rho}\Vert_{L^{2}}+\Vert\sigma-\tilde{\sigma}\Vert_{L^{2}(\Omega;L^{2}([0,T]))}\big)^{1/4}.\label{eq:-202}
\end{equation}

For $E_{3}$, by the Burkholder--Davis--Gundy inequality,
\[
\begin{aligned}E_{3} & \le\mathbb{E}\Big(\sum_{n=1}^{N}\int_{t_{n-1}}^{t_{n}}|v_{n-1}^{-1/2}\sigma_{t}-\tilde{v}_{n-1}^{-1/2}\tilde{\sigma}_{t}|^{2}dt\Big)\\
 & \le2\sum_{n=1}^{N}\int_{t_{n-1}}^{t_{n}}\mathbb{E}(|v_{n-1}^{-1/2}-\tilde{v}_{n-1}^{-1/2}|^{2}\sigma_{t}^{2})dt+2\sum_{n=1}^{N}\int_{t_{n-1}}^{t_{n}}\mathbb{E}(\tilde{v}_{n-1}^{-1}|\sigma_{t}-\tilde{\sigma}_{t}|^{2})dt\\
 & \le c_{\sigma^{\ast}}\Delta t\sum_{n=1}^{N}\mathbb{E}(|v_{n-1}^{-1/2}-\tilde{v}_{n-1}^{-1/2}|^{2})+2\sum_{n=1}^{N}\mathbb{E}(\tilde{v}_{n-1}^{-2})^{1/2}\int_{t_{n-1}}^{t_{n}}\mathbb{E}(|\sigma_{t}-\tilde{\sigma}_{t}|^{4})^{1/2}dt.
\end{aligned}
\]
For the first term in the last inequality, by (\ref{eq:-151}) with
$p=1$ and $\alpha=1$, we have
\[
\Delta t\sum_{n=1}^{N}\mathbb{E}(|v_{n-1}^{-1/2}-\tilde{v}_{n-1}^{-1/2}|^{2})\le c_{\sigma_{\ast},\sigma^{\ast},M_{\rho},v_{0},T}\big(\Vert\rho-\tilde{\rho}\Vert_{L^{2}}+\Vert\sigma-\tilde{\sigma}\Vert_{L^{2}(\Omega;L^{2}([0,T]))}\big)^{1/4}.
\]
For the second term, by Lemma \ref{lem:-4}, $\sup_{n}\mathbb{E}(\tilde{v}_{n}^{-2})\le c_{\lambda,\sigma_{\ast},\sigma^{\ast},M_{\rho},v_{0},T}<\infty$.
Consequently,
\[
\begin{aligned}\sum_{n=1}^{N}\mathbb{E}(\tilde{v}_{n-1}^{-2})^{1/2}\int_{t_{n-1}}^{t_{n}}\mathbb{E}(|\sigma_{t}-\tilde{\sigma}_{t}|^{4})^{1/2}dt & \le c_{\lambda,\sigma_{\ast},\sigma^{\ast},M_{\rho},v_{0},T}\int_{0}^{T}\mathbb{E}(|\sigma_{t}-\tilde{\sigma}_{t}|^{4})^{1/2}dt\\
 & \le c_{\lambda,\sigma_{\ast},\sigma^{\ast},M_{\rho},v_{0},T}\Big(\int_{0}^{T}\mathbb{E}(|\sigma_{t}-\tilde{\sigma}_{t}|^{4})dt\Big)^{1/2}\\
 & \le c_{\lambda,\sigma_{\ast},\sigma^{\ast},M_{\rho},v_{0},T}\Vert\sigma-\tilde{\sigma}\Vert_{L^{2}(\Omega;L^{2}([0,T]))}.
\end{aligned}
\]
Therefore,
\begin{equation}
E_{3}\le c_{\sigma_{\ast},\sigma^{\ast},M_{\rho},v_{0},T}\big(\Vert\rho-\tilde{\rho}\Vert_{L^{2}}+\Vert\sigma-\tilde{\sigma}\Vert_{L^{2}(\Omega;L^{2}([0,T]))}\big)^{1/4}.\label{eq:-203}
\end{equation}

Combining (\ref{eq:-199}), (\ref{eq:-202}) and (\ref{eq:-203})
completes the proof.
\end{proof}
Similarly, we have the continuity of the limiting process $\{X_{t}\}_{t}$
with respect to the coefficients.
\begin{prop}
\label{prop:-7}Let $\{X_{t}\}_{t\ge0}$ be the continuous process
defined by (\ref{eq:}). Suppose that $\tilde{r}(t)$ and $\tilde{\rho}(t)$
are deterministic functions satisfying (\ref{eq:-184}) and (\ref{eq:-190})
respectively, and that $\{\tilde{\sigma}_{t}\}_{t\ge0}$ is a progressively
measurable process (on the same filtered probability space $(\Omega,\{\mathcal{F}_{t}\}_{t\ge0},\mathbb{P})$)
satisfying (\ref{eq:-10}). Let $\{\tilde{X}_{t}\}_{t\ge0}$ be the
process obtained by replacing $(\rho(t),r(t),\sigma_{t})$ by $(\tilde{\rho}(t),\tilde{r}(t),\tilde{\sigma}_{t})$
in (\ref{eq:}). Then 
\[
\mathbb{E}\Big(\sup_{t\in[0,T]}|\log X_{t}-\log\tilde{X}_{t}|^{2}\Big)\le c_{\bar{\sigma},\lambda,M_{\rho},M_{r},\sigma_{\ast},T}\big(\Vert r-\tilde{r}\Vert_{L^{2}}+\Vert\rho-\tilde{\rho}\Vert_{L^{2}}+\Vert\sigma-\tilde{\sigma}\Vert_{L^{2}(\Omega;L^{2}([0,T]))}\big)^{2},
\]
where $c_{\bar{\sigma},\lambda,M_{\rho},M_{r},\sigma_{\ast},T}$ is
a constant depending only on $\bar{\sigma}$, $\lambda$, $M_{\rho}$,
$M_{r}$, $\sigma_{\ast}$ and $T$.
\end{prop}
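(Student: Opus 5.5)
The limiting process is explicit, so I would work directly with $\log X_t$ and $\log\tilde{X}_t$ rather than with any discrete approximation. By It\^{o}'s formula applied to (\ref{eq:}), and because the diffusion coefficient $\bar{\sigma}V(\lambda)^{1/2}$ and the Brownian motion $B_{t}$ do not depend on $(r,\rho,\sigma)$,
\[
\log X_{t}-\log\tilde{X}_{t}=\int_{0}^{t}\big(r(s)-\tilde{r}(s)\big)ds+\bar{\sigma}U(\lambda)\int_{0}^{t}\Big(\frac{q(s)}{\sigma_{s}}-\frac{\tilde{q}(s)}{\tilde{\sigma}_{s}}\Big)ds,
\]
where $q=\rho-r$ and $\tilde{q}=\tilde{\rho}-\tilde{r}$. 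The It\^{o} correction terms $\bar{\sigma}^{2}V(\lambda)/2$ cancel, and so do the stochastic integrals $\bar{\sigma}V(\lambda)^{1/2}B_{t}$. The difference is therefore a pathwise Lebesgue integral, and no martingale inequality is needed. This is the reason the exponent here is $2$, compared with the much weaker exponent in Proposition \ref{prop:-6}.

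Next I would bound the supremum over $t\in[0,T]$ by the integral over $[0,T]$ of the absolute value of the integrand, and then apply Cauchy--Schwarz in time:
\[
\sup_{t\in[0,T]}|\log X_{t}-\log\tilde{X}_{t}|^{2}\le 2T\Vert r-\tilde{r}\Vert_{L^{2}}^{2}+2T\bar{\sigma}^{2}U(\lambda)^{2}\int_{0}^{T}\Big|\frac{q(s)}{\sigma_{s}}-\frac{\tilde{q}(s)}{\tilde{\sigma}_{s}}\Big|^{2}ds.
\]
For the integrand I would use the decomposition
\[
\frac{q}{\sigma}-\frac{\tilde{q}}{\tilde{\sigma}}=\frac{q-\tilde{q}}{\sigma}+\tilde{q}\,\frac{\tilde{\sigma}-\sigma}{\sigma\tilde{\sigma}}.
\]
With (\ref{eq:-10}), (\ref{eq:-184}) and (\ref{eq:-190}), this gives the pointwise bound $\sigma_{\ast}^{-1}|q-\tilde{q}|+(M_{\rho}+M_{r})\sigma_{\ast}^{-2}|\sigma-\tilde{\sigma}|$. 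Taking expectations, and using $\Vert q-\tilde{q}\Vert_{L^{2}}\le\Vert\rho-\tilde{\rho}\Vert_{L^{2}}+\Vert r-\tilde{r}\Vert_{L^{2}}$, yields a bound by a constant times $\Vert r-\tilde{r}\Vert_{L^{2}}^{2}+\Vert\rho-\tilde{\rho}\Vert_{L^{2}}^{2}+\Vert\sigma-\tilde{\sigma}\Vert_{L^{2}(\Omega;L^{2}([0,T]))}^{2}$. This is dominated by the square of the sum of the three norms.

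The resulting constant depends only on $T$, $\bar{\sigma}$, $U(\lambda)$, $\sigma_{\ast}$, $M_{\rho}$ and $M_{r}$, as the statement requires. $U(\lambda)$ is finite for fixed $\lambda\in(0,1)$ because the infinite product in (\ref{eq:-16}) decays faster than any power of $t$.

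I expect no real obstacle. The one point needing care is the interpretation in Remark \ref{rem:-4}(ii). Both $X$ and $\tilde{X}$ must be constructed on the same product space with the same $W^{\prime}$, so that $B_{t}$ is common to both and the stochastic integrals cancel exactly. Existence of the solution is not an issue, since $\log X_{t}$ is given explicitly by the integral formula above.
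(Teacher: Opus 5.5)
Your proposal is correct and follows essentially the same route as the paper. It\^{o}'s formula reduces $\log X_t-\log\tilde X_t$ to a pathwise $dt$-integral, since the $\bar\sigma V(\lambda)^{1/2}B_t$ terms and the It\^{o}-correction terms cancel; this is followed by Cauchy--Schwarz in time and the splitting $q\sigma^{-1}-\tilde q\tilde\sigma^{-1}=(q-\tilde q)\sigma^{-1}+\tilde q(\sigma^{-1}-\tilde\sigma^{-1})$, controlled by $\sigma_\ast$, $M_\rho$ and $M_r$. Your bookkeeping is in fact slightly more careful than the paper's write-up, which writes $\sigma_\ast^{2}$ where $\sigma_\ast^{-2}$ is meant and leaves the $\sigma$-norm unsquared in its final display.
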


\begin{proof}
Let $q(t)=\rho(t)-r(t)$ and $\tilde{q}(t)=\tilde{\rho}(t)-\tilde{r}(t)$.
By It\^{o}'s formula,
\[
d(\log X_{t}-\log\tilde{X}_{t})=\big(r(t)-\tilde{r}(t)+(q(t)\sigma_{t}^{-1}-\tilde{q}(t)\tilde{\sigma}_{t}^{-1})\bar{\sigma}U(\lambda)\big)dt.
\]
Therefore,
\[
\begin{aligned}\mathbb{E}\Big(\sup_{t\in[0,T]}|\log X_{t}-\log\tilde{X}_{t}|^{2}\Big) & \le c_{\bar{\sigma},\lambda}\Big[\Vert r-\tilde{r}\Vert_{L^{2}}^{2}+\mathbb{E}\Big(\Big(\int_{0}^{T}|q(t)\sigma_{t}^{-1}-\tilde{q}(t)\tilde{\sigma}_{t}^{-1}|dt\Big)^{2}\Big)\Big]\\
 & \le c_{\bar{\sigma},\lambda,T}\Big[\Vert r-\tilde{r}\Vert_{L^{2}}^{2}+\int_{0}^{T}\mathbb{E}(|q(t)\sigma_{t}^{-1}-\tilde{q}(t)\tilde{\sigma}_{t}^{-1}|^{2})dt\Big].
\end{aligned}
\]
By $|q|\le M_{\rho}+M_{r}$, $|\tilde{q}|\le M_{\rho}+M_{r}$ and
$|\sigma_{t}^{-1}-\tilde{\sigma}_{t}^{-1}|\le\sigma_{\ast}^{2}|\sigma_{t}-\tilde{\sigma}_{t}|$,
we have 
\[
\begin{aligned}\mathbb{E}(|q(t)\sigma_{t}^{-1}-\tilde{q}(t)\tilde{\sigma}_{t}^{-1}|^{2}) & \le2\mathbb{E}(|q(t)-\tilde{q}(t)|^{2}\sigma_{t}^{-2})+2\mathbb{E}(\tilde{q}(t)^{2}|\sigma_{t}^{-1}-\tilde{\sigma}_{t}^{-1}|^{2})\\
 & \le c_{M_{\rho},M_{r},\sigma_{\ast}}[|q(t)-\tilde{q}(t)|^{2}+\mathbb{E}(|\sigma_{t}^{-1}-\tilde{\sigma}_{t}^{-1}|^{2})]\\
 & \le c_{M_{\rho},M_{r},\sigma_{\ast}}[|q(t)-\tilde{q}(t)|^{2}+\mathbb{E}(|\sigma_{t}-\tilde{\sigma}_{t}|^{2})].
\end{aligned}
\]
Therefore,
\[
\mathbb{E}\Big(\sup_{t\in[0,T]}|\log X_{t}-\log\tilde{X}_{t}|^{2}\Big)\le c_{\bar{\sigma},\lambda,M_{\rho},M_{r},\sigma_{\ast},T}\big(\Vert r-\tilde{r}\Vert_{L^{2}}^{2}+\Vert q-\tilde{q}\Vert_{L^{2}}^{2}+\Vert\sigma-\tilde{\sigma}\Vert_{L^{2}(\Omega;L^{2}([0,T]))}\big).
\]
This completes the proof.
\end{proof}

\subsection{\label{subsec:-5}Convergence of Terminal Distribution under Constant
Coefficients}

In this subsection, we prove the convergence of the terminal distribution
$(I_{T},W_{T},\tilde{W}_{T})$ under the assumption that $\rho(t)=\rho$,
$r(t)=r$ and $\sigma_{t}=\sigma$ are constants. To obtain the joint
limiting distribution of $(I_{T},W_{T},\tilde{W}_{T})$ as well as
its convergence rate in terms of cumulative distribution functions,
we will need an extension of the limit theorems in \cite{LG26}. The
following Barry--Esseen type bound for one-dimensional martingale
difference sequences is obtained in \cite[Theorem 1]{Haeusler88}\footnote{To the best of our knowledge, there is no existing results on Barry--Esseen
type bounds for multi-dimensional martingale difference sequences
which is applicable to the sequences being studied in this paper. }. 
\begin{thm}
\label{thm:-2}Let $\{\mathcal{G}_{k}\}_{0\le k\le n}$ be a filtration
on a probability space, and $\{X_{k}\}_{1\le k\le n}$ be a sequence
of adapted square integrable random variables such that $\mathbb{E}(X_{k}|\mathcal{G}_{k-1})=0$
for $1\le n$. Let 
\begin{align*}
L_{n,r} & =\sum_{k=1}^{n}\mathbb{E}(|X_{k}|^{2+2r}),\\
N_{n,r} & =\mathbb{E}\Big(\Big|\sum_{k=1}^{n}\mathbb{E}(X_{k}^{2}|\mathcal{G}_{k-1})-1\Big|^{1+r}\Big).
\end{align*}
Then
\[
\sup_{x\in\mathbb{R}}|\mathbb{P}(S_{n}\le x)-\Phi(x)|\le c_{r}(L_{n,r}+N_{n,r})^{1/(3+2r)},
\]
where $c_{r}$ is a constant depending only on $r$, and $\Phi(x)$
is the cumulative distribution function of $\mathcal{N}(0,1)$.
\end{thm}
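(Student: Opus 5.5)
This is \cite[Theorem 1]{Haeusler88}, and in the paper we would simply cite it. If a self-contained argument were wanted, I would follow the Lindeberg replacement scheme with smoothing, as in Bolthausen and Haeusler. Write $V_{n}^{2}=\sum_{k=1}^{n}\mathbb{E}(X_{k}^{2}|\mathcal{G}_{k-1})$ and $S_{n}=\sum_{k=1}^{n}X_{k}$.

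\emph{Step 1: reduce to $V_{n}^{2}\le1$.} Pass to a stopped sequence. Let $\tau$ be the largest $m\le n$ with $\sum_{k\le m}\mathbb{E}(X_{k}^{2}|\mathcal{G}_{k-1})\le1$; this is a stopping time because the conditional variances are predictable. Replace $X_{k}$ by $X_{k}\mathds{1}_{\{k\le\tau\}}$. This does not change the martingale difference property and does not increase $L_{n,r}$. On $\{\tau<n\}$ the change in $S_{n}$ is controlled in $L^{2}$ by $|V_{n}^{2}-1|$ plus one summand. A Chebyshev-type bound with exponent $1+r$ then shows the change in $\mathbb{P}(S_{n}\le x)$ is at most $c_{r}(L_{n,r}+N_{n,r})^{1/(3+2r)}$.

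\emph{Step 2: make the conditional variances sum exactly to one.} Append to the sequence an independent Gaussian piece $\sqrt{1-V_{\tau}^{2}}\,Z$, with $Z\sim\mathcal{N}(0,1)$ independent of everything; if a discrete version is needed, split it into many small independent normal increments. The new sum $\tilde{S}$ then has conditional variances summing exactly to $1$. It differs from $S_{n}$ by a term whose second moment is $\mathbb{E}|1-V_{\tau}^{2}|$, which is again controlled by $N_{n,r}$.

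\emph{Step 3: Lindeberg replacement against a smoothed indicator.} Fix a smoothed indicator $f_{\varepsilon}$ of $(-\infty,x]$ with $\Vert f_{\varepsilon}^{(j)}\Vert_{\infty}\le c\varepsilon^{-j}$. Replace the $X_{k}$ one at a time, going backwards, by conditionally Gaussian variables with the same conditional variance. The key point is that, because the remaining conditional variance is now deterministic, the "remaining sum" can be absorbed into a Gaussian convolution of $f_{\varepsilon}$, namely $\Phi$-smoothing at scale $\sqrt{1-\sum_{j\le k}\sigma_{j}^{2}}$. Taylor expanding to third order, the first- and second-order terms cancel by the martingale property and the variance matching. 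The remainder is bounded by the interpolated moment $\mathbb{E}|X_{k}|^{2+2r}$ times $\varepsilon^{-(1+2r)}$. This uses the inequality $\min(|y|^{2},|y|^{3}\varepsilon^{-1})\le|y|^{2+2r}\varepsilon^{-2r}$ together with derivative bounds on the Gaussian-smoothed $f_{\varepsilon}$. The result is
\[
\big|\mathbb{E}f_{\varepsilon}(\tilde{S})-\mathbb{E}f_{\varepsilon}(Z)\big|\le c_{r}\varepsilon^{-(1+2r)}L_{n,r}.
\]

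\emph{Step 4: remove the smoothing and optimize.} Removing the smoothing costs $O(\varepsilon)$, since the normal law has bounded density. Choosing $\varepsilon=(L_{n,r}+N_{n,r})^{1/(2+2r)}$ balances the error terms. Combining this with Steps 1 and 2 gives the exponent $1/(3+2r)$.

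The main obstacle is Step 3. The bound requires uniform control of the Gaussian-smoothed derivatives as the remaining variance $1-\sum_{j\le k}\sigma_{j}^{2}$ shrinks to zero, and this control must hold simultaneously for all $x$ to yield the supremum. I would handle the last summands, where the smoothing scale is small, using only $f_{\varepsilon}$'s own regularity, and the earlier summands using the Gaussian factor. This follows Bolthausen's splitting of the sum at the point where the remaining variance drops below $\varepsilon^{2}$.
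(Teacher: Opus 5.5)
Citing \cite[Theorem 1]{Haeusler88} is exactly what the paper does: the statement is quoted from Haeusler and no proof is given. The self-contained sketch you add, however, has a genuine error in Step 3.

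Write $\sigma_k^2=\mathbb{E}(X_k^2|\mathcal{G}_{k-1})$. We have $\Vert f_\varepsilon''\Vert_\infty\asymp\varepsilon^{-2}$. The Gaussian smoothing at scale $(1-\sum_{j\le k}\sigma_j^2)^{1/2}$ (which is $\mathcal{G}_{k-1}$-measurable, not deterministic) gives nothing for the last summands. So the Taylor remainder is of order $\min(\varepsilon^{-2}y^{2},\varepsilon^{-3}|y|^{3})\le\varepsilon^{-(2+2r)}|y|^{2+2r}$ (for $r\le1/2$), and the replacement error is $c_r\varepsilon^{-(2+2r)}L_{n,r}$, not $c_r\varepsilon^{-(1+2r)}L_{n,r}$.

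Your bound together with Step 4 would yield $\sup_x|\mathbb{P}(S_n\le x)-\Phi(x)|\le c_rL_{n,r}^{1/(2+2r)}$ whenever $\sum_k\sigma_k^2\equiv1$, and that is false. Run Gaussian steps of variance $h$ up to the deterministic time $m$ at which the remaining variance is $\eta^2$. On $\{S_m\in(0,\eta)\}$, let $X_{m+1}$ be $-S_m$ with probability $1/2$ and $S_m\pm(2(\eta^2-S_m^2))^{1/2}$ with probability $1/4$ each, and set all later steps to $0$. Off this event, keep taking variance-$h$ steps. This is a martingale difference sequence with $\sum_k\sigma_k^2\equiv1$, so $N_{n,r}=0$, and $L_{n,r}\le c\,\eta^{3+2r}$ once $h$ is small. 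Yet $S_n$ has an atom of mass $\asymp\eta$ at $0$. So $1/(3+2r)$ is sharp even with $N_{n,r}=0$. Your choice $\varepsilon=(L_{n,r}+N_{n,r})^{1/(2+2r)}$ is also inconsistent with the exponent you then announce; with the corrected power, the right choice is $\varepsilon=(L_{n,r}+N_{n,r})^{1/(3+2r)}$.

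Even after this correction the sketch only covers $0<r\le1/2$. The inequality $\min(|y|^{2},|y|^{3}\varepsilon^{-1})\le|y|^{2+2r}\varepsilon^{-2r}$ fails for $r>1/2$ and $|y|<\varepsilon$. A plain third-order Lindeberg bound then gives only $\varepsilon^{-3}\sum_k\mathbb{E}|X_k|^{3}\le\varepsilon^{-3}L_{n,r}^{1/(2r)}$ (by interpolation against $\sum_k\mathbb{E}X_k^2=1$), i.e.\ the weaker rate $L_{n,r}^{1/(8r)}$. Getting $1/(3+2r)$ for $r>1/2$ requires actually using the Gaussian smoothing at the predictable scale $\rho_{k+1}=(1-\sum_{j\le k}\sigma_j^2)^{1/2}$. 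It must be combined with a recursive bound on $\mathbb{P}(|S_{k-1}-x|\le\rho_{k+1})$ in terms of the Kolmogorov distance itself. That is Bolthausen's argument, which your last paragraph names but does not carry out.

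Finally, in Steps 1 and 2, $L^2$ control plus Chebyshev only yields $(L_{n,r}+N_{n,r})^{1/(3+3r)}$. You need $(2+2r)$-th moments, for example $\mathbb{E}|S_n-S_\tau|^{2+2r}\le c_r(L_{n,r}+N_{n,r})$. This follows from Burkholder's (Rosenthal-type) inequality together with $\sum_{\tau<k\le n}\sigma_k^2\le|V_n^2-1|+\max_k\sigma_k^2$ and $\mathbb{E}\max_k\sigma_k^{2+2r}\le L_{n,r}$. Because of the overshoot term, Step 2 is controlled by $L_{n,r}+N_{n,r}$, not by $N_{n,r}$ alone.
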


We start with the following lemma, which will be needed in the proof
of Theorem \ref{thm:-1} and Theorem \ref{thm:-3}.
\begin{lem}
\label{lem:-19}Let $\{\lambda_{n}\}_{n\ge0}$ be a sequence of positive
numbers satisfying 
\begin{equation}
\sum_{n=0}^{\infty}\lambda_{n}=1,\label{eq:-204}
\end{equation}
and 
\begin{equation}
\lambda_{n}\le K_{\lambda}\gamma^{n},\label{eq:-179}
\end{equation}
for some constants $\gamma\in(0,1)$ and $K_{\lambda}>0$. Let $\phi(t)$
be a continuous function on $[0,\infty)$ satisfying $\phi(0)=1$,
$0<\phi(t)\le1$ and 
\begin{equation}
1-t^{\alpha}\le\phi(t)\le K_{\phi}(1+t)^{-\beta},\label{eq:-44}
\end{equation}
for some constants $0<\alpha\le1$, $\beta\ge0$, and $K_{\phi}>0$.
Suppose that $\mu$ is a positive Borel measure on $[0,\infty)$ such
that
\begin{equation}
\int_{0}^{\infty}(1+t)^{-m}\mu(dt)\le K_{\mu},\label{eq:-39}
\end{equation}
for some $m\ge0$ and $K_{\mu}<\infty$. Then, 
\begin{equation}
\int_{0}^{\infty}\prod_{k=0}^{\infty}\phi(\lambda_{k}t)\mu(dt)\le c_{\lambda,m,K_{\mu},K_{\phi},\alpha,\beta}<\infty,\label{eq:-178}
\end{equation}
and
\begin{equation}
\Big|\int_{0}^{\infty}e^{-r_{n}t}\prod_{k=0}^{n-1}\phi(\lambda_{k}t)\mu(dt)-\int_{0}^{\infty}\prod_{k=0}^{\infty}\phi(\lambda_{k}t)\mu(dt)\Big|\le c_{\lambda,m,K_{\mu},K_{\phi},\alpha,\beta}\gamma^{\alpha n},\label{eq:-188}
\end{equation}
for all $n\ge0$, where $r_{n}=\sum_{k=n}^{\infty}\lambda_{k}$, and
$c_{\lambda,m,K_{\mu},K_{\phi},\alpha,\beta}$ is a constant depending
only on $\{\lambda_{n}\}_{n}$, $m$, $K_{\mu}$, $K_{\phi}$, $\alpha$
and $\beta$. 
\end{lem}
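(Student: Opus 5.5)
The plan is to control the infinite product by splitting the indices $k$ into a finite head and a geometrically small tail. The head supplies decay in $t$ through the upper bound in (\ref{eq:-44}). The tail product is bounded above by $1$ and below, via $\phi(s)\ge1-s^{\alpha}$, by something close to $1$.

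\textbf{Finiteness (\ref{eq:-178}).} Since $0<\phi\le1$, for any finite $J$ we have
\[
\prod_{k=0}^{\infty}\phi(\lambda_{k}t)\le\prod_{k=0}^{J-1}\phi(\lambda_{k}t)\le K_{\phi}^{J}\prod_{k=0}^{J-1}(1+\lambda_{k}t)^{-\beta}\le K_{\phi}^{J}\big(1+\lambda_{\min,J}t\big)^{-J\beta},
\]
where $\lambda_{\min,J}=\min_{k<J}\lambda_{k}>0$. If $\beta>0$, choose $J$ with $J\beta\ge m$. The integrand is then at most $c\,(1+t)^{-m}$, with $c$ depending on $\{\lambda_{n}\}$, $K_{\phi}$, $\beta$, $m$. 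Integrating against $\mu$ and using (\ref{eq:-39}) gives (\ref{eq:-178}). If $\beta=0$ this argument gives no decay. In that case I would read the lemma as applied with $m=0$, so that $\mu$ is finite and the integrand is bounded by $1$; I would note this degenerate case separately.

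\textbf{Rate (\ref{eq:-188}).} Write $P_{n}(t)=\prod_{k<n}\phi(\lambda_{k}t)$ and $T_{n}(t)=\prod_{k\ge n}\phi(\lambda_{k}t)$. The difference of the two integrands is $P_{n}(t)\,[e^{-r_{n}t}-T_{n}(t)]$, and both $e^{-r_{n}t}$ and $T_{n}(t)$ lie in $[0,1]$. The key pointwise estimate I aim for is
\[
|e^{-r_{n}t}-T_{n}(t)|\le c\,\min\big(1,\gamma^{\alpha n}(1+t)^{\alpha}\big).
\]
For the exponential, $1-e^{-r_{n}t}\le\min(1,r_{n}t)\le(r_{n}t)^{\alpha}$, and $r_{n}\le K_{\lambda}\gamma^{n}/(1-\gamma)$ by (\ref{eq:-179}). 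For the tail product, when every $\lambda_{k}t\le1$ I use $\prod(1-a_{k})\ge1-\sum a_{k}$ for $a_{k}\in[0,1]$, which gives
\[
1-T_{n}(t)\le\sum_{k\ge n}(\lambda_{k}t)^{\alpha}\le K_{\lambda}^{\alpha}\,t^{\alpha}\,\gamma^{\alpha n}/(1-\gamma^{\alpha}).
\]
Otherwise some $\lambda_{k}t>1$, so $t\gamma^{n}$ is bounded below and the trivial bound $1$ is absorbed into the displayed estimate.

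It then remains to show
\[
\int P_{n}(t)\min\big(1,\gamma^{\alpha n}(1+t)^{\alpha}\big)\,\mu(dt)\le c\,\gamma^{\alpha n}.
\]
For $n\ge J'$, where $J'$ is chosen with $J'\beta\ge m+\alpha$, the first part gives $P_{n}(t)\le c\,(1+t)^{-m-\alpha}$, and the bound follows directly from (\ref{eq:-39}). The finitely many $n<J'$ are handled by adjusting the constant, since both integrals are bounded by the constant in (\ref{eq:-178}) and $\gamma^{\alpha n}\ge\gamma^{\alpha J'}$. Note that $J'$ depends only on $m$, $\alpha$, $\beta$, so the constant has the stated dependence.

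\textbf{Main obstacle.} The delicate step is the lower bound on the tail product. The inequality $\phi(s)\ge1-s^{\alpha}$ is only useful for $s\le1$, so the uniformity in $t$ must be recovered by splitting between the region $t\gamma^{n}\lesssim1$ and its complement. A second point to watch is ensuring enough decay of $P_{n}$ to absorb the extra factor $(1+t)^{\alpha}$ against $\mu$. Both are dealt with by the choice of $J'$ above.
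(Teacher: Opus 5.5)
Your route is essentially the paper's, and it is correct once one misjustified step is fixed. A finite head of the product supplies the $(1+t)^{-m}$ (resp.\ $(1+t)^{-m-\alpha}$) decay from $\phi\le K_\phi(1+t)^{-\beta}$. The tail product is within $c\,\gamma^{\alpha n}t^{\alpha}$ of $1$ by $1-\phi(s)\le s^{\alpha}$ and a union-type bound. Finitely many $n$ are absorbed into the constant. The paper only packages it differently: it writes the difference as $e^{-r_nt}(P_n-P_\infty)-(1-e^{-r_nt})P_\infty$ and uses $1-e^{-r_nt}\le r_nt$ on the second piece.

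Your ``main obstacle'' is not really one. The bound $1-\phi(s)\le s^{\alpha}$ holds for every $s\ge0$ (trivially when $s>1$), and $1-\prod_k\phi_k\le\sum_k(1-\phi_k)$ for all $\phi_k\in(0,1]$. So no case split on $\lambda_kt\le1$ is needed; this is exactly the paper's telescoping bound $P_n-P_\infty\le t^{\alpha}P_n\sum_{k\ge n}\lambda_k^{\alpha}$.

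The misjustified step is the case $n<J'$. You say both integrals are bounded by the constant in (\ref{eq:-178}), but $\int e^{-r_nt}P_n\,d\mu$ is not controlled by that bound or by your head-product estimate. For $n<J'$ the partial product $P_n$ need not decay like $(1+t)^{-m}$; for $n=0$ it is identically $1$. What makes the integral finite is the exponential factor: $r_n\ge r_{J'}>0$, so
$\int e^{-r_nt}P_n\,d\mu\le\sup_{t\ge0}\big(e^{-r_{J'}t}(1+t)^{m}\big)K_\mu$.
This is precisely the paper's constant $A$.

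Finally, $\beta>0$ is needed for (\ref{eq:-188}) as well, not only for (\ref{eq:-178}). Take $\phi\equiv1$, $m=0$, $\lambda_k=(1-\gamma)\gamma^k$ (so $r_n=\gamma^n$) and $\mu(dt)=\epsilon(1+t)^{-1-\epsilon}dt$ with $\epsilon<\alpha$. Then the left side of (\ref{eq:-188}) equals $\int(1-e^{-\gamma^nt})\mu(dt)\gtrsim\gamma^{\epsilon n}$, so your $m=0$ fallback does not rescue the rate. The paper's proof (through $\lfloor m/\beta\rfloor$) tacitly assumes $\beta>0$ too; you should simply state that assumption.
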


\begin{proof}
Throughout the proof, we will denote by $c_{q_{1},\dots,q_{n}}$ a
constant depending only on $q_{1},\dots,q_{n}$, and its value may
vary at different appearances. Let
\[
P_{n}(t)=\prod_{k=0}^{n-1}\phi(\lambda_{k}t),\quad P_{\infty}(t)=\prod_{k=0}^{\infty}\phi(\lambda_{k}t).
\]

To see (\ref{eq:-178}), let $n_{0}=\lfloor m/\beta\rfloor+1$. Then
$P_{\infty}(t)\le P_{n_{0}}(t)\le c_{\lambda,K_{\phi},\beta}(1+t)^{-m}$,
which implies (\ref{eq:-178}) readily. To see (\ref{eq:-188}), let
\[
\tilde{P}_{n}(t)=P_{n}(t)^{-1}P_{\infty}(t)=\prod_{k=0}^{\infty}\phi(\lambda_{k+n}t).
\]
Note that 
\begin{equation}
\tilde{P}_{n}(t)=\prod_{k=0}^{\infty}\phi(\lambda_{k+n}t)=\phi(\lambda_{n}t)\prod_{k=1}^{\infty}\phi(\lambda_{k+n}t)=\phi(\lambda_{n}t)\tilde{P}_{n+1}(t).\label{eq:-182}
\end{equation}
By (\ref{eq:-182}) and $0<\phi\le1$, we see that $\tilde{P}_{n}(t)$
is an increasing sequence and $\tilde{P}_{n}(t)\le1$, which implies
the existence of $\tilde{P}_{\infty}(t)=\lim_{n\to\infty}\tilde{P}_{n}(t)$.
By the continuity of $\phi$, setting $n\to\infty$ in (\ref{eq:-182})
gives $\tilde{P}_{\infty}(t)=1$. Consequently, 
\[
P_{n}(t)-P_{\infty}(t)=P_{n}(t)(1-\tilde{P}_{n}(t))=P_{n}(t)\sum_{k=n}^{\infty}(\tilde{P}_{k+1}(t)-\tilde{P}_{k}(t)).
\]
By (\ref{eq:-44}) and (\ref{eq:-182}),
\[
\begin{aligned}P_{n}(t)-P_{\infty}(t) & =P_{n}(t)\sum_{k=n}^{\infty}(1-\phi(\lambda_{k}t))\tilde{P}_{k+1}(t)\le P_{n}(t)\sum_{k=n}^{\infty}(1-\phi(\lambda_{k}t))\le t^{\alpha}P_{n}(t)\sum_{k=n}^{\infty}\lambda_{k}^{\alpha}.\end{aligned}
\]
By the above and $\lambda_{n}\le K_{\lambda}\gamma^{n}$, we deduce
that
\begin{equation}
P_{n}(t)-P_{\infty}(t)\le K_{\lambda}^{\alpha}(1-\gamma^{\alpha})^{-1}\gamma^{\alpha n}t^{\alpha}P_{n}(t).\label{eq:-183}
\end{equation}
Let $n_{\alpha}=\lfloor(\alpha+m)/\beta\rfloor+1$. Then $\beta n_{0}-\alpha>m$.
For any $n\ge n_{0}$, by (\ref{eq:-44}), $t^{\alpha}P_{n}(t)\le t^{\alpha}P_{n_{\alpha}}(t)\le c_{\lambda,K_{\phi},\beta}(1+t)^{-m}$.
Therefore, by (\ref{eq:-183}) and the above,
\[
\begin{aligned} & \int_{0}^{\infty}e^{-r_{n}t}[P_{n}(t)-P_{\infty}(t)]\mu(dt)\\
 & \le K_{\lambda}^{\alpha}(1-\gamma^{\alpha})^{-1}\lambda_{n_{\alpha}}^{-\beta n_{\alpha}}K_{\phi}^{n_{\alpha}}\gamma^{\alpha n}\int_{0}^{\infty}(1+t)^{-m}\mu(dt)\le c_{\lambda,m,K_{\mu},K_{\phi},\alpha,\beta}\gamma^{\alpha n}.
\end{aligned}
\]
where, in the last inequality, the dependence on $K_{\lambda}$, $(1-\gamma^{\alpha})^{-1}$
and $\lambda_{n_{0}}$ are expressed as dependence on $\{\lambda_{n}\}_{n}$.
Let $A=\gamma^{-\alpha n_{\alpha}}\sup_{t\ge0}e^{-r_{n_{\alpha}}t}(1+t)^{m}<\infty$.
Then, for $n<n_{\alpha}$,
\[
\int_{0}^{\infty}e^{-r_{n}t}[P_{n}(t)-P_{\infty}(t)]\mu(dt)\le\int_{0}^{\infty}e^{-r_{n_{0}}t}\mu(dt)\le A\gamma^{\alpha n}\int_{0}^{\infty}(1+t)^{-m}\mu(dt).
\]
Therefore,
\begin{equation}
\int_{0}^{\infty}e^{-r_{n}t}[P_{n}(t)-P_{\infty}(t)]\mu(dt)\le c_{\lambda,m,K_{\mu},K_{\phi},\alpha,\beta}\gamma^{\alpha n},\quad n\ge0.\label{eq:-187}
\end{equation}
Let $n_{1}=\lfloor(1+m)/\beta\rfloor+1$. Then $tP_{\infty}(t)\le tP_{n_{1}}(t)\le c_{\lambda,K_{\phi},\beta}(1+t)^{-m}$.
Therefore, by $r_{k}=\sum_{k=n}^{\infty}\lambda_{k}\le K_{\lambda}(1-\gamma)^{-1}\gamma^{n}$,
we deduce that
\begin{equation}
\begin{aligned}\int_{0}^{\infty}(1 & -e^{-r_{n}t})P_{\infty}(t)\mu(dt)\le r_{n}\int_{0}^{\infty}tP_{\infty}(t)\mu(dt)\le c_{\lambda,m,K_{\mu},K_{\phi},\beta}\gamma^{n}.\end{aligned}
\label{eq:-185}
\end{equation}
Combining (\ref{eq:-187}) and (\ref{eq:-185}) gives (\ref{eq:-188}). 
\end{proof}
Recall that a function $F(t)$ on $[0,\infty)$ is called \emph{completely
monotone} if $F\in C^{\infty}$ and $(-1)^{n}F^{(n)}(t)\ge0$ for
all $n\in\mathbb{N}$ and $t\ge0$. By Bernstein's theorem (see \cite[Theorem 1.4]{SSV12}),
$F(t)$ is completely monotone if and only if there exists a positive
Borel measure $\mu_{F}$, called the \emph{representing measure} of
$F$, such that
\begin{equation}
F(t)=\int_{0}^{\infty}e^{-ts}\mu_{F}(ds),\quad t\ge0.\label{eq:-38}
\end{equation}
We will need a slight extension of \cite[Theorem 2.2]{LG26}, where
the measure $f(t)dt$ in \cite[Theorem 2.2]{LG26} is replaced by
the representing measure $\mu_{F}(dt)$.
\begin{thm}
\label{thm:-1}Let $\{\lambda_{n}\}_{n\ge0}$ be a sequence of positive
numbers satisfying (\ref{eq:-204}) and (\ref{eq:-179}). Suppose
that $\{\xi_{n}\}_{n\ge1}$ is a sequence of i.i.d. positive random
variables, and let
\begin{equation}
\omega_{n}=r_{n}\omega_{0}+\sum_{k=1}^{n}\lambda_{n-k}\xi_{k},\quad n\ge1,\label{eq:-37}
\end{equation}
where $\omega_{0}>0$ is a constant and $r_{n}=\sum_{k=n}^{\infty}\lambda_{k}$.
Suppose that the Laplace transform $\phi(t)=\mathbb{E}(e^{-t\xi_{1}})$
of $\xi_{1}$ satisfies the property (\ref{eq:-44}). Let $F(t)$
be a completely monotone function on $[0,\infty)$ with representing
measure $\mu_{F}$ satisfying (\ref{eq:-39}). Then
\begin{equation}
\lim_{n\to\infty}\frac{1}{n}\sum_{k=0}^{n-1}F(\omega_{k})=\int_{0}^{\infty}\prod_{k=0}^{\infty}\phi(\lambda_{k}t)\mu_{F}(dt)\quad\mathrm{a.s.},\label{eq:-42}
\end{equation}
and
\begin{equation}
\mathbb{E}\Big[\Big(\frac{1}{n}\sum_{k=0}^{n-1}F(\omega_{k})-\int_{0}^{\infty}\prod_{k=0}^{\infty}\phi(\lambda_{k}t)\mu_{F}(dt)\Big)^{2}\Big]\le c_{\lambda,m,K_{\mu},K_{\phi},\alpha,\beta,\omega_{0}}n^{-1},\label{eq:-175}
\end{equation}
where $c_{\lambda,m,K_{\mu},K_{\phi},\alpha,\beta,\omega_{0}}$ is
a constant depending only on $\{\lambda_{n}\}_{n}$, $m$, $K_{\mu}$,
$K_{\phi}$, $\alpha$, $\beta$ and $\omega_{0}$. 
\end{thm}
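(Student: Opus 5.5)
The plan is the standard route: represent $F$ through Bernstein's theorem, reduce everything to Laplace transforms of $\omega_k$, compute first and second moments via independence and Lemma \ref{lem:-19}, and then upgrade the $L^{2}$ bound to almost sure convergence. Write $F(\omega_k)=\int_0^\infty e^{-t\omega_k}\mu_F(dt)$ by (\ref{eq:-38}). Since the $\xi_j$ are i.i.d. with Laplace transform $\phi$,
\[
\mathbb{E}\big(e^{-t\omega_k}\big)=e^{-r_k\omega_0 t}\prod_{j=0}^{k-1}\phi(\lambda_j t).
\]
Let $L=\int_0^\infty\prod_{k\ge0}\phi(\lambda_k t)\mu_F(dt)$, which is finite by (\ref{eq:-178}). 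By Fubini and Lemma \ref{lem:-19} (with $\mu=\mu_F$ and $t$ replaced by $\omega_0 t$, which only alters constants by a dependence on $\omega_0$), we get $|\mathbb{E}F(\omega_k)-L|\le c\,\gamma^{\alpha k}$. Hence the bias term satisfies $|\frac1n\sum_{k<n}\mathbb{E}F(\omega_k)-L|\le c/n$, because $\sum_k\gamma^{\alpha k}<\infty$. The estimate (\ref{eq:-175}) then reduces to showing
\[
\mathrm{Var}\Big(\sum_{k<n}F(\omega_k)\Big)\le c\,n .
\]

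For the variance I will show that the covariances decay geometrically, $|\mathrm{Cov}(F(\omega_k),F(\omega_l))|\le c\,\gamma^{\alpha(l-k)}$ for $k\le l$. Splitting
\[
\omega_l=A+B,\qquad B=\sum_{j=k+1}^{l}\lambda_{l-j}\xi_j ,
\]
the part $B$ is independent of $\omega_k$, while $A=r_l\omega_0+\sum_{j\le k}\lambda_{l-j}\xi_j$ is measurable with respect to $\xi_1,\dots,\xi_k$. Using the representing measure in two variables,
\[
\mathrm{Cov}=\iint\Big(\mathbb{E}\big[e^{-s\omega_k-tA}\big]-\mathbb{E}\big[e^{-s\omega_k}\big]\mathbb{E}\big[e^{-tA}\big]\Big)\,\mathbb{E}\big[e^{-tB}\big]\,\mu_F(ds)\,\mu_F(dt).
\]
Here $\mathbb{E}[e^{-tB}]=\prod_{i=0}^{l-k-1}\phi(\lambda_i t)$. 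The variable $A$ has tiny coefficients, all at most $c\gamma^{l-k}$, so it is close to $0$. I plan to bound the bracket by $\mathbb{E}|e^{-tA}-1|$ plus a second term of the same form, and use $1-e^{-x}\le x^{\alpha}$ with $x^\alpha$ subadditive for $\alpha\le1$. This gives $\mathbb{E}|1-e^{-tA}|\le t^{\alpha}\big(r_l^\alpha\omega_0^\alpha+\sum_{j\le k}\lambda_{l-j}^\alpha\mathbb{E}\xi^\alpha\big)\le c\,t^\alpha\gamma^{\alpha(l-k)}$.

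The finiteness of $\mathbb{E}\xi^\alpha$ is the delicate point. If it is not directly available, I would instead argue through Laplace transforms, using $\mathbb{E}(1-e^{-t\lambda\xi})=1-\phi(\lambda t)\le(\lambda t)^\alpha$, which is exactly the lower bound in (\ref{eq:-44}). Independence of the $\xi_j$ and the elementary inequality $1-\prod_j a_j\le\sum_j(1-a_j)$ for $a_j\in[0,1]$ then give the same bound. The factor $t^\alpha$ must be absorbed by integrability against $\mu_F(dt)$, and this is where the decay $\phi(\lambda_i t)\le K_\phi(1+\lambda_i t)^{-\beta}$ in $\mathbb{E}[e^{-tB}]$ enters: it makes $t^\alpha\,\mathbb{E}[e^{-tB}]\lesssim(1+t)^{-m}$ once $l-k\ge n_\alpha$, as in the proof of Lemma \ref{lem:-19}. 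The finitely many small gaps $l-k<n_\alpha$ are handled trivially, using that $\mathbb{E}F(\omega_k)^2$ is uniformly bounded by the $\mu_F\otimes\mu_F$ integral of $\mathbb{E}e^{-(s+t)\omega_k}$. For that integral I use $\mathbb{E}e^{-(s+t)\omega_k}\le\mathbb{E}e^{-s\omega_k}$ for $k\ge n_0$, and the deterministic floor $\omega_k\ge r_k\omega_0$ for small $k$. Summing $\gamma^{\alpha|l-k|}$ over $k,l<n$ gives the $O(n)$ variance, and hence (\ref{eq:-175}).

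For the almost sure statement (\ref{eq:-42}), I would use the standard subsequence trick. Chebyshev's inequality with (\ref{eq:-175}) along $n=j^2$ together with Borel--Cantelli gives convergence along $n=j^2$. Between consecutive squares, $F\ge0$ and monotonicity of partial sums give
\[
\frac{j^2}{(j+1)^2}\,S_{j^2}\le S_n\le\frac{(j+1)^2}{j^2}\,S_{(j+1)^2}
\]
for the averages $S_n$ and $j^2\le n\le(j+1)^2$, which fills the gaps. The main obstacle I expect is the covariance bound, specifically making the $t^\alpha$ loss integrable uniformly against $\mu_F\otimes\mu_F$ while only assuming (\ref{eq:-39}).
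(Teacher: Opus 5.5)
Your bias estimate is the paper's own argument: Fubini plus (\ref{eq:-188}), rerun with $r_k\omega_0$ in place of $r_k$. Your Borel--Cantelli argument along $n=j^{2}$, combined with the monotone sandwich, is a correct and self-contained replacement for the paper's appeal to \cite{LG26} for (\ref{eq:-42}).

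The paper also imports $\mathrm{Var}(\psi_n)\le cn^{-1}$ from \cite{LG26}, so your covariance argument has to stand on its own, and as written it has a gap. Twice you discard the exponential weight in one of the two Laplace variables:
\begin{itemize}
\item you bound the bracket by $2\,\mathbb{E}(1-e^{-tA})$, i.e.\ you use $e^{-s\omega_k}\le 1$;
\item for the short gaps you bound $\mathbb{E}e^{-(s+t)\omega_k}\le\mathbb{E}e^{-s\omega_k}$.
\end{itemize}
Each of these leaves a bare factor $\int_0^\infty\mu_F(ds)=F(0+)$. Condition (\ref{eq:-39}) is designed precisely to allow $\mu_F$ of infinite mass, e.g.\ $\mu_F(ds)=(\pi s)^{-1/2}ds$ for $F(x)=x^{-1/2}$, which is how the theorem is used in Proposition \ref{prop:-4}. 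Even when $\mu_F$ is finite, its mass is not controlled by $m$ and $K_\mu$, so the constant in (\ref{eq:-175}) would acquire a dependence on $F(0)$ that the statement does not allow.

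The repair is to keep both weights and distribute the decaying factors between $s$ and $t$. For the weighted term, compute via Laplace transforms; this also avoids $\mathbb{E}\xi_1^{\alpha}$, which can indeed be infinite when $\alpha<1$:
\[
\mathbb{E}\big[e^{-s\omega_k}(1-e^{-tA})\big]=e^{-sr_k\omega_0}\Big(\prod_{i<k}\phi(\lambda_i s)-e^{-tr_l\omega_0}\prod_{i<k}\phi(\lambda_i s+\lambda_{i+l-k}t)\Big).
\]
Telescope the products and use $\phi(a)-\phi(a+b)\le 1-\phi(b)\le b^{\alpha}$. This bounds the term by
\[
t^{\alpha}\Big((r_l\omega_0)^{\alpha}+\sum_{i<k}\lambda_{i+l-k}^{\alpha}\Big)\max_{J}\,e^{-sr_k\omega_0}\prod_{i\in J}\phi(\lambda_i s)\le c\,\gamma^{\alpha(l-k)}\,t^{\alpha}(1+s)^{-m},
\]
where $J$ ranges over $\{0,\dots,k-1\}$ with at most one index deleted. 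The last inequality holds for two reasons:
\begin{itemize}
\item when $k>n_0$, at least $n_0$ factors with indices $\le n_0$ survive the deletion;
\item when $k\le n_0$, the factor $e^{-sr_k\omega_0}\le e^{-sr_{n_0}\omega_0}$ supplies the decay.
\end{itemize}

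For the second moments, use that $\phi$ is nonincreasing to assign $n_0$ factors of $\prod_{i<k}\phi(\lambda_i(s+t))$ to $s$ and another $n_0$ to $t$. Equivalently, apply (\ref{eq:-186}) to $F^{2}$, whose measure $\mu_F\ast\mu_F$ satisfies (\ref{eq:-39}) with $2m$ by Lemma \ref{lem:-16}; this is what the paper does in the proof of Theorem \ref{thm:-3}.

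With these changes your geometric covariance decay $|\mathrm{Cov}(F(\omega_k),F(\omega_l))|\le c\gamma^{\alpha(l-k)}$ and the $O(n)$ variance bound go through.
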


\begin{proof}
The proof of (\ref{eq:-42}) is the same as that of \cite[Theorem 2.2]{LG26}.
To see (\ref{eq:-175}), let $\psi_{n}=n^{-1}\sum_{k=0}^{n-1}F(\omega_{k})$.
It is shown in \cite[p. 9]{LG26} that
\begin{equation}
\text{Var}(\psi_{n})\le c_{\lambda,m,K_{\mu},K_{\phi},\alpha,\beta,\omega_{0}}n^{-1},\label{eq:-177}
\end{equation}
By $F(x)=\int_{0}^{\infty}e^{-tx}\mu_{F}(dt)$ and Fubini's theorem,
\begin{equation}
\mathbb{E}[F(\omega_{k})]=\mathbb{E}\Big(\int_{0}^{\infty}e^{-\omega_{k}t}\mu_{F}(dt)\Big)=\int_{0}^{\infty}e^{-r_{k}\omega_{0}t}\prod_{j=0}^{k-1}\phi(\lambda_{j}t)\mu_{F}(dt).\label{eq:-181}
\end{equation}
Therefore, by (\ref{eq:-188}), we have
\[
\begin{aligned}\Big|\mathbb{E}[F(\omega_{k})]-\int_{0}^{\infty}\prod_{j=0}^{\infty}\phi(\lambda_{j}t)\mu_{F}(dt)\Big| & \le c_{\lambda,m,K_{\mu},K_{\phi},\alpha,\beta,\omega_{0}}\gamma^{\alpha k}.\end{aligned}
\]
Consequently,
\[
\Big|\mathbb{E}(\psi_{n})-\int_{0}^{\infty}\prod_{j=0}^{\infty}\phi(\lambda_{j}t)\mu_{F}(dt)\Big|\le c_{\lambda,m,K_{\mu},K_{\phi},\alpha,\beta,\omega_{0}}n^{-1}\sum_{k=1}^{n}\gamma^{\alpha k}\le c_{\lambda,m,K_{\mu},K_{\phi},\alpha,\beta,\omega_{0}}n^{-1},
\]
which, together with (\ref{eq:-177}), proves (\ref{eq:-175}).
\end{proof}
\begin{rem}
\label{rem:-6}Clearly, if $F(t)$ and $G(t)$ are completely monotone
functions satisfying (\ref{eq:-39}), then $F(t)G(t)$ and $F(t)+G(t)$
are also completely monotone functions satisfying (\ref{eq:-39})
with possibly different $m$ (see Lemma \ref{lem:-16}).
\end{rem}

\begin{thm}
\label{thm:-3} Let $\{\lambda_{n}\}_{n\ge0}$, $\{\xi_{n}\}_{n\ge1}$,
$\{\omega_{n}\}_{n\ge0}$ be the same as in Theorem \ref{thm:-1}.
Suppose that the Laplace transform $\phi(t)=\mathbb{E}(e^{-t\xi_{1}})$
of $\xi_{1}$ satisfies the property (\ref{eq:-44}). Let $F(t)$
be a completely monotone function on $[0,\infty)$ with representing
measure $\mu_{F}$ such that satisfying (\ref{eq:-39}).Let $\{\eta_{n}\}_{n\ge1}$
be a sequence of random variables such that
\begin{equation}
\mathbb{E}(\eta_{n}|\mathcal{G}_{n-1})=0,\quad n\ge1\quad\mathrm{a.s.}\label{eq:-40}
\end{equation}
and 
\begin{equation}
\mathbb{E}(\eta_{n}^{2}|\mathcal{G}_{n-1})=1,\quad n\ge1\quad\mathrm{a.s.},\label{eq:-41}
\end{equation}
where $\mathcal{G}_{0}=\emptyset$ and $\mathcal{G}_{n}=\sigma(\{(\xi_{k},\eta_{k}):1\le k\le n\})$
be the filtration generated by $\{(\xi_{n},\eta_{n})\}_{n\ge1}$.
Suppose that
\begin{equation}
K_{\eta}=\sup_{n\ge1}\mathbb{E}(|\eta_{n}|^{p})<\infty,\label{eq:-176}
\end{equation}
for some $p>2$. Let 
\begin{equation}
V_{F}=\int_{0}^{\infty}P_{\infty}(t)(\mu_{F}\ast\mu_{F})(dt),\label{eq:-45}
\end{equation}
and 
\begin{equation}
S_{n}=\frac{1}{\sqrt{nV_{F}}}\sum_{k=1}^{n}F(\omega_{k-1})\eta_{k}.\label{eq:-46}
\end{equation}
Then
\begin{equation}
\sup_{x\in\mathbb{R}}|\mathbb{P}(S_{n}\le x)-\Phi(x)|\le c_{\lambda,m,K_{\mu},K_{\eta},K_{\phi},\alpha,\beta,\omega_{0},p}n^{-\kappa},\label{eq:-180}
\end{equation}
where $\kappa=\min\big(\frac{1}{5},\frac{p-2}{2+2p}\big)$ and $c_{\lambda,m,K_{\mu},K_{\eta},K_{\phi},\alpha,\beta,\omega_{0},p}$
is a constant depending only on $p$, $\{\lambda_{n}\}_{n}$, $m$,
$K_{\mu}$, $K_{\eta}$, $K_{\phi}$, $\alpha$, $\beta$, $\omega_{0}$
and $p$. Consequently, 
\begin{equation}
\lim_{n\to\infty}\frac{1}{\sqrt{n}}\sum_{k=1}^{n}F(\omega_{k-1})\eta_{k}=\mathcal{N}(0,V_{F}),\label{eq:-43}
\end{equation}
in distribution, where $\mathcal{N}(\mu,\sigma^{2})$ is the normal
distribution with mean $\mu$ and variance $\sigma^{2}$.
\end{thm}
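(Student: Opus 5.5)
We apply the martingale Berry--Esseen bound of Theorem \ref{thm:-2} with $r>0$ to the martingale difference sequence
\[
X_{k}=\frac{F(\omega_{k-1})\eta_{k}}{\sqrt{nV_{F}}},\qquad 1\le k\le n,
\]
adapted to $\{\mathcal{G}_{k}\}$. Since $\omega_{k-1}$ is $\mathcal{G}_{k-1}$-measurable, (\ref{eq:-40}) gives $\mathbb{E}(X_{k}|\mathcal{G}_{k-1})=0$. By (\ref{eq:-41}),
\[
\mathbb{E}(X_{k}^{2}|\mathcal{G}_{k-1})=\frac{F(\omega_{k-1})^{2}}{nV_{F}}.
\]
The proof therefore reduces to bounding the two quantities $L_{n,r}$ and $N_{n,r}$ of Theorem \ref{thm:-2} by powers of $n^{-1}$, and then choosing $r$ optimally.

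For $L_{n,r}$, note that $F$ is completely monotone, so $0\le F(\omega)\le F(0)=\mu_{F}([0,\infty))$. The total mass $\mu_{F}([0,\infty))$ may be infinite under (\ref{eq:-39}) alone. In that case we instead use $F(\omega_{k-1})\le F(r_{k-1}\omega_{0})$, since $\omega_{k-1}\ge r_{k-1}\omega_{0}$, or the negative-moment type bounds coming from the Laplace representation. The simplest route is Hölder: take $2+2r<p$, i.e. $r<(p-2)/2$, and bound
\[
\mathbb{E}|F(\omega_{k-1})\eta_{k}|^{2+2r}\le \mathbb{E}\big(F(\omega_{k-1})^{s}\big)^{\cdot}\,K_{\eta}^{(2+2r)/p},\qquad s=\frac{(2+2r)p}{p-2-2r}.
\]
The moment $\mathbb{E}(F(\omega_{k-1})^{s})$ is bounded uniformly in $k$ because $F^{s}$ is dominated by a product of completely monotone functions. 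By Remark \ref{rem:-6}, for integer $\lceil s\rceil$ this product is again completely monotone and satisfies (\ref{eq:-39}), so (\ref{eq:-178}) and (\ref{eq:-181}) apply. This gives
\[
L_{n,r}\le c\,n\cdot n^{-(1+r)}=c\,n^{-r}.
\]

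For $N_{n,r}$, we write
\[
\sum_{k=1}^{n}\mathbb{E}(X_{k}^{2}|\mathcal{G}_{k-1})-1=\frac{1}{V_{F}}\Big(\frac{1}{n}\sum_{k=0}^{n-1}F^{2}(\omega_{k})-V_{F}\Big).
\]
The function $F^{2}$ is completely monotone with representing measure $\mu_{F}\ast\mu_{F}$, so by Remark \ref{rem:-6} it satisfies (\ref{eq:-39}) with a modified $m$. By construction $V_{F}$ in (\ref{eq:-45}) is exactly the limit appearing in (\ref{eq:-42}) for $F^{2}$. Hence (\ref{eq:-175}) gives an $L^{2}$ bound of order $n^{-1}$. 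Taking $r\le1$ and applying Jensen yields
\[
N_{n,r}\le c\,n^{-(1+r)/2}.
\]

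Combining the two estimates, Theorem \ref{thm:-2} gives
\[
\sup_{x}|\mathbb{P}(S_{n}\le x)-\Phi(x)|\le c\big(n^{-r}+n^{-(1+r)/2}\big)^{1/(3+2r)}.
\]
Choosing $r=1$ when $p>4$ gives $n^{-1/5}$. Otherwise we take $r$ just below $(p-2)/2$, tuned against the Hölder loss; balancing these produces the exponent $\kappa=\min\big(\frac15,\frac{p-2}{2+2p}\big)$. The limit (\ref{eq:-43}) then follows immediately from (\ref{eq:-180}). We also need $V_{F}>0$, which holds since $P_{\infty}>0$ and $\mu_{F}\neq0$.

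The main obstacle is the uniform moment bound on $F(\omega_{k})^{s}$ for non-integer $s$, together with the exact exponent bookkeeping in the Hölder step needed to reproduce $\kappa$. For the moment bound, I would first dominate $F^{s}\le F^{\lceil s\rceil}+1$ on the relevant range. If that fails, I would use the bound $F(x)\le c(1+x^{-m})$ implied by (\ref{eq:-39}), together with negative moments of $\omega_{k}$ controlled through (\ref{eq:-44}) as in Lemma \ref{lem:-2}.
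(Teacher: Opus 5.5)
Your setup matches the paper's: the martingale differences $X_k$, the bound $N_{n,r}\le c\,n^{-(1+r)/2}$ obtained from (\ref{eq:-175}) applied to $F^2$, and the uniform moment bounds for $F(\omega_k)$ via integer powers and Remark~\ref{rem:-6}. The case $p>4$ is fine.

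The gap is the exponent when $2<p\le 4$. Your H\"older split (whose missing exponent should be $(p-2-2r)/p$) requires $2+2r<p$ strictly. For each admissible $r$, Theorem~\ref{thm:-2} then gives $n^{-r/(3+2r)}$. Since $r\mapsto r/(3+2r)$ is strictly increasing, every admissible $r$ gives an exponent strictly below $\frac{p-2}{2+2p}$. At $p=4$ it is strictly below $\frac15$, because $r=1$ is not admissible there. The H\"older loss only enters the constant, through $s\to\infty$ and $\sup_k\mathbb{E}F(\omega_k)^{\lceil s\rceil}$, and never the power of $n$. So there is nothing to ``balance'': your argument yields $n^{-\kappa+\varepsilon}$ for every $\varepsilon>0$, but not the stated $n^{-\kappa}$.

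The paper reaches the endpoint by taking $r=\min(1,p/2-1)$ exactly, so that $2+2r=\min(4,p)\le p$. It then writes $\mathbb{E}|X_{k,n}|^{2+2r}\le K_\eta (nV_F)^{-1-r}\mathbb{E}[F(\omega_{k-1})^{2+2r}]$ with no H\"older split. This amounts to conditioning on $\mathcal{G}_{k-1}$ and using $K_\eta$ as a bound on $\mathbb{E}(|\eta_k|^{2+2r}\mid\mathcal{G}_{k-1})$. That bound holds in the application (Proposition~\ref{prop:-4}, where $\eta_n$ is conditionally standard normal), but it is stronger than the unconditional hypothesis (\ref{eq:-176}). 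The remaining moment $\sup_k\mathbb{E}[F(\omega_k)^{2+2r}]$ is handled essentially as you propose, by interpolating between $\sup_k\mathbb{E}F(\omega_k)$ and $\sup_k\mathbb{E}F(\omega_k)^{n_0}$ with $n_0=\lfloor 2r\rfloor+3$.

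To close your proof you must do one of two things:
\begin{itemize}
\item invoke a conditional $p$-th moment bound on $\eta_k$ given $\mathcal{G}_{k-1}$ and take the endpoint $r$; or
\item weaken the conclusion to $n^{-\kappa+\varepsilon}$ for $2<p\le 4$.
\end{itemize}
Under (\ref{eq:-176}) alone your H\"older route is the careful one, but it does not deliver the stated $\kappa$.
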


\begin{proof}
We only need to prove (\ref{eq:-180}). Let $X_{k,n}=(nV_{F})^{-1/2}F(\omega_{k-1})\eta_{k}$.
By (\ref{eq:-41}), $\mathbb{E}(X_{k,n}^{2}|\mathcal{G}_{k-1})=(nV_{F})^{-1}F(\omega_{k-1})^{2}.$
Note that $V_{F}=\int_{0}^{\infty}P_{\infty}(t)\mu_{F^{2}}(dt)$.
Applying (\ref{eq:-175}) to $F^{2}$ gives
\[
\mathbb{E}\Big(\Big|\frac{1}{n}\sum_{k=1}^{n}F(\omega_{k-1})^{2}-V_{F}\Big|^{2}\Big)\le c_{\lambda,m,K_{\mu},K_{\phi},\alpha,\beta,\omega_{0}}n^{-1}.
\]
Let $r=\min(1,p/2-1)$. By the above and H\"{o}lder's inequality,
we have 
\begin{equation}
\mathbb{E}\Big(\Big|\sum_{k=1}^{n}\mathbb{E}(X_{k,n}^{2}|\mathcal{G}_{k-1})-1\Big|^{1+r}\Big)\le c_{\lambda,m,K_{\mu},K_{\phi},\alpha,\beta,\omega_{0}}n^{-(1+r)/2}.\label{eq:-68}
\end{equation}
By (\ref{eq:-176}),
\begin{equation}
\mathbb{E}(|X_{k,n}|^{2+2r})\le K_{\eta}(nV_{F})^{-1-r}\mathbb{E}[F(\omega_{k-1})^{2+2r}].\label{eq:-205}
\end{equation}
By (\ref{eq:-178}) and (\ref{eq:-188}), we have
\begin{equation}
\sup_{k}\mathbb{E}[F(\omega_{k})]\le c_{\lambda,m,K_{\mu},K_{\phi},\alpha,\beta,\omega_{0}}<\infty.\label{eq:-186}
\end{equation}
Let $n_{0}=\lfloor2r\rfloor+3$. Applying (\ref{eq:-186}) to $F^{n_{0}}$
gives 
\begin{equation}
\sup_{k}\mathbb{E}[F(\omega_{k})^{n_{0}}]\le c_{\lambda,m,K_{\mu},K_{\phi},\alpha,\beta,\omega_{0},n_{0}}<\infty.\label{eq:-189}
\end{equation}
It follows from (\ref{eq:-186}), (\ref{eq:-189}) and H\"{o}lder's
inequality that $\sup_{k}\mathbb{E}[F(\omega_{k})^{2+2r}]\le c_{\lambda,m,K_{\mu},K_{\phi},\alpha,\beta,\omega_{0},p}$.
Therefore, by (\ref{eq:-205}),
\begin{equation}
\sum_{k=1}^{n}\mathbb{E}(|X_{k,n}|^{2+2r})\le c_{\lambda,m,K_{\mu},K_{\eta},K_{\phi},\alpha,\beta,\omega_{0},p}n^{-r}.\label{eq:-206}
\end{equation}
Therefore, by (\ref{eq:-68}), (\ref{eq:-206}) and Theorem \ref{thm:-2},
\[
\begin{aligned}\sup_{x\in\mathbb{R}}|\mathbb{P}(S_{n}\le x)-\Phi(x)| & \le c_{\lambda,m,K_{\mu},K_{\eta},K_{\phi},\alpha,\beta,\omega_{0},p}(n^{-r}+n^{-(1+r)/2})^{1/(3+2r)}\\
 & \le c_{\lambda,m,K_{\mu},K_{\eta},K_{\phi},\alpha,\beta,\omega_{0},p}n^{-r/(3+2r)}=c_{\lambda,m,K_{\mu},K_{\eta},K_{\phi},\alpha,\beta,\omega_{0},p}n^{-\kappa}.
\end{aligned}
\]
This completes the proof.
\end{proof}
\begin{prop}
\label{prop:-4}Suppose that $r(t)=r$, $\rho(t)=\rho$ and $\sigma_{t}=\sigma$
are constants. Then, as $\Delta t\to0$, the random vector $(\log I_{T},W_{T},\tilde{W}_{T})$
converges in distribution to the $(d+2)$-dimensional normal distribution
$\mathcal{N}(\mu T,TLL^{\mathrm{T}})$ with 
\begin{equation}
\mu=\big(r+(\rho-r)\sigma^{-1}\bar{\sigma}U(\lambda)-\bar{\sigma}^{2}V(\lambda)/2,0,\dots,0\big),\label{eq:-118}
\end{equation}
and 
\begin{equation}
LL^{\mathrm{T}}=\left[\begin{array}{ccc}
\bar{\sigma}^{2}V(\lambda) & \bar{\sigma}U(\lambda)\\
\bar{\sigma}U(\lambda) & 1\\
 &  & \mathrm{I}_{d}
\end{array}\right].\label{eq:-126}
\end{equation}
Therefore, $(\log I_{T},W_{T},\tilde{W}_{T})\to(\log X_{T},W_{T},\tilde{W}_{T})$
in distribution as $\Delta t\to0$. Moreover, for any $a=(a_{1},a_{2},\dots,a_{d+2})\in\mathbb{R}^{d+2}$,
\begin{equation}
\sup_{x\in\mathbb{R}}|\mathbb{P}(a\cdot Y_{T}\le x)-\mathbb{P}(a\cdot\bar{Y}_{T}\le x)|\le c_{\lambda,v_{0},T}\Delta t^{1/5},\label{eq:-207}
\end{equation}
where $Y_{T}=(\log I_{T},W_{T},\tilde{W}_{T})$ and $\bar{Y}_{T}=(\log X_{T},W_{T},\tilde{W}_{T})$. 
\end{prop}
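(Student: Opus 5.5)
By Proposition \ref{prop:-3}, it suffices to prove the statement with $I$ replaced by $J$, i.e.\ with $v_{n}$ replaced by $u_{n}$. With constant coefficients the increments $\xi_{k}=(\Delta t)^{-1/2}\sigma(W_{t_{k}}-W_{t_{k-1}})$ are i.i.d.\ $\mathcal{N}(0,\sigma^{2})$. Write $Z_{k}=\xi_{k}/\sigma$. Then $\sigma^{-2}u_{n}=\omega_{n}$ has exactly the form (\ref{eq:-37}) with $\lambda_{n}=(1-\lambda)\lambda^{n}$, $\omega_{0}=v_{0}\sigma^{-2}$ and i.i.d.\ positive variables $Z_{k}^{2}$. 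Their Laplace transform $\phi(t)=(1+2t)^{-1/2}$ satisfies (\ref{eq:-44}) with $\alpha=1$ and $\beta=1/2$.

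By It\^{o}'s formula,
\[
\log J_{T}=rT+\sum_{n=1}^{N}\Big[(\rho-r)\bar{\sigma}u_{n-1}^{-1/2}\Delta t-\tfrac{1}{2}\bar{\sigma}^{2}\sigma^{2}u_{n-1}^{-1}\Delta t+\bar{\sigma}\sigma u_{n-1}^{-1/2}\Delta t^{1/2}Z_{n}\Big].
\]
The drift sums are $T\cdot N^{-1}\sum F(\omega_{n-1})$ with $F(x)=x^{-1/2}$ and $F(x)=x^{-1}$. Both are completely monotone, with representing measures $c\,s^{-1/2}ds$ and $ds$, and these satisfy (\ref{eq:-39}). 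By (\ref{eq:-175}) of Theorem \ref{thm:-1}, these sums converge in $L^{2}$ at rate $N^{-1/2}$ to $\sigma^{-1}U(\lambda)\,T$ and $\sigma^{-2}V(\lambda)\,T$ respectively. The identification of these limits with the integrals (\ref{eq:-16}), (\ref{eq:-3}) is as in \cite{LG26}. This yields the drift vector $\mu$ in (\ref{eq:-118}).

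For the joint law, fix $a\in\mathbb{R}^{d+2}$. The fluctuating part of $a\cdot Y_{T}$ is $\sqrt{T/N}\sum_{n}\big(a_{1}\bar{\sigma}\sigma u_{n-1}^{-1/2}Z_{n}+a_{2}Z_{n}+\tilde{a}\cdot\tilde{Z}_{n}\big)$, where $\tilde{Z}_{n}$ are the normalized increments of $\tilde{W}$. This is a martingale difference array with respect to $\mathcal{G}_{n}$ generated by all increments up to $t_{n}$. Its conditional variance is
\[
\frac{T}{N}\sum_{n}\big[a_{1}^{2}\bar{\sigma}^{2}\sigma^{2}u_{n-1}^{-1}+2a_{1}a_{2}\bar{\sigma}\sigma u_{n-1}^{-1/2}+a_{2}^{2}+|\tilde{a}|^{2}\big].
\]
By Theorem \ref{thm:-1} applied to $F=x^{-1}$ and $F=x^{-1/2}$, this converges in $L^{2}$ at rate $N^{-1/2}$ to $T\,a^{\mathrm{T}}LL^{\mathrm{T}}a$, which produces the covariance (\ref{eq:-126}). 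Theorem \ref{thm:-3} as stated handles only a single $F(\omega_{k-1})\eta_{k}$. I will instead apply Haeusler's Theorem \ref{thm:-2} directly to the normalized array, copying the proof of Theorem \ref{thm:-3}. The conditional-variance term is bounded as in (\ref{eq:-68}) by $N^{-(1+r)/2}$. The Lyapunov term is bounded by $N^{-r}$, using Gaussian moments of $Z_{n}$ and the uniform negative moments of Lemma \ref{lem:-2}. Taking $r=1$, which is allowed since all moments exist, gives a Kolmogorov bound of order $N^{-1/5}$ for the normalized martingale part. The degenerate case $a^{\mathrm{T}}LL^{\mathrm{T}}a=0$ forces $a=0$, because $U^{2}<V$ by Lemma \ref{lem:-6}.

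It remains to add the drift and the approximation errors. The drift error is $O_{L^{2}}(N^{-1/2})$ and the error $\log I_{T}-\log J_{T}$ is $O_{L^{2}}(\Delta t^{1/2})$. Here the qualitative Proposition \ref{prop:-3} must be replaced by the quantitative bound from Lemma \ref{lem:-5}, which gives rate $\Delta t^{1/2}$ in (\ref{eq:-112}). A perturbation $E$ with $\|E\|_{L^{2}}\le\epsilon$ changes the Kolmogorov distance to a Gaussian of variance bounded below by at most $c\,\epsilon^{2/3}$. This follows from Chebyshev, $\mathbb{P}(|E|>\epsilon^{2/3})\le\epsilon^{2/3}$, together with the Lipschitz property of the Gaussian CDF. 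With $\epsilon\sim\Delta t^{1/2}$ this contributes $\Delta t^{1/3}$, which is dominated by $\Delta t^{1/5}$ and gives (\ref{eq:-207}). Convergence in distribution then follows by Cram\'er--Wold. The limit $\mathcal{N}(\mu T,TLL^{\mathrm{T}})$ is exactly the law of $(\log X_{T},W_{T},\tilde{W}_{T})$ from (\ref{eq:})--(\ref{eq:-35}), since $\langle\bar{\sigma}V^{1/2}B,W\rangle_{T}=\bar{\sigma}U\,T$.

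I expect the main obstacle to be bookkeeping the uniformity of constants and the rate $1/5$: each of the three error sources (the martingale CLT, the drift LLN, and $I$ versus $J$) must be shown to be at least as fast as $\Delta t^{1/5}$ and uniform in $a$ on compact sets. In particular, I need to check that the conditional-variance estimate for the cross term $u^{-1/2}$ does not degrade the exponent.
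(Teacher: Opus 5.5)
Your proposal is correct. It follows the paper's overall architecture: reduction to $J$ via Proposition \ref{prop:-3}, It\^{o}'s formula, the $L^{2}$ law of large numbers (\ref{eq:-175}) for the drift, and Haeusler's bound with $r=1$ for the martingale part. Your Chebyshev $\epsilon^{2/3}$ perturbation step is the same as Lemma \ref{lem:-11} with $p=2$ and $\beta=1/3$.

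Where you differ is the martingale CLT. The paper writes the fluctuating part as $\sqrt{T/N}\sum_{n}G(\sigma^{-2}u_{n-1})\eta_{n}$, with
\[
G(t)=\Big((a_{1}\bar{\sigma}t^{-1/2}+a_{2})^{2}+\sum_{k=1}^{d}a_{k+2}^{2}\Big)^{1/2}
\]
and normalized $\eta_{n}$, and then invokes Theorem \ref{thm:-3} for the single function $G$. That step formally needs $G^{2}$ (and $G$) to be completely monotone, which holds only when $a_{1}a_{2}\ge0$. For $a_{1}a_{2}<0$,
\[
-(G^{2})^{\prime}(t)=t^{-2}\big(a_{1}^{2}\bar{\sigma}^{2}+a_{1}a_{2}\bar{\sigma}t^{1/2}\big)<0
\]
for large $t$, so the would-be representing measure is signed. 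You apply Theorem \ref{thm:-2} directly and use only that the conditional variance is a fixed linear combination of $x^{-1}$, $x^{-1/2}$ and a constant, each term handled by (\ref{eq:-175}). This needs no complete monotonicity of the combination, so it covers every direction $a$ without change, which is exactly what is needed to make that step valid for all $a$.

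You also make explicit the quantitative bound $\Vert\log I_{T}-\log J_{T}\Vert_{L^{2}}=\mathrm{O}(\Delta t^{1/2})$ from Lemma \ref{lem:-5} and (\ref{eq:-112}). The rate (\ref{eq:-207}) genuinely needs this, since $Y_{T}$ is built from $I$ rather than $J$, and the paper's proof leaves it implicit behind the qualitative Proposition \ref{prop:-3}.

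One small remark on your degenerate-case argument. The polar-coordinate computation in Lemma \ref{lem:-6} actually yields the constant $\pi/4$ rather than $\pi/2$, because $\int_{0}^{\infty}f(r^{2})r\,dr=\frac{1}{2}\int_{0}^{\infty}f$. It is this constant that gives $U(\lambda)^{2}<V(\lambda)$. That in turn gives positive definiteness of $LL^{\mathrm{T}}$, and hence, after scaling $a\mapsto a/|a|$, constants uniform over all $a$.
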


\begin{proof}
By Proposition \ref{prop:-3}, it suffices to show that $(\log J_{T},W_{T},\tilde{W}_{T})\to\mathcal{N}(\mu T,TLL^{\mathrm{T}})$
in distribution, where $\{J_{t}\}_{t\ge0}$ is defined by (\ref{eq:-109}).
By It\"{o}'s formula,
\begin{equation}
\log J_{T}=A_{N}+\bar{\sigma}\sqrt{\frac{T}{N}}\sum_{n=1}^{N}(\sigma^{-2}u_{n-1}^{(N)})^{-1/2}Z_{n},\label{eq:-117}
\end{equation}
where $Z_{n}=\Delta t^{-1/2}(W_{t_{n}}-W_{t_{n-1}})$, $u_{n}=\lambda^{n}v_{0}+\sigma^{2}(1-\lambda)\sum_{k=1}^{n}\lambda^{n-k}Z_{k}^{2}$,
and
\begin{equation}
A_{N}=rT+(\rho-r)\sigma^{-1}\bar{\sigma}\frac{T}{N}\sum_{n=1}^{N}(\sigma^{-2}u_{n-1})^{-1/2}-\frac{\bar{\sigma}^{2}}{2}\frac{T}{N}\sum_{n=1}^{N}(\sigma^{-2}u_{n-1})^{-1},\label{eq:-134}
\end{equation}
Therefore, for any $a_{1},a_{2},\dots,a_{d+2}\in\mathbb{R}$,
\begin{equation}
\begin{aligned}a_{1}\log J_{T}+a_{2}W_{T}+\sum_{k=1}^{d}a_{k+2}\tilde{W}_{T}^{k} & =a_{1}A_{N}+T^{1/2}M_{N}\end{aligned}
,\label{eq:-120}
\end{equation}
where
\begin{equation}
M_{N}=\sqrt{\frac{T}{N}}\sum_{n=1}^{N}\Big([a_{1}\bar{\sigma}(\sigma^{-2}u_{n-1})^{-1/2}+a_{2}]Z_{n}+\sum_{k=1}^{d}a_{k+2}\tilde{Z}_{n}^{k}\Big),\label{eq:-133}
\end{equation}
and $\tilde{Z}_{n}^{k}=\Delta t^{-1/2}(\tilde{W}_{t_{n}}-\tilde{W}_{t_{n-1}})$. 

For the bounded variation part $A_{N}$, let $\mathcal{L}[f]$ be
the Laplace transform of a function $f$, and $F(t)=t^{-1/2}$. Then
$d\mu_{F}/dt=\mathcal{L}^{-1}[F](t)=(\pi t)^{-1/2}$. Applying Theorem
\ref{thm:-1} (i) to $\lambda_{n}=(1-\lambda)\lambda^{n}$, $\xi_{n}=Z_{n}^{2}$,
$\phi(t)=\mathbb{E}(e^{-t\xi_{1}})=(1+2t)^{-1/2}$ and $F(t)=t^{-1/2}$
gives
\begin{equation}
\lim_{N\to\infty}\frac{1}{N}\sum_{n=1}^{N}(\sigma^{-2}u_{n-1})^{-1/2}=\int_{0}^{\infty}(\pi t)^{-1/2}\prod_{k=0}^{\infty}(1+2t(1-\lambda)\lambda^{k})^{-1/2}dt=U(\lambda)\quad a.s.,\label{eq:-122}
\end{equation}
where the last equality follows from a simple change of variable.
Note that 
\[
\frac{d\mu_{F^{2}}}{dt}=\frac{d(\mu_{F}\ast\mu_{F})}{dt}=\int_{0}^{t}[\pi(t-s)]^{-1/2}(\pi s)^{-1/2}ds=1.
\]
Applying Theorem \ref{thm:-1} (i) to $\lambda_{n}=(1-\lambda)\lambda^{n}$,
$\xi_{n}=Z_{n}^{2}$, $\phi(t)=\mathbb{E}(e^{-t\xi_{1}})=(1+2t)^{-1/2}$
and $F(t)^{2}=t^{-1}$ gives
\begin{equation}
\lim_{N\to\infty}\frac{1}{N}\sum_{n=1}^{N}(\sigma^{-2}u_{n-1})^{-1}=\int_{0}^{\infty}\prod_{k=0}^{\infty}(1+2t(1-\lambda)\lambda^{k})^{-1/2}dt=V(\lambda)\quad a.s.\label{eq:-123}
\end{equation}
Therefore, by (\ref{eq:-122}) and (\ref{eq:-123}),
\begin{equation}
\begin{aligned}\lim_{N\to\infty}A_{N} & =[r+(\rho-r)\sigma^{-1}\bar{\sigma}U(\lambda)-\bar{\sigma}^{2}V(\lambda)/2]T\quad a.s.\end{aligned}
\label{eq:-135}
\end{equation}

For the martingale part $M_{N}$, let
\begin{equation}
G(t)=\Big((a_{1}\bar{\sigma}F(t)+a_{2})^{2}+\sum_{k=1}^{d}a_{k+2}^{2}\Big)^{1/2},\label{eq:-124}
\end{equation}
and
\begin{equation}
\eta_{n}=\frac{a_{1}\bar{\sigma}(\sigma^{-2}u_{n-1})^{-1/2}+a_{2}}{G(\sigma^{-2}u_{n-1})}Z_{n}+\sum_{k=1}^{d}\frac{a_{k+2}}{G(\sigma^{-2}u_{n-1})}\tilde{Z}_{n}^{k},\label{eq:-119}
\end{equation}
Then
\begin{equation}
M_{N}=\sqrt{\frac{T}{N}}\sum_{n=1}^{N}G(\sigma^{-2}u_{n-1})\eta_{n}.\label{eq:-125}
\end{equation}
Clearly, $G^{2}$ is a completely monotone function. Since $h(x)=x^{1/2}$
is a positive function and $h^{\prime}(x)$ is completely monotone,
we see that $G=h\circ G^{2}$ is also completely monotone. Moreover,
\begin{equation}
\begin{aligned}\mu_{G^{2}} & =\mathcal{L}^{-1}\Big[(a_{1}\bar{\sigma}F+a_{2})^{2}+\sum_{k=1}^{d}a_{k+2}^{2}\Big]\\
 & =\mathcal{L}^{-1}[a_{1}\bar{\sigma}F+a_{2}]\ast\mathcal{L}^{-1}[a_{1}\bar{\sigma}F+a_{2}]+\sum_{k=1}^{d}a_{k+2}^{2}\delta\\
 & =(a_{1}\bar{\sigma}\mu_{F}+a_{2}\delta)\ast(a_{1}\bar{\sigma}\mu_{F}+a_{2}\delta)+\sum_{k=1}^{d}a_{k+2}^{2}\delta\\
 & =a_{1}^{2}\bar{\sigma}^{2}\mu_{F^{2}}+2a_{1}a_{2}\bar{\sigma}\mu_{F}+\sum_{k=0}^{d}a_{k+2}^{2}\delta,
\end{aligned}
\label{eq:-208}
\end{equation}
where $\delta$ is the Dirac measure. It follows from (\ref{eq:-208})
that $\mu_{G}\ast\mu_{G}$ satisfies the condition (\ref{eq:-39}).
Therefore, by Lemma \ref{lem:-20}, $\mu_{G}$ also satisfies the
condition (\ref{eq:-39}). Applying Theorem \ref{thm:-3} to $\lambda_{n}=(1-\lambda)\lambda^{n}$,
$\xi_{n}=Z_{n}^{2}$, $\phi(t)=\mathbb{E}(e^{-t\xi_{1}})=(1+2t)^{-1/2}$
and $\eta_{n}$ gives
\begin{equation}
\lim_{N\to\infty}\frac{1}{\sqrt{N}}\sum_{n=1}^{N}G(\sigma^{-2}u_{n-1})\eta_{n}=\mathcal{N}\Big(0,\int_{0}^{\infty}\prod_{k=0}^{\infty}(1+2t(1-\lambda)\lambda^{k})^{-1/2}\mu_{G^{2}}(dt)\Big),\label{eq:-128}
\end{equation}
in distribution. Therefore, by $d\mu_{F}/dt=(\pi t)^{-1/2}$ and $d\mu_{F^{2}}/dt=1$,
\[
\begin{aligned}\int_{0}^{\infty}\prod_{k=0}^{\infty}(1+2t(1-\lambda)\lambda^{k})^{-1/2}\mu_{G^{2}}(dt) & =a_{1}^{2}\bar{\sigma}^{2}V(\lambda)+2a_{1}a_{2}\bar{\sigma}U(\lambda)+\sum_{k=0}^{d}a_{k+2}^{2}\end{aligned}
,
\]
which, together with (\ref{eq:-128}), shows that 
\begin{equation}
\lim_{N\to\infty}M_{N}=\mathcal{N}\Big(0,\Big(a_{1}^{2}\bar{\sigma}^{2}V(\lambda)+2a_{1}a_{2}\bar{\sigma}U(\lambda)+\sum_{k=0}^{d}a_{k+2}^{2}\Big)T\Big),\label{eq:-130}
\end{equation}
in distribution. Combining (\ref{eq:-120}), (\ref{eq:-135}) and
(\ref{eq:-130}) gives that
\begin{equation}
\begin{aligned} & \lim_{N\to\infty}\Big(a_{1}\log J_{T}+a_{2}W_{T}+\sum_{k=1}^{d}a_{k+2}\tilde{W}_{T}^{k}\Big)\\
 & =\mathcal{N}\Big(a_{1}(r+(\rho-r)\sigma^{-1}\bar{\sigma}U(\lambda)-\bar{\sigma}^{2}V(\lambda)/2)T,\Big(a_{1}^{2}\bar{\sigma}^{2}V(\lambda)+2a_{1}a_{2}\bar{\sigma}U(\lambda)+\sum_{k=0}^{d}a_{k+2}^{2}\Big)T\Big)
\end{aligned}
\label{eq:-131}
\end{equation}
in distribution for any $a_{1},a_{2},\dots,a_{d+2}\in\mathbb{R}$.
Therefore, $(\log J_{T},W_{T},\tilde{W}_{T})$ converges in distribution
to the $(d+2)$-dimensional normal distribution $\mathcal{N}(\mu T,TLL^{\mathrm{T}})$
with mean and covariance given by (\ref{eq:-118}) and (\ref{eq:-126})
respectively. It is easily seen that $(\log X_{T},W_{T},\tilde{W}_{T})$
has the same normal distribution $\mathcal{N}(\mu T,TLL^{\mathrm{T}})$.
Therefore, 
\begin{equation}
\lim_{N\to\infty}(\log J_{T},W_{T},\tilde{W}_{T})=(\log X_{T},W_{T},\tilde{W}_{T})\label{eq:-132}
\end{equation}
in distribution. This proves the convergence of the terminal joint
distribution.

We now prove (\ref{eq:-207}). Clearly, $\eta_{n}$ is a standard
normal distribution when conditional on $\mathcal{F}_{n-1}$. Therefore,
$\mathbb{E}(|\eta_{n}|^{4}|\mathcal{F}_{n-1})=3$ and consequently
$\mathbb{E}(|\eta_{n}|^{4})=3$. For any $a\in\mathbb{R}^{d+2}$ with
$|a|=1$, by Theorem \ref{thm:-3} with $p=4$, we obtain that
\[
\sup_{x\in\mathbb{R}}|\mathbb{P}(|La|^{-1}M_{N}\le x)-\Phi(x)|\le c_{\lambda,v_{0},T}N^{-1/5},
\]
or equivalently, 
\begin{equation}
\sup_{x\in\mathbb{R}}|\mathbb{P}(M_{N}\le x)-\mathbb{P}(La\cdot Z\le x)|\le c_{\lambda,v_{0},T}N^{-1/5},\label{eq:-209}
\end{equation}
where $Z$ is a $(d+2)$-dimensional standard normal distribution,
and $c_{\lambda,v_{0}}$ is a constant depending only on $\lambda$,
$v_{0}$ and $T$. Note that $c_{\lambda,v_{0},T}$ does not depend
on $a\in\mathbb{R}^{d+2}$ due to the boundedness of the variance
$|La|^{2}$. By (\ref{eq:-175}), (\ref{eq:-209}) and Lemma \ref{lem:-11},
we deduce that
\[
\sup_{x\in\mathbb{R}}|\mathbb{P}(a\cdot Y_{T}\le x)-\mathbb{P}(\mu+La\cdot Z\le x)|\le c_{\lambda,v_{0},T}N^{-1/5}.
\]
Therefore, (\ref{eq:-207}) holds for all $a\in\mathbb{R}^{d+2}$
with $|a|=1$. For general $a\in\mathbb{R}^{d+2}$, let $\bar{a}=a/|a|$
and $\bar{x}=x/|a|$. Applying (\ref{eq:-207}) to $\bar{a}$ and
$\bar{x}$ gives 
\[
|\mathbb{P}(\bar{a}\cdot Y_{T}\le\bar{x})-\mathbb{P}(\bar{a}\cdot\bar{Y}_{T}\le\bar{x})|\le c_{\lambda,v_{0},T}N^{-1/5}.
\]
That is $|\mathbb{P}(a\cdot Y_{T}\le x)-\mathbb{P}(a\cdot\bar{Y}_{T}\le x)|\le c_{\lambda,v_{0},T}N^{-1/5}$.
This completes the proof of (\ref{eq:-207}).
\end{proof}

\subsection{\label{subsec:-6}Convergence Rate of Expectations}
\begin{lem}
\label{lem:-13}Let $h(u)$ be a bounded differentiable function on
$\mathbb{R}$ such that $\Vert h^{\prime}\Vert_{L^{1}}<\infty$. Suppose
that $\{Y_{n}\}_{n}$ and $Y$ satisfy the property 
\begin{equation}
\sup_{a\in\mathbb{R}^{d}}\sup_{u\in\mathbb{R}}|\mathbb{P}(a\cdot Y_{n}\ge u)-\mathbb{P}(a\cdot Y\ge u)|\le c_{Y}n^{-\alpha}.\label{eq:-160}
\end{equation}
Then 
\begin{equation}
\sup_{a\in\mathbb{R}^{d}}|\mathbb{E}(h(a\cdot Y_{n}))-\mathbb{E}(h(a\cdot Y))|\le c_{Y}\Vert h^{\prime}\Vert_{L^{1}}n^{-\alpha}.\label{eq:-162}
\end{equation}
\end{lem}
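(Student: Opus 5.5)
The plan is to write the expectation of $h$ as an integral of tail probabilities against $h'$, so that the uniform Kolmogorov-distance bound (\ref{eq:-160}) can be applied pointwise under the integral. Fix $a\in\mathbb{R}^{d}$ and set $X_{n}=a\cdot Y_{n}$, $X=a\cdot Y$. Since $h$ is bounded, differentiable and $h'\in L^{1}$, the limits $h(\pm\infty)=\lim_{u\to\pm\infty}h(u)$ exist. For every $x$ we have
\[
h(x)=h(-\infty)+\int_{-\infty}^{x}h'(u)du=h(-\infty)+\int_{-\infty}^{\infty}h'(u)\mathds{1}_{\{x\ge u\}}du .
\]
Here we use that a differentiable function with integrable derivative is absolutely continuous on compact intervals, so the fundamental theorem of calculus applies; letting the lower endpoint tend to $-\infty$ is justified because $h'\in L^{1}$.

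Next, substitute $x=X_{n}$ and take expectations. The integrand $|h'(u)|\mathds{1}_{\{X_{n}\ge u\}}$ is dominated by $|h'(u)|$, which is integrable on $\mathbb{R}\times\Omega$. Fubini's theorem therefore gives
\[
\mathbb{E}[h(X_{n})]=h(-\infty)+\int_{-\infty}^{\infty}h'(u)\,\mathbb{P}(X_{n}\ge u)\,du,
\]
and the same identity holds with $X$ in place of $X_{n}$.

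Subtracting the two identities cancels the constant $h(-\infty)$, and we obtain
\[
|\mathbb{E}[h(X_{n})]-\mathbb{E}[h(X)]|\le\int_{-\infty}^{\infty}|h'(u)|\,|\mathbb{P}(X_{n}\ge u)-\mathbb{P}(X\ge u)|\,du\le c_{Y}n^{-\alpha}\Vert h'\Vert_{L^{1}},
\]
where the last step uses (\ref{eq:-160}) at this fixed $a$. The constant does not depend on $a$, so taking the supremum over $a\in\mathbb{R}^{d}$ yields (\ref{eq:-162}).

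The only delicate point is the first step, namely justifying the representation of $h$ through $h'$ when $h$ is merely differentiable. If one prefers to avoid that discussion, an alternative is to approximate $h$ by mollifications $h_{\epsilon}=h*\varphi_{\epsilon}$. These satisfy $\Vert h_{\epsilon}'\Vert_{L^{1}}\le\Vert h'\Vert_{L^{1}}$ and $h_{\epsilon}\to h$ pointwise and boundedly, so one proves the bound for $h_{\epsilon}$ and passes to the limit $\epsilon\to0$ by dominated convergence.
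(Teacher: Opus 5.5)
Your proof is correct and is essentially the paper's argument. The paper also writes $\mathbb{E}(h(\xi))$ as an integral of $h'$ against tail probabilities; it uses base point $h(0)$ and splits into $u>0$ and $u<0$ where you use $h(-\infty)$, and after subtraction this yields the same identity $\mathbb{E}(h(a\cdot Y_n))-\mathbb{E}(h(a\cdot Y))=\int h'(u)[\mathbb{P}(a\cdot Y_n\ge u)-\mathbb{P}(a\cdot Y\ge u)]\,du$, to which (\ref{eq:-160}) is applied. Your justification via the fundamental theorem of calculus for everywhere-differentiable functions with integrable derivative is the right one. Your mollification aside is unnecessary, since getting $h_\epsilon'=h'*\varphi_\epsilon$ relies on that same fact.
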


\begin{proof}
For any random variable $\xi$, it is easily seen by integration by
parts that
\[
\mathbb{E}(h(\xi))=h(0)+\int_{0}^{\infty}h^{\prime}(u)\mathbb{P}(\xi\ge u)du-\int_{-\infty}^{0}h^{\prime}(u)[1-\mathbb{P}(\xi\ge u)]du.
\]
Therefore,
\[
\mathbb{E}(h(a\cdot Y_{n}))-\mathbb{E}(h(a\cdot Y))=\int_{-\infty}^{\infty}h^{\prime}(u)[\mathbb{P}(a\cdot Y_{n}\ge u)-\mathbb{P}(a\cdot Y\ge u)]du,
\]
which implies (\ref{eq:-162}).
\end{proof}
\begin{lem}
\label{lem:-14}Suppose that $\{Y_{n}\}_{n}$ and $Y$ satisfy the
property (\ref{eq:-160}). If $Y$ has $d$-dimensional normal distribution
$\mathcal{N}(0,LL^{\mathrm{T}})$ and 
\[
|L^{\mathrm{T}}a|^{2}\le\kappa^{2}|a|^{2},\quad a\in\mathbb{R}^{d}.
\]
Then, for any $a\in\mathbb{R}^{d}$ and $u\ge\kappa|a|$,
\begin{equation}
\mathbb{P}(|a\cdot Y|>u)\le e^{-(\kappa|a|)^{-2}u^{2}/2},\label{eq:-164}
\end{equation}
and
\begin{equation}
\mathbb{P}(|a\cdot Y_{n}|>u)\le e^{-(\kappa|a|)^{-2}u^{2}/2}+2c_{Y}n^{-\alpha}.\label{eq:-165}
\end{equation}
\end{lem}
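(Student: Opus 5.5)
Both estimates come from a Gaussian tail bound for $a\cdot Y$, transferred to $Y_n$ through the uniform distributional estimate (\ref{eq:-160}).

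For (\ref{eq:-164}) I use that $a\cdot Y$ is a centered normal with variance $|L^{\mathrm{T}}a|^{2}\le\kappa^{2}|a|^{2}$. Writing $s^{2}=|L^{\mathrm{T}}a|^{2}$, the case $s=0$ is trivial. Otherwise I apply the standard Mills-ratio bound
\[
\mathbb{P}(|Z|>x)\le\sqrt{2/\pi}\,x^{-1}e^{-x^{2}/2}
\]
with $x=u/s$. Since $u\ge\kappa|a|\ge s$, we have $x\ge1$, so $\sqrt{2/\pi}\,x^{-1}\le1$. This gives
\[
\mathbb{P}(|a\cdot Y|>u)\le e^{-u^{2}/(2s^{2})}\le e^{-(\kappa|a|)^{-2}u^{2}/2},
\]
where the last step uses monotonicity in $s$.

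For (\ref{eq:-165}) I split the event as
\[
\mathbb{P}(|a\cdot Y_{n}|>u)=\mathbb{P}(a\cdot Y_{n}>u)+\mathbb{P}(-a\cdot Y_{n}>u).
\]
The first term is at most $\mathbb{P}(a\cdot Y_{n}\ge u)$. By (\ref{eq:-160}) it exceeds $\mathbb{P}(a\cdot Y\ge u)$ by at most $c_{Y}n^{-\alpha}$.

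For the second term I apply (\ref{eq:-160}) with $-a$ in place of $a$; this is allowed because the supremum there is over all of $\mathbb{R}^{d}$. The term is then at most $\mathbb{P}(-a\cdot Y\ge u)+c_{Y}n^{-\alpha}$.

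Since $Y$ is Gaussian, $\mathbb{P}(a\cdot Y=\pm u)=0$ for $u>0$, so the two Gaussian pieces sum to $\mathbb{P}(|a\cdot Y|>u)$. Applying (\ref{eq:-164}) to that sum yields (\ref{eq:-165}).

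There is no real obstacle. The only care points are the boundary case $s=0$ (or $a=0$, where the bound is trivial) and the switch between $\ge$ and $>$, which the Gaussian continuity handles.
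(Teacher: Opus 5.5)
Your proof is correct and follows essentially the paper's argument. For the Gaussian tail, you use the Mills ratio on the two-sided tail with $u/s\ge1$, while the paper bounds each one-sided tail by $\tfrac12 e^{-(\kappa|a|)^{-2}u^{2}/2}$ via $e^{-x^{2}/2}\le xe^{-x^{2}/2}$ for $x\ge1$. The transfer to $Y_{n}$ through (\ref{eq:-160}) costs $c_{Y}n^{-\alpha}$ per tail in both versions, and your explicit use of $-a$ for the lower tail and your $\ge$ versus $>$ bookkeeping fill in what the paper leaves as ``similarly.''
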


\begin{proof}
Let $\sigma^{2}=|L^{\text{T}}a|^{2}$. Then $a\cdot X$ has a normal
distribution $\mathcal{N}(0,\sigma^{2})$. For $u\ge\kappa|a|\ge\sigma$,
we have
\[
\begin{aligned}\mathbb{P}(a\cdot Y>u) & =\frac{1}{\sqrt{2\pi}}\int_{\sigma^{-1}u}^{\infty}e^{-x^{2}/2}dx\le\frac{1}{\sqrt{2\pi}}\int_{\sigma^{-1}u}^{\infty}xe^{-x^{2}/2}dx\le\frac{1}{2}e^{-(\kappa|a|)^{-2}u^{2}/2}.\end{aligned}
\]
Similarly, $\mathbb{P}(a\cdot X<-u)\le2^{-1}e^{-(\kappa|a|)^{-2}u^{2}/2}$.
This proves (\ref{eq:-164}). By(\ref{eq:-160}), 
\[
\mathbb{P}(a\cdot Y_{n}\ge u)\le\mathbb{P}(a\cdot Y\ge u)+c_{Y}n^{-\alpha}\le\frac{1}{2}e^{-(\kappa|a|)^{-2}u^{2}/2}+c_{Y}n^{-\alpha}.
\]
Similarly, $\mathbb{P}(a\cdot Y_{n}<-u)\le2^{-1}e^{-(\kappa|a|)^{-2}u^{2}/2}+c_{Y}n^{-\alpha}$.
This proves (\ref{eq:-165}).
\end{proof}
\begin{lem}
\label{lem:-15}Let $g(u)$ be a differentiable function on $\mathbb{R}$
such that $|g|\le1$ and $|g^{\prime}|\le1$. Then, for any $R>0$,
there exists a differentiable function $h(u)$ on $\mathbb{R}$ such
that $h(u)=g(u)$ on $|u|\le R$, $h(u)=0$ on $|u|\ge R+1$, and
$|h|\le1$ and $|h^{\prime}|\le5$.
\end{lem}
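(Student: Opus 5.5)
We will construct $h$ as $g$ multiplied by a smooth cutoff, correcting near the boundary so that the derivative bound survives. The natural first choice is $h=g\chi$, where $\chi$ is a $C^{1}$ cutoff with $\chi=1$ on $|u|\le R$, $\chi=0$ on $|u|\ge R+1$, and $0\le\chi\le1$. Then $h=g$ on $|u|\le R$, $h=0$ on $|u|\ge R+1$, and $|h|\le|g|\le1$ automatically. Since $h^{\prime}=g^{\prime}\chi+g\chi^{\prime}$, we get $|h^{\prime}|\le1+\Vert\chi^{\prime}\Vert_{\infty}$. It therefore suffices to find a $C^{1}$ cutoff on an interval of length $1$ with $\Vert\chi^{\prime}\Vert_{\infty}\le4$.

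Such a cutoff exists. For $R\le u\le R+1$ take
\[
\chi(u)=1-3(u-R)^{2}+2(u-R)^{3},
\]
extend it by $1$ for $|u|\le R$ and by $0$ for $|u|\ge R+1$, and define it evenly for negative $u$. This cubic satisfies $\chi(R)=1$, $\chi(R+1)=0$, and $\chi^{\prime}(R)=\chi^{\prime}(R+1)=0$, so $\chi$ is $C^{1}$ on $\mathbb{R}$. Writing $s=u-R\in[0,1]$, it is monotone with $|\chi^{\prime}|=6s(1-s)\le3/2$. Hence $|h^{\prime}|\le1+3/2<5$. Any cutoff with $\Vert\chi^{\prime}\Vert_{\infty}\le4$ would suffice, including a piecewise quadratic one with slope at most $2$.

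The only point that needs care is differentiability of $h$ at the junctions $|u|=R$ and $|u|=R+1$. There $\chi$ is $C^{1}$ with vanishing one-sided derivatives, and $g$ is differentiable, so the product rule applies. At $u=\pm(R+1)$, $h$ and $h^{\prime}$ both vanish from each side; at $u=\pm R$, both one-sided derivatives of $h$ equal $g^{\prime}(\pm R)$. The symmetric definition handles $u<0$, and $u=0$ lies inside $|u|\le R$, where $h=g$. No other obstacle is expected.
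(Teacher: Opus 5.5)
Your proof is correct and is essentially the paper's argument: the paper also takes $h=g\chi$, with the cutoff $\chi(u)=\phi(u+R+1)[1-\phi(u-R)]$ built from a smooth step $\phi$ satisfying $0\le\phi\le1$ and $|\phi'|\le2$, and bounds $|h'|\le|g'|+2|g|+2|g|\le5$ by the product rule. Your explicit cubic $C^{1}$ cutoff, with slope at most $3/2$, works just as well, since the lemma only asks for $h$ to be differentiable, and the argument applies unchanged to the complex-valued $g(u)=(2\pi)^{-1}e^{\mathrm{i}2\pi u}$ to which the lemma is applied in Proposition~\ref{prop:-5}.
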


\begin{proof}
Let $\phi$ be a smooth function on $\mathbb{R}$ such that $\phi(u)=0$
on $u\le0$, $\phi(u)=1$ on $u\ge1$, and $0\le\phi\le1$ and $|\phi^{\prime}|\le2$.
Let 
\[
h(u)=g(u)\phi(u+R+1)[1-\phi(u-R)].
\]
Then $h(u)=g(u)$ on $|u|\le R$, $h(u)=0$ on $|u|\ge R+1$, and
$|h|\le1$. The bound $|h^{\prime}|\le5$ follows easily from the
chain rule and the bounds $|g|\le1$, $|g^{\prime}|\le1$, $0\le\phi\le1$
and $|\phi^{\prime}|\le2$.
\end{proof}
For a smooth function $f$ on $\mathbb{R}^{d}$, $k\in\mathbb{N}$
and $1\le p<\infty$, let $\Vert f\Vert_{W^{k,p}(\mathbb{R}^{d})}$
be the Sobolev norm defined by
\begin{equation}
\Vert f\Vert_{W^{k,p}(\mathbb{R}^{d})}=\Big(\sum_{|\gamma|\le k}\Vert\partial^{\gamma}f\Vert_{L^{p}(\mathbb{R}^{d})}^{p}\Big)^{1/p},\label{eq:-24}
\end{equation}
where $\partial^{\gamma}f=\partial_{x_{1}}^{\gamma_{1}}\cdots\partial_{x_{d}}^{\gamma_{d}}f$
for any multi-index $\gamma=(\gamma_{1},\dots,\gamma_{d})\in\mathbb{N}^{d}$.
When $p=\infty$, the Sobolev norm $\Vert f\Vert_{W^{k,\infty}(\mathbb{R}^{d})}$
is defined by 
\begin{equation}
\Vert f\Vert_{W^{k,\infty}(\mathbb{R}^{d})}=\sum_{|\gamma|\le k}\Vert\partial^{\gamma}f\Vert_{L^{\infty}(\mathbb{R}^{d})}.\label{eq:-25}
\end{equation}
For $1\le p<\infty$, the Sobolev space $W^{k,p}(\mathbb{R}^{d})$
is the completion of $C^{\infty}(\mathbb{R}^{d})$ with respect to
the norm $\Vert\cdot\Vert_{W^{k,p}(\mathbb{R}^{d})}$. If $0\le r\le k-d/p$
for some $r\in\mathbb{N}$, by the Sobolev embedding theorem, $W^{k,p}(\mathbb{R}^{d})\subseteq C^{r}(\mathbb{R}^{d})$
in the sense that any $f\in W^{k,p}(\mathbb{R}^{d})$ equals a.e.
to a function in $C^{r}(\mathbb{R}^{d})$. To avoid confusion in discussion
of convergence in distribution for random variables, we use the notation
$f\in C(\mathbb{R}^{d})\cap W^{k,p}(\mathbb{R}^{d})$ to indicate
that the continuous version of $f\in W^{k,p}(\mathbb{R}^{d})$ is
being considered.
\begin{prop}
\label{prop:-5}Let $\{Y_{n}\}_{n}$ be a sequence of $d$-dimensional
random vectors, and $Y$ be a $d$-dimensional random vector satisfying
the property
\begin{equation}
\sup_{a\in\mathbb{R}^{d}}\sup_{u\in\mathbb{R}}|\mathbb{P}(a\cdot Y_{n}\ge u)-\mathbb{P}(a\cdot Y\ge u)|\le c_{Y}n^{-\alpha}.\label{eq:-166}
\end{equation}
Suppose that $Y$ has $d$-dimensional normal distribution $\mathcal{N}(\mu,LL^{\mathrm{T}})$
with $\mu\in\mathbb{R}^{d}$ and $LL^{\mathrm{T}}$ satisfying
\begin{equation}
|L^{\mathrm{T}}a|^{2}\le\kappa^{2}|a|^{2},\quad a\in\mathbb{R}^{d}.\label{eq:-163}
\end{equation}
Then, for any $f\in C(\mathbb{R}^{d})\cap W^{d+2,1}(\mathbb{R}^{d})$
and $n\ge e^{\alpha^{-1}/2}$,
\begin{equation}
|\mathbb{E}(f(Y_{n}))-\mathbb{E}(f(Y))|\le c_{Y,\kappa,\alpha,d}\Vert f\Vert_{W^{d+2,1}}n^{-\alpha}(\log n)^{1/2},\label{eq:-167}
\end{equation}
where $c_{Y,\kappa,\alpha,d}$ is a constant depending only on the
constants $c_{Y}$, $\alpha$, $\kappa$ and the dimension $d$.
\end{prop}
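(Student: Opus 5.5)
We write $f$ through its Fourier transform, which turns $f(Y_n)$ into a superposition of functions of the one-dimensional projections $a\cdot Y_n$. Since $f\in W^{d+2,1}(\mathbb{R}^d)$, the transform $\hat f(a)=\int e^{-ia\cdot x}f(x)dx$ satisfies $(1+|a|)^{d+2}|\hat f(a)|\le c_d\Vert f\Vert_{W^{d+2,1}}$. Hence $\hat f\in L^1$, and also $|a|\,|\hat f(a)|\in L^1$, because $\int(1+|a|)^{-d-1}da<\infty$. The inversion formula $f(x)=(2\pi)^{-d}\int\hat f(a)e^{ia\cdot x}da$ then holds pointwise for the continuous version of $f$. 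By Fubini,
\[
\mathbb{E}(f(Y_{n}))-\mathbb{E}(f(Y))=(2\pi)^{-d}\int_{\mathbb{R}^{d}}\hat f(a)\big[\mathbb{E}(e^{ia\cdot Y_{n}})-\mathbb{E}(e^{ia\cdot Y})\big]da .
\]
So it suffices to bound $|\mathbb{E}(e^{ia\cdot Y_n})-\mathbb{E}(e^{ia\cdot Y})|$ by roughly $(1+|a|)n^{-\alpha}(\log n)^{1/2}$, uniformly enough in $a$. Integrating against $|\hat f|$ then gives (\ref{eq:-167}).

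To bound the characteristic function difference, we fix $a\ne0$ and truncate. Set $b=a/|a|$, $s=|a|$, and $g(u)=e^{isu}$, taking real and imaginary parts separately. Choose $R=R_n$, to be fixed later, and let $h$ be the function of Lemma \ref{lem:-15}, applied to $\cos(su)$ and $\sin(su)$ rescaled by $\max(1,s)$. This gives $h$ equal to $g$ on $|u|\le R$, supported in $|u|\le R+1$, with $|h|\le1$ and $|h'|\le c(1+s)$, so that $\Vert h'\Vert_{L^1}\le c(1+s)(R+1)$. The mean $\mu$ is first absorbed by replacing $u$ with $u-b\cdot\mu$; equivalently, Lemma \ref{lem:-14} is applied to $Y-\mu$ and $Y_n-\mu$, which still satisfy (\ref{eq:-166}) because shifting $u$ preserves the supremum. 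We then split
\[
|\mathbb{E}g(b\cdot Y_n)-\mathbb{E}g(b\cdot Y)|\le|\mathbb{E}h(b\cdot Y_n)-\mathbb{E}h(b\cdot Y)|+2\mathbb{P}(|b\cdot(Y_n-\mu)|>R)+2\mathbb{P}(|b\cdot(Y-\mu)|>R).
\]
Lemma \ref{lem:-13} bounds the first term by $c_Y c(1+s)(R+1)n^{-\alpha}$. Lemma \ref{lem:-14} with $|b|=1$ bounds the other two by $2e^{-R^2/(2\kappa^2)}+4c_Yn^{-\alpha}$, provided $R\ge\kappa$.

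This truncation step is the main obstacle. The crude bound $\Vert h'\Vert_{L^1}\sim(1+s)R$ means $R$ must grow as slowly as possible, while the Gaussian tail needs $R$ large. The choice $R=\kappa(2\alpha\log n)^{1/2}+\kappa$ makes $e^{-R^2/(2\kappa^2)}\le n^{-\alpha}$. The hypothesis $n\ge e^{\alpha^{-1}/2}$ guarantees $\log n$ is bounded below, so $R+1\le c_{\kappa,\alpha}(\log n)^{1/2}$. Combining the three bounds,
\[
|\mathbb{E}(e^{ia\cdot Y_n})-\mathbb{E}(e^{ia\cdot Y})|\le c_{Y,\kappa,\alpha}(1+|a|)n^{-\alpha}(\log n)^{1/2}.
\]

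Finally, we integrate this bound against $|\hat f(a)|$. The integral $\int(1+|a|)|\hat f(a)|da\le c_d\Vert f\Vert_{W^{d+2,1}}\int(1+|a|)^{-d-1}da$ is finite, since the exponent $d+1$ exceeds $d$. This yields (\ref{eq:-167}), with a constant depending only on $c_Y$, $\kappa$, $\alpha$ and $d$. The bound $(1+|a|)^{d+2}|\hat f|\le c_d\Vert f\Vert_{W^{d+2,1}}$ follows from $|a^\gamma\hat f(a)|\le\Vert\partial^\gamma f\Vert_{L^1}$ for $|\gamma|\le d+2$. We first establish it for smooth $f$ and pass to general $f$ by density, using continuity of both sides in the $W^{d+2,1}$ norm. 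Since $W^{d+2,1}\subset C_b$ continuously, $\mathbb{E}f(Y_n)$ depends continuously on $f$ in this norm.
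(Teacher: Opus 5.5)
Your proposal is correct and is essentially the paper's proof. Both arguments use Fourier inversion with $(1+|\xi|)^{d+2}|\hat f(\xi)|\le c_d\Vert f\Vert_{W^{d+2,1}}$, a cutoff from Lemma \ref{lem:-15} controlled by Lemma \ref{lem:-13}, tail bounds from Lemma \ref{lem:-14}, a truncation level of order $\kappa\sqrt{2\alpha\log n}$, and integration against $(1+|\xi|)|\hat f(\xi)|$. The only difference is a reparametrization: you normalize to $b=a/|a|$ and let $|a|$ enter through $\Vert h'\Vert_{L^1}\le c(1+|a|)(R+1)$, while the paper keeps $\xi$ unnormalized with $g(u)=(2\pi)^{-1}e^{\mathrm{i}2\pi u}$ and radius $R=\rho|\xi|$, which is the same after rescaling $u\mapsto|\xi|u$.
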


\begin{proof}
Note that (\ref{eq:-166}) and the Sobolev norm $\Vert\cdot\Vert_{W^{d+2,1}}$
are translation invariant. We may assume $\mu=0$ by replacing $Y_{n}$
and $Y$ by $\tilde{Y}_{n}=Y_{n}-\mu$ and $\tilde{Y}=Y-\mu$ respectively.
Let 
\[
\hat{f}(\xi)=\int_{\mathbb{R}^{d}}f(x)e^{-\text{i}2\pi\xi\cdot x}dx
\]
be the Fourier transform of $f$. Since $\widehat{\partial^{\gamma}f}(\xi)=(\text{i}2\pi\xi)^{\gamma}\hat{f}(\xi)$
and the Fourier transform is a bounded linear operator from $L^{1}$
to $L^{\infty}$, we have $\sup_{\xi}|(\text{i}2\pi\xi)^{\gamma}\hat{f}(\xi)|\le\Vert\partial^{\gamma}f\Vert_{L^{1}}$
for all $|\gamma|\le d+2$. Consequently, 
\begin{equation}
\sup_{\xi\in\mathbb{R}^{d}}(1+|\xi|)^{d+2}|\hat{f}(\xi)|\le c_{d}\Vert f\Vert_{W^{d+2,1}},\label{eq:-168}
\end{equation}
where $c_{d}$ is a constant depending only on $d$. This implies
that $\hat{f}\in L^{1}$, and therefore, by the Fourier inversion
theorem (see \cite[Corollary 1.21, p. 11]{SW71}), 
\begin{equation}
f(x)=\int_{\mathbb{R}^{d}}\hat{f}(\xi)e^{\text{i}2\pi\xi\cdot x}d\xi,\label{eq:-169}
\end{equation}
for a.e. $x\in\mathbb{R}^{d}$. By $\hat{f}\in L^{1}$ and the dominated
convergence theorem, the right-hand side of (\ref{eq:-169}) is continuous
in $x$. Therefore, (\ref{eq:-169}) holds for all $x\in\mathbb{R}^{d}$.
By $\hat{f}\in L^{1}$ and Fubini's theorem, 
\begin{equation}
\mathbb{E}(f(Y_{n}))-\mathbb{E}(f(Y))=\int_{\mathbb{R}^{d}}\hat{f}(\xi)[\mathbb{E}(e^{\text{i}2\pi\xi\cdot Y_{n}})-\mathbb{E}(e^{\text{i}2\pi\xi\cdot Y_{n}})]d\xi.\label{eq:-170}
\end{equation}

Let $g(u)=(2\pi)^{-1}e^{\text{i}2\pi u}$. Then $|g|\le1$ and $|g^{\prime}|\le1$.
Let $\rho\ge\kappa$ and $R=\rho|\xi|$. By Lemma \ref{lem:-15},
there exists a differentiable function $h(u)$ such that $h(u)=g(u)$
on $|u|\le R$, $h(u)=0$ on $|u|\ge R+1$, and $|h|\le1$ and $|h^{\prime}|\le5$.
Then,
\[
\mathbb{E}(g(\xi\cdot Y_{n}))-\mathbb{E}(h(\xi\cdot Y_{n}))=\mathbb{E}\big[(g(\xi\cdot Y_{n})-h(\xi\cdot Y_{n}))\mathds{1}_{\{|\xi\cdot Y_{n}|>R\}}\big].
\]
Therefore, by $|g|\le1$ and $|h|\le1$ and Lemma \ref{lem:-14},
\begin{equation}
|\mathbb{E}(g(\xi\cdot Y_{n}))-\mathbb{E}(h(\xi\cdot Y_{n}))|\le2\mathbb{P}(|\xi\cdot Y_{n}|>R)\le2e^{-\kappa^{-2}\rho^{2}/2}+4c_{Y}n^{-\alpha}.\label{eq:-171}
\end{equation}
Similarly,
\begin{equation}
|\mathbb{E}(g(\xi\cdot Y))-\mathbb{E}(h(\xi\cdot Y))|\le2e^{\kappa^{-2}\rho^{2}/2}.\label{eq:-172}
\end{equation}
Therefore, by (\ref{eq:-171}) and (\ref{eq:-172})
\begin{equation}
|\mathbb{E}(g(\xi\cdot Y_{n}))-\mathbb{E}(g(\xi\cdot Y))|\le|\mathbb{E}(h(\xi\cdot Y_{n}))-\mathbb{E}(h(\xi\cdot Y))|+4(e^{\kappa^{-2}\rho^{2}/2}+c_{Y}n^{-\alpha}).\label{eq:-173}
\end{equation}
Since $h(u)=0$ on $|u|\ge R+1$ and $|h^{\prime}|\le5$, we have
$\Vert h^{\prime}\Vert_{L^{1}}\le5(R+1)=5(\rho|\xi|+1)$. By Lemma
\ref{lem:-13}, 
\[
|\mathbb{E}(h(\xi\cdot Y_{n}))-\mathbb{E}(h(\xi\cdot Y))|\le5c_{Y}(\rho|\xi|+1)n^{-\alpha},
\]
which, together with (\ref{eq:-173}), gives 
\[
|\mathbb{E}(g(\xi\cdot Y_{n}))-\mathbb{E}(g(\xi\cdot Y))|\le(4+9c_{Y})[(\rho|\xi|+1)n^{-\alpha}+e^{\kappa^{-2}\rho^{2}/2}].
\]
When $n\ge e^{\alpha^{-1}/2}$, setting $\rho=\kappa\sqrt{2\alpha\log n}\ge\kappa$
gives 
\begin{equation}
|\mathbb{E}(g(\xi\cdot Y_{n}))-\mathbb{E}(g(\xi\cdot Y))|\le c_{Y,\kappa,\alpha}(1+|\xi|)n^{-\alpha}(\log n)^{1/2},\label{eq:-174}
\end{equation}
where $c_{Y,\kappa,\alpha}$ is a constant depending only on $c_{Y}$,
$\kappa$ and $\alpha$. 

It follows from (\ref{eq:-170}) and (\ref{eq:-174}) that 
\[
|\mathbb{E}(f(Y_{n}))-\mathbb{E}(f(Y))|\le c_{Y,\kappa,\alpha}n^{-\alpha}(\log n)^{1/2}\int_{\mathbb{R}^{d}}(1+|\xi|)|\hat{f}(\xi)|d\xi.
\]
By (\ref{eq:-168}),
\[
\int_{\mathbb{R}^{d}}(1+|\xi|)|\hat{f}(\xi)|d\xi\le c_{d}\Vert f\Vert_{W^{d+2,1}}\int_{\mathbb{R}^{d}}(1+|\xi|)^{-d-1}d\xi\le c_{d}A_{d-1}\Vert f\Vert_{W^{d+2,1}},
\]
where $A_{d-1}=2\pi^{d/2}/\Gamma(d/2)$ is the area of the $(d-1)$-dimensional
unit sphere. This completes the proof.
\end{proof}
The following rate of convergence in distribution for $(\log I_{T},W_{T},\tilde{W}_{T})$
is a direct consequence of Proposition \ref{prop:-4} and Proposition
\ref{prop:-5}.
\begin{cor}
\label{cor:-2}Let $Y_{T}=(\log I_{T},W_{T},\tilde{W}_{T})$ and $\bar{Y}_{T}=(\log X_{T},W_{T},\tilde{W}_{T})$.
Suppose that $\rho(t)=\rho$, $r(t)=r$ and $\sigma_{t}=\sigma$ are
constants. Then, for any $f\in W^{d+2,1}(\mathbb{R}^{d})$, 
\[
|\mathbb{E}(f(Y_{T}))-\mathbb{E}(f(\bar{Y}_{T}))|\le c_{d,\lambda,\sigma_{\ast},\sigma^{\ast},v_{0},T}\Vert f\Vert_{W^{d+2,1}}\Delta t^{1/5}(\log(T\Delta t^{-1}))^{1/2},
\]
where $c_{d,\lambda,v_{0},T}$ is a constant depending only on $d$,
$\lambda$, $\sigma_{\ast}$, $\sigma^{\ast}$, $v_{0}$ and $T$.
\end{cor}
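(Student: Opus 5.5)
The plan is to combine Proposition \ref{prop:-4} with Proposition \ref{prop:-5}, working in dimension $d+2$ with $n=N=T/\Delta t$. Proposition \ref{prop:-4}, through (\ref{eq:-207}), gives for every $a\in\mathbb{R}^{d+2}$ and $x\in\mathbb{R}$
\[
|\mathbb{P}(a\cdot Y_{T}\le x)-\mathbb{P}(a\cdot\bar{Y}_{T}\le x)|\le c_{\lambda,v_{0},T}\Delta t^{1/5}=c_{\lambda,v_{0},T}T^{1/5}N^{-1/5}.
\]
Hypothesis (\ref{eq:-166}) is stated with $\ge u$ rather than $\le x$, so the first step converts between them. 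Writing $\mathbb{P}(a\cdot Y\ge u)=1-\mathbb{P}(a\cdot Y<u)$ and approximating $\mathbb{P}(a\cdot Y<u)$ by $\mathbb{P}(a\cdot Y\le u-\epsilon)$ with $\epsilon\downarrow0$, the uniform bound in $x$ passes to strict inequalities, and hence to $\ge u$, with the same constant. Taking the supremum over $a$, (\ref{eq:-166}) then holds with $\alpha=1/5$ and $c_{Y}=c_{\lambda,v_{0},T}T^{1/5}$, and this constant does not depend on $a$.

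The second step checks the Gaussian hypothesis on the limit. By Proposition \ref{prop:-4}, $\bar{Y}_{T}\sim\mathcal{N}(\mu T,TLL^{\mathrm{T}})$ with covariance (\ref{eq:-126}). Its largest eigenvalue is bounded by $T$ times a constant depending on $\bar{\sigma}^{2}V(\lambda)$ and $\bar{\sigma}U(\lambda)$. So (\ref{eq:-163}) holds with, for instance, $\kappa^{2}=T(1+\bar{\sigma}^{2}V(\lambda)+\bar{\sigma}U(\lambda))$, which depends only on the allowed quantities.

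The third step applies Proposition \ref{prop:-5}. For $N\ge e^{5/2}$ it gives
\[
|\mathbb{E}f(Y_{T})-\mathbb{E}f(\bar{Y}_{T})|\le c\Vert f\Vert_{W^{d+2,1}}N^{-1/5}(\log N)^{1/2},
\]
and substituting $N=T\Delta t^{-1}$ yields the stated rate $\Delta t^{1/5}(\log(T\Delta t^{-1}))^{1/2}$. The dimension appearing in Proposition \ref{prop:-5} is that of the random vector, here $d+2$. The Sobolev index in the corollary should therefore be read as $W^{d+4,1}(\mathbb{R}^{d+2})$, or the statement's notation read with $d$ standing for the ambient dimension. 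I will state this explicitly, since it only relabels the constant's dependence on $d$.

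The only real obstacle is the finitely many $N<e^{5/2}$, that is, $\Delta t>Te^{-5/2}$. For these, Sobolev embedding gives $|f|\le c_{d}\Vert f\Vert_{W^{d+2,1}}$ on $\mathbb{R}^{d+2}$, since the embedding $W^{d+2,1}\subseteq L^{\infty}$ holds in this dimension. Hence the left side is at most $2c_{d}\Vert f\Vert_{W^{d+2,1}}$. Because $\Delta t$ is bounded below by $Te^{-5/2}$ in this range, the factor $\Delta t^{1/5}(\log(T\Delta t^{-1}))^{1/2}$ is bounded below by a constant depending on $T$, except in the degenerate case $\Delta t=T$, where $\log(T\Delta t^{-1})=0$. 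I will treat $N=1$ by restricting to $N\ge2$, or by adjusting the logarithm to $\log(1+T\Delta t^{-1})$. After enlarging the constant, this completes the corollary.
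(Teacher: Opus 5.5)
Your proposal is correct and follows the paper's route: the paper states this corollary as a direct consequence of Proposition \ref{prop:-4} (the bound (\ref{eq:-207}), uniform in $a$, with $n=N=T\Delta t^{-1}$ and $\alpha=1/5$) combined with Proposition \ref{prop:-5} applied to the Gaussian limit $\mathcal{N}(\mu T,TLL^{\mathrm{T}})$. Your added bookkeeping is also sound: the vectors live in $\mathbb{R}^{d+2}$, so Proposition \ref{prop:-5} really requires $f\in W^{d+4,1}(\mathbb{R}^{d+2})$, which does no harm in the later proof of Theorem \ref{thm:}, where all the functions are smooth with bounded derivatives; the finitely many $N<e^{5/2}$ are absorbed via $\Vert f\Vert_{L^{\infty}}\le c\Vert f\Vert_{W^{d+2,1}}$; and $N=1$ must be excluded, because there the right-hand side vanishes while $\mathbb{E}f(Y_{T})\neq\mathbb{E}f(\bar{Y}_{T})$ in general.
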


\subsection{\label{subsec:-8}Proof of Theorem \ref{thm:}}

We are now in a position to prove Theorem \ref{thm:} for general
coefficients $\rho(t)$, $r(t)$ and $\{\sigma_{t}\}_{t\ge0}$. We
follow the same strategy as the proof of \cite[Theorem 1.5]{LG26}
with some modifications.
\begin{lem}
\label{lem:-7}Let $\{\mathbf{W}_{t}\}_{t\ge0}$ be a $d$-dimensional
Brownian motion and $\mathbf{W}_{s,t}=\mathbf{W}_{t}-\mathbf{W}_{s}$.
Let $\{Y_{t}\}_{t\ge0}$ be a progressively measurable process such
that $\mathbb{E}\big(\int_{0}^{T}|Y_{t}|^{2}dt\big)<\infty$. Then
there exists a sequence $\{Y_{t}^{n}\}_{t\ge0}$ of simple processes
of the form
\begin{equation}
Y_{t}^{n}=\sum_{k=1}^{n}f_{k-1,n}(\mathbf{W}_{\tau_{0}},\mathbf{W}_{\tau_{0},\tau_{1}}\dots,\mathbf{W}_{\tau_{k-2},\tau_{k-1}})\mathds{1}_{[\tau_{k-1},\tau_{k})}(t),\quad0<\tau_{0}<\cdots<\tau_{n}\le T,\label{eq:-121}
\end{equation}
such that each $f_{k,n}$ is a smooth function with
\[
\mathbb{E}[f(\mathbf{W}_{\tau_{0}},\mathbf{W}_{\tau_{0},\tau_{1}}\dots,\mathbf{W}_{\tau_{k-2},\tau_{k-1}})^{2}]<\infty,
\]
and that 
\begin{equation}
\lim_{n\to\infty}\mathbb{E}\Big(\int_{0}^{T}|Y_{t}^{n}-Y_{t}|^{2}dt\Big)^{1/2}=0.\label{eq:-129}
\end{equation}
If in addition $Y_{\ast}\le Y_{t}\le Y^{\ast}$ for some constants
$Y_{\ast},Y^{\ast}\in\mathbb{R}$, then the smooth functions $f_{k,n}$
can be chosen such that $Y_{\ast}\le f_{k,n}\le Y^{\ast}$ and $\Vert f_{k,n}\Vert_{W^{r,\infty}}<\infty$
for 
\begin{equation}
\Vert f_{k,n}\Vert_{W^{r,\infty}}<\infty,\label{eq:-210}
\end{equation}
 for all $k$, $n$ and $r$.
\end{lem}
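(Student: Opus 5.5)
The plan is the standard density argument for It\^{o} integrands, done in two stages: first approximate $Y$ by simple processes whose values on each interval are $\mathcal{F}_{\tau_{k-1}}$-measurable, then approximate each such value by a smooth function of finitely many Brownian increments. For the first stage I invoke the classical fact that simple adapted processes are dense in the space of progressively measurable processes with $\mathbb{E}\int_0^T|Y_t|^2dt<\infty$. Concretely, take the Steklov averages $\frac{1}{h}\int_{t-h}^t Y_sds$, as in (\ref{eq:-136}); these are continuous, adapted and converge in $L^2(\Omega\times[0,T])$. Then sample them at the left endpoints of a fine dyadic grid. This gives $\sum_k Y^{(k)}\mathds{1}_{[\tau_{k-1},\tau_k)}(t)$ with $Y^{(k)}\in L^2(\mathcal{F}_{\tau_{k-1}})$. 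Taking $\tau_0>0$ small costs only $\mathbb{E}\int_0^{\tau_0}|Y_t|^2dt\to0$. Since these operations preserve pointwise bounds, $Y_*\le Y^{(k)}\le Y^*$ whenever $Y_*\le Y_t\le Y^*$.

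The second stage is the real content. Fix $k$ and write $\mathcal{F}_{\tau_{k-1}}=\sigma(\mathbf{W}_s:s\le\tau_{k-1})$ (the filtration is Brownian). This $\sigma$-field is generated by the increments over arbitrarily fine partitions of $[0,\tau_{k-1}]$. By martingale convergence, refining the partition $\tau_0<\dots<\tau_{k-1}$ (allowing extra grid points, which is harmless since the grid in (\ref{eq:-121}) is ours to choose) gives
\[
\mathbb{E}\big(Y^{(k)}\big|\mathbf{W}_{\tau_0},\mathbf{W}_{\tau_0,\tau_1},\dots,\mathbf{W}_{\tau_{k-2},\tau_{k-1}}\big)\to Y^{(k)}\quad\text{in }L^2 .
\]
The conditional expectation has the form $g(\mathbf{W}_{\tau_0},\dots,\mathbf{W}_{\tau_{k-2},\tau_{k-1}})$ for a Borel $g$, and $g$ inherits the bounds $Y_*\le g\le Y^*$. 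Because the increments have a nondegenerate Gaussian joint law, $C_c^\infty$ functions are dense in $L^2$ of that law. So I can mollify: set $g_\epsilon=g\ast\varphi_\epsilon$ after truncating $g$ to a large ball. Then $g_\epsilon\to g$ in $L^2(\gamma)$, and convexity of the mollification preserves the bounds $Y_*\le g_\epsilon\le Y^*$. When $g$ is bounded, each $g_\epsilon=g\ast\varphi_\epsilon$ has all derivatives bounded by $\Vert g\Vert_\infty\Vert\partial^\gamma\varphi_\epsilon\Vert_{L^1}$, which gives (\ref{eq:-210}).

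The main obstacle is bookkeeping across the grid. The refinement in the second stage must use a single common partition for all $k$, indexed consistently with (\ref{eq:-121}). I would handle this by working on dyadic grids of mesh $2^{-j}$ throughout and letting $j\to\infty$. On such a grid the sampled process is constant on coarser intervals, and it can be rewritten as a simple process on the finer grid without changing it. A diagonal argument over $j$ then yields the sequence $Y^n$ with (\ref{eq:-129}). In the unbounded case I would first truncate $Y$ at level $\pm M$, which converges in $L^2(\Omega\times[0,T])$ by dominated convergence, and then run the same argument.
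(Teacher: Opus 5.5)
Your argument is correct, but its key step differs from the paper's. Both proofs begin from density of simple adapted processes; the paper just cites this, and you sketch it.

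The paper gets density of functionals of finitely many increments in $L^2(\mathcal{F}_\tau)$ from the Wiener chaos decomposition. Polynomials in Wiener integrals of dyadic step functions are dense, and these are continuous functions of the increments. In the bounded case it recenters the bounds to $[-M,M]$, clips with $\max(-M,\min(\cdot,M))$, and mollifies. Continuity of the clipped function then gives pointwise convergence of $f\ast\phi_\epsilon$.

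You instead use L\'evy's upward theorem on the nested dyadic $\sigma$-fields, together with the Doob--Dynkin lemma. This route is more elementary, and the conditional expectation inherits $Y_\ast\le g\le Y^\ast$ directly, so no clipping is needed. It also needs no complete orthonormal system. The paper's $h_n$ are Rademacher functions, which are not complete, so its chaos step really needs Haar functions instead. The price is that your $g$ is only Borel, so the smoothing step must use the absolute continuity of the joint law of the increments, which you correctly invoke.

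One slip: do not truncate $g$ to zero outside a ball before mollifying. In the intended application $Y_\ast=\sigma_\ast>0$, and $(g\mathds{1}_{B_R})\ast\varphi_\epsilon$ vanishes far from the origin, so the lower bound fails. Since $g$ is already bounded, mollify it directly. Then $g\ast\varphi_\epsilon\to g$ at every Lebesgue point, hence $\gamma$-a.e., and bounded convergence gives convergence in $L^2(\gamma)$. The bounds $Y_\ast\le g\ast\varphi_\epsilon\le Y^\ast$ then hold automatically. Alternatively, extend $g$ outside the ball by a constant in $[Y_\ast,Y^\ast]$.
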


\begin{proof}
Since progressively measurable simple processes are dense in $L^{2}(\Omega;L^{2}([0,T]))$,
it suffices to prove the lemma for processes $\{Y_{t}\}_{t\ge0}$
of the form
\begin{equation}
Y_{t}=\sum_{k=1}^{n}\eta_{k-1}\mathds{1}_{[u_{k-1},u_{k})}(t),\quad0\le u_{0}<u_{1}<\cdots<u_{n}\le T,\label{eq:-137}
\end{equation}
where $\eta_{k}\in\mathcal{F}_{u_{k}}$ and $\mathbb{E}(\eta_{k}^{2})<\infty$
for $0\le k<n$. Let $\tau\in(0,T]$ be a constant and $\mathcal{P}_{\tau}$
be the subspace
\[
\mathcal{P}_{\tau}=\{\xi\in L^{2}(\mathbb{P},\mathcal{F}_{\tau}):\xi=f(\mathbf{W}_{\tau_{0}},\mathbf{W}_{\tau_{0},\tau_{1}},\dots,\mathbf{W}_{\tau_{j-1},\tau_{j}})\;\text{for some continuous}\;f\;\text{and}\;0<\tau_{0}<\cdots<\tau_{j}\le\tau\}.
\]
We show that $\mathcal{P}_{\tau}$ is dense in $L^{2}(\Omega,\mathcal{F}_{\tau})$.
To see this, let
\[
h_{n}(t)=\frac{1}{\tau}\sum_{j=1}^{2^{n}}(-1)^{j-1}\mathds{1}_{[(j-1)2^{-n}\tau,j2^{-n}\tau)}(t),\quad t\in[0,\tau].
\]
Then $\{h_{n}\}_{n\ge0}$ is an orthonormal basis of $L^{2}([0,\tau])$.
By the Wiener chaos decomposition (see \cite[Theorem 1.1.1, p. 6]{Nualart06}),
the space of polynomials of $\big\{\int_{0}^{\tau}h_{n}(t)d\mathbf{W}_{t}\big\}_{n\ge0}$
is dense in $L^{2}(\mathbb{P},\mathcal{F}_{\tau})$. Clearly, $\int_{0}^{\tau}h_{n}(t)d\mathbf{W}_{t}\in\mathcal{P}_{\tau}$
for all $n$ and therefore all polynomials of $\big\{\int_{0}^{\tau}h_{n}(t)d\mathbf{W}_{t}\big\}_{n\ge0}$
are in $\mathcal{P}_{\tau}$. This proves the density of $\mathcal{P}_{\tau}$
in $L^{2}(\mathbb{P},\mathcal{F}_{\tau})$. For each $\eta_{k}$ in
(\ref{eq:-137}), by the density of $\mathcal{P}_{u_{k}}$ in $L^{2}(\mathbb{P},\mathcal{F}_{u_{k}})$,
there exists a sequence $\{\xi_{k,n}\}_{n}\subseteq\mathcal{P}_{u_{k}}$
such that $\lim_{n\to\infty}\xi_{k,n}=\eta_{k}$ in $L^{2}(\mathbb{P},\mathcal{F}_{u_{k}})$.
Let $Y_{t}^{n}=\sum_{k=1}^{n}\xi_{k,n}\mathds{1}_{[u_{k-1},u_{k})}(t)$.
It is easily seen that $Y_{t}^{n}$ can be written in the form (\ref{eq:-137})
and satisfies (\ref{eq:-129}). 

Suppose in addition that $Y_{\ast}\le Y_{t}\le Y^{\ast}$. By considering
$Y_{t}^{n}-(A_{\ast}+A^{\ast})/2$ and $Y-(A_{\ast}+A^{\ast})/2$,
we may assume that $A_{\ast}=-M$ and $A^{\ast}=M$ with $M\ge0$.
Let $\{Y_{t}^{n}\}_{t\ge0}$ be a sequence of the form (\ref{eq:-121})
such that $\mathbb{E}\big(\int_{0}^{T}|Y_{t}^{n}-Y_{t}|^{2}dt\big)^{1/2}\to0$.
By considering a subsequence if necessary, we may assume that $Y_{t}^{n}\to Y_{t}$
for all $t\in[0,T]$ a.s. Let $f_{k,n}(x_{1},\dots,x_{k})$, $(x_{1},\dots,x_{k})\in\mathbb{R}^{k\times d}$,
be the smooth function in (\ref{eq:-121}). Let
\[
g_{k,n}(x_{1},\dots,x_{k})=\max(-M,\min(f_{k,n}(x_{1},\dots,x_{k}),M)),
\]
and $\tilde{Y}_{t}^{n}$ be the process of the form with $f_{k,n}$
in (\ref{eq:-121}) replaced by $g_{k,n}$. Clearly, $\tilde{Y}_{t}^{n}$
is of the form (\ref{eq:-121}) and $\tilde{Y}_{t}^{n}\to Y_{t}$
for all $t\in[0,T]$ a.s. By the dominated convergence theorem, $\mathbb{E}\big(\int_{0}^{T}|\tilde{Y}_{t}^{n}-Y_{t}|^{2}dt\big)^{1/2}\to0$.
Therefore, by replacing $Y_{t}^{n}$ by $\tilde{Y}_{t}^{n}$ if necessary,
we may assume that each $Y_{t}^{n}$ is of the form (\ref{eq:-121})
such that $|f_{k,n}|\le M$ with $f_{k,n}$ being continuous only.
Let $\phi^{k}\in C_{c}^{\infty}(\mathbb{R}^{k\times d})$ be a positive
smooth function with compact support such that $\Vert\phi^{k}\Vert_{L^{1}}=1$.
Let $\tilde{f}_{k,n}=f_{k,n}\ast\phi_{\epsilon}^{k}$, where $\phi_{\epsilon}^{k}(x)=\epsilon^{-k}\phi^{k}(x/\epsilon)$.
By the continuity of $f_{k,n}$, $\lim_{\epsilon\to0}(f_{k,n}\ast\phi_{\epsilon}^{k})(x)=f_{k,n}(x)$
for all $x\in\mathbb{R}^{k\times d}$. By Young's inequality, 
\[
\Vert f_{k,n}\ast\phi_{\epsilon}^{k}\Vert_{L^{\infty}}\le\Vert f_{k,n}\Vert_{L^{\infty}}\Vert\phi_{\epsilon}^{k}\Vert_{L^{1}}\le M.
\]
Moreover, by the dominated convergence theorem, 
\[
\lim_{\epsilon\to0}\mathbb{E}\big[\big|(f_{k,n}\ast\phi_{\epsilon}^{k}-f_{k,n})(\mathbf{W}_{\tau_{0}},\mathbf{W}_{\tau_{0},\tau_{1}}\dots,\mathbf{W}_{\tau_{k-1},\tau_{k}})\big|^{2}\big]=0.
\]
Therefore, we may choose $\epsilon_{k,n}$ such that 
\[
\mathbb{E}\big[\big|(f_{k,n}\ast\phi_{\epsilon}^{k}-f_{k,n})(\mathbf{W}_{\tau_{0}},\mathbf{W}_{\tau_{0},\tau_{1}}\dots,\mathbf{W}_{\tau_{k-1},\tau_{k}})\big|^{2}\big]^{1/2}\le n^{-2},
\]
Let $\tilde{f}_{k,n}=f_{k,n}\ast\phi_{\epsilon_{k,n}}^{k}$ and $\tilde{Y}_{t}^{n}$
be the process of the form with $f_{k,n}$ in (\ref{eq:-121}) replaced
by $\tilde{f}_{k,n}$. Then each $\tilde{f}_{k,n}$ is a smooth function
with $|\tilde{f}_{k,n}|\le M$, and $\mathbb{E}\big(\int_{0}^{T}|\tilde{Y}_{t}^{n}-Y_{t}|^{2}dt\big)^{1/2}\to0$.
To see that $\tilde{f}_{k,n}$ satisfies (\ref{eq:-210}), let $\gamma$
be a multi-index $\gamma$ with $|\gamma|=r$. Then $\partial^{\gamma}(f_{k,n}\ast\phi_{\epsilon}^{k})=f_{k,n}\ast\partial^{\gamma}\phi_{\epsilon}^{k}=\epsilon^{-r}f_{k,n}\ast(\partial^{\gamma}\phi^{k})_{\epsilon}$.
By Young's inequality,
\[
\Vert\partial^{\gamma}(f_{k,n}\ast\phi_{\epsilon}^{k})\Vert_{L^{\infty}}\le\epsilon^{-r}\Vert f_{k,n}\Vert_{L^{\infty}}\Vert(\partial^{\gamma}\phi^{k})_{\epsilon}\Vert_{L^{1}}\le M\epsilon^{-r}\Vert\phi^{k}\Vert_{W^{r,1}}<\infty.
\]
This completes the proof.
\end{proof}
\begin{rem}
\label{rem:-8}Let $\{Y_{t}^{n}\}_{t\ge0}$ be a sequence of progressively
measurable processes such that $\{(Y_{t}^{n},W_{t},\tilde{W}_{t})\}_{t\ge0}$
converges in distribution to $\{(Y_{t},W_{t},\tilde{W}_{t})\}_{t\ge0}$.
It follows readily from Lemma \ref{lem:-7} that, for any progressively
measurable process $\{q_{t}\}_{t\ge0}$, the joint process $\{(Y_{t}^{n},q_{t})\}_{t\ge0}$
converges in distribution to $\{(Y_{t},q_{t})\}_{t\ge0}$.
\end{rem}

We are now in a position to prove Theorem \ref{thm:}. 
\begin{proof}[Proof of Theorem \ref{thm:}]
Throughout the proof, we will use $c_{q_{1},\dots,q_{m}}$ to denote
any constant depending only on the quantities $q_{1},\dots,q_{m}$,
and the value of $c_{q_{1},\dots,q_{m}}$ may vary at different appearances.
In view of Corollary \ref{cor:}, it suffices to show, for any $0=\tau_{0}<\tau_{1}<\cdots<\tau_{\ell}\le T$
and smooth functions $F_{k}\in C^{\infty}$ with $\Vert F_{k}\Vert_{W^{1,\infty}}\le1$,
that
\begin{equation}
\lim_{\Delta t\to0}\mathbb{E}\Big(\prod_{k=1}^{\ell}F_{k}(Y_{\tau_{k}})\Big)=\mathbb{E}\Big(\prod_{k=1}^{\ell}F_{k}(\bar{Y}_{\tau_{k}})\Big),\label{eq:-27}
\end{equation}
where $Y_{t}=(\log I_{t},W_{t},\tilde{W}_{t})$ and $\bar{Y}_{t}=(\log X_{t},W_{t},\tilde{W}_{t})$,
and $\Vert\cdot\Vert_{W^{1,\infty}}$ is the Sobolev norm defined
by (\ref{eq:-25}). In view of Proposition (\ref{prop:-6}), Proposition
(\ref{prop:-7}) and Lemma (\ref{lem:-7}), and inserting time points
to the approximating sequence in (\ref{eq:-121}) if necessary, it
suffices to prove (\ref{eq:-27}) for the case when $\rho(t)$ and
$r(t)$ are simple functions of the forms $\rho(t)=\sum_{k=1}^{\ell}\rho_{k}\mathds{1}_{[\tau_{k-1},\tau_{k})}(t)$
and $r(t)=\sum_{k=1}^{\ell}r_{k}\mathds{1}_{[\tau_{k-1},\tau_{k})}(t)$,
and $\{\sigma_{t}\}_{t\ge0}$ is a simple process of the form 
\begin{equation}
\sigma_{t}=\sum_{k=1}^{\ell}\sigma_{k}(\mathbf{W}_{\tau_{0},\tau_{1}},\dots\mathbf{W}_{\tau_{k-2},\tau_{k-1}})\mathds{1}_{[\tau_{k-1},\tau_{k})}(t),\label{eq:-225}
\end{equation}
where $\mathbf{W}_{t}=(W_{t},\tilde{W}_{t})$ and $\mathbf{W}_{s,t}=\mathbf{W}_{t}-\mathbf{W}_{s}$,
and $\sigma_{k}(x_{1},\dots,x_{k-1})$ are smooth functions of $(x_{1},\dots,x_{k-1})$
with $\sigma_{\ast}\le\sigma_{k}\le\sigma^{\ast}$ and 
\begin{equation}
\Vert\sigma_{k}\Vert_{W^{d+2,\infty}}<\infty.\label{eq:-217}
\end{equation}
Let $C_{c}^{\infty}(\mathbb{R}^{d+2})$ be the space of smooth functions
with compact support on $\mathbb{R}^{d+2}$, and $Y_{s,t}=Y_{t}-Y_{s}$
and $\bar{Y}_{s,t}=\bar{Y}_{t}-\bar{Y}_{s}$. Then it suffices to
show that 
\begin{equation}
\lim_{\Delta t\to0}\mathbb{E}\Big(\prod_{k=1}^{\ell}f_{k}(Y_{\tau_{k-1},\tau_{k}})\Big)=\mathbb{E}\Big(\prod_{k=1}^{\ell}f_{k}(\bar{Y}_{\tau_{k-1},\tau_{k}})\Big),\label{eq:-23}
\end{equation}
for any $f_{k}\in C_{c}^{\infty}(\mathbb{R}^{d+2})$ with $|f_{k}|\le1$.
Let $\{\tilde{I}_{t}\}_{t\ge0}$ be the volatility target index obtained
by resetting the realized variance at times $\tau_{1},\dots,\tau_{\ell}$,
and $\tilde{Y}_{s,t}=(\log\tilde{I}_{t}-\log\tilde{I}_{s},W_{t}-W_{s},\tilde{W}_{t}-\tilde{W}_{s})$.
By Lemma \ref{lem:-8}, 
\[
\mathbb{E}\Big(\sup_{t\in[0,T]}|\log I_{t}-\log\tilde{I}_{t}|^{2}\Big)^{1/2}\le c_{\ell,\lambda,v_{0},M_{\rho},M_{r},\sigma_{\ast},\sigma^{\ast},T}\Delta t^{1/2}.
\]
Therefore, by $|f_{k}|\le1$ and the above, it is easily seen that
\begin{equation}
\begin{aligned} & \Big|\mathbb{E}\Big(\prod_{k=1}^{\ell}f_{k}(Y_{\tau_{k-1},\tau_{k}})\Big)-\mathbb{E}\Big(\prod_{k=1}^{\ell}f_{k}(\tilde{Y}_{\tau_{k-1},\tau_{k}})\Big)\Big|\\
 & \le\sum_{k=1}^{\ell}\Vert f_{k}\Vert_{W^{1,\infty}}\mathbb{E}(|Y_{\tau_{k-1},\tau_{k}}-\tilde{Y}_{\tau_{k-1},\tau_{k}}|)\le c_{\ell,\lambda,v_{0},M_{\rho},M_{r},\sigma_{\ast},\sigma^{\ast},T}\Delta t^{1/2}\sum_{k=1}^{\ell}\Vert f_{k}\Vert_{W^{1,\infty}},
\end{aligned}
\label{eq:-26}
\end{equation}
In view of (\ref{eq:-26}), it suffices to prove (\ref{eq:-23}) with
$Y_{\tau_{k-1},\tau_{k}}$ replaced by $\tilde{Y}_{\tau_{k-1},\tau_{k}}$.
By abuse of notation, we denote again by $\{I_{t}\}_{t\ge0}$ the
volatility target index obtained by realized variance reset, and by
$\{Y_{t}\}_{t\ge0}$ the corresponding joint process. Let $(\Omega^{\prime},\{\mathcal{F}_{t}^{\prime}\}_{t},\mathbb{P}^{\prime})$
be the probability space which supports the Brownian motion $\{W_{t}^{\prime}\}_{t\ge0}$.
In the rest of the proof, we will denote the product filtered probability
space $(\Omega\times\Omega^{\prime},\{\mathcal{F}_{t}\times\mathcal{F}_{t}^{\prime}\}_{t},\mathbb{P}\times\mathbb{P}^{\prime})$
by $(\Omega,\{\mathcal{F}_{t}\}_{t},\mathbb{P})$ again. Note that,
when conditional on $\mathcal{F}_{k-1}$, $\{I_{t}/I_{\tau_{k-1}}\}_{\tau_{k-1}\le t<\tau_{k}}$
is a volatility target index with initial realized variance $v_{0}$
and constant coefficients $\rho(t)=\rho_{k}$, $r(t)=r_{k}$ and $\sigma_{t}=\sigma_{k}$.
Define the random variables $g_{k}\in\mathcal{F}_{\tau_{k}}$ by $g_{\ell}=1$
and 
\begin{equation}
g_{k-1}=\mathbb{E}[g_{k}f_{k}(\bar{Y}_{\tau_{k-1},\tau_{k}})|\mathcal{F}_{\tau_{k-1}}],\quad1\le k\le\ell.\label{eq:-211}
\end{equation}
Clearly, $g_{0}=\mathbb{E}\big(\prod_{k=1}^{\ell}f_{k}(\bar{Y}_{\tau_{k-1},\tau_{k}})\big)$,
and $|g_{k}|\le1$ in view of $|f_{k}|\le1$. By (\ref{eq:-211})
and $|f_{k}|\le1$ and $|g_{k}|\le1$, we have
\begin{equation}
\begin{aligned} & \Big|\mathbb{E}\Big(g_{k}\prod_{j=1}^{k}f_{j}(Y_{\tau_{j-1},\tau_{j}})\Big)-\mathbb{E}\Big(g_{k-1}\prod_{j=1}^{k-1}f_{j}(Y_{\tau_{j-1},\tau_{j}})\Big)\Big|\\
 & =\Big|\mathbb{E}\Big(g_{k}f_{k}(Y_{\tau_{k-1},\tau_{k}})\prod_{j=1}^{k-1}f_{j}(Y_{\tau_{j-1},\tau_{j}})\Big)-\mathbb{E}\Big(g_{k}f_{k}(\bar{Y}_{\tau_{k-1},\tau_{k}})\prod_{j=1}^{k-1}f_{j}(Y_{\tau_{j-1},\tau_{j}})\Big)\Big|\\
 & =\Big|\mathbb{E}\Big([g_{k}f_{k}(Y_{\tau_{k-1},\tau_{k}})-g_{k}f_{k}(\bar{Y}_{\tau_{k-1},\tau_{k}})]\prod_{j=1}^{k-1}f_{j}(Y_{\tau_{j-1},\tau_{j}})\Big)\Big|\\
 & \le\mathbb{E}\big[\big|\mathbb{E}\big(g_{k}f_{k}(Y_{\tau_{k-1},\tau_{k}})-g_{k}f_{k}(\bar{Y}_{\tau_{k-1},\tau_{k}})|\mathcal{F}_{\tau_{k-1}}\big)\big|\big].
\end{aligned}
\label{eq:-213}
\end{equation}
Let 
\[
A=\sup_{1\le k\le\ell}\Vert\sigma_{k}\Vert_{W^{d+2,\infty}},
\]
and $B=\prod_{k=1}^{\ell}(\tau_{k}-\tau_{k-1})$. We will show that
\begin{equation}
\begin{aligned}\big|\mathbb{E}\big(g_{k}f_{k}(Y_{\tau_{k-1},\tau_{k}})-g_{k}f_{k}(\bar{Y}_{\tau_{k-1},\tau_{k}})|\mathcal{F}_{\tau_{k-1}}\big)\big|\le c_{d,\lambda,\ell,A,B,\sigma_{\ast},\sigma^{\ast},v_{0},T}\Vert f_{k}\Vert_{W^{d+2,1}}\Delta t^{1/5}(\log(T\Delta t^{-1}))^{1/2}.\end{aligned}
\label{eq:-214}
\end{equation}
Once (\ref{eq:-214}) is obtained, by (\ref{eq:-213}), we deduce
that
\[
\begin{aligned}\Big|\mathbb{E}\Big(\prod_{k=1}^{\ell}f_{k}(Y_{\tau_{k-1},\tau_{k}})\Big)-\mathbb{E}\Big(\prod_{k=1}^{\ell}f_{k}(\bar{Y}_{\tau_{k-1},\tau_{k}})\Big)\Big| & \le\sum_{k=0}^{\ell}\Big|\mathbb{E}\Big(g_{k}\prod_{j=1}^{k}f_{j}(Y_{\tau_{j-1},\tau_{j}})\Big)-\mathbb{E}\Big(g_{k-1}\prod_{j=1}^{k-1}f_{j}(Y_{\tau_{j-1},\tau_{j}})\Big)\Big|\\
 & \le c_{d,\lambda,\ell,A,B,\sigma_{\ast},\sigma^{\ast},v_{0},T}\Big(\sum_{k=0}^{\ell}\Vert f_{k}\Vert_{W^{d+2,1}}\Big)\Delta t^{1/5}(\log(T\Delta t^{-1}))^{1/2},
\end{aligned}
\]
which implies (\ref{eq:-23}) immediately. We now prove (\ref{eq:-214})
in two steps. 

\textbf{Step 1.} \emph{Each $g_{k}$ can be written as 
\begin{equation}
g_{k}=h_{k}(\xi_{k-1},Y_{\tau_{k-1},\tau_{k}})=h_{k}(\xi_{k-1},\bar{Y}_{\tau_{k-1},\tau_{k}}),\label{eq:-212}
\end{equation}
where 
\[
\xi_{k-1}=(\mathbf{W}_{\tau_{0},\tau_{1}},\dots,\mathbf{W}_{\tau_{k-2},\tau_{k-1}}),
\]
and $h_{k}$ is a smooth function with $|h_{k}|\le1$ and
\begin{equation}
\Vert h_{k}\Vert_{W^{d+2,\infty}}<c_{d,\lambda,\ell,A,\sigma_{\ast}}\prod_{j=k+1}^{\ell}(\tau_{j}-\tau_{j-1})^{-d/2-1}<\infty.\label{eq:-222}
\end{equation}
}

We prove this by induction. Suppose that (\ref{eq:-212}) holds for
$k$, and we show that (\ref{eq:-212}) holds for $k-1$. By Proposition
\ref{prop:-4}, $\bar{Y}_{\tau_{k-1},\tau_{k}}$ is a normal distribution
$\mathcal{N}((\tau_{k}-\tau_{k-1})\mu_{k}(\xi_{k-1}),(\tau_{k}-\tau_{k-1})LL^{\text{T}})$
when conditional on $\mathcal{F}_{\tau_{k-1}}$, where 
\begin{equation}
\mu_{k}(x)=\big(r_{k}+(\rho_{k}-r_{k})\bar{\sigma}\sigma_{k}(x)^{-1}U(\lambda)-2^{-1}\bar{\sigma}^{2}V(\lambda),0,\dots,0\big),\label{eq:-215}
\end{equation}
and $L$ is given by (\ref{eq:-126}). Let $\phi$ be the probability
density function of $\mathcal{N}(0,LL^{\text{T}})$ and $\phi_{t}(x)=t^{-(d+2)/2}\phi(t^{-1/2}x)$.
Then 
\begin{equation}
\begin{aligned}g_{k-1} & =\mathbb{E}(g_{k}f_{k}(\bar{Y}_{\tau_{k-1},\tau_{k}})|\mathcal{F}_{\tau_{k-1}})\\
 & =\mathbb{E}(h_{k}(\xi_{k-1},\bar{Y}_{\tau_{k-1},\tau_{k}})f_{k}(\bar{Y}_{\tau_{k-1},\tau_{k}})|\mathcal{F}_{\tau_{k-1}})=\tilde{h}_{k-1}(\xi_{k-1}),
\end{aligned}
\label{eq:-216}
\end{equation}
where
\begin{equation}
\tilde{h}_{k-1}(x)=\int_{\mathbb{R}^{d+2}}h_{k}(x,y)f_{k}(y)\phi_{\tau_{k}-\tau_{k-1}}(\mu_{k}(x)-y)dy,\label{eq:-226}
\end{equation}
with $x=(x_{1},\dots,x_{k-1})\in\mathbb{R}^{(k-1)\times(d+2)}$. Clearly,
$\tilde{h}_{k-1}$ is a smooth function. By the chain rule, we have
\[
\begin{aligned}\sum_{|\gamma|\le d+2}|\partial_{x}^{\gamma}[\phi_{\tau_{k}-\tau_{k-1}}(\mu_{k}(x)-y)]| & \le c_{d}(1+\Vert\mu_{k}\Vert_{W^{d+2,\infty}})^{d+2}\sum_{|\gamma|\le d+2}|(\partial^{\gamma}\phi_{\tau_{k}-\tau_{k-1}})(\mu_{k}(x)-y)|\\
 & \le c_{d,\sigma_{\ast}}(1+\Vert\sigma_{k}\Vert_{W^{d+2,\infty}})^{d+2}\sum_{|\gamma|\le d+2}|(\partial^{\gamma}\phi_{\tau_{k}-\tau_{k-1}})(\mu_{k}(x)-y)|\\
 & \le c_{d,A,\sigma_{\ast}}\sum_{|\gamma|\le d+2}|(\partial^{\gamma}\phi_{\tau_{k}-\tau_{k-1}})(\mu_{k}(x)-y)|.
\end{aligned}
\]
Therefore, by (\ref{eq:-222}), (\ref{eq:-226}) and the fact that
$\Vert\phi\Vert_{W^{d+2,1}}\le c_{d,\lambda}$, we have
\[
\begin{aligned}\sum_{|\gamma|\le d+2}|\partial_{x}^{\gamma}\tilde{h}_{k-1}(x)| & \le c_{d,A,\sigma_{\ast}}\Vert h_{k}\Vert_{W^{d+2,\infty}}\sum_{|\gamma|\le d+2}\int_{\mathbb{R}^{d+2}}|f_{k}(y)(\partial^{\gamma}\phi_{\tau_{k}-\tau_{k-1}})(\mu_{k}(x)-y)|dy\\
 & \le c_{d,A,\sigma_{\ast}}\Vert h_{k}\Vert_{W^{d+2,\infty}}(\tau_{k}-\tau_{k-1})^{-d/2-1}\Vert\phi\Vert_{W^{d+2,1}}\\
 & \le c_{d,\lambda,\ell,A,\sigma_{\ast}}\prod_{j=k}^{\ell}(\tau_{j}-\tau_{j-1})^{-d/2-1}.
\end{aligned}
\]
This implies that
\begin{equation}
\Vert\tilde{h}_{k-1}\Vert_{W^{d+2,\infty}}\le c_{d,\lambda,\ell,A,\sigma_{\ast}}\prod_{j=k}^{\ell}(\tau_{j}-\tau_{j-1})^{-d/2-1}.\label{eq:-218}
\end{equation}
Note that $\mathbf{W}_{\tau_{k-2},\tau_{k-1}}=\pi(Y_{\tau_{k-2},\tau_{k-1}})=\pi(\bar{Y}_{\tau_{k-2},\tau_{k-1}})$,
where $\pi$ is the projection on $\mathbb{R}^{d+2}$ given by $\pi(y_{1},y^{\prime})=y^{\prime}$
for all $y_{1}\in\mathbb{R}$ and $y^{\prime}\in\mathbb{R}^{d+1}$.
We see by (\ref{eq:-216}) that
\[
g_{k-1}=\tilde{h}_{k-1}(\xi_{k-2},\pi(Y_{\tau_{k-2},\tau_{k-1}}))=\tilde{h}_{k-1}(\xi_{k-2},\pi(\bar{Y}_{\tau_{k-2},\tau_{k-1}})).
\]
Let $h_{k-1}(x_{1},\dots,x_{k-2},x_{k-1})=\tilde{h}_{k-1}(x_{1},\dots,x_{k-2},\pi(x_{k-1}))$.
Then (\ref{eq:-212}) is satisfied, and (\ref{eq:-222}) follows readily
from (\ref{eq:-218}). This proves the conclusion of Step 1.

\textbf{Step 2.} \emph{The estimate (\ref{eq:-214}) holds for all
$k$.}

For any fixed $(x_{1},\dots,x_{k-1})\in\mathbb{R}^{(k-1)\times(d+2)}$,
by the Leibniz rule and (\ref{eq:-222}), $h_{k}(x_{1},\dots,x_{k-1},x_{k})f_{k}(x_{k})$
is a $W^{d+2,1}$ function of $x_{k}$, and 
\begin{equation}
\Vert h_{k}(x_{1},\dots,x_{k-1},x_{k})f_{k}(x_{k})\Vert_{W^{d+2,1}(x_{k})}\le c_{d,\lambda,\ell,A,\sigma_{\ast}}\Big(\prod_{j=k}^{\ell}(\tau_{j}-\tau_{j-1})\Big)^{-d/2-1}\Vert f_{k}\Vert_{W^{d+2,1}},\label{eq:-220}
\end{equation}
where $\Vert\cdot\Vert_{W^{d+2,1}(x_{k})}$ is the Sobolev norm with
respect to the variable $x_{k}$. By (\ref{eq:-212}), (\ref{eq:-220})
and Corollary \ref{cor:-2}, 
\begin{equation}
\begin{aligned} & \big|\mathbb{E}\big(g_{k}f_{k}(Y_{\tau_{k-1},\tau_{k}})-g_{k}f_{k}(\bar{Y}_{\tau_{k-1},\tau_{k}})\big|\mathcal{F}_{\tau_{k-1}}\big)\big|\\
 & =\big|\mathbb{E}\big(h_{k}(\xi_{k-1},Y_{\tau_{k-1},\tau_{k}})f_{k}(Y_{\tau_{k-1},\tau_{k}})\big|\mathcal{F}_{\tau_{k-1}}\big)-\mathbb{E}\big(h_{k}(\xi_{k-1},\bar{Y}_{\tau_{k-1},\tau_{k}})f_{k}(\bar{Y}_{\tau_{k-1},\tau_{k}})\big|\mathcal{F}_{\tau_{k-1}}\big)\big|\\
 & \le c_{d,\lambda,\ell,A,\sigma_{\ast},\sigma^{\ast},v_{0},T}\Big(\prod_{j=k}^{\ell}(\tau_{j}-\tau_{j-1})\Big)^{-d/2-1}\Vert f_{k}\Vert_{W^{d+2,1}}\Delta t^{1/5}(\log(T\Delta t^{-1}))^{1/2}.
\end{aligned}
\label{eq:-221}
\end{equation}
This proves (\ref{eq:-214}), and therefore completes the proof. 
\end{proof}
\begin{rem}
\label{rem:-2}The proof of (\ref{eq:-23}) can be significantly simplified
when $\sigma_{t}=\sigma$ is a constant\footnote{The proof for deterministic $\sigma_{t}=\sigma(t)$ follows from a
similar argument and a continuity argument. See \cite[Section 2.5]{LG26}
for details.}. This is due to the fact that, when conditional on $\mathcal{F}_{\tau_{k-1}}$
and with realized variance reset, $\log(I_{\tau_{k}}/I_{\tau_{k-1}})$
is a volatility target index independent of $\mathcal{F}_{\tau_{k-1}}$.
Consequently, by Proposition \ref{prop:-4}, for any $\epsilon>0$,
there exists a $\delta_{k}>0$ such that when $\Delta t<\delta_{k}$,
\[
|\mathbb{E}[f_{k}(Y_{\tau_{k-1},\tau_{k}})|\mathcal{F}_{\tau_{k-1}}]-\mathbb{E}[f_{k}(Y_{\tau_{k-1},\tau_{k}})|\mathcal{F}_{\tau_{k-1}}]|=|\mathbb{E}[f_{k}(Y_{\tau_{k-1},\tau_{k}})]-\mathbb{E}[f_{k}(Y_{\tau_{k-1},\tau_{k}})]|<\epsilon
\]
Setting $\Delta t<\min_{k}\delta_{k}$ completes the proof of (\ref{eq:-23})
for constant $\sigma_{t}=\sigma$. In contrast, when $\{\sigma_{t}\}_{t\ge0}$
is stochastic, the convergence rate in Corollary \ref{cor:-2} gives
a uniform control independent of $\mathcal{F}_{\tau_{k-1}}$.
\end{rem}

\subsection{\label{subsec:-7}Proof of Proposition \ref{prop:-1}}

In this subsection, we prove Proposition \ref{prop:-1}. 
\begin{proof}[Proof of Proposition \ref{prop:-1}]
We prove the proposition by contradiction. To simplify notation,
we assume that $\rho(t)=r(t)=0$ and $d=0$; the proof for general
$\rho(t),r(t)\in L^{\infty}([0,T])$ and $d\ge0$ is similar. Suppose
that $\{I_{t}^{(n)}\}_{t\ge0}$ is a subsequence of $\{I_{t}^{(N)}\}_{t\ge0}$
and $\{\tilde{X}_{t}\}_{t\ge0}$ is a process such that $\lim_{n\to\infty}I_{t}^{(n)}=\tilde{X}_{t}$
a.s. for all $t\in[0,T]$. By Corollary \ref{cor:} and Fatou's lemma,
\[
\mathbb{E}\big(|\log\tilde{X}_{t}-\log\tilde{X}_{s}|^{2}\big)^{1/2}\le c_{p,\lambda,v_{0},\sigma_{\ast},\sigma^{\ast},M_{r},M_{\rho},T}|t-s|^{1/2},
\]
which implies that $\tilde{X}_{t}$ has a continuous modification.
By abuse of notation, we denote the continuous modification of $\{\tilde{X}_{t}\}_{t\ge0}$
by $\{\tilde{X}_{t}\}_{t\ge0}$ itself. Let $\{\tilde{\mathcal{F}}_{t}\}_{t\ge0}$
be the completion of $\{\mathcal{F}_{t}\}_{t\ge0}$. Clearly, $\tilde{X}_{t}\in\tilde{\mathcal{F}}_{t}$
for each $t\in[0,T]$. In view of the continuity of $\tilde{X}_{t}$,
we see that $\{\tilde{X}_{t}\}_{t\ge0}$ is $\{\tilde{\mathcal{F}}_{t}\}_{t\ge0}$-adapted.
Note that 
\begin{equation}
\sup_{n\ge1}\mathbb{E}\big(\sup_{t\in[0,T]}|\log I_{t}^{(n)}|^{p}\big)<\infty,\label{eq:-232}
\end{equation}
for all $p>0$, which implies that $\log I_{t}^{(n)}$ is uniformly
integrable for each $t$, and therefore, 
\[
\mathbb{E}(\log\tilde{X}_{t}-\log\tilde{X}_{s})=\lim_{n\to\infty}\mathbb{E}(\log I_{t}^{(n)}-\log I_{s}^{(n)})=-2^{-1}\bar{\sigma}^{2}V(\lambda)(t-s),
\]
where the last equality follows from Theorem \ref{thm:}. Therefore,
$\log\tilde{X}_{t}+2^{-1}\bar{\sigma}^{2}V(\lambda)t$ is an $\{\tilde{\mathcal{F}}_{t}\}_{t\ge0}$-adapted
martingale. By the martingale representation theorem, there exists
a progressively measurable process $\{\beta_{t}\}_{t\ge0}$ such that
\begin{equation}
\log\tilde{X}_{t}=-2^{-1}\bar{\sigma}^{2}V(\lambda)t+\int_{0}^{t}\beta_{s}dW_{s}.\label{eq:-11}
\end{equation}

Let $\mathcal{H}$ be the space of all continuous $L^{2}$-integrable
processes $\{Y_{t}\}_{t\ge0}$ of the form 
\begin{equation}
Y_{t}=A_{t}+M_{t},\quad a.s.,\label{eq:-8}
\end{equation}
where $A_{t}=\int_{0}^{t}a_{s}ds$ is a bounded variation process
satisfying $\mathbb{E}\big(\int_{0}^{T}|a_{s}|^{2}ds\big)<\infty$,
and $\{M_{t}\}_{t\ge0}$ is an $L^{2}$-integrable continuous martingale.
By the Doob--Meyer decomposition theorem, the decomposition (\ref{eq:-8})
is unique for each $\{Y_{t}\}_{t\ge0}\in\mathcal{H}$. For any $Y,Y^{\prime}\in\mathcal{H}$
with Doob--Meyer decomposition $Y=\int a_{s}ds+M$ and $Y^{\prime}=\int a_{s}^{\prime}ds+M^{\prime}$,
define 
\begin{equation}
(Y,Y^{\prime})_{\mathcal{H}}=\mathbb{E}\Big(\int_{0}^{T}a_{t}a_{t}^{\prime}dt\Big)+\mathbb{E}\big(\langle M,M^{\prime}\rangle_{T}\big).\label{eq:-9}
\end{equation}
Clearly, $\mathcal{H}$ is a Hilbert space with the inner product
$(\cdot,\cdot)_{\mathcal{H}}$, and therefore satisfies the Banach--Saks
property (see for example \cite{Kakutani39}); that is, every bounded
sequence $\{Y^{(n)}\}\subseteq\mathcal{H}$ contains a subsequence
$\{Y^{(n_{k})}\}$ such that the Ces\`{a}ro mean $\frac{1}{k}\sum_{j=1}^{k}Y^{(n_{j})}$
converges strongly in $\mathcal{H}$. It is easily seen by (\ref{eq:-86}),
It\^{o}'s formula and H\"{o}lder's inequality that
\[
\Vert\log I^{(n)}\Vert_{\mathcal{H}}^{2}\le c_{M_{\rho},M_{r},\sigma^{\ast}}\frac{1}{n}\sum_{k=0}^{n-1}\big(\mathbb{E}(v_{k}^{-1/2})+\mathbb{E}(v_{k}^{-2})\big)
\]
for some constant $c_{M_{\rho},M_{r},\sigma^{\ast}}$ depending only
on $M_{\rho}$, $M_{r}$ and $\sigma^{\ast}$. It follows from the
above and Lemma \ref{lem:-4} that $\sup_{n}\Vert\log I^{(n)}\Vert_{\mathcal{H}}<\infty$.
Therefore, there exists a subsequence $\{\log I^{(n_{k})}\}_{k}$
such that the strong limit $\lim_{k\to\infty}\frac{1}{k}\sum_{j=1}^{k}\log I^{(n_{j})}$
exists in $\mathcal{H}$. Note that, by H\"{o}lder's inequality and
Doob's maximal inequality, 
\[
\mathbb{E}\Big(\sup_{0\le t\le T}|Y_{t}|^{2}\Big)\le2\mathbb{E}\Big[\Big(\int_{0}^{T}|a_{t}|dt\Big)^{2}+\sup_{0\le t\le T}M_{t}^{2}\Big]\le c\Vert Y\Vert_{\mathcal{H}}^{2},\quad Y\in\mathcal{H}.
\]
Therefore, convergence of $\frac{1}{k}\sum_{j=1}^{k}\log I^{(n_{j})}$
in $\mathcal{H}$ implies convergence of $\frac{1}{k}\sum_{j=1}^{k}\log I_{t}^{(n_{j})}$
in probability for each $t$. Consequently, $\{\log\tilde{X}_{t}\}_{t\ge0}$
coincides with the strong limit $\lim_{k\to\infty}\frac{1}{k}\sum_{j=1}^{k}\log I^{(n_{j})}$
in $\mathcal{H}$; that is, 
\begin{equation}
\lim_{k\to\infty}\Big\Vert\frac{1}{k}\sum_{j=1}^{k}\log I^{(n_{j})}-\log\tilde{X}\Big\Vert_{\mathcal{H}}=0.\label{eq:-12}
\end{equation}

For any $\tau\in[0,T]$, let $W_{t}^{\tau}=W_{t\wedge\tau}$. By It\^{o}'s
formula, 
\begin{equation}
\begin{aligned}\lim_{n\to\infty}(\log I^{(n)},W^{\tau})_{\mathcal{H}} & =\lim_{n\to\infty}\sum_{k=1}^{n}\mathbb{E}\Big(\bar{\sigma}v_{k-1}^{-1/2}\int_{t_{k-1}}^{t_{k}}\sigma d\langle W,W^{\tau}\rangle_{u}\Big)\\
 & =\lim_{n\to\infty}\sum_{k=1}^{n}\mathbb{E}\Big(\bar{\sigma}v_{k-1}^{-1/2}\Big)\int_{t_{k-1}\wedge\tau}^{t_{k}\wedge\tau}\sigma du=\bar{\sigma}U(\lambda)\tau.
\end{aligned}
\label{eq:-13}
\end{equation}
By (\ref{eq:-11}), (\ref{eq:-12}) and (\ref{eq:-13}), we have 
\[
\int_{0}^{\tau}\mathbb{E}(\beta_{t})dt=(\log\tilde{X},W^{\tau})_{\mathcal{H}}=\lim_{k\to\infty}\frac{1}{k}\sum_{j=1}^{k}(\log I^{(n_{j})},W^{\tau})_{\mathcal{H}}=\bar{\sigma}U(\lambda)\tau.
\]
Therefore, 
\begin{equation}
\mathbb{E}(\beta_{t})=\bar{\sigma}U(\lambda),\quad t\in[0,T].\label{eq:-14}
\end{equation}
On the other hand, by (\ref{eq:-232}) and Theorem \ref{thm:}, we
have 
\[
\mathbb{E}[(\log\tilde{X}_{t}+2^{-1}\bar{\sigma}^{2}V(\lambda)t)^{2}]=\lim_{n\to\infty}\mathbb{E}[(\log I_{t}^{(n)}+2^{-1}\bar{\sigma}^{2}V(\lambda)t)^{2}]=\bar{\sigma}^{2}V(\lambda)t,
\]
which implies that 
\begin{equation}
\beta_{t}^{2}=\bar{\sigma}^{2}V(\lambda),\quad t\in[0,T]\;.a.s.\label{eq:-15}
\end{equation}
For any positive progressively measurable process $\{\xi_{t}\}_{t\ge0}$,
we have $\big(\log I^{(n)},\int\xi_{t}dW_{t}\big)_{\mathcal{H}}\ge0$.
By (\ref{eq:-12}), we deduce that 
\[
\int_{0}^{T}\mathbb{E}(\beta_{t}\xi_{t})dt=(\log\tilde{X},W^{\tau})_{\mathcal{H}}=\lim_{k\to\infty}\frac{1}{k}\sum_{j=1}^{k}(\log I^{(n_{j})},W^{\tau})_{\mathcal{H}}\ge0,
\]
for any positive progressively measurable process $\{\xi_{t}\}_{t\ge0}$.
This implies that $\beta_{t}\ge0$ for all $t\in[0,T]$ a.s. Therefore,
by (\ref{eq:-15}), we see that 
\begin{equation}
\beta_{t}=\bar{\sigma}\sqrt{V(\lambda)},\quad t\in[0,T]\;.a.s.\label{eq:-34}
\end{equation}
Now (\ref{eq:-14}) contradicts with (\ref{eq:-34}) in view of the
fact that $U(\lambda)^{2}<V(\lambda)$. This completes the proof.
\end{proof}

\section{\label{sec:-2}The Discrete Correction}

In this section, we derive the discrete correction to the diffusion
(\ref{eq:}) when $\sigma_{t}=\sigma(t)$ is a deterministic $C^{1,1}([0,T])$
function. As we will see later in this section, the discrete correction
formula can be regarded as a quantification of the competing effect
of $\lambda\to1$ and $\Delta t\to0$, or more precisely, the size
of $\delta(\Delta t,\lambda)=(1-\lambda)^{-1}\Delta t$ (see \cite[Section 4]{LG26}
for more details). 

Throughout this section, we will denote 
\begin{equation}
P_{k}(t)=\prod_{j=0}^{k-1}(1+t\lambda^{j})^{-1/2},\quad P_{\infty}(t)=\prod_{j=0}^{\infty}(1+t\lambda^{j})^{-1/2},\label{eq:-63}
\end{equation}
and denote by $V_{k}(\lambda,\theta)$ the function defined in (\ref{eq:-5});
that is, 
\[
V_{k}(\lambda,\theta)=\frac{1}{2(1-\lambda)}\int_{0}^{\infty}e^{-t\theta(1-\lambda)^{-1}\lambda^{k}/2}P_{k}(t)dt.
\]

\subsection{\label{subsec:-3}Estimates for $V_{k}(\lambda,\theta)$}

In this subsection, we establish estimates for $V_{k}(\lambda,\theta)$
and its derivatives, which will be needed for the determination of
order of terms in the discrete correction formula. 
\begin{lem}
\label{lem:-21}For any integer $n\ge1$ and any $\gamma\in[0,1)$,
we have 
\begin{equation}
\int_{0}^{\infty}t^{n-1}P_{\infty}(t)\frac{dt}{t^{\gamma}}\le c_{n,\gamma}\lambda^{-n^{2}}(1-\lambda)^{n-\gamma},\label{eq:-229}
\end{equation}
where $c_{n,\gamma}$ is a constant depending only on $n$ and $\gamma$.
\end{lem}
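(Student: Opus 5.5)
The plan is to bound $P_\infty(t)$ above by a truncated product and then split the integral at the scale $t\sim(1-\lambda)^{-1}$. Write $P_\infty(t)\le P_m(t)=\prod_{j=0}^{m-1}(1+t\lambda^j)^{-1/2}$ for a fixed $m$, to be chosen depending only on $n$ and $\gamma$. This bound decays like $t^{-m/2}$ only for $t\gtrsim\lambda^{-m}$, so it will handle the tail. In the bulk, the infinite product must be used, since it is the product over the roughly $(1-\lambda)^{-1}$ factors with $t\lambda^j\gtrsim1$ that makes $P_\infty$ small.

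The key step is to bound $\log P_\infty(t)$ from above for $t\ge1$. Use $\log(1+x)\ge \log 2$ for $x\ge1$, i.e. count the $j$ with $t\lambda^j\ge1$. These are $j\le \log t/\log\lambda^{-1}$, and there are about $\log t/(1-\lambda)$ of them. This gives
\[
P_\infty(t)\le 2^{-\frac12\lfloor \log t/\log\lambda^{-1}\rfloor}\le c\, t^{-c'/\log\lambda^{-1}} .
\]
Since $\log\lambda^{-1}\asymp 1-\lambda$ for $\lambda\ge\lambda_\ast$, this is a power-law decay $t^{-c/(1-\lambda)}$ on $t\ge1$. The bound is therefore very strong in the bulk. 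Alternatively, one can compare $\sum_j\log(1+t\lambda^j)$ with the integral $\frac{1}{\log\lambda^{-1}}\int_0^{t}\log(1+s)\frac{ds}{s}$, which is at least of order $(\log t)^2/(1-\lambda)$. Either way, $P_\infty(t)$ is tiny once $t$ exceeds a large constant.

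Next, split the integral $\int_0^\infty t^{n-1-\gamma}P_\infty(t)\,dt$ at $t=A$. On $[0,A]$, bound $P_\infty\le1$ to get a contribution at most $c_{n,\gamma}A^{n-\gamma}$. On $[A,\infty)$, use the power decay $t^{-c/\log\lambda^{-1}}$, which gives a contribution at most $A^{n-\gamma-c/\log\lambda^{-1}}\log\lambda^{-1}$. Here the catch appears: this crude split yields only an $O(1)$ bound, not the claimed $(1-\lambda)^{n-\gamma}$, which is small. I must therefore recheck the scaling. With $P_\infty(t)=\prod_j(1+t\lambda^j)^{-1/2}$, a typical $t$ contributes until the product becomes small, and the product reaches size $e^{-1}$ already when $\sum_j t\lambda^j\approx t/(1-\lambda)\sim1$, i.e. at $t\sim 1-\lambda$. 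So the effective range is $t\lesssim 1-\lambda$, which matches the factor $(1-\lambda)^{n-\gamma}$. The correct mechanism is that for $t\le 1$, $\log(1+t\lambda^j)\ge t\lambda^j/2$, so $P_\infty(t)\le \exp(-t/(4(1-\lambda)))$ on $[0,1]$. Hence
\[
\int_0^1 t^{n-1-\gamma}P_\infty\,dt\le \Gamma(n-\gamma)\,(4(1-\lambda))^{n-\gamma}.
\]
For the region $t\ge1$, use the earlier bound together with the integral comparison, or simply keep the factors with $t\lambda^j\ge$ const. Drop to $P_m$ with $m$ large and bound $P_\infty(t)\le P_m(t)\,e^{-\lambda^m/(4(1-\lambda))}$, applying the small-$t$ estimate to the remaining factors $j\ge m$ evaluated at $t\lambda^m\le$ const. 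This produces an exponentially small factor $e^{-c/(1-\lambda)}\le c_k(1-\lambda)^k$, times $\int_1^\infty t^{n-1-\gamma}P_m(t)\,dt\le c\lambda^{-m^2/2}$ once $m>2(n-\gamma)$. This region is where the factor $\lambda^{-n^2}$ comes from: taking $m\approx 2n+1$ gives $\prod_{j<m}\lambda^{-j/2}\sim\lambda^{-m^2/4}\le\lambda^{-n^2}$, up to constants.

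The main obstacle is matching the bookkeeping of the large-$t$ region so that the constant is exactly of the form $c_{n,\gamma}\lambda^{-n^2}$. The tail argument needs care, since the $j\ge m$ factors must be handled uniformly in $t\ge1$. I would split the tail further into $1\le t\le\lambda^{-m}$, where all remaining factors use $\log(1+x)\ge x/2$ for $x\le1$, and $t\ge\lambda^{-m}$, where $P_m$ alone suffices.
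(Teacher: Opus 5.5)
Your overall route ($P_\infty(t)\le e^{-t/(4(1-\lambda))}$ on $[0,1]$, then $P_\infty=P_m\cdot P_\infty(\lambda^m\,\cdot)$ on $[1,\infty)$) is workable, but the tail step as you plan it in your last paragraph fails. There you use $P_m$ alone on $t\ge\lambda^{-m}$. Since $P_m(t)\ge(1+t)^{-m/2}$ for every $\lambda<1$, that piece is at least $\int_{\lambda^{-m}}^{\infty}t^{n-1-\gamma}(1+t)^{-m/2}\,dt$. This tends to a positive constant as $\lambda\to1$, so you lose the factor $(1-\lambda)^{n-\gamma}$ exactly in the regime the lemma is about.

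The missing observation is monotonicity. $P_\infty$ is decreasing, so for every $t\ge1$ you have $P_\infty(t)=P_m(t)P_\infty(\lambda^m t)\le P_m(t)P_\infty(\lambda^m)\le P_m(t)e^{-\lambda^m/(4(1-\lambda))}$, using your small-$t$ bound at the point $\lambda^m\le1$. This is the inequality you wrote in your third paragraph, and it holds on all of $[1,\infty)$ with no further splitting. Moreover $\sup_{0<\lambda<1}e^{-\lambda^m/(4(1-\lambda))}(1-\lambda)^{-(n-\gamma)}<\infty$, since the ratio tends to $1$ as $\lambda\to0$ and to $0$ as $\lambda\to1$. So this factor supplies $(1-\lambda)^{n-\gamma}$ without costing any power of $\lambda$.

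Second, your $\lambda$-bookkeeping is wrong. With $m=2n+1$, the crude bound $P_m(t)\le t^{-m/2}\prod_{j<m}\lambda^{-j/2}$ gives $\lambda^{-m(m-1)/4}=\lambda^{-n^2-n/2}$, and your inequality $\lambda^{-m^2/4}\le\lambda^{-n^2}$ goes the wrong way. To reach $\lambda^{-n^2}$ for small $\lambda$, keep only the factors with $t\lambda^j\ge1$. On $[\lambda^{-(i-1)},\lambda^{-i}]$, $1\le i\le m$, use $P_m(t)\le t^{-i/2}\lambda^{-i(i-1)/4}$, and on $[\lambda^{-m},\infty)$ use the crude bound. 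Summing the $m+1$ pieces gives $c_{n,\gamma}\lambda^{-n^2+n/2}(1+\log\lambda^{-1})\le c_{n,\gamma}\lambda^{-n^2}$. This only matters as $\lambda\to0$, which the main results never use since they assume $\lambda\ge\lambda_\ast$. In fact the paper's own argument as written, via $I_0=2\log\lambda^{-1}\le2\lambda^{-1}(1-\lambda)$, yields $\lambda^{-n^2-1}$.

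With both repairs your proof is a valid alternative to the paper's. The paper dominates $P_\infty(t)$ by $\tilde P_\infty(\lambda t)$ with $\tilde P_\infty(t)=\prod_j(1+\lambda^{2j}t)^{-1}$ (exponent $-1$, as its recursion requires). It then uses the exact identity $(1+t)\tilde P_\infty(t)=\tilde P_\infty(\lambda^2t)$ to get $I_n=(\lambda^{-2n}-1)I_{n-1}$ plus the closed form of $I_0$, and treats $\gamma>0$ by splitting at $t=1-\lambda$. Your pointwise estimate avoids the $q$-integral evaluation and handles all $\gamma\in[0,1)$ in one pass.
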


\begin{proof}
We first prove (\ref{eq:-229}) for $\gamma=0$. Let $\tilde{P}_{\infty}(t)=\prod_{j=0}^{\infty}(1+t\lambda^{2j})^{-1/2}$.
By 
\[
\begin{aligned}\prod_{k=0}^{\infty}(1+t\lambda^{k})^{-1/2} & =\prod_{k=0}^{\infty}[(1+t\lambda^{k})(1+t\lambda^{2k+1})]^{-1/2}\le\tilde{P}_{\infty}(t\lambda),\end{aligned}
\]
we have
\begin{equation}
\begin{aligned}\int_{0}^{\infty}t^{n-\gamma-1}\prod_{k=0}^{\infty}(1+t\lambda^{k})^{-1/2}dt & \le\int_{0}^{\infty}t^{n-1}\tilde{P}_{\infty}(t\lambda)dt=\lambda^{-n}\int_{0}^{\infty}t^{n-1}\tilde{P}_{\infty}(t)dt.\end{aligned}
\label{eq:-228}
\end{equation}
Let $I_{n}=\int_{0}^{\infty}t^{n}\tilde{P}_{\infty}(t)dt$. Then $(1+t)\tilde{P}_{\infty}(t)=\tilde{P}(t\lambda^{2})$
and therefore, 
\[
\begin{aligned}I_{n} & =\int_{0}^{\infty}t^{n-1}(1+t-1)\tilde{P}_{\infty}(t)dt\\
 & =\int_{0}^{\infty}t^{n-1}\tilde{P}_{\infty}(t\lambda^{2})dt-\int_{0}^{\infty}t^{n-1}\tilde{P}_{\infty}(t)dt\\
 & =(\lambda^{-2n}-1)I_{n-1}.
\end{aligned}
\]
By induction, we obtain that 
\[
I_{n}=I_{0}\prod_{k=1}^{n}(\lambda^{-2k}-1)=I_{0}\lambda^{-n(n+1)}\prod_{k=1}^{n}(1-\lambda^{2k})\le(2n)!!\lambda^{-n(n+1)}(1-\lambda)^{n}I_{0}.
\]
Since $I_{0}=2\log\lambda^{-1}\le2(\lambda^{-1}-1)$ (see \cite[Lemma 3.7]{LG26}),
we deduce that  
\[
I_{n}\le2(2n)!!\lambda^{-n^{2}-n-1}(1-\lambda)^{n+1},
\]
which, together with (\ref{eq:-228}), completes the proof for $\gamma=0$.

For $\gamma\in(0,1)$, we have 
\[
\begin{aligned}\int_{0}^{\infty}t^{n-1}P_{\infty}(t)\frac{dt}{t^{\gamma}} & \le\int_{0}^{1-\lambda}t^{n-1}\frac{dt}{t^{\gamma}}+\int_{1-\lambda}^{\infty}t^{n-1}P_{\infty}(t)\frac{dt}{t^{\gamma}}\\
 & \le\frac{(1-\lambda)^{n-\gamma}}{n-\gamma}+(1-\lambda)^{-\gamma}\int_{1-\lambda}^{\infty}t^{n-1}P_{\infty}(t)dt\\
 & \le\frac{(1-\lambda)^{n-\gamma}}{n-\gamma}+c_{n}(1-\lambda)^{n-\gamma},
\end{aligned}
\]
where the last inequality follows from (\ref{eq:-229}) for $\gamma=0$.
This completes the proof.
\end{proof}
\begin{rem}
\label{rem:-11}We would like to make a comment on the role of $\gamma$
in (\ref{eq:-229}); the same consideration applies to several estimates
in this subsection. The case for $\gamma=0$ corresponds to estimates
related to the functions $V(\lambda)$ and $V_{k}(\lambda,\theta)$,
while $\gamma=1/2$ corresponds to the functions $U(\lambda)$ and
\begin{equation}
U_{k}(\lambda,\theta)=\sqrt{\frac{2}{\pi(1-\lambda)}}\int_{0}^{\infty}e^{-t^{2}\theta(1-\lambda)^{-1}\lambda^{k}/2}P_{k}(t^{2})dt.\label{eq:-20}
\end{equation}
This is because, by a change of variable, 
\[
U(\lambda)=\frac{1}{\sqrt{2\pi(1-\lambda)}}\int_{0}^{\infty}P_{\infty}(t)\frac{dt}{t^{1/2}},
\]
and 
\[
U_{k}(\lambda,\theta)=\frac{1}{\sqrt{2\pi(1-\lambda)}}\int_{0}^{\infty}e^{-t\theta(1-\lambda)^{-1}\lambda^{k}/2}P_{k}(t)\frac{dt}{t^{1/2}}.
\]
\end{rem}

\begin{cor}
\label{cor:-4}Let $V(\lambda)$ be the function given in (\ref{eq:-3}).
Then we have 
\begin{equation}
V(\lambda)\le c_{0}\lambda^{-1},\label{eq:-280}
\end{equation}
\begin{equation}
V^{\prime}(\lambda)\le c_{0}\lambda^{-5}(1-\lambda)^{-1},\label{eq:-281}
\end{equation}
and 
\begin{equation}
V^{\prime\prime}(\lambda)\le c_{0}\lambda^{-11}(1-\lambda)^{-2},\label{eq:-282}
\end{equation}
where $c_{0}$ is a universal constant.
\end{cor}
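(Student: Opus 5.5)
We will work directly from the representation $V(\lambda)=\frac{1}{2(1-\lambda)}\int_{0}^{\infty}P_{\infty}(t)\,dt$, where $P_{\infty}$ is the product in (\ref{eq:-63}), and reduce every bound to Lemma \ref{lem:-21}. For (\ref{eq:-280}) we apply Lemma \ref{lem:-21} with $n=1$, $\gamma=0$, which gives $\int_{0}^{\infty}P_{\infty}(t)dt\le c\lambda^{-1}(1-\lambda)$; dividing by $2(1-\lambda)$ yields $V(\lambda)\le c_{0}\lambda^{-1}$.

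For the derivatives we differentiate under the integral sign. Justification is by dominated convergence on compact subsets of $(0,1)$, using the bounds below as dominating functions. Writing $\log P_{\infty}(t)=-\frac12\sum_{j\ge0}\log(1+t\lambda^{j})$, we get
\[
\partial_{\lambda}P_{\infty}(t)=-\frac12P_{\infty}(t)\sum_{j\ge1}\frac{jt\lambda^{j-1}}{1+t\lambda^{j}}.
\]
Since $\partial_\lambda P_\infty\le0$, the second term in
\[
V'(\lambda)=\frac{1}{2(1-\lambda)^{2}}\int P_{\infty}+\frac{1}{2(1-\lambda)}\int\partial_{\lambda}P_{\infty}
\]
is nonpositive, and only an upper bound is asserted. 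We therefore drop it. The first term is at most $c\lambda^{-1}(1-\lambda)^{-1}$ by the $n=1$ case, which is better than (\ref{eq:-281}). To also control $|V'|$, as is needed later, we bound $\sum_{j\ge1}jt\lambda^{j-1}/(1+t\lambda^{j})\le t\sum_{j}j\lambda^{j-1}=t(1-\lambda)^{-2}$. Lemma \ref{lem:-21} with $n=2$ then gives $\int tP_{\infty}\le c\lambda^{-4}(1-\lambda)^{2}$, so the second term is $O(\lambda^{-4}(1-\lambda)^{-1})$. Allowing a harmless extra factor $\lambda^{-1}$ produces the stated $\lambda^{-5}$.

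For (\ref{eq:-282}) we differentiate once more. We obtain three kinds of terms: $(1-\lambda)^{-3}\int P_{\infty}$, $(1-\lambda)^{-2}\int|\partial_{\lambda}P_{\infty}|$, and $(1-\lambda)^{-1}\int|\partial_{\lambda}^{2}P_{\infty}|$. The first two are handled as above and give $O(\lambda^{-1}(1-\lambda)^{-2})$ and $O(\lambda^{-4}(1-\lambda)^{-2})$. For the third, $\partial_{\lambda}^{2}P_{\infty}$ equals $P_{\infty}$ times the square of the first sum plus the derivative of that sum. The squared sum is bounded by $t^{2}(1-\lambda)^{-4}$, and Lemma \ref{lem:-21} with $n=3$ gives $\int t^{2}P_{\infty}\le c\lambda^{-9}(1-\lambda)^{3}$, contributing $O(\lambda^{-9}(1-\lambda)^{-2})$. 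The derivative of the sum is bounded by $t\sum_{j}j(j-1)\lambda^{j-2}+t^{2}\sum_j j^{2}\lambda^{2j-2}\le c\,t(1-\lambda)^{-3}+c\,t^{2}(1-\lambda)^{-3}$. This contributes at most $O(\lambda^{-4}(1-\lambda)^{-2})+O(\lambda^{-9}(1-\lambda)^{-1})$. Collecting powers of $\lambda$ (all at least $\lambda^{-11}$ for $\lambda\in(0,1)$) gives (\ref{eq:-282}).

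The main obstacle is bookkeeping rather than ideas. One has to match each power of $(1-\lambda)^{-1}$ coming from the geometric-type sums $\sum j^{k}\lambda^{j}$ against the gain $(1-\lambda)^{n}$ supplied by Lemma \ref{lem:-21}, so that exactly $(1-\lambda)^{-1}$ and $(1-\lambda)^{-2}$ remain. In particular, the crude bound $\sum_j j\lambda^{j-1}\le(1-\lambda)^{-2}$ has to be paired with the $t^{n-1}$ moment at the correct $n$. A secondary point is justifying the differentiation under the integral sign, which follows from these same uniform dominating bounds.
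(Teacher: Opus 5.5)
Your proof is correct and follows essentially the paper's route: differentiate under the integral sign, bound the logarithmic derivative of $P_{\infty}$ by geometric-type sums such as $\sum_j j\lambda^{j-1}$, and invoke Lemma \ref{lem:-21} with $n=1,2,3$. The only difference is bookkeeping: the paper first rescales to $V(\lambda)=\int_{0}^{\infty}P_{\infty}(2t(1-\lambda))\,dt$ so the prefactor is absorbed into the product, whereas you keep $\frac{1}{2(1-\lambda)}$ and use the product rule, which also gives the sign of the second term of $V^{\prime}$ and slightly better powers of $\lambda$ ($\lambda^{-4}$ and $\lambda^{-9}$).
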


\begin{proof}
By Lemma \ref{lem:-21}, 
\[
V(\lambda)=2^{-1}(1-\lambda)^{-1}\int_{0}^{\infty}P_{\infty}(t)dt\le c_{0}\lambda^{-1}.
\]

For $V^{\prime}(\lambda)$, by the dominated convergence theorem,
\begin{equation}
\begin{aligned}V^{\prime}(\lambda) & =\frac{\partial}{\partial\lambda}\Big(\int_{0}^{\infty}P_{\infty}(2t(1-\lambda))dt\Big)\\
 & =\int_{0}^{\infty}\frac{\partial}{\partial\lambda}[P_{\infty}(2t(1-\lambda))]dt\\
 & =-\int_{0}^{\infty}\sum_{k=0}^{\infty}\frac{t[k\lambda^{k-1}(1-\lambda)-\lambda^{k}]}{1+2t(1-\lambda)\lambda^{k}}P_{\infty}(2t(1-\lambda))dt.
\end{aligned}
\label{eq:-283}
\end{equation}
Since 
\begin{equation}
\sum_{k=0}^{\infty}\frac{t|k\lambda^{k-1}(1-\lambda)-\lambda^{k}|}{1+2t(1-\lambda)\lambda^{k}}\le2t\Big((1-\lambda)\sum_{k=1}^{\infty}k\lambda^{k-1}+\sum_{k=0}^{\infty}\lambda^{k}\Big)\le c_{0}t\lambda^{-1}(1-\lambda)^{-1},\label{eq:-276}
\end{equation}
we deduce that 
\[
\begin{aligned}|V^{\prime}(\lambda)|\le\lambda^{-1}(1-\lambda)^{-1}\int_{0}^{\infty}tP_{\infty}(2t(1-\lambda))dt\le c_{0}\lambda^{-5}(1-\lambda)^{-1},\end{aligned}
\]
where the last inequality follows from Lemma \ref{lem:-21} and a
change of variable.

For $V^{\prime\prime}(\lambda)$, by the dominated convergence theorem,
\[
\begin{aligned}V^{\prime\prime}(\lambda) & =-\int_{0}^{\infty}\frac{\partial}{\partial\lambda}\Big(\sum_{k=0}^{\infty}\frac{t[k\lambda^{k-1}(1-\lambda)-\lambda^{k}]}{1+2t(1-\lambda)\lambda^{k}}P_{\infty}(2t(1-\lambda))\Big)dt\\
 & =E_{1}+E_{2}+E_{3},
\end{aligned}
\]
where 
\begin{align*}
E_{1} & =\int_{0}^{\infty}\Big(\sum_{j=0}^{\infty}\frac{t[j\lambda^{j-1}(1-\lambda)-\lambda^{j}]}{1+2t(1-\lambda)\lambda^{j}}\Big)^{2}P_{\infty}(2t(1-\lambda))dt,\\
E_{2} & =-\int_{0}^{\infty}\sum_{j=0}^{\infty}\frac{t[j^{2}\lambda^{j-2}(1-\lambda)-2j\lambda^{j-1}]}{1+2t(1-\lambda)\lambda^{j}}P_{\infty}(2t(1-\lambda))dt,\\
E_{3} & =\int_{0}^{\infty}\sum_{j=0}^{\infty}\frac{t^{2}[j\lambda^{j-1}(1-\lambda)-\lambda^{j}]^{2}}{[1+2t(1-\lambda)\lambda^{j}]^{2}}P_{\infty}(2t(1-\lambda))dt.
\end{align*}
For $E_{1}$, by (\ref{eq:-276}) and Lemma \ref{lem:-21},
\begin{equation}
|E_{1}|\le c_{0}\lambda^{-2}(1-\lambda)^{-2}\int_{0}^{\infty}t^{2}P_{\infty}(2t(1-\lambda))dt\le c_{0}\lambda^{-11}(1-\lambda)^{-2}.\label{eq:-277}
\end{equation}
For $E_{2}$, note that
\[
\sum_{j=0}^{\infty}\frac{t|j^{2}\lambda^{j-2}(1-\lambda)-2j\lambda^{j-1}|}{1+2t(1-\lambda)\lambda^{j}}\le t\Big(\sum_{j=0}^{\infty}j^{2}\lambda^{j-2}(1-\lambda)+2\sum_{j=0}^{\infty}j\lambda^{j-1}\Big)\le c_{0}t\lambda^{-2}(1-\lambda)^{-2}.
\]
Therefore, by Lemma \ref{lem:-21},
\begin{equation}
|E_{2}|\le c_{0}\lambda^{-2}(1-\lambda)^{-2}\int_{0}^{\infty}tP_{\infty}(2t(1-\lambda))dt\le c_{0}\lambda^{-6}(1-\lambda)^{-2}.\label{eq:-278}
\end{equation}
For $E_{3}$, we have
\[
\sum_{j=0}^{\infty}\frac{t^{2}[j\lambda^{j-1}(1-\lambda)-\lambda^{j}]^{2}}{[1+2t(1-\lambda)\lambda^{j}]^{2}}\le c_{0}t^{2}\sum_{j=0}^{\infty}[j^{2}\lambda^{2j-2}(1-\lambda)^{2}+\lambda^{2j}]\le c_{0}t^{2}\lambda^{-2}(1-\lambda)^{-1}.
\]
Therefore, by Lemma \ref{lem:-21},
\begin{equation}
|E_{3}|\le c_{0}\lambda^{-2}(1-\lambda)^{-1}\int_{0}^{\infty}t^{2}P_{\infty}(2t(1-\lambda))dt\le c_{0}\lambda^{-11}(1-\lambda)^{-1}.\label{eq:-279}
\end{equation}
Combining (\ref{eq:-277}), (\ref{eq:-278}) and (\ref{eq:-279})
completes the proof.
\end{proof}
\begin{rem}
\label{rem:-9}We would like to point out that, though the estimates
(\ref{eq:-281}) and (\ref{eq:-282}) will be sufficient for the purpose
of establishing the discrete correction, they can in fact be improved
using the cancellation property of the integrand. We briefly demonstrate
this for $V^{\prime}(\lambda)$. Note that
\[
\begin{aligned}\sum_{k=0}^{\infty}\frac{t[k\lambda^{k-1}-(k+1)\lambda^{k}]}{1+2t(1-\lambda)\lambda^{k}} & =\sum_{k=0}^{\infty}\frac{tk\lambda^{k-1}}{1+2t(1-\lambda)\lambda^{k}}-\sum_{k=1}^{\infty}\frac{tk\lambda^{k-1}}{1+2t(1-\lambda)\lambda^{k-1}}\\
 & =\sum_{k=1}^{\infty}\frac{2t^{2}k\lambda^{2k-2}(1-\lambda)^{2}}{[1+2t(1-\lambda)\lambda^{k}][1+2t(1-\lambda)\lambda^{k-1}]}.
\end{aligned}
\]
Therefore, the integrand for (\ref{eq:-283}) is dominated by $(1-\lambda)^{2}\big(\sum_{k=1}^{\infty}k\lambda^{2k-2}\big)t^{2}P_{\infty}(2t(1-\lambda))\le c_{0}\lambda^{-2}t^{2}P_{\infty}(2t(1-\lambda))$.
Now it follows from Lemma \ref{lem:-21} that 
\[
|V^{\prime}(\lambda)|\le c_{0}\lambda^{-11}.
\]
\end{rem}

\begin{lem}
\label{lem}For any $\theta>0$ and any $\gamma\in[0,1)$, we have
\begin{equation}
\sum_{k=0}^{\infty}\int_{0}^{\infty}e^{-t\theta(1-\lambda)^{-1}\lambda^{k}}t^{n}[P_{k}(t)-P_{\infty}(t)]\frac{dt}{t^{\gamma}}\le c_{n,\gamma}(1+\theta^{-n})\lambda^{-(n+1)^{2}}(1-\lambda)^{n-\gamma},\label{eq:-64}
\end{equation}
where $c_{n,\gamma}$ is a constant depending only on $n$ and $\gamma$. 
\end{lem}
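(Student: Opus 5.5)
We adapt the telescoping argument from the proof of Lemma \ref{lem:-19} to the present setting with $\phi(t)=(1+t)^{-1/2}$ and $\lambda_{j}=\lambda^{j}$.

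\textbf{Step 1: telescoping bound on $P_{k}-P_{\infty}$.} Write $\tilde{P}_{k}(t)=P_{\infty}(t)/P_{k}(t)=\prod_{j\ge k}(1+t\lambda^{j})^{-1/2}$. Then
\[
P_{k}(t)-P_{\infty}(t)=P_{k}(t)\bigl(1-\tilde{P}_{k}(t)\bigr).
\]
Using $1-\prod(1-a_{j})\le\sum a_{j}$ together with $1-(1+x)^{-1/2}\le x/2$, we get
\[
0\le P_{k}(t)-P_{\infty}(t)\le\tfrac12 t\sum_{j\ge k}\lambda^{j}P_{k}(t)=\frac{t\lambda^{k}}{2(1-\lambda)}P_{k}(t).
\]
This bound is too lossy near $k$ small relative to $(1-\lambda)^{-1}$. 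The factor $t\lambda^{k}(1-\lambda)^{-1}$ must instead be absorbed by the exponential weight, which supplies the $\theta^{-n}$ dependence.

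\textbf{Step 2: absorbing the factor into the exponential.} Use the elementary bound $s^{m}e^{-s}\le c_{m}$ with $s=t\theta(1-\lambda)^{-1}\lambda^{k}$. This gives
\[
e^{-t\theta(1-\lambda)^{-1}\lambda^{k}}\,t\lambda^{k}(1-\lambda)^{-1}\le c\,\theta^{-1}e^{-s/2}.
\]
So the $k$-th term is at most
\[
c\,\theta^{-1}\int_{0}^{\infty}e^{-s/2}\,t^{n-\gamma}P_{k}(t)\,dt.
\]

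\textbf{Step 3: summing over $k$.} The remaining difficulty is that $P_{k}$ decays only like $t^{-k/2}$ for finite $k$, so finite-$k$ terms are not controlled by $P_{\infty}$. I would split the sum at $k_{0}\approx n+2$, or more carefully exploit that the $k$-th term is nonzero only when the product has not yet converged.

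For $k\ge k_{0}$, the factor $P_{k}$ gives decay $(1+t\lambda^{k_{0}})^{-k_{0}/2}$, which makes the integrals finite. Summing $\sum_{k}e^{-s_{k}}$ over $k$ is then handled by the geometric structure of $\lambda^{k}$: for fixed $t$, $\sum_{k}e^{-t\theta\lambda^{k}/(1-\lambda)}\min\bigl(1,t\lambda^{k}/(1-\lambda)\bigr)$ is bounded by $c(1+\theta^{-1})(1-\lambda)^{-1}\cdot(\text{a factor})$. Sums of this type behave like $\int du/u$ over the range where $u=t\lambda^{k}/(1-\lambda)\in(0,\theta^{-1})$, and are bounded by $c\,\theta^{-1}/\log\lambda^{-1}\sim\theta^{-1}(1-\lambda)^{-1}$.

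This leaves $\int t^{n-\gamma}\,P(\text{tail})\,dt$. Here I would use the comparison trick from Lemma \ref{lem:-21}, namely $P_{k}(t)\le P_{\infty}(t)\prod_{j\ge k}(1+t\lambda^{j})^{1/2}$ with the product controlled by $e^{s}$-type factors, and then apply Lemma \ref{lem:-21} with $n+1$ to produce $\lambda^{-(n+1)^{2}}(1-\lambda)^{n+1-\gamma}$. Combined with the $(1-\lambda)^{-1}$ from the sum over $k$, this yields $(1-\lambda)^{n-\gamma}$.

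For $k<k_{0}$, bound $P_{k}\le1$ and integrate directly against $e^{-t\theta(1-\lambda)^{-1}\lambda^{k}}$. This gives $c\,\theta^{-n-1+\gamma}\lambda^{-k_{0}(n+1)}(1-\lambda)^{n+1-\gamma}$; interpolating $\theta^{-a}\le1+\theta^{-n}$ appropriately handles the power of $\theta$.

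\textbf{Main obstacle.} The main obstacle is the middle range of $k$, where both the exponential weight and $P_{k}$ are ineffective. The exact bookkeeping of the $\theta$-power there is the delicate point: the target exponent is $\theta^{-n}$, not $\theta^{-n-1}$. So I would likely use $P_{k}-P_{\infty}\le\min\bigl(P_{k},\,t\lambda^{k}(1-\lambda)^{-1}P_{k}\bigr)$ and choose between the two bounds according to the size of $t\lambda^{k}(1-\lambda)^{-1}$.
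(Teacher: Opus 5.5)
Your plan has two concrete failures even before the $\theta$-dependence comes in.

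First, Step~2 throws away the only decay in $k$. With $s_{k}=t\theta\lambda^{k}/(1-\lambda)$, the $k$-th bound $c\,\theta^{-1}\int_{0}^{\infty}e^{-s_{k}/2}t^{n-\gamma}P_{k}(t)\,dt$ is at least $c\,\theta^{-1}\int_{0}^{\infty}e^{-s_{k}/2}t^{n-\gamma}P_{\infty}(t)\,dt$. This increases to $c\,\theta^{-1}\int_{0}^{\infty}t^{n-\gamma}P_{\infty}(t)\,dt>0$, so these bounds are not summable in $k$. The factor $\min(1,u_{k})$, with $u_{k}=t\lambda^{k}/(1-\lambda)$, must be kept.

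Second, splitting at $k_{0}\approx n+2$ and using $P_{k}\le P_{k_{0}}$ is at the wrong scale. Indeed $\int_{0}^{\infty}t^{n-\gamma}P_{k_{0}}(t)\,dt\ge\int_{0}^{\infty}t^{n-\gamma}(1+t)^{-k_{0}/2}\,dt$ stays of order one as $\lambda\to1$. The content of the lemma is the factor $(1-\lambda)^{n+1-\gamma}$, which only appears through the roughly $(1-\lambda)^{-1}$ factors of $P_{\infty}$ acting together.

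The comparison you mention at the end is the right tool once made precise. We have $P_{k}-P_{\infty}=P_{k}[1-P_{\infty}(t\lambda^{k})]\le\min(1,u_{k}/2)P_{k}$ and $P_{k}=P_{\infty}(t)/P_{\infty}(t\lambda^{k})\le e^{u_{k}/2}P_{\infty}(t)$. For $\theta\ge1$ this gives, uniformly in $t$, $\sum_{k}e^{-\theta u_{k}}\min(1,u_{k}/2)e^{u_{k}/2}\le\sum_{k}e^{-u_{k}/2}\min(1,u_{k})\le3(1-\lambda)^{-1}$. Lemma~\ref{lem:-21} with $n+1$ in place of $n$ then yields $c_{n,\gamma}\lambda^{-(n+1)^{2}}(1-\lambda)^{n-\gamma}$ with no $\theta$-dependence. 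In that range this is a genuinely shorter route than the paper's. For $\theta<1/2$, however, the factor $e^{(1/2-\theta)u_{k}}$ cannot be absorbed, and this is exactly your unresolved middle range.

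The paper avoids the issue by the substitution $t\mapsto t\lambda^{-k}$. This turns the $k$-th term into $\lambda^{-(n+1-\gamma)k}\int_{0}^{\infty}e^{-t\theta/(1-\lambda)}t^{n-\gamma}\tilde{P}_{k}(t)[1-P_{\infty}(t)]\,dt$, so the weight and $1-P_{\infty}(t)$ no longer depend on $k$. The series $\sum_{k}\lambda^{-(n+1)k}\tilde{P}_{k}(t)$ is then bounded in closed form through $\tilde{P}_{2k}(t)\le\tilde{Q}_{k}(t\lambda)$ and the telescoping identity $t\lambda^{-2k}\tilde{Q}_{k}=\tilde{Q}_{k-1}-\tilde{Q}_{k}$ (the recursion for $S_{n}$). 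This treats all $\theta>0$ at once.

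As for the exponent of $\theta$, the problem is not bookkeeping. Your $k<k_{0}$ estimate produces $\theta^{-(n+1-\gamma)}$, and $\theta^{-(n+1-\gamma)}\le1+\theta^{-n}$ is false as $\theta\to0$ because $n+1-\gamma>n$.

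No argument can repair this, because the inequality as stated fails for small $\theta$. All terms are nonnegative, $P_{0}=1$ and $P_{\infty}(t)\le(1+t)^{-1/2}$. So for $\theta\le1-\lambda$ the $k=0$ term alone is at least $(1-2^{-1/2})\int_{1}^{\infty}e^{-t\theta/(1-\lambda)}t^{n-\gamma}\,dt\ge c_{n,\gamma}(1-\lambda)^{n+1-\gamma}\theta^{-(n+1-\gamma)}$. For fixed $\lambda$ this exceeds the right-hand side by a factor of order $\theta^{-(1-\gamma)}$. This is also why no $\theta$-independent multiple of $(1-\lambda)^{-1}\int_{0}^{\infty}t^{n-\gamma}P_{\infty}(t)\,dt$ can work for small $\theta$.

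The paper's proof reaches $\theta^{-n}$ only through a slip in (\ref{eq:-290}). By (\ref{eq:-236}), $S_{n+1}(t)\ge t^{n+1}$; already $S_{1}(t)=t+1$. The final step $t(1-\lambda+t)^{n}+(1-\lambda)^{n}\le c_{n}(1-\lambda+t)^{n}$ in the estimate after (\ref{eq:-239}) drops a factor $t$. The correct bound is $S_{n+1}(t)\le c_{n}\lambda^{-n(n+1)}(1+t)(1-\lambda+t)^{n}$. Carried into (\ref{eq:-219}), the extra factor produces a term of order $(1-\lambda)^{n+1-\gamma}\theta^{-(n+1-\gamma)}$, matching the lower bound.

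So your suspicion was justified: what can be proved is an estimate of the same form with $1+\theta^{-n-1}$ in place of $1+\theta^{-n}$. This is harmless for the main results, where $\theta$ is bounded below in terms of $v_{0}$ and $\sigma$. But the target must be changed rather than reached by interpolation.
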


\begin{proof}
We denote by $c_{n,\gamma}$ a constant depending only on $n$ and
$\gamma$, of which the value may vary at different appearances. Let
\[
I_{k}=\int_{0}^{\infty}e^{-t\theta(1-\lambda)^{-1}\lambda^{k}}t^{n}[P_{k}(t)-P_{\infty}(t)]\frac{dt}{t^{\gamma}},
\]
and $\tilde{P}_{k}(t)=\prod_{j=1}^{k}(1+t\lambda^{-j})^{-1/2}$ and
$\tilde{P}_{0}(t)=1$. Clearly, $P_{k}(t\lambda^{-k})=\tilde{P}_{k}(t)$
and $P_{\infty}(t\lambda^{-k})=\tilde{P}_{k}(t)P_{\infty}(t)$. Therefore,
by a change of variable, 
\begin{equation}
\begin{aligned}I_{k} & =\lambda^{-(n+1)k}\int_{0}^{\infty}e^{-t\theta(1-\lambda)^{-1}}t^{n}[P_{k}(t\lambda^{-k})-P_{\infty}(t\lambda^{-k})]\frac{dt}{t^{\gamma}}\\
 & =\lambda^{-(n+1)k}\int_{0}^{\infty}e^{-t\theta(1-\lambda)^{-1}}t^{n}\tilde{P}_{k}(t)[1-P_{\infty}(t)]\frac{dt}{t^{\gamma}}.
\end{aligned}
\label{eq:-48}
\end{equation}
By $\tilde{P}_{2k+1}(t)\le\tilde{P}_{2k}(t)$, we have $I_{2k+1}\le\lambda^{-1}I_{2k}$
and therefore, 
\[
\sum_{k=0}^{\infty}I_{k}\le2\lambda^{-1}\int_{0}^{\infty}e^{-t\theta(1-\lambda)^{-1}}t^{n}\sum_{k=0}^{\infty}\lambda^{-2(n+1)k}\tilde{P}_{2k}(t)[1-P_{\infty}(t)]\frac{dt}{t^{\gamma}}.
\]
Let $\tilde{Q}_{k}(t)=\prod_{j=1}^{k}(1+t\lambda^{-2j})^{-1}$ and
$\tilde{Q}_{0}(t)=1$. It is easily seen that $\tilde{P}_{2k}(t)\le\tilde{Q}_{k}(t\lambda)$.
Therefore, 
\begin{equation}
\begin{aligned}\sum_{k=0}^{\infty}I_{k} & \le2\lambda^{-1}\int_{0}^{\infty}e^{-t\theta(1-\lambda)^{-1}}t^{n}\sum_{k=0}^{\infty}\lambda^{-2(n+1)k}\tilde{Q}_{k}(t\lambda)[1-P_{\infty}(t)]\frac{dt}{t^{\gamma}}\\
 & =2\lambda^{-n-1}\int_{0}^{\infty}e^{-t\theta(1-\lambda)^{-1}}S_{n+1}(t\lambda)[1-P_{\infty}(t)]\frac{dt}{t^{1+\gamma}},
\end{aligned}
\label{eq:-235}
\end{equation}
where 
\begin{equation}
S_{n}(t)=t^{n}\sum_{k=0}^{\infty}\lambda^{-2nk}\tilde{Q}_{k}(t),\quad n\ge1.\label{eq:-236}
\end{equation}

We show that 
\begin{equation}
S_{n+1}(t)\le c_{n}\lambda^{-n(n+1)}(1-\lambda+t)^{n},\label{eq:-290}
\end{equation}
For $S_{1}(t)$, we have
\[
\begin{aligned}S_{1}(t) & =t+\sum_{k=1}^{\infty}(1+t\lambda^{-2k}-1)\tilde{Q}_{k}(t\lambda)=t+\sum_{k=1}^{\infty}[\tilde{Q}_{k-1}-\tilde{Q}_{k}(t)]=t+1.\end{aligned}
\]
A recursion for $S_{n}(t)$ can be obtained similarly. We have 
\begin{equation}
\begin{aligned}S_{n+1}(t) & =t^{n+1}+t^{n}\sum_{k=1}^{\infty}\lambda^{-2nk}(1+t\lambda^{-2k}-1)\tilde{Q}_{k}(t)\\
 & =t^{n+1}+t^{n}\sum_{k=1}^{\infty}\lambda^{-2nk}[\tilde{Q}_{k-1}(t)-\tilde{Q}_{k}(t)]\\
 & =t^{n+1}+t^{n}\lambda^{-2n}\sum_{k=1}^{\infty}\lambda^{-2n(k-1)}\tilde{Q}_{k-1}(t)-S_{n}(t)\\
 & =t^{n+1}+(\lambda^{-2n}-1)S_{n}(t).
\end{aligned}
\label{eq:-238}
\end{equation}
By (\ref{eq:-238}) and $S_{1}(t)=t+1$, we obtain that 
\begin{equation}
S_{n+1}(t)=\sum_{k=1}^{n}\Big(\prod_{j=k+1}^{n}(\lambda^{-2j}-1)\Big)t^{k+1}+\Big(\prod_{j=1}^{n}(\lambda^{-2j}-1)\Big)(t+1).\label{eq:-239}
\end{equation}
Let $[n]_{q}!=\prod_{j=1}^{n}\frac{1-q^{j}}{1-q}$ be the $q$-factorial.
The above can be written as 
\[
S_{n+1}(t)=\sum_{k=1}^{n}\lambda^{-(n+k+1)(n-k)}\frac{[n]_{\lambda^{2}}!}{[k]_{\lambda^{2}}!}(1-\lambda^{2})^{n-k}t^{k+1}+\lambda^{-n(n+1)}[n]_{\lambda^{2}}!(1-\lambda^{2})^{n}(t+1).
\]
Since $[n]_{q}!/[k]_{q}!$ is increasing in $q\in(0,1)$ and $[n]_{q}!/[k]_{q}!\to n!/k!$
as $q\to1$, we see that $[n]_{q}!/[k]_{q}!\le n!/k!$ for $0<q<1$.
Therefore, 
\[
\begin{aligned}S_{n+1}(t) & \le\lambda^{-n(n+1)}\sum_{k=1}^{n}\frac{n!}{k!}(1-\lambda^{2})^{n-k}t^{k+1}+\lambda^{-n(n+1)}n!(1-\lambda^{2})^{n}(t+1)\\
 & \le c_{n}\lambda^{-n(n+1)}\Big(\sum_{k=0}^{n}\binom{n}{k}(1-\lambda)^{n-k}t^{k+1}+(1-\lambda)^{n}\Big)\\
 & \le c_{n}\lambda^{-n(n+1)}(1-\lambda+t)^{n}.
\end{aligned}
\]

By (\ref{eq:-235}) and (\ref{eq:-290}), we see that
\begin{equation}
\begin{aligned}\sum_{k=0}^{\infty}I_{k} & \le c_{n}\lambda^{-(n+1)^{2}}\int_{0}^{\infty}e^{-t\theta(1-\lambda)^{-1}}(1-\lambda+t\lambda)^{n}[1-P_{\infty}(t)]\frac{dt}{t^{1+\gamma}}\\
 & \le c_{n}\lambda^{-(n+1)^{2}}\Big(\int_{0}^{1-\lambda}(1-\lambda+t\lambda)^{n}[1-P_{\infty}(t)]\frac{dt}{t^{1+\gamma}}+\int_{1-\lambda}^{\infty}e^{-t\theta(1-\lambda)^{-1}}(1-\lambda+t\lambda)^{n}\frac{dt}{t^{1+\gamma}}\Big).
\end{aligned}
\label{eq:-62}
\end{equation}
For the second integral, by a change of variable, 
\begin{equation}
\begin{aligned}\int_{1-\lambda}^{\infty}e^{-t\theta(1-\lambda)^{-1}}(1-\lambda+t\lambda)^{n}\frac{dt}{t^{1+\gamma}} & =(1-\lambda)^{n-\gamma}\int_{1}^{\infty}e^{-t\theta}(1+t\lambda)^{n}\frac{dt}{t^{1+\gamma}}\\
 & \le(1-\lambda)^{n-\gamma}\int_{1}^{\infty}e^{-t\theta}t^{n-1-\gamma}dt\le c_{n,\gamma}\theta^{-n}(1-\lambda)^{n}.
\end{aligned}
\label{eq:-219}
\end{equation}
We now estimate the first integral in (\ref{eq:-62}). Note that $\lim_{k\to\infty}P_{\infty}(t\lambda^{k})=1$.
By the monotone convergence theorem and that
\[
P_{\infty}(t\lambda^{k})=(1+t\lambda^{k})^{-1/2}P_{\infty}(t\lambda^{k+1}),
\]
we have 
\[
\begin{aligned}\int_{0}^{1-\lambda}(1-\lambda+t\lambda)^{n}[1-P_{\infty}(t)]\frac{dt}{t^{1+\gamma}} & =\sum_{k=1}^{\infty}\int_{0}^{1-\lambda}(1-\lambda+t\lambda)^{n}[P_{\infty}(t\lambda^{k+1})-P_{\infty}(t\lambda^{k})]\frac{dt}{t^{1+\gamma}}\\
 & \le c_{n}(1-\lambda)^{n}\sum_{k=1}^{\infty}\int_{0}^{1-\lambda}[1-(1+t\lambda^{k})^{-1/2}]P_{\infty}(t\lambda^{k+1})\frac{dt}{t^{1+\gamma}}\\
 & \le c_{n}(1-\lambda)^{n}\sum_{k=1}^{\infty}\lambda^{k}\int_{0}^{1-\lambda}P_{\infty}(t\lambda^{k+1})\frac{dt}{t^{\gamma}},
\end{aligned}
\]
where the last inequality is due to Bernoulli's inequality. Since
$P_{\infty}(t)\le1$, we deduce that 
\begin{equation}
\int_{0}^{1-\lambda}(1-\lambda+t\lambda)^{n}[1-P_{\infty}(t)]\frac{dt}{t^{1+\gamma}}\le c_{n,\gamma}(1-\lambda)^{n}\sum_{k=1}^{\infty}(1-\lambda)^{1-\gamma}\lambda^{k}\le c_{n,\gamma}(1-\lambda)^{n-\gamma}.\label{eq:-227}
\end{equation}
Combining (\ref{eq:-62}), (\ref{eq:-219}) and (\ref{eq:-227}) completes
the proof.
\end{proof}
\begin{lem}
\label{lem:-23}For any $m\ge0$, $n\ge0$, $r\ge0$ and any $\gamma\in[0,1)$,
we have
\begin{equation}
\sum_{k=r}^{\infty}k^{m}\lambda^{k}\int_{0}^{\infty}e^{-t\theta(1-\lambda)^{-1}\lambda^{k}}t^{n}P_{k}(t)\frac{dt}{t^{\gamma}}\le c_{m,n,\gamma}(1+\theta^{-n})(1+r)^{m}\lambda^{r-(n+1)^{2}}(1-\lambda)^{n-m-\gamma},\label{eq:-246}
\end{equation}
where $c_{m,n}$ is a constant depending only on $m$, $n$ and $\gamma$. 
\end{lem}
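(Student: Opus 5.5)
The plan is to follow the proof of Lemma \ref{lem}: rescale each integral by the change of variable $t\mapsto t\lambda^{-k}$, control the resulting products through the functions $S_{n}$, and absorb the polynomial weight $k^{m}$ against the geometric factor. Let
\[
J_{k}=\int_{0}^{\infty}e^{-t\theta(1-\lambda)^{-1}\lambda^{k}}t^{n}P_{k}(t)\frac{dt}{t^{\gamma}}.
\]
Using $P_{k}(t\lambda^{-k})=\tilde{P}_{k}(t)=\prod_{j=1}^{k}(1+t\lambda^{-j})^{-1/2}$, the substitution gives
\[
J_{k}=\lambda^{-(n+1-\gamma)k}\int_{0}^{\infty}e^{-t\theta(1-\lambda)^{-1}}t^{n}\tilde{P}_{k}(t)\frac{dt}{t^{\gamma}},
\]
so that $\lambda^{k}J_{k}=\lambda^{-(n-\gamma)k}\int_{0}^{\infty}e^{-t\theta(1-\lambda)^{-1}}t^{n}\tilde{P}_{k}(t)\,t^{-\gamma}dt$. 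As in Lemma \ref{lem}, I pair odd and even indices via $\tilde{P}_{2k+1}\le\tilde{P}_{2k}\le\tilde{Q}_{k}(t\lambda)$, at the cost of a factor $\lambda^{-n-1}$. The sum $\sum_{k\ge r}k^{m}\lambda^{k}J_{k}$ is then dominated by an integral of $e^{-t\theta(1-\lambda)^{-1}}t^{n-\gamma}$ against the weighted sum
\[
\sum_{k\ge r/2}(2k+1)^{m}\lambda^{-2(n-\gamma)k}\tilde{Q}_{k}(t\lambda).
\]

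The key new step is to estimate a tail of this weighted series. I would bound $\tilde{Q}_{k}(t)\le\tilde{Q}_{k_{0}}(t)\prod_{j=k_{0}+1}^{k}(1+t\lambda^{-2j})^{-1}$ and treat two regimes separately.

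\textbf{Regime $t\lesssim 1-\lambda$.} The factors $(1+t\lambda^{-2j})^{-1}$ do not decay until $\lambda^{-2j}t\gtrsim1$. The relevant index range has length of order $(1-\lambda)^{-1}\log(1/t)$, and summing the weight $k^{m}$ over it produces the factor $(1-\lambda)^{-m}$.

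\textbf{Regime $t\gtrsim 1-\lambda$.} Here the exponential $e^{-t\theta/(1-\lambda)}$ gives the factors $\theta^{-n}$ and $(1-\lambda)^{n-\gamma}$ exactly as in (\ref{eq:-219}).

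To make this concrete, I would use $k^{m}\le c_{m}(1-\lambda)^{-m}\lambda^{-\epsilon k}$ with a small fixed $\epsilon$, or more simply write $k^{m}\lambda^{k}\le(1+r)^{m}\lambda^{r}\cdot k^{m}\lambda^{k-r}/(1+r)^{m}$ and use $\sum_{k\ge r}(k/(1+r))^{m}\lambda^{k-r}\le c_{m}(1-\lambda)^{-m-1}$ type bounds. The aim is to reduce the weighted sum to $S_{n'}$-type sums with a shifted exponent $n'$, bounded via (\ref{eq:-290}). The prefactor $(1+r)^{m}\lambda^{r}$ comes from pulling out the first term of the tail.

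Concretely, I would aim for the intermediate bound
\[
\sum_{k\ge r}k^{m}\lambda^{-2(n-\gamma)k}\tilde{Q}_{k}(t)\le c\,(1+r)^{m}\lambda^{r}(1-\lambda)^{-m}\lambda^{-(n+1)^{2}}(1-\lambda+t)^{n}t^{-n},
\]
proved by induction on $m$, mimicking the recursion (\ref{eq:-238}). Writing $k^{m}=(k-1)^{m}+O(k^{m-1})$ and telescoping $\tilde{Q}_{k-1}-\tilde{Q}_{k}=t\lambda^{-2k}\tilde{Q}_{k}$ trades one power of $k$ for a factor $(\lambda^{-2(n-\gamma)}-1)^{-1}\sim(1-\lambda)^{-1}$. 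Plugging this bound into the integral and splitting at $t=1-\lambda$, as in (\ref{eq:-62}) through (\ref{eq:-227}), yields (\ref{eq:-246}).

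I expect the main obstacle to be this induction on $m$. The telescoping identity must be redone with the weight $k^{m}$ while keeping the tail start $r$ explicit, and the extracted factor must be exactly $(1-\lambda)^{-m}$ without losing powers of $\lambda$ beyond $\lambda^{-(n+1)^{2}}$. If the exact recursion becomes messy, the fallback is to give up sharp $\lambda$-dependence and absorb constants into $c_{m,n,\gamma}$, taking $\lambda\ge\lambda_{\ast}$ implicitly.
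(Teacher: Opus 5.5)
The intermediate pointwise estimate on which your argument rests,
\[
\sum_{k\ge r}k^{m}\lambda^{-2(n-\gamma)k}\tilde{Q}_{k}(t)\le c\,(1+r)^{m}\lambda^{r}(1-\lambda)^{-m}\lambda^{-(n+1)^{2}}(1-\lambda+t)^{n}t^{-n},
\]
is false. Take $m=r=\gamma=0$ and $n=1$. The telescoping identity $t\lambda^{-2k}\tilde{Q}_{k}(t)=\tilde{Q}_{k-1}(t)-\tilde{Q}_{k}(t)$ gives exactly $\sum_{k\ge0}\lambda^{-2k}\tilde{Q}_{k}(t)=(1+t)/t$. Your bound would then force $1+t\le c\lambda^{-4}(1-\lambda+t)$ for all $t>0$, which fails as $t\to0$ once $1-\lambda<\lambda^{4}/c$.

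You have read (\ref{eq:-290}) at the wrong index. It controls $S_{n+1}$, the series with exponent $\lambda^{-2(n+1)k}$. For $\gamma=0$ your series is $t^{-n}S_{n}(t)$, which near $t=0$ has size $(1-\lambda)^{n-1}t^{-n}$. That is a factor $(1-\lambda)^{-1}$ larger than you claim.

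The same discrepancy shows up after integration. Inserting your bound into the $t$-integral would give $O((1-\lambda)^{n+1-\gamma-m})$. But for $m=r=0$ the left side of (\ref{eq:-246}) is at least $e^{-\theta-1/2}(n+1-\gamma)^{-1}(1-\lambda)^{n-\gamma}$. To see this, restrict to $t\le1-\lambda$, where the exponential is $\ge e^{-\theta}$ and $P_{k}\ge P_{\infty}\ge e^{-1/2}$.

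Moreover, the factors $\lambda^{r}$ and $(1-\lambda)^{-m}$ are not available pointwise in $t$. For $n\ge1$ and $t\ll(1-\lambda)\lambda^{2r}$, the rescaled series is dominated by indices $k\approx\log((1-\lambda)/t)/(2\log\lambda^{-1})\ge r$. So restricting to $k\ge r$ gains nothing, and the weight there has size $(1-\lambda)^{-m}\log^{m}((1-\lambda)/t)$. These gains appear only after integrating in $t$.

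Hence the induction on $m$ that you leave open targets a false statement. Absorbing $\lambda$-powers for $\lambda\ge\lambda_{\ast}$ does not rescue it, because the failure is in the power of $1-\lambda$.

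What you are missing is that the weights never need to interact with the $t$-integrals. The paper splits $P_{k}=(P_{k}-P_{\infty})+P_{\infty}$.
\begin{itemize}
\item The $P_{\infty}$ part is at most $R_{r}\int_{0}^{\infty}t^{n-\gamma}P_{\infty}(t)\,dt$. Here $R_{r}=\sum_{k\ge r}k^{m}\lambda^{k}\le c_{m}(1+r)^{m}\lambda^{r}(1-\lambda)^{-m-1}$, and the integral is at most $c\lambda^{-(n+1)^{2}}(1-\lambda)^{n+1-\gamma}$ by Lemma~\ref{lem:-21}.
\item The differences $d_{k}=\int_{0}^{\infty}e^{-t\theta(1-\lambda)^{-1}\lambda^{k}}t^{n-\gamma}[P_{k}(t)-P_{\infty}(t)]\,dt$ are nonnegative, and $\sum_{k}d_{k}\le c(1+\theta^{-n})\lambda^{-(n+1)^{2}}(1-\lambda)^{n-\gamma}$ by Lemma~\ref{lem}. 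Therefore $\sum_{k\ge r}k^{m}\lambda^{k}d_{k}\le\big(\sup_{k\ge r}k^{m}\lambda^{k}\big)\sum_{k}d_{k}\le(1-\lambda)R_{r}\sum_{k}d_{k}$. The last step uses $(1-\lambda)\sum_{k\ge k_{0}}k^{m}\lambda^{k}\ge k_{0}^{m}\lambda^{k_{0}}$; the paper phrases it as a summation by parts.
\end{itemize}
Both pieces are then $\lesssim(1+\theta^{-n})\lambda^{-(n+1)^{2}}(1-\lambda)^{n+1-\gamma}R_{r}$, which gives (\ref{eq:-246}) after inserting the bound on $R_{r}$.

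Your rescaling and odd/even pairing is exactly what is already carried out inside Lemma~\ref{lem}. Using that lemma as a black box is what makes the weighted statement a short corollary.

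Separately, the factor $(1+\theta^{-n})$ in the statement is too optimistic as $\theta\to0$. For $m=r=0$ the $k=0$ term alone equals $\Gamma(n+1-\gamma)\big((1-\lambda)/\theta\big)^{n+1-\gamma}$. The paper inherits this from Lemma~\ref{lem}. It is harmless in the applications, where $\theta$ is bounded below in terms of $v_{0}$ and $\sigma$.
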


\begin{proof}
We denote by $c_{n}$ a constant depending only on $n$ of which the
value may vary at different appearances. Let $S_{0}=0$,
\[
S_{k}=\sum_{j=1}^{k}\int_{0}^{\infty}e^{-t\theta(1-\lambda)^{-1}\lambda^{j}}t^{n}[P_{j}(t)-P_{\infty}(t)]\frac{dt}{t^{\gamma}},
\]
and $S_{\infty}=\lim_{k\to\infty}S_{k}$. Let $R_{r}=\sum_{k=r}^{\infty}k^{m}\lambda^{k}$.
Then
\[
\begin{aligned} & \sum_{k=r}^{\infty}k^{m}\lambda^{k}\int_{0}^{\infty}e^{-t\theta(1-\lambda)^{-1}\lambda^{k}}t^{n}P_{k}(t)\frac{dt}{t^{\gamma}}\\
 & \le\sum_{k=r}^{\infty}k^{m}\lambda^{k}\int_{0}^{\infty}e^{-t\theta(1-\lambda)^{-1}\lambda^{k}}t^{n}[P_{k}(t)-P_{\infty}(t)]\frac{dt}{t^{\gamma}}+\sum_{k=r}^{\infty}k^{m}\lambda^{k}\int_{0}^{\infty}t^{n}P_{\infty}(t)\frac{dt}{t^{\gamma}}\\
 & \le\sum_{k=r}^{\infty}k^{m}\lambda^{k}(S_{k}-S_{k-1})+R_{r}\int_{0}^{\infty}t^{n}P_{\infty}(t)\frac{dt}{t^{\gamma}}\\
 & \le\sum_{k=r}^{\infty}(k^{m}\lambda^{k}-k^{m+1}\lambda^{k+1})S_{k}-r^{m}\lambda^{r}S_{r-1}+R_{r}\int_{0}^{\infty}t^{n}P_{\infty}(t)\frac{dt}{t^{\gamma}}\\
 & \le S_{\infty}\sum_{k=r}^{\infty}(k^{m}\lambda^{k}-k^{m}\lambda^{k+1})+R_{r}\int_{0}^{\infty}t^{n}P_{\infty}(t)dt\\
 & =R_{r}\Big((1-\lambda)S_{\infty}+\int_{0}^{\infty}t^{n}P_{\infty}(t)\frac{dt}{t^{\gamma}}\Big).
\end{aligned}
\]
By Lemma \ref{lem:-21},
\[
\int_{0}^{\infty}t^{n}P_{\infty}(t)dt\le c_{n,\gamma}\lambda^{-(n+1)^{2}}(1-\lambda)^{n+1-\gamma},
\]
By Lemma \ref{lem}, 
\begin{equation}
S_{\infty}\le c_{n,\gamma}(1+\theta^{-n})\lambda^{-(n+1)^{2}}(1-\lambda)^{n-\gamma}.\label{eq:-292}
\end{equation}
Therefore, 
\begin{equation}
\begin{aligned}\sum_{k=r}^{\infty}k^{m}\lambda^{k}\int_{0}^{\infty}e^{-t\theta(1-\lambda)^{-1}\lambda^{k}}t^{n}P_{k}(t)\frac{dt}{t^{\gamma}} & \le c_{n,\gamma}(1+\theta^{-n})\lambda^{-(n+1)^{2}}(1-\lambda)^{n+1-\gamma}R_{r}.\end{aligned}
\label{eq:-291}
\end{equation}
Note that 
\[
\begin{aligned}R_{r} & =\lambda^{r}\sum_{k=0}^{\infty}(k+r)^{m}\lambda^{k}\le c_{m}\lambda^{r}\sum_{k=0}^{\infty}(k^{m}+r^{m})\lambda^{k}\\
 & \le c_{m}\lambda^{r}[(1-\lambda)^{-m-1}+r^{m}(1-\lambda)^{-1}]\le c_{m}(1+r)^{m}\lambda^{r}(1-\lambda)^{-m-1}.
\end{aligned}
\]
Combining (\ref{eq:-291}) and the above gives (\ref{eq:-246}). 
\end{proof}
\begin{prop}
\label{prop:-9}Let $V(\lambda)$ be the function defined by (\ref{eq:-3}).
For any $n\ge0$, we have 
\begin{equation}
\sum_{k=n}^{\infty}|V_{k}(\lambda,\theta)-V(\lambda)|\le c_{0}(\theta+\theta^{-1})\lambda^{n-4}(1-\lambda)^{-1},\label{eq:-293}
\end{equation}
where $c_{0}$ is a universal constant. 
\end{prop}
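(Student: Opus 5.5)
The plan is to write each difference as a single integral and split it into two pieces: one where the two products differ, and one where only the exponential weight differs. Set $a=\theta(1-\lambda)^{-1}/2$. By (\ref{eq:-3}) and (\ref{eq:-5}),
\[
V_{k}(\lambda,\theta)-V(\lambda)=\frac{1}{2(1-\lambda)}\int_{0}^{\infty}\Big(e^{-a\lambda^{k}t}[P_{k}(t)-P_{\infty}(t)]+(e^{-a\lambda^{k}t}-1)P_{\infty}(t)\Big)dt .
\]
The first integrand is nonnegative, since $P_{\infty}\le P_{k}$. The second is nonpositive. By the triangle inequality,
\[
\sum_{k=n}^{\infty}|V_{k}-V|\le\frac{1}{2(1-\lambda)}\big(A_{n}+B_{n}\big),
\]
where $A_{n}$ and $B_{n}$ are the sums over $k\ge n$ of the absolute values of the two pieces. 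Each must be shown to be at most $c_{0}(\theta+\theta^{-1})\lambda^{n-4}$ in the relevant form.

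\emph{The term $A_{n}$.} I would not use Lemma \ref{lem} directly here, because it gives no factor $\lambda^{n}$. Instead, bound the tail of the product pointwise. Write $P_{\infty}=P_{k}\prod_{j\ge k}(1+t\lambda^{j})^{-1/2}$. The elementary inequality $1-\prod_{j}(1+x_{j})^{-1/2}\le\frac12\sum_{j}x_{j}$ then gives
\[
P_{k}(t)-P_{\infty}(t)\le\frac{t\lambda^{k}}{2(1-\lambda)}P_{k}(t).
\]
Hence
\[
A_{n}\le\frac{1}{2(1-\lambda)}\sum_{k\ge n}\lambda^{k}\int_{0}^{\infty}e^{-a\lambda^{k}t}\,t\,P_{k}(t)dt .
\]
This sum is exactly of the form covered by Lemma \ref{lem:-23}, with $m=0$, exponent $1$ for $t$, $r=n$, $\gamma=0$, and $\theta/2$ in place of $\theta$. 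That lemma bounds the sum by $c_{0}(1+\theta^{-1})\lambda^{n-4}(1-\lambda)$, so $A_{n}\le c_{0}(1+\theta^{-1})\lambda^{n-4}$.

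\emph{The term $B_{n}$.} Use $1-e^{-x}\le x$ to get
\[
B_{n}\le a\sum_{k\ge n}\lambda^{k}\int_{0}^{\infty}tP_{\infty}(t)dt\le\frac{\theta\lambda^{n}}{2(1-\lambda)^{2}}\int_{0}^{\infty}tP_{\infty}(t)dt .
\]
Lemma \ref{lem:-21} with $n=2$, $\gamma=0$ bounds the integral by $c_{0}\lambda^{-4}(1-\lambda)^{2}$. Therefore $B_{n}\le c_{0}\theta\lambda^{n-4}$.

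\emph{Conclusion and main difficulty.} Adding the two bounds gives
\[
\sum_{k\ge n}|V_{k}-V|\le c_{0}(1+\theta^{-1}+\theta)\lambda^{n-4}(1-\lambda)^{-1}.
\]
Since $\theta+\theta^{-1}\ge2$, the constant term $1$ is absorbed, which yields (\ref{eq:-293}). The main obstacle is getting the factor $\lambda^{n}$ in $A_{n}$ together with the correct power of $1-\lambda$. The crude tail bound above produces an extra factor $(1-\lambda)^{-1}$. This is compensated exactly by the gain $(1-\lambda)^{1}$ that Lemma \ref{lem:-23} provides for the first moment in $t$. I would double-check that the $\lambda$-exponent $r-(n+1)^{2}$ in Lemma \ref{lem:-23}, taken with exponent $1$ for $t$ and $r=n$, gives $\lambda^{n-4}$ rather than something worse.
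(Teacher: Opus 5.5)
Your proposal is correct and follows essentially the same route as the paper: the same split into a product-difference piece, controlled by the tail bound $P_k(t)-P_\infty(t)\le\frac{t\lambda^k}{2(1-\lambda)}P_k(t)$ together with Lemma \ref{lem:-23} ($m=0$, $t$-exponent $1$, $r=n$, $\gamma=0$), and an exponential-difference piece, controlled by $1-e^{-x}\le x$ together with Lemma \ref{lem:-21}. The exponent check you flag works out: $(1+1)^2=4$ gives exactly $\lambda^{n-4}$, and the $(1-\lambda)^{1}$ from Lemma \ref{lem:-23} cancels the extra $(1-\lambda)^{-1}$ from the tail bound, as you anticipated.
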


\begin{proof}
Let $P_{k}(t)=\prod_{j=0}^{k-1}(1+t\lambda^{j})^{-1/2}$. Then
\[
\sum_{k=n}^{\infty}|V_{k}(\lambda,\theta)-V(\lambda)|\le\sum_{k=n}^{\infty}I_{k}^{\prime}+\sum_{k=n}^{\infty}I_{k}^{\prime\prime},
\]
where 
\[
I_{k}^{\prime}=\frac{1}{2(1-\lambda)}\int_{0}^{\infty}(1-e^{-t\theta(1-\lambda)^{-1}\lambda^{k}/2})P_{\infty}(t)dt,
\]
and
\[
I_{k}^{\prime\prime}=\frac{1}{2(1-\lambda)}\int_{0}^{\infty}e^{-t\theta(1-\lambda)^{-1}\lambda^{k}/2}[P_{k}(t)-P_{\infty}(t)]dt.
\]

By $1-e^{-t\theta(1-\lambda)^{-1}\lambda^{k}/2}\le t\theta(1-\lambda)^{-1}\lambda^{k}/2$,
we have
\begin{equation}
\begin{aligned}I_{k}^{\prime} & \le\frac{(1-\lambda)^{-2}\theta\lambda^{k}}{4}\int_{0}^{\infty}tP_{\infty}(t)dt.\end{aligned}
\label{eq:-294}
\end{equation}
Therefore, by Lemma \ref{lem:-21}, 
\begin{equation}
\sum_{k=n}^{\infty}I_{k}^{\prime}\le c_{0}(1-\lambda)^{-3}\theta\lambda^{n}\int_{0}^{\infty}tP_{\infty}(t)dt\le c_{0}\theta\lambda^{n-4}(1-\lambda)^{-1}.\label{eq:-296}
\end{equation}

For $\sum_{k=n}^{\infty}I_{k}^{\prime\prime}$, note that $1-(1+t\lambda^{j})^{-1/2}\le2^{-1}t\lambda^{j}$
and therefore,
\[
\begin{aligned}P_{k}(t)-P_{\infty}(t) & =\sum_{j=k}^{\infty}[P_{j}(t)-P_{j+1}(t)]=\sum_{j=k}^{\infty}[1-(1+t\lambda^{j})^{-1/2}]P_{j}(t)\\
 & \le2^{-1}\sum_{j=k}^{\infty}t\lambda^{j}P_{j}(t)\le tP_{k}(t)\sum_{j=k}^{\infty}\lambda^{j}=(1-\lambda)^{-1}\lambda^{k}tP_{k}(t).
\end{aligned}
\]
By the above and Lemma \ref{lem:-23}, 
\begin{equation}
\begin{aligned}\sum_{k=n}^{\infty}I_{k}^{\prime\prime} & \le2^{-1}(1-\lambda)^{-2}\sum_{k=n}^{\infty}\lambda^{k}\int_{0}^{\infty}e^{-t\theta(1-\lambda)^{-1}\lambda^{k}/2}tP_{k}(t)dt\\
 & \le c_{0}(1+\theta^{-1})\lambda^{n-4}(1-\lambda)^{-1}.
\end{aligned}
\label{eq:-295}
\end{equation}
The inequality (\ref{eq:-293}) follows readily from (\ref{eq:-295})
and (\ref{eq:-296}).
\end{proof}
\begin{prop}
\label{prop:-8}Let $V(\lambda)$ be the function given in (\ref{eq:-3}).
For any $\theta>0$,
\begin{equation}
\sum_{k=0}^{\infty}|\partial_{\lambda}V_{k}(\lambda,\theta)-V^{\prime}(\lambda)|\le c_{0}(\theta+\theta^{-1})\lambda^{-5}(1-\lambda)^{-2},\label{eq:-240}
\end{equation}
where $V_{k}(\lambda,\theta)$ is the function defined by (\ref{eq:-246}),
and $c_{0}$ is a universal constant.
\end{prop}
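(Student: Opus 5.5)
We differentiate $V_{k}(\lambda,\theta)$ under the integral sign, in parallel with the computation (\ref{eq:-283}) of $V^{\prime}(\lambda)$, and then control the difference term by term using Lemma \ref{lem:-21}, Lemma \ref{lem} and Lemma \ref{lem:-23}. It is convenient to write, after the change of variable $t\mapsto2t(1-\lambda)$,
\[
V_{k}(\lambda,\theta)=\int_{0}^{\infty}e^{-t\theta\lambda^{k}}P_{k}(2t(1-\lambda))dt,\qquad V(\lambda)=\int_{0}^{\infty}P_{\infty}(2t(1-\lambda))dt .
\]
Differentiating in $\lambda$ (justified by dominated convergence as in Corollary \ref{cor:-4}) gives
\[
\partial_{\lambda}V_{k}=\int_{0}^{\infty}\Big(-t\theta k\lambda^{k-1}-\sum_{j=0}^{k-1}a_{j}(t)\Big)e^{-t\theta\lambda^{k}}P_{k}(2t(1-\lambda))dt,
\]
where $a_{j}(t)=t[j\lambda^{j-1}(1-\lambda)-\lambda^{j}](1+2t(1-\lambda)\lambda^{j})^{-1}$. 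The formula for $V^{\prime}$ is the same with $k=\infty$, no exponential and no $\theta$-term.

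We then split $\partial_{\lambda}V_{k}-V^{\prime}$ into three pieces:
\begin{itemize}
\item[(a)] the $\theta$-term $-\theta k\lambda^{k-1}\int te^{-t\theta\lambda^{k}}P_{k}\,dt$;
\item[(b)] $-\int\big(\sum_{j<k}a_{j}\big)\big[e^{-t\theta\lambda^{k}}P_{k}-P_{\infty}\big]dt$;
\item[(c)] $\int\big(\sum_{j\ge k}a_{j}\big)P_{\infty}\,dt$.
\end{itemize}
For (a), undo the change of variable and apply Lemma \ref{lem:-23} with $m=1$, $n=1$, $\gamma=0$. This gives $\sum_{k}\theta k\lambda^{k}\int\cdots\le c_{0}\theta(1+\theta^{-1})\lambda^{-4}(1-\lambda)^{-2}$ up to factors of $(1-\lambda)$, which we expect to match the target $(\theta+\theta^{-1})\lambda^{-5}(1-\lambda)^{-2}$.

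For (c), use $|a_{j}|\le t(j\lambda^{j-1}(1-\lambda)+\lambda^{j})$, so that $\sum_{k}\sum_{j\ge k}|a_{j}|\le t\sum_{j}(j+1)(j\lambda^{j-1}(1-\lambda)+\lambda^{j})\le c_{0}t\lambda^{-2}(1-\lambda)^{-2}$. Lemma \ref{lem:-21} then bounds $\int tP_{\infty}(2t(1-\lambda))dt$ by $c\lambda^{-4}$ after rescaling.

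For (b), use the bound $\sum_{j<k}|a_{j}|\le c_{0}t\lambda^{-1}(1-\lambda)^{-1}$ from (\ref{eq:-276}), uniformly in $k$, and split
\[
e^{-t\theta\lambda^{k}}P_{k}-P_{\infty}=e^{-t\theta\lambda^{k}}(P_{k}-P_{\infty})-(1-e^{-t\theta\lambda^{k}})P_{\infty}.
\]
The first part, summed over $k$, is exactly of the form controlled by Lemma \ref{lem} with $n=1$, giving $(1+\theta^{-1})\lambda^{-4}(1-\lambda)$ in the original variables. The second part is bounded via $1-e^{-x}\le x$ together with Lemma \ref{lem:-21} for $n=2$, exactly as in (\ref{eq:-294})--(\ref{eq:-296}), giving a $\theta\lambda^{k}$-weighted sum.

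The main obstacle is the bookkeeping of powers of $(1-\lambda)$ and $\lambda$ under the rescaling $t\mapsto2t(1-\lambda)$. The prefactor $(1-\lambda)^{-1}$ from the $a_{j}$ sums and the gains $(1-\lambda)^{n}$ from Lemmas \ref{lem:-21}--\ref{lem:-23} must combine to exactly $(1-\lambda)^{-2}$. Term (c) is the worst, since the crude sum over $k$ of tails $\sum_{j\ge k}$ costs an extra $(1-\lambda)^{-1}$. If this overshoots, the fallback is the telescoping cancellation of Remark \ref{rem:-9}, applied to the tail sums $\sum_{j\ge k}a_{j}$. That cancellation yields integrands of the form $t^{2}(1-\lambda)^{2}\lambda^{2k}$, so the $k$-sum converges with only a $(1-\lambda)^{-1}$ loss.
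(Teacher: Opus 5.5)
Your proposal is correct and is essentially the paper's proof: after the rescaling $t\mapsto 2t(1-\lambda)$ you differentiate under the integral sign, bound the $\theta$-term by Lemma~\ref{lem:-23}, the replacement $P_{k}\to P_{\infty}$ by Lemma~\ref{lem}, and the tail $\sum_{j\ge k}$ and the factor $1-e^{-t\theta\lambda^{k}}$ by Lemma~\ref{lem:-21}; the only difference from the paper is the order in which the telescoping replacements are made. The fallback via Remark~\ref{rem:-9} is not needed, since $\sum_{k}\sum_{j\ge k}|a_{j}|\le 3t(1-\lambda)^{-2}$ and $\int_{0}^{\infty}tP_{\infty}(2t(1-\lambda))dt\le c\lambda^{-4}$ already give exactly $(1-\lambda)^{-2}$ for (c); the one loose point, shared by the paper's (\ref{eq:-243}), is that the $1-e^{-t\theta\lambda^{k}}$ piece carries $t^{2}$, so it needs Lemma~\ref{lem:-21} with $n=3$ (in that lemma's indexing), which gives a worse negative power of $\lambda$ than $\lambda^{-5}$ but is harmless for $\lambda\ge\lambda_{\ast}$.
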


\begin{proof}
We have
\begin{equation}
\begin{aligned}\partial_{\lambda}V_{k}(\lambda,\theta) & =\frac{\partial}{\partial\lambda}\Big(\int_{0}^{\infty}e^{-t\theta\lambda^{k}}tP_{k}(2t(1-\lambda))dt\Big)\\
 & =-\theta k\lambda^{k-1}\int_{0}^{\infty}e^{-t\theta\lambda^{k}}tP_{k}(2t(1-\lambda))dt\\
 & \quad-\int_{0}^{\infty}e^{-t\theta\lambda^{k}}\sum_{j=0}^{k-1}\frac{t[j\lambda^{j-1}(1-\lambda)-\lambda^{j}]}{1+2t(1-\lambda)\lambda^{j}}P_{k}(2t(1-\lambda))dt.
\end{aligned}
\label{eq:-247}
\end{equation}
By Lemma \ref{lem:-23} and a change of variable, 
\begin{equation}
\sum_{k=0}^{\infty}k\lambda^{k-1}\int_{0}^{\infty}e^{-t\theta\lambda^{k}}tP_{k}(2t(1-\lambda))dt\le c_{0}(1+\theta^{-1})\lambda^{-4}(1-\lambda)^{-2},\label{eq:-244}
\end{equation}
where $c_{0}$ is a universal constant. 

We replace the integrand of the second integral on the right-hand
side of (\ref{eq:-247}) one by one. We first show that
\begin{equation}
\sum_{k=0}^{\infty}\Big|\int_{0}^{\infty}e^{-t\theta\lambda^{k}}\sum_{j=0}^{k-1}\frac{t[j\lambda^{j-1}(1-\lambda)-\lambda^{j}]}{1+2t(1-\lambda)\lambda^{j}}[P_{k}(2t(1-\lambda))-P_{\infty}(2t(1-\lambda))]dt\Big|\le c_{0}(1-\lambda)^{-2}.\label{eq:-237}
\end{equation}
Clearly, 
\[
\sum_{j=0}^{k-1}\frac{t|j\lambda^{j-1}(1-\lambda)-\lambda^{j}|}{1+2t(1-\lambda)\lambda^{j}}\le t\sum_{j=0}^{k-1}[j\lambda^{j-1}(1-\lambda)+\lambda^{j}]\le c_{0}t(1-\lambda)^{-1}.
\]
Therefore, the left-hand side of (\ref{eq:-237}) is dominated by
\[
c_{0}(1-\lambda)^{-1}\sum_{k=0}^{\infty}\int_{0}^{\infty}e^{-t\theta\lambda^{k}}t[P_{k}(2t(1-\lambda))-P_{\infty}(2t(1-\lambda))]dt,
\]
which, by Lemma \ref{lem} and a change of variable, is dominated
by $c_{0}(1-\lambda)^{-2}$.

We next show that
\begin{equation}
\sum_{k=0}^{\infty}\Big|\int_{0}^{\infty}e^{-t\theta\lambda^{k}}\sum_{j=k}^{\infty}\frac{t[j\lambda^{j-1}(1-\lambda)-\lambda^{j}]}{1+2t(1-\lambda)\lambda^{j}}P_{\infty}(2t(1-\lambda))dt\Big|\le c_{0}\lambda^{-5}(1-\lambda)^{-2}.\label{eq:-241}
\end{equation}
Note that
\begin{equation}
\begin{aligned}\sum_{j=k}^{\infty}\frac{t|j\lambda^{j-1}(1-\lambda)-\lambda^{j}|}{1+2t(1-\lambda)\lambda^{j}} & \le t\sum_{j=k}^{\infty}[j\lambda^{j-1}(1-\lambda)+\lambda^{j}]\le c_{0}t[k\lambda^{k-1}+(1-\lambda)^{-1}\lambda^{k}].\end{aligned}
\label{eq:-242}
\end{equation}
Therefore, the left-hand side of (\ref{eq:-241}) is dominated by
\[
\begin{aligned} & c_{0}\sum_{k=0}^{\infty}\int_{0}^{\infty}e^{-t\theta\lambda^{k}}t[k\lambda^{k-1}+(1-\lambda)^{-1}\lambda^{k}]P_{\infty}(2t(1-\lambda))dt\\
 & \le c_{0}\sum_{k=0}^{\infty}[k\lambda^{k-1}+(1-\lambda)^{-1}\lambda^{k}]\int_{0}^{\infty}tP_{\infty}(2t(1-\lambda))dt\\
 & \le c_{0}\lambda^{-1}(1-\lambda)^{-2}\int_{0}^{\infty}tP_{\infty}(2t(1-\lambda))dt\\
 & \le c_{0}\lambda^{-5}(1-\lambda)^{-2},
\end{aligned}
\]
where the last inequality follows from Lemma \ref{lem:-21} and a
change of variable.

We now show that
\begin{equation}
\sum_{k=0}^{\infty}\Big|\int_{0}^{\infty}(e^{-t\theta\lambda^{k}}-1)\sum_{j=0}^{\infty}\frac{t[j\lambda^{j-1}(1-\lambda)-\lambda^{j}]}{1+2t(1-\lambda)\lambda^{j}}P_{\infty}(2t(1-\lambda))dt\Big|\le c_{0}\theta\lambda^{-1}(1-\lambda)^{-2}.\label{eq:-243}
\end{equation}
By (\ref{eq:-242}), the left-hand side of (\ref{eq:-243}) is dominated
by 
\[
\begin{aligned} & c_{0}\lambda^{-1}(1-\lambda)^{-1}\sum_{k=0}^{\infty}\int_{0}^{\infty}(1-e^{-t\theta\lambda^{k}})tP_{\infty}(2t(1-\lambda))dt\\
 & \le c_{0}\theta\lambda^{-1}(1-\lambda)^{-1}\sum_{k=0}^{\infty}\lambda^{k}\int_{0}^{\infty}t^{2}P_{\infty}(2t(1-\lambda))dt\\
 & \le c_{0}\theta\lambda^{-1}(1-\lambda)^{-2}\int_{0}^{\infty}t^{2}P_{\infty}(2t(1-\lambda))dt\\
 & \le c_{0}\theta\lambda^{-1}(1-\lambda)^{-2},
\end{aligned}
\]
where the last inequality follows from Lemma \ref{lem:-21} and a
change of variable.

In view of (\ref{eq:-244}), (\ref{eq:-237}), (\ref{eq:-241}) and
(\ref{eq:-243}), we see that
\begin{equation}
\sum_{k=0}^{\infty}\Big|\partial_{\lambda}V_{k}(\lambda,\theta)-\int_{0}^{\infty}\sum_{j=0}^{\infty}\frac{t[j\lambda^{j-1}(1-\lambda)-\lambda^{j}]}{1+2t(1-\lambda)\lambda^{j}}P_{\infty}(2t(1-\lambda))dt\Big|\le c_{0}(\theta+\theta^{-1})\lambda^{-5}(1-\lambda)^{-2}.\label{eq:-245}
\end{equation}
Clearly, 
\[
\begin{aligned}V^{\prime}(\lambda) & =\frac{\partial}{\partial\lambda}\Big(\int_{0}^{\infty}P_{\infty}(2t(1-\lambda))dt\Big)\\
 & =\int_{0}^{\infty}\sum_{j=0}^{\infty}\frac{t[j\lambda^{j-1}(1-\lambda)-\lambda^{j}]}{1+2t(1-\lambda)\lambda^{j}}P_{\infty}(2t(1-\lambda))dt,
\end{aligned}
\]
where differentiation under the integral sign is due to the dominated
convergence theorem. The inequality (\ref{eq:-240}) follows readily
from (\ref{eq:-245}) and the above.
\end{proof}
The following proposition can be obtained similarly using Lemma \ref{lem:-21},
Lemma \ref{lem} and Lemma \ref{lem:-23}.
\begin{prop}
\label{prop:-10}Let $V(\lambda)$ be the function given in (\ref{eq:-3}).
For any $\theta>0$, 
\begin{equation}
\sum_{k=0}^{\infty}|\partial_{\lambda}^{2}V_{k}(\lambda,\theta)-V^{\prime\prime}(\lambda)|\le c_{0}(\theta+\theta^{-1})^{2}\lambda^{-9}(1-\lambda)^{-3},\label{eq:-19}
\end{equation}
where $V_{k}(\lambda,\theta)$ is the function defined by (\ref{eq:-246}),
and $c_{0}$ is a universal constant.
\end{prop}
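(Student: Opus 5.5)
We will follow the proof of Proposition \ref{prop:-8}, differentiating once more in $\lambda$ and comparing term by term with $V''(\lambda)$. Use the substitution from (\ref{eq:-247}):
\[
V_{k}(\lambda,\theta)=\int_{0}^{\infty}e^{-t\theta\lambda^{k}}P_{k}(2t(1-\lambda))dt .
\]
Write $a_{j}(t)=t[j\lambda^{j-1}(1-\lambda)-\lambda^{j}]/(1+2t(1-\lambda)\lambda^{j})$, so that $\partial_{\lambda}P_{k}(2t(1-\lambda))=-\big(\sum_{j<k}a_{j}\big)P_{k}$.

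Differentiating (\ref{eq:-247}) once more gives $\partial_{\lambda}^{2}V_{k}$ as a sum of six kinds of terms:
\begin{itemize}
\item[(a)] $\theta^{2}k^{2}\lambda^{2k-2}t^{2}$ and $\theta k(k-1)\lambda^{k-2}t$ terms, coming from the exponential alone;
\item[(b)] cross terms $\theta k\lambda^{k-1}t\sum_{j<k}a_{j}$;
\item[(c)] $\big(\sum_{j<k}a_{j}\big)^{2}$;
\item[(d)] $\sum_{j<k}\partial_{\lambda}a_{j}$, which has the same structure as the integrands of $E_{2}$ and $E_{3}$ in Corollary \ref{cor:-4};
\item[(e)], (f) the same expressions as (c) and (d) but with $\sum_{j=0}^{\infty}$ and $P_{\infty}$.
\end{itemize}
All of these carry the weight $e^{-t\theta\lambda^{k}}P_{k}(2t(1-\lambda))$. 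The function $V''$ is exactly $E_{1}+E_{2}+E_{3}$, which is (e) and (f) with the weight $P_{\infty}$ and no exponential.

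The first group of terms, (a) and (b), has no counterpart in $V''$, so we bound their sums over $k$ directly. We use $|\sum_{j<k}a_{j}|\le c_{0}t(1-\lambda)^{-1}$ as in (\ref{eq:-276}), and then apply Lemma \ref{lem:-23} with $m\le2$ and $n\le2$. For example,
\[
\sum_{k}k^{2}\lambda^{2k}\theta^{2}\int e^{-t\theta\lambda^{k}}t^{2}P_{k}\,dt
\]
is handled by Lemma \ref{lem:-23} with $m=2$, $n=2$, after rescaling $t\mapsto t/(2(1-\lambda))$. This produces factors $(1+\theta^{-2})\theta^{2}$ and powers $(1-\lambda)^{-3}$ or better. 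The constant factors $\lambda^{-(n+1)^{2}}$ combine into a negative power of $\lambda$. We expect bookkeeping to give $\lambda^{-9}$, using the improved exponents of Remark \ref{rem:-9}-type cancellation if necessary; otherwise we accept any fixed power.

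For the remaining terms (c) and (d) we replace the integrand by that of $V''$ in three steps, exactly as in (\ref{eq:-237}), (\ref{eq:-241}) and (\ref{eq:-243}).
\begin{itemize}
\item[(i)] Replace $P_{k}$ by $P_{\infty}$. The resulting error is controlled by Lemma \ref{lem} with $n=2$, using $|\sum a_{j}|^{2}+|\sum\partial_{\lambda}a_{j}|\le c_{0}(t^{2}+t)(1-\lambda)^{-2}$.
\item[(ii)] Extend the partial sums $\sum_{j<k}$ to $\sum_{j\ge0}$. The tail satisfies $|\sum_{j\ge k}a_{j}|\le c_{0}t[k\lambda^{k-1}+(1-\lambda)^{-1}\lambda^{k}]$ as in (\ref{eq:-242}), and an analogous bound holds for $\sum_{j\ge k}\partial_{\lambda}a_{j}$ with one more power of $k$. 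For the square we write $A^{2}-B^{2}=(A-B)(A+B)$. Summing over $k$ gives $\sum_{k}(k^{2}\lambda^{k}+\dots)\le c(1-\lambda)^{-3}$, multiplied by $\int t^{2}P_{\infty}(2t(1-\lambda))dt$, which is bounded by a constant times a power of $\lambda^{-1}$ by Lemma \ref{lem:-21}.
\item[(iii)] Remove the exponential using $1-e^{-t\theta\lambda^{k}}\le t\theta\lambda^{k}$. Summing over $k$ gives a factor $(1-\lambda)^{-1}$, and Lemma \ref{lem:-21} with $n=3$ controls the remaining integral.
\end{itemize}
Each step contributes at most $c_{0}(\theta+\theta^{-1})^{2}\lambda^{-p}(1-\lambda)^{-3}$. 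Adding these contributions to the bounds on (a) and (b) yields (\ref{eq:-19}).

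We expect step (ii) for the squared term $\big(\sum_{j<k}a_{j}\big)^{2}$ to be the main obstacle. There the tail bound carries an extra factor $k\lambda^{k}$ against the full sum of size $t(1-\lambda)^{-1}$, and we must verify that the total power of $(1-\lambda)^{-1}$ does not exceed $3$. We plan to first bound $|A+B|\le c_{0}t(1-\lambda)^{-1}$ and $|A-B|\le c_{0}t(k\lambda^{k-1}+(1-\lambda)^{-1}\lambda^{k})$. Summing over $k$ then gives $(1-\lambda)^{-1}\cdot(1-\lambda)^{-2}$, times $\int t^{2}P_{\infty}(2t(1-\lambda))dt=O(1)$ by Lemma \ref{lem:-21}, for a total of exactly $(1-\lambda)^{-3}$.

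We are less certain that the $\lambda$-exponent comes out as $-9$ rather than a larger fixed power. The plan is to track the exponents from Lemma \ref{lem:-21} and Lemma \ref{lem:-23} carefully and, where needed, to use the telescoping cancellation of Remark \ref{rem:-9}.
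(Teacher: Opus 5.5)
Your proposal is correct and follows the paper's proof essentially step by step. You use the same split of $\partial_\lambda^2V_k$ into terms with no counterpart in $V''$ (the paper's $E_{1,k},E_{2,k},E_{3,k}$, bounded by Lemma \ref{lem:-23}) and terms compared with $V''=E_4+E_5+2E_6$ through the three replacements $P_k\to P_\infty$ (Lemma \ref{lem}), partial sums $\to$ full sums (tail bound (\ref{eq:-242})), and $e^{-t\theta\lambda^k}\to1$ (Lemma \ref{lem:-21}). Your caution about obtaining exactly $\lambda^{-9}$ is justified but harmless, since the paper is equally loose there: the last step needs the cubic moment $\int t^{3}P_{\infty}(2t(1-\lambda))dt$, i.e.\ the case $n=4$ of Lemma \ref{lem:-21} rather than $n=3$, which gives a larger power of $\lambda^{-1}$. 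Only $\lambda\in[\lambda_\ast,1)$ is ever used, so any fixed power suffices.
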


\begin{proof}
We have
\[
\begin{aligned}\partial_{\lambda}^{2}V_{k}(\lambda,\theta) & =\frac{\partial^{2}}{\partial\lambda^{2}}\Big(\int_{0}^{\infty}e^{-t\theta\lambda^{k}}tP_{k}(2t(1-\lambda))dt\Big)\\
 & =-\theta E_{1,k}+\theta^{2}E_{2,k}+\theta E_{3,k}+E_{4,k}+E_{5,k}+2E_{6,k},
\end{aligned}
\]
where
\begin{align*}
E_{1,k} & =-k(k-1)\lambda^{k-2}\int_{0}^{\infty}e^{-t\theta\lambda^{k}}tP_{k}(2t(1-\lambda))dt,\\
E_{2,k} & =k^{2}\lambda^{2k-2}\int_{0}^{\infty}e^{-t\theta\lambda^{k}}t^{2}P_{k}(2t(1-\lambda))dt,
\end{align*}
and
\begin{align*}
E_{3,k} & =k\lambda^{k-1}\int_{0}^{\infty}e^{-t\theta\lambda^{k}}\sum_{j=0}^{k-1}\frac{t^{2}[j\lambda^{j-1}(1-\lambda)-\lambda^{j}]}{1+2t(1-\lambda)\lambda^{j}}P_{k}(2t(1-\lambda))dt,\\
E_{4,k} & =\int_{0}^{\infty}e^{-t\theta\lambda^{k}}\Big(\sum_{j=0}^{k-1}\frac{t[j\lambda^{j-1}(1-\lambda)-\lambda^{j}]}{1+2t(1-\lambda)\lambda^{j}}\Big)^{2}P_{k}(2t(1-\lambda))dt,\\
E_{5,k} & =-\int_{0}^{\infty}e^{-t\theta\lambda^{k}}\sum_{j=0}^{k-1}\frac{t[j^{2}\lambda^{j-2}(1-\lambda)-2j\lambda^{j-1}]}{1+2t(1-\lambda)\lambda^{j}}P_{k}(2t(1-\lambda))dt,\\
E_{6,k} & =\int_{0}^{\infty}e^{-t\theta\lambda^{k}}\sum_{j=0}^{k-1}\frac{t^{2}[j\lambda^{j-1}(1-\lambda)-\lambda^{j}]^{2}}{[1+2t(1-\lambda)\lambda^{j}]^{2}}P_{k}(2t(1-\lambda))dt.
\end{align*}

For $E_{1,k}$ and $E_{2,k}$, by Lemma \ref{lem:-23} and a change
of variable,
\begin{equation}
\sum_{k=0}^{\infty}|E_{1,k}|+\sum_{k=0}^{\infty}|E_{2,k}|\le c_{0}(1+\theta^{-1})^{2}\lambda^{-9}(1-\lambda)^{-3}.\label{eq:-255}
\end{equation}

For $E_{3,k}$, it is easily seen that 
\begin{equation}
\sum_{j=0}^{k-1}\frac{|j\lambda^{j-1}(1-\lambda)-\lambda^{j}|}{1+2t(1-\lambda)\lambda^{j}}\le c_{0}(1-\lambda)^{-1},\label{eq:-65}
\end{equation}
and therefore, 
\begin{equation}
\begin{aligned}\sum_{k=0}^{\infty}|E_{3,k}| & \le c_{0}(1-\lambda)^{-1}\sum_{k=1}^{\infty}k\lambda^{k-1}\int_{0}^{\infty}e^{-t\theta\lambda^{k}}t^{2}P_{k}(2t(1-\lambda))dt\\
 & \le c_{0}(1+\theta^{-1})^{2}\lambda^{-9}(1-\lambda)^{-3},
\end{aligned}
\label{eq:-256}
\end{equation}
where the last inequality follows from Lemma \ref{lem:-23} and a
change of variable.

For $E_{4,k}$, by (\ref{eq:-65}), we have
\begin{equation}
\begin{aligned} & \sum_{k=0}^{\infty}\Big|\int_{0}^{\infty}e^{-t\theta\lambda^{k}}\Big(\sum_{j=0}^{k-1}\frac{t[j\lambda^{j-1}(1-\lambda)-\lambda^{j}]}{1+2t(1-\lambda)\lambda^{j}}\Big)^{2}[P_{k}(2t(1-\lambda))-P_{\infty}(2t(1-\lambda))]dt\Big|\\
 & \le c_{0}(1-\lambda)^{-2}\sum_{k=1}^{\infty}\int_{0}^{\infty}e^{-t\theta\lambda^{k}}t^{2}[P_{k}(2t(1-\lambda))-P_{\infty}(2t(1-\lambda))]dt\\
 & \le c_{0}(1+\theta^{-1})^{2}\lambda^{-9}(1-\lambda)^{-3},
\end{aligned}
\label{eq:-66}
\end{equation}
where the last inequality follows from Lemma \ref{lem} and a change
of variable. Let 
\[
S_{k}(t)=\sum_{j=0}^{k-1}\frac{t[j\lambda^{j-1}(1-\lambda)-\lambda^{j}]}{1+2t(1-\lambda)\lambda^{j}}.
\]
Note that
\[
\begin{aligned}|S_{k}(t)^{2}-S_{\infty}(t)^{2}| & \le2\Big(\sum_{j=0}^{\infty}\frac{t[j\lambda^{j-1}(1-\lambda)-\lambda^{j}]}{1+2t(1-\lambda)\lambda^{j}}\Big)\Big(\sum_{j=k}^{\infty}\frac{t|j\lambda^{j-1}(1-\lambda)-\lambda^{j}|}{1+2t(1-\lambda)\lambda^{j}}\Big)\\
 & \le c_{0}t^{2}(1-\lambda)^{-1}[k\lambda^{k-1}+(1-\lambda)\lambda^{k}].
\end{aligned}
\]
Therefore,
\begin{equation}
\begin{aligned} & \sum_{k=0}^{\infty}\Big|\int_{0}^{\infty}e^{-t\theta\lambda^{k}}(S_{k}(t)^{2}-S_{\infty}(t)^{2})P_{\infty}(2t(1-\lambda))dt\Big|\\
 & \le c_{0}(1-\lambda)^{-1}\sum_{k=0}^{\infty}[k\lambda^{k-1}+(1-\lambda)\lambda^{k}]\int_{0}^{\infty}e^{-t\theta\lambda^{k}}t^{2}P_{\infty}(2t(1-\lambda))dt\\
 & \le c_{0}(1+\theta^{-1})^{2}\lambda^{-9}(1-\lambda)^{-3},
\end{aligned}
\label{eq:-248}
\end{equation}
where the last inequality follows from Lemma \ref{lem:-23} and a
change of variable. By (\ref{eq:-65}) and $1-e^{-t\theta\lambda^{k}}\le t\theta\lambda^{k}$,
it is easily seen that
\begin{equation}
\begin{aligned} & \sum_{k=0}^{\infty}\Big|\int_{0}^{\infty}(1-e^{-t\theta\lambda^{k}})S_{\infty}(t)^{2}P_{\infty}(2t(1-\lambda))dt\Big|\\
 & \le c_{0}\theta(1-\lambda)^{-2}\sum_{k=1}^{\infty}\lambda^{k}\int_{0}^{\infty}t^{3}P_{\infty}(2t(1-\lambda))dt\\
 & \le c_{0}\theta(1-\lambda)^{-3}\int_{0}^{\infty}t^{3}P_{\infty}(2t(1-\lambda))dt\\
 & \le c_{0}\theta(1-\lambda)^{-3},
\end{aligned}
\label{eq:-67}
\end{equation}
where the last inequality follows from Lemma \ref{lem:-21}. Combining
(\ref{eq:-66}), (\ref{eq:-248}) and (\ref{eq:-67}) gives 
\begin{equation}
\sum_{k=0}^{\infty}|E_{4,k}-E_{4}|\le c_{0}(1+\theta^{-1})^{2}\lambda^{-9}(1-\lambda)^{-3}.\label{eq:-249}
\end{equation}
where 
\begin{equation}
E_{4}=\int_{0}^{\infty}\Big(\sum_{j=0}^{\infty}\frac{t[j\lambda^{j-1}(1-\lambda)-\lambda^{j}]}{1+2t(1-\lambda)\lambda^{j}}\Big)^{2}P_{\infty}(2t(1-\lambda))dt.\label{eq:-252}
\end{equation}

For $E_{5,k}$ and $E_{6,k}$, similar argument yields that
\begin{equation}
\sum_{k=0}^{\infty}|E_{5,k}-E_{5}|\le c_{0}(1+\theta^{-1})\lambda^{-4}(1-\lambda)^{-3},\label{eq:-250}
\end{equation}
where 
\begin{equation}
E_{5}=-\int_{0}^{\infty}\sum_{j=0}^{\infty}\frac{t[j^{2}\lambda^{j-2}(1-\lambda)-2j\lambda^{j-1}]}{1+2t(1-\lambda)\lambda^{j}}P_{\infty}(2t(1-\lambda))dt,\label{eq:-253}
\end{equation}
and
\begin{equation}
\sum_{k=0}^{\infty}|E_{6,k}-E_{6}|\le c_{0}(1+\theta^{-1})^{2}\lambda^{-9}(1-\lambda)^{-3},\label{eq:-251}
\end{equation}
where 
\begin{equation}
E_{6}=\int_{0}^{\infty}\sum_{j=0}^{\infty}\frac{t^{2}[j\lambda^{j-1}(1-\lambda)-\lambda^{j}]^{2}}{[1+2t(1-\lambda)\lambda^{j}]^{2}}P_{\infty}(2t(1-\lambda))dt.\label{eq:-254}
\end{equation}

Note that, by the dominated convergence theorem, $V^{\prime\prime}(\lambda)=E_{4}+E_{5}+2E_{6}$.
Combining (\ref{eq:-255}), (\ref{eq:-256}), (\ref{eq:-249}), (\ref{eq:-250})
and (\ref{eq:-251}) completes the proof.
\end{proof}
Estimates for partial derivatives $\partial_{\theta}V_{k}(\lambda,\theta)$,
$\partial_{\theta}^{2}V_{k}(\lambda,\theta)$ and $\partial_{\theta}\partial_{\lambda}V_{k}(\lambda,\theta)$
can be obtained by an argument similar to Proposition \ref{prop:-8}
or Proposition \ref{prop:-10}. However, the derivation can be simplified
by using properties of analytic functions.
\begin{prop}
\label{prop:-11}For any $\theta>0$,
\begin{equation}
\sum_{k=0}^{\infty}|\partial_{\theta}^{r}V_{k}(\lambda,\theta)|\le c_{r}(\theta+\theta^{-1})^{r+1}\lambda^{-4}(1-\lambda)^{-1},\label{eq:-258}
\end{equation}
and 
\begin{equation}
\sum_{k=0}^{\infty}|\partial_{\theta}^{r}\partial_{\lambda}V_{k}(\lambda,\theta)|\le c_{r}(\theta+\theta^{-1})^{r+1}\lambda^{-5}(1-\lambda)^{-1},\label{eq:-259}
\end{equation}
where $c_{r}$ is a constant depending only on $r$.
\end{prop}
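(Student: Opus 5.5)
The plan is to exploit analyticity in $\theta$, as the remark before the statement suggests, so that bounds on $\partial_{\theta}^{r}$ follow from bounds on the functions themselves via Cauchy's estimate. For complex $\theta$ with $\mathrm{Re}\,\theta>0$, the integral defining $V_{k}(\lambda,\theta)$ converges absolutely, because
\[
|e^{-t\theta(1-\lambda)^{-1}\lambda^{k}/2}|=e^{-t\,\mathrm{Re}\,\theta\,(1-\lambda)^{-1}\lambda^{k}/2}.
\]
Hence $V_{k}(\lambda,\cdot)$ is holomorphic on the right half plane. The same holds for $\partial_{\lambda}V_{k}$, which by (\ref{eq:-247}) is again an absolutely convergent integral of the same type.

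\textbf{Step 1.} For $\theta>0$, fix the disc $D=\{z:|z-\theta|\le\theta/2\}$. On $D$ we have $\mathrm{Re}\,z\ge\theta/2$ and $|z|\le3\theta/2$. Cauchy's estimate then gives
\[
|\partial_{\theta}^{r}V_{k}(\lambda,\theta)|\le r!\,(2/\theta)^{r}\sup_{z\in D}|V_{k}(\lambda,z)|,
\]
and the analogous bound holds for $\partial_{\lambda}V_{k}$.

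\textbf{Step 2.} Bound $\sup_{z\in D}$ of the relevant quantities, summed over $k$. The natural approach is to bound the modulus of the integrand by replacing $e^{-tz(\cdot)}$ with $e^{-t\,\mathrm{Re}\,z(\cdot)}$. This reduces everything to the real estimates of Lemma \ref{lem:-21}, Lemma \ref{lem} and Lemma \ref{lem:-23}, evaluated at the real parameter $\mathrm{Re}\,z\in[\theta/2,3\theta/2]$.

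There is a difficulty, however: $\sum_{k}|V_{k}|$ itself diverges, since $V_{k}\to V(\lambda)\neq0$. So for $r\ge1$ I would first subtract the $\theta$-independent limit, writing
\[
\partial_{\theta}^{r}V_{k}=\partial_{\theta}^{r}\big(V_{k}-V(\lambda)\big).
\]
I then apply Cauchy's estimate to the holomorphic function $V_{k}(\lambda,z)-V(\lambda)$. Its modulus is controlled as in the proof of Proposition \ref{prop:-9}, with two pieces. The first piece uses
\[
|1-e^{-tz(1-\lambda)^{-1}\lambda^{k}/2}|\le t|z|(1-\lambda)^{-1}\lambda^{k}/2,
\]
which yields $c_{0}|z|\lambda^{n-4}(1-\lambda)^{-1}$ after summation. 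The second piece is $|P_{k}-P_{\infty}|$ with the damped exponential of real part $\mathrm{Re}\,z$, handled by Lemma \ref{lem:-23}, which yields $(1+(\mathrm{Re}\,z)^{-1})\lambda^{-4}(1-\lambda)^{-1}$. Together, $\sum_{k}\sup_{D}|V_{k}-V|\le c_{0}(\theta+\theta^{-1})\lambda^{-4}(1-\lambda)^{-1}$. Multiplying by $r!(2/\theta)^{r}\le c_{r}(\theta+\theta^{-1})^{r}$ gives (\ref{eq:-258}) for $r\ge1$.

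For $r=0$, the statement as written would need $\sum_{k}|V_{k}|$ to converge, which it does not. I would interpret (\ref{eq:-258}) as concerning $V_{k}-V$, so that for $r=0$ it is exactly Proposition \ref{prop:-9} with $n=0$; this is also the form actually used for $\partial_{\theta}$-derivatives later.

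\textbf{Step 3.} For (\ref{eq:-259}), proceed the same way with $\partial_{\lambda}V_{k}-V^{\prime}(\lambda)$. Redo the decomposition in the proof of Proposition \ref{prop:-8} for complex $z\in D$: the terms (\ref{eq:-244}), (\ref{eq:-237}), (\ref{eq:-241}) and (\ref{eq:-243}) are each bounded by replacing $e^{-tz\lambda^{k}}$ with its modulus, and using $|1-e^{-tz\lambda^{k}}|\le t|z|\lambda^{k}$ where needed. Proposition \ref{prop:-8} gives $(1-\lambda)^{-2}$, whereas the claim has $(1-\lambda)^{-1}$. To get the better power I would use the cancellation from Remark \ref{rem:-9}, which improves the $j$-sums, together with the sharper $(1-\lambda)^{1}$ gain in Lemma \ref{lem:-21}. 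Cauchy's estimate then finishes the argument.

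The main obstacle is this $(1-\lambda)$-power bookkeeping in Step 3, and in particular ensuring that the complex-$z$ versions of the summation lemmas lose no factor of $(1-\lambda)$. If the improvement fails, I would settle for the $(1-\lambda)^{-2}$ power that Proposition \ref{prop:-8} naturally provides.
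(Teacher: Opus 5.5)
Your Steps 1 and 2 are the paper's proof. The paper extends (\ref{eq:-293}) to $\mathrm{Re}\,z>0$ and applies Cauchy's formula on $|z-\theta|=\theta/2$ to $V_k(\lambda,z)-V(\lambda)$; you also supply the complex-$z$ verification that the paper only asserts. Your caveat about $r=0$ is correct and applies to the paper too. Its Cauchy representation with numerator $V_k(\lambda,z)-V(\lambda)$ is valid only for $r\ge1$, which is the only case used later.

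The real issue is Step 3. The paper obtains (\ref{eq:-259}) ``similarly, using (\ref{eq:-240})'', and that yields $(\theta+\theta^{-1})^{r+1}\lambda^{-5}(1-\lambda)^{-2}$, which is exactly your fallback. Your planned upgrade to $(1-\lambda)^{-1}$ through Remark \ref{rem:-9} cannot succeed, because that bound is false. The cancellation in Remark \ref{rem:-9} uses the complete series in $j$ that defines $V'(\lambda)$. In $\partial_\lambda V_k$ the series stops at $j<k$, and $\sum_{j<k}[j\lambda^{j-1}(1-\lambda)-\lambda^{j}]=-k\lambda^{k-1}$. Together with the term from differentiating $e^{-t\theta\lambda^k}$, this leaves contributions of size $k\lambda^{k-1}$, and $\sum_k k\lambda^{k-1}=(1-\lambda)^{-2}$.

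To see that nothing better is possible, substitute $t=2(1-\lambda)s$ in (\ref{eq:-5}). This gives $V_k(\lambda,\theta)=\mathbb{E}[(\theta\lambda^k+\xi_k)^{-1}]$ with $\xi_k=(1-\lambda)\sum_{j<k}\lambda^jZ_j^2$, where the $Z_j$ are i.i.d.\ standard normal. At $\theta=1$ set $Y_k=\lambda^k+\xi_k$. Then $\mathbb{E}Y_k=1$ for every $\lambda$, so $\mathbb{E}[\partial_\lambda Y_k]=0$ and
\[
\partial_\lambda\partial_\theta V_k(\lambda,1)=-k\lambda^{k-1}\mathbb{E}[Y_k^{-2}]+2\lambda^k\mathbb{E}\big[(Y_k^{-3}-1)\,\partial_\lambda Y_k\big].
\]
By Jensen, $\mathbb{E}[Y_k^{-2}]\ge1$. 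Moreover $\mathrm{Var}(Y_k)\le2(1-\lambda)$ and $\mathbb{E}[(\partial_\lambda Y_k)^2]=\mathrm{O}((1-\lambda)^{-1})$, and together with uniformly bounded negative moments of $Y_k$ this makes the second term $\mathrm{O}(\lambda^k)$. Hence $\sum_k|\partial_\theta\partial_\lambda V_k(\lambda,1)|\ge(1-\lambda)^{-2}-C(1-\lambda)^{-1}$, which cannot be bounded by $4c_1\lambda^{-5}(1-\lambda)^{-1}$ as $\lambda\to1$.

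So prove (\ref{eq:-259}) with $(1-\lambda)^{-2}$; your complex-$z$ rerun of Proposition \ref{prop:-8} does this. That is also all the proof of Proposition \ref{prop:} needs. In (\ref{eq:-304}) the prefactor is $\mathrm{O}((1-\lambda)^{-1})$, $|\beta_k^{\Delta t}-x|\le c(1-\lambda)\delta(\Delta t,\lambda)$, and the $x$-range has length $\mathrm{O}(1-\lambda)$. The resulting bound is $\Delta t\,\delta(\Delta t,\lambda)(1-\lambda)^{-1}=\delta(\Delta t,\lambda)^2$.
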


\begin{proof}
Clearly, the functions $V_{k}(\lambda,z)$ are analytic in $z$ on
$\{z\in\mathbb{C}:\text{Re}(z)>0\}$, and (\ref{eq:-232}) easily
extends to 
\[
\sum_{k=0}^{\infty}|V_{k}(\lambda,z)-V(\lambda)|\le c_{0}(\text{Re}(z)+\text{Re}(z)^{-1})\lambda^{-4}(1-\lambda)^{-1},\quad\text{Re}(z)>0.
\]
By Cauchy's integral formula, 
\[
\partial_{\theta}^{r}V_{k}(\lambda,\theta)=\frac{r!}{\text{i}2\pi}\int_{|z-\theta|=\theta/2}\frac{V_{k}(\lambda,z)-V(\lambda)}{(z-\theta)^{r+1}}dz.
\]
Therefore,
\[
\begin{aligned}\sum_{k=0}^{\infty}|\partial_{\theta}^{r}V_{k}(\lambda,\theta)| & \le\frac{r!}{2\pi}\int_{|z-\theta|=\theta/2}\sum_{k=1}^{\infty}\frac{|V_{k}(\lambda,z)-V(\lambda)|}{|z-\theta|^{r+1}}|dz|\\
 & \le c_{r}\int_{|z-\theta|=\theta/2}\frac{(\text{Re}(z)+\text{Re}(z)^{-1})\lambda^{-4}(1-\lambda)^{-1}}{\theta^{r+1}}|dz|\\
 & \le c_{r}\theta^{-r}(\theta+\theta^{-1})\lambda^{-4}(1-\lambda)^{-1}\\
 & \le c_{r}(\theta+\theta^{-1})^{r+1}\lambda^{-4}(1-\lambda)^{-1}.
\end{aligned}
\]

The estimate (\ref{eq:-259}) can be obtained similarly by considering
the functions $\partial_{\lambda}V_{k}(\lambda,z)$ on $\{z\in\mathbb{C}:\text{Re}(z)>0\}$
and using the estimate (\ref{eq:-240}).
\end{proof}
\begin{rem}
\label{rem:-10}Using similar arguments and by Lemma \ref{lem:-21},
Lemma \ref{lem} and Lemma \ref{lem:-23} with $\gamma=1/2$, it can
be shown that the estimates (\ref{eq:-293}), (\ref{eq:-240}), (\ref{eq:-19}),
(\ref{eq:-258}) and (\ref{eq:-259}) also hold if $V_{k}(\lambda,\theta)$
and $V(\lambda)$ are replaced by $U_{k}(\lambda,\theta)$ and $U(\lambda)$
respectively, where $U_{k}(\lambda,\theta)$ is the function defined
by (\ref{eq:-20}).
\end{rem}

\subsection{\label{subsec:}Proof of Proposition \ref{prop:}}

In this subsection, we assume that $\sigma_{t}=\sigma(t)$ is a $C^{1,1}([0,T])$.
Let $t_{n}=n\Delta t$. It is easily seen by It\^{o}'s formula that
\begin{equation}
\begin{aligned}\mathbb{E}[\langle\log I\rangle_{t_{n}}] & =\mathbb{E}\Big[\Big(\sum_{k=1}^{n}\bar{\sigma}v_{k-1}^{-1/2}\int_{t_{k-1}}^{t_{k}}\sigma(u)dW_{u}\Big)^{2}\Big]=\bar{\sigma}^{2}\sum_{k=0}^{n-1}\mathbb{E}(v_{k}^{-1})A_{k+1},\end{aligned}
\label{eq:-18}
\end{equation}
where 
\[
v_{k}=\lambda^{k}v_{0}+(1-\lambda)\Delta t^{-1}\sum_{j=1}^{k}\lambda^{k-j}\Big(\int_{t_{j-1}}^{t_{j}}\sigma(u)dW_{u}\Big)^{2}.
\]
and
\begin{equation}
A_{k}=\Delta t^{-1}\int_{t_{k-1}}^{t_{k}}\sigma(u)^{2}du,\quad1\le k\le N.\label{eq:-56}
\end{equation}
By $x^{-1}=\int_{0}^{\infty}e^{-tx}dt$ and Fubini's theorem,
\[
\begin{aligned}\mathbb{E}(v_{k}^{-1}) & =\mathbb{E}\Big[\int_{0}^{\infty}\exp\Big(-t\lambda^{k}v_{0}-t(1-\lambda)\Delta t^{-1}\sum_{j=1}^{k}\lambda^{k-j}\Big(\int_{t_{j-1}}^{t_{j}}\sigma(u)dW_{u}\Big)^{2}\Big)dt\Big]\\
 & =\int_{0}^{\infty}e^{-t\lambda^{k}v_{0}}\mathbb{E}\Big[\prod_{j=1}^{k}\exp\Big(-t(1-\lambda)\Delta t^{-1}\lambda^{k-j}\Big(\int_{t_{j-1}}^{t_{j}}\sigma(u)dW_{u}\Big)^{2}\Big)\Big]dt\\
 & =\int_{0}^{\infty}e^{-t\lambda^{k}v_{0}}\prod_{j=1}^{k}\big(1+2t(1-\lambda)\lambda^{k-j}A_{j}\big)^{-1/2}dt.
\end{aligned}
\]
Therefore, by the above and (\ref{eq:-18}) and change of variables,
we obtain that
\begin{equation}
\begin{aligned}\mathbb{E}[\langle\log I\rangle_{t_{n}}] & =\frac{\bar{\sigma}^{2}\Delta t}{2(1-\lambda)}\sum_{k=0}^{n-1}\int_{0}^{\infty}e^{-t(1-\lambda)^{-1}\lambda^{k}A_{k+1}^{-1}v_{0}/2}\prod_{j=0}^{k-1}\Big(1+t\lambda^{j}\frac{A_{k-j}}{A_{k+1}}\Big)^{-1/2}dt.\end{aligned}
\label{eq:-57}
\end{equation}

Let $\delta(\Delta t,\lambda)$ be the quantity defined by (\ref{eq:-307}).
We now derive the first order approximation, in terms of $\delta(\Delta t,\lambda)$,
of the right-hand side of (\ref{eq:-57}). Note that the right-hand
side of (\ref{eq:-57}) is not in a form for which Taylor's theorem
can be readily applied to give an expansion in $\Delta t$, or more
precisely, in $\delta(\Delta t,\lambda)$. We will first rewrite the
right-hand side of (\ref{eq:-57}) such that its expansion in $\Delta t$
is more transparent. The following lemma will be crucial in rewriting
(\ref{eq:-57}) in a form for which Taylor's theorem can be easily
applied. 
\begin{lem}
\label{lem:-22}Let $\mathbb{R}_{+}^{\omega}$ be the space of infinite
sequences $x=(x_{0},x_{1},\dots)$ with $x_{j}>0$. For any $x\in\mathbb{R}_{+}^{\omega}$,
let
\begin{equation}
P_{k}(t,x)=\prod_{j=0}^{k-1}(1+t\lambda^{j}x_{j})^{-1/2}.\label{eq:-29}
\end{equation}
Suppose that $a_{k}=(a_{k,0},a_{k,1},\dots)$ and $b_{k}=(b_{k,0},b_{k,1},\dots)$
are sequences in $\mathbb{R}_{+}^{\omega}$ satisfying 
\begin{equation}
a_{k,j}\ge\kappa\lambda^{\gamma j},\;b_{k,j}\ge\kappa\lambda^{\gamma j},\quad0\le j\le k,\label{eq:-51}
\end{equation}
for some constants $\kappa>0$ and $\gamma\ge0$. Then, for any $1\le n\le N$
and any $(\theta_{0},\theta_{1},\dots)\in\mathbb{R}_{+}^{\omega}$
satisfying $\theta_{k}\ge\theta$ for some $\theta>0$, 
\begin{equation}
\begin{aligned} & \frac{\Delta t}{1-\lambda}\sum_{k=0}^{n-1}\int_{0}^{\infty}e^{-t(1-\lambda)^{-1}\lambda^{k}\theta_{k}}|P_{k}(t,a_{k})-P_{k}(t,b_{k})|dt\\
 & \le c_{\kappa,\gamma,T}(1+\theta^{-1})\lambda^{-4(1+\gamma)}(1-\lambda)^{-2}[1+\delta(\Delta t,\lambda)]\sup_{k\ge0}\rho(a_{k},b_{k})
\end{aligned}
\label{eq:-49}
\end{equation}
where 
\begin{equation}
\rho(x,y)=\sup_{j\ge0}(j+1)^{-2}|x_{j}-y_{j}|,\quad x,y\in\mathbb{R}_{+}^{\omega},\label{eq:-50}
\end{equation}
and $c_{\kappa,\gamma,T}$ is a constant depending only on $\kappa$,
$\gamma$ and $T$.
\end{lem}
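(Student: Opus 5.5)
We telescope the difference of the two products one factor at a time and then control each resulting term with the integral estimates of Lemma \ref{lem:-21}, Lemma \ref{lem} and Lemma \ref{lem:-23}. For fixed $k$ and $t$, write
\[
P_{k}(t,a_{k})-P_{k}(t,b_{k})=\sum_{i=0}^{k-1}\Big(\prod_{j<i}(1+t\lambda^{j}b_{k,j})^{-1/2}\Big)\big[(1+t\lambda^{i}a_{k,i})^{-1/2}-(1+t\lambda^{i}b_{k,i})^{-1/2}\big]\Big(\prod_{i<j<k}(1+t\lambda^{j}a_{k,j})^{-1/2}\Big).
\]
The mean value theorem bounds the bracket by $\tfrac12 t\lambda^{i}|a_{k,i}-b_{k,i}|\,(1+t\lambda^{i}\kappa\lambda^{\gamma i})^{-1}$. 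We keep the factor $(1+t\lambda^{(1+\gamma)i}\kappa)^{-1}$ coming from the lower bound (\ref{eq:-51}), since it can replace the missing $i$-th factor $(1+\cdots)^{-1/2}$. Then $|a_{k,i}-b_{k,i}|\le (i+1)^{2}\rho(a_{k},b_{k})$. Using (\ref{eq:-51}) once more on all remaining factors gives
\[
|P_{k}(t,a_{k})-P_{k}(t,b_{k})|\le \tfrac12\sup_{k}\rho(a_{k},b_{k})\; t\sum_{i=0}^{k-1}(i+1)^{2}\lambda^{i}\,\tilde P_{k}(\kappa t),\qquad \tilde P_{k}(s)=\prod_{j=0}^{k-1}(1+s\lambda^{(1+\gamma)j})^{-1/2}.
\]
The sum $\sum_{i}(i+1)^{2}\lambda^{i}$ is bounded by $c(1-\lambda)^{-3}$. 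To avoid paying this full power, we reorganise before summing over $i$. The plan is to interchange the sums over $i$ and $k$ and to extract only $(1-\lambda)^{-1}$ from the $i$-sum. This is done by exploiting the exponential weight $e^{-t(1-\lambda)^{-1}\lambda^{k}\theta}$ together with the decay of $\tilde P_{k}$ in the regime $t\lambda^{i}\gtrsim1$. In that regime each index $i$ with $t\lambda^{i}\ge1$ contributes $(i+1)^{2}\rho$ times a product that is already small.

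Next we estimate $\frac{\Delta t}{1-\lambda}\sum_{k=0}^{n-1}\int_{0}^{\infty}e^{-t(1-\lambda)^{-1}\lambda^{k}\theta}\,t\,\tilde P_{k}(\kappa t)\,dt$, with the polynomial weight in $i$ included. This quantity has the same shape as the quantities controlled in Lemma \ref{lem:-23}, with $\lambda$ replaced by $\lambda^{1+\gamma}$. We first rescale $t\mapsto t/\kappa$ and note $\theta\ge\theta_{k}$ lower bound. Then we write $\tilde P_{k}=(\tilde P_{k}-\tilde P_{\infty})+\tilde P_{\infty}$ and proceed as follows:
\begin{itemize}
\item The $\tilde P_{k}-\tilde P_{\infty}$ part is summed over all $k\ge0$ using Lemma \ref{lem} with $n=1$. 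This yields $c(1+\theta^{-1})\lambda^{-4(1+\gamma)}(1-\lambda)^{1}$ times $(1-\lambda)^{-2}$ from the weights.
\item The $\tilde P_{\infty}$ part is independent of $k$. The sum over $k<n$ then has at most $n\le T/\Delta t$ terms. Combined with the prefactor $\Delta t/(1-\lambda)$ and Lemma \ref{lem:-21}, which gives $\int t\tilde P_{\infty}\le c\lambda^{-(1+\gamma)}(1-\lambda)^{2}$, this produces the $T$-dependent term. The $\delta(\Delta t,\lambda)$ term comes from the $k$-sum with the exponential factor: $\Delta t(1-\lambda)^{-1}\sum_{k}$ of an $O(1)$ quantity is bounded by $\delta(\Delta t,\lambda)$ times the bound of Lemma \ref{lem:-23}.
\end{itemize}
Collecting the factors gives $(1+\theta^{-1})\lambda^{-4(1+\gamma)}(1-\lambda)^{-2}[1+\delta]$.

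The main obstacle is the polynomial weight $(i+1)^{2}\lambda^{i}$ inside the telescoping sum. Crude summation gives $(1-\lambda)^{-3}$, which overshoots the target exponent $-2$. I would first try to handle this by splitting $i$ according to whether $t\lambda^{(1+\gamma)i}\le1$ or not. For the small-$t\lambda^{i}$ indices, the factor $t\lambda^{i}$ is used directly. For the remaining indices, the $i$-th factor of $\tilde P_{k}$ absorbs $(i+1)^{2}$ through the integral identities of Lemma \ref{lem:-21}, namely the recursion $I_{n}=(\lambda^{-2n}-1)I_{n-1}$ there. If this fails, I would accept a weaker power and check whether the stated $(1-\lambda)^{-2}$ follows after multiplication by $\Delta t$, possibly converting one factor $(1-\lambda)^{-1}\Delta t$ into $\delta$.
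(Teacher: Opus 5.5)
Your pointwise bound (telescoping, the mean value theorem, and (\ref{eq:-51}) to dominate every factor by $(1+\kappa t\lambda_0^{j})^{-1/2}$ with $\lambda_0=\lambda^{1+\gamma}$) matches the paper's. So does your split $\tilde P_k=(\tilde P_k-\tilde P_\infty)+\tilde P_\infty$, handled by Lemma~\ref{lem} and Lemma~\ref{lem:-21}. The only cosmetic difference is that you telescope, while the paper uses Taylor's theorem with a uniform gradient bound.

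The gap is in how you track powers of $1-\lambda$. The ``main obstacle'' does not exist, and the remedy you propose cannot work. The paper simply uses $\sum_{i}(i+1)^2\lambda^i\le 4(1-\lambda)^{-3}$. Rescale $t\mapsto t/\kappa$, then apply Lemma~\ref{lem} with $n=1$ and Lemma~\ref{lem:-21} with $n=2$ at base $\lambda_0$, using $1-\lambda_0\le(1+\gamma)(1-\lambda)$. This gives
\[
\sum_{k=0}^{n-1}\int_0^\infty e^{-t(1-\lambda_0)^{-1}\lambda_0^k\kappa^{-1}\theta}\,t\,\tilde P_k(t)\,dt\le c_{\kappa,\gamma}(1+\theta^{-1})\lambda^{-4(1+\gamma)}(1-\lambda)\,[1+n(1-\lambda)].
\]
Hence the left side of (\ref{eq:-49}) is bounded by a constant times
\[
\sup_k\rho(a_k,b_k)\,(1+\theta^{-1})\lambda^{-4(1+\gamma)}\,\frac{\Delta t}{(1-\lambda)^{4}}\,(1-\lambda)[1+n(1-\lambda)]=\sup_k\rho(a_k,b_k)\,(1+\theta^{-1})\lambda^{-4(1+\gamma)}(1-\lambda)^{-2}\,[\delta(\Delta t,\lambda)+n\Delta t].
\]
Since $n\Delta t\le T$, this finishes the proof. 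That is your fallback, and it is the entire argument.

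Your primary plan, extracting only $(1-\lambda)^{-1}$ from the $i$-sum, would produce a $T$-term with no negative power of $1-\lambda$, and that is false. For $\lambda$ near $1$, the mass of $t\tilde P_\infty(\kappa t)$ sits at $t\sim 1-\lambda$, where $t\lambda^i\le1$ for every $i$. There, neither the exponential weight nor the factors of $\tilde P_k$ absorb any of the weights $(i+1)^2$.

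Concretely, take $\gamma=0$, $\kappa=1$, $\theta_k=\theta$, $b_{k,j}=1$ and $a_{k,j}=1+(j+1)^2\rho$ with $\rho\le c(1-\lambda)^2$.
\begin{itemize}
\item For $t\le1-\lambda$ one has $P_\infty(t)\ge e^{-1/2}$ and $|P_k(t,a_k)-P_k(t,b_k)|\ge c\,t\rho\sum_{j<k}(j+1)^2\lambda^j$.
\item Hence each $k\ge C(1-\lambda)^{-1}$ has its integral at least $c_\theta\rho(1-\lambda)^{-1}$.
\item Summing, the left side of (\ref{eq:-49}) is at least $c_\theta\rho(1-\lambda)^{-2}\bigl(t_n-C\delta(\Delta t,\lambda)-\Delta t\bigr)$.
\end{itemize}
So the exponent $-2$ is sharp, and the full $(1-\lambda)^{-3}$ from the $i$-sum is genuinely needed. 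Your own second bullet confirms this: it gets the $T$-term from $\int t\tilde P_\infty\le c(1-\lambda)^2$ and the prefactor $n\Delta t/(1-\lambda)$, and that only lands on $(1-\lambda)^{-2}$ if the weights contribute $(1-\lambda)^{-3}$.

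Two smaller points.
\begin{itemize}
\item Lemma~\ref{lem} needs the same base in the exponential and in the product, but here the exponential carries $\lambda$ while $\tilde P_k$ carries $\lambda_0$. The paper bridges this with the monotonicity of $\lambda\mapsto\lambda^k/(1-\lambda)$, which gives $e^{-t(1-\lambda)^{-1}\lambda^k\theta_k}\le e^{-t(1-\lambda_0)^{-1}\lambda_0^k\theta}$. Note that it is $\theta_k\ge\theta$ that is used, not the reverse.
\item Lemma~\ref{lem:-21} gives $\lambda_0^{-4}(1-\lambda_0)^2$, not $\lambda^{-(1+\gamma)}(1-\lambda)^2$. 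The $\delta$-term comes from Lemma~\ref{lem}, not Lemma~\ref{lem:-23}.
\end{itemize}
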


\begin{proof}
Clearly, 
\[
|\partial_{x_{j}}P_{k}(t,x)|=\frac{t\lambda^{j}}{2(1+t\lambda^{j}x_{j})}P_{k}(t,x)\le t\lambda^{j}P_{k}(t,x).
\]
If $x$ satisfies that $x_{j}\ge\kappa\lambda^{\gamma j}$ for all
$j\le k$, by the fact that $P_{k}(t,x)$ is decreasing in each $x_{j}$,
we have
\[
|\partial_{x_{j}}P_{k}(t,x)|\le t\lambda^{j}P_{k}(t,\kappa,\kappa\lambda^{\gamma},\dots,\kappa\lambda^{\gamma(k-1)})=t\lambda^{j}\prod_{j=0}^{k-1}(1+t\kappa\lambda_{0}^{j})^{-1/2}.
\]
where $\lambda_{0}=\lambda^{1+\gamma}\le\lambda$. Therefore, by Taylor's
theorem,
\[
\begin{aligned}|P_{k}(t,a_{k})-P_{k}(t,b_{k})| & \le\sum_{j=0}^{k-1}|a_{k,j}-b_{k,j}|\lambda^{j}t\prod_{j=0}^{k-1}(1+t\kappa\lambda_{0}^{j})^{-1/2}\\
 & \le\rho(a_{k},b_{k})\sum_{j=0}^{k-1}(j+1)^{2}\lambda^{j}t\prod_{j=0}^{k-1}(1+t\kappa\lambda_{0}^{j})^{-1/2}\\
 & \le4\rho(a_{k},b_{k})(1-\lambda)^{-3}t\prod_{j=0}^{k-1}(1+t\kappa\lambda_{0}^{j})^{-1/2}.
\end{aligned}
\]
Let $P_{k,\lambda}(t)=\prod_{j=0}^{k-1}(1+t\lambda^{j})^{-1/2}$.
Then, by $\theta_{k}\ge\theta$,
\begin{equation}
\begin{aligned} & \frac{\Delta t}{1-\lambda}\sum_{k=0}^{n-1}\int_{0}^{\infty}e^{-t(1-\lambda)^{-1}\lambda^{k}\theta_{k}}|P_{k}(t,a_{k})-P_{k}(t,b_{k})|dt\\
 & \le\frac{\Delta t}{1-\lambda}\sum_{k=0}^{n-1}\int_{0}^{\infty}e^{-t(1-\lambda)^{-1}\lambda^{k}\theta}|P_{k}(t,a_{k})-P_{k}(t,b_{k})|dt\\
 & \le4\Delta t\Big(\sup_{k\ge0}\rho(a_{k},b_{k})\Big)(1-\lambda)^{-4}\sum_{k=0}^{n-1}\int_{0}^{\infty}e^{-t(1-\lambda)^{-1}\lambda^{k}\theta}t\prod_{j=0}^{k-1}(1+t\kappa\lambda_{0}^{j})^{-1/2}dt\\
 & \le c_{\kappa}\Delta t\Big(\sup_{k\ge0}\rho(a_{k},b_{k})\Big)(1-\lambda)^{-4}\sum_{k=0}^{n-1}\int_{0}^{\infty}e^{-t(1-\lambda_{0})^{-1}\lambda_{0}^{k}\kappa^{-1}\theta}tP_{k,\lambda_{0}}(t)^{-1/2}dt
\end{aligned}
\label{eq:-30}
\end{equation}
where the last inequality is due to the fact that $(1-\lambda)^{-1}\lambda^{k}$
is increasing in $\lambda$ and a change of variable. By Lemma \ref{lem:-21}
and Lemma \ref{lem},
\begin{equation}
\begin{aligned} & \sum_{k=0}^{n-1}\int_{0}^{\infty}e^{-t(1-\lambda_{0})^{-1}\lambda_{0}^{k}\kappa^{-1}\theta}tP_{k,\lambda_{0}}(t)dt\\
 & \le\sum_{k=0}^{n-1}\int_{0}^{\infty}e^{-t(1-\lambda_{0})^{-1}\lambda_{0}^{k}\kappa^{-1}\theta}t[P_{k,\lambda_{0}}(t)-P_{\infty,\lambda_{0}}(t)]dt+n\int_{0}^{\infty}tP_{\infty,\lambda_{0}}(t)dt\\
 & \le c_{0}(1+\kappa\theta^{-1})\lambda_{0}^{-4}(1-\lambda_{0})+c_{n}n\lambda_{0}^{-4}(1-\lambda_{0})^{2}\\
 & \le c_{\kappa,\gamma}(1+\theta^{-1})\lambda^{-4(1+\gamma)}(1-\lambda)[1+n(1-\lambda)],
\end{aligned}
\label{eq:-31}
\end{equation}
where the last inequality is due to $1-\lambda^{1+\gamma}\le(1+\gamma)(1-\lambda)$.
The estimate (\ref{eq:-49}) now follows readily from (\ref{eq:-30})
and (\ref{eq:-31}) and that $n\Delta t\le T$.
\end{proof}
\begin{lem}
\label{lem:-24}Let $A_{0}=\sigma(0)^{2}$ and $A_{k}$ be given by
(\ref{eq:-56}) for $1\le k\le N$. Define $a_{k},b_{k}\in\mathbb{R}_{+}^{\omega}$
by
\[
a_{k,j}=A_{k-j}/A_{k+1},\quad b_{k,j}=\exp\Big(-2(j+1)\Delta t\frac{\sigma^{\prime}(t_{k})}{\sigma(t_{k})}\Big),
\]
for $0\le j\le k$, and $a_{k,j}=b_{k,j}=0$ for $j>k$. Let
\begin{equation}
L_{\sigma,T}=\Vert\sigma^{\prime}\Vert_{C^{0,1}}\sigma^{\ast}+\Vert\sigma^{\prime}\Vert_{L^{\infty}}^{2}+4e^{3(1+T)\Vert\sigma^{\prime}\Vert_{L^{\infty}}/\sigma_{\ast}}.\label{eq:-58}
\end{equation}
Then, when $\Delta t<2^{-1/2}L_{\sigma,T}^{-1}$, we have
\begin{equation}
\rho(a_{k},b_{k})\le c_{T,\sigma_{\ast},\sigma^{\ast},\Vert\sigma^{\prime}\Vert_{L^{\infty}},\Vert\sigma^{\prime}\Vert_{C^{0,1}}}\Delta t^{2},\label{eq:-268}
\end{equation}
where $\rho(\cdot,\cdot)$ is the metric given by (\ref{eq:-50}),
and $c_{T,\sigma_{\ast},\sigma^{\ast},\Vert\sigma^{\prime}\Vert_{L^{\infty}},\Vert\sigma^{\prime}\Vert_{C^{0,1}}}$
is a constant depending only on $T$, $\sigma_{\ast}$, $\sigma^{\ast}$,
$\Vert\sigma^{\prime}\Vert_{L^{\infty}}$ and $\Vert\sigma^{\prime}\Vert_{C^{0,1}}$.
\end{lem}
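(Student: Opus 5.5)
The plan is to treat each coordinate $j\le k$ separately and bound $|a_{k,j}-b_{k,j}|$ by $c(j+1)^{2}\Delta t^{2}$. The weight $(j+1)^{-2}$ in $\rho$ then absorbs the growth in $j$, and coordinates with $j>k$ contribute nothing because $a_{k,j}=b_{k,j}=0$ there. Write $g=\log\sigma^{2}$, so that $g'=2\sigma'/\sigma$ is bounded by $2\Vert\sigma'\Vert_{L^{\infty}}/\sigma_{\ast}$ and is Lipschitz with constant controlled by $\Vert\sigma'\Vert_{C^{0,1}}$, $\Vert\sigma'\Vert_{L^{\infty}}$ and $\sigma_{\ast}$. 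Then $A_{m}=\Delta t^{-1}\int_{t_{m-1}}^{t_{m}}e^{g(u)}du$, and we set $\log a_{k,j}=\log A_{k-j}-\log A_{k+1}$ and $\log b_{k,j}=-(j+1)\Delta t\,g'(t_{k})$.

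First I estimate $\log A_{m}$ against the midpoint value. By Taylor's theorem with a Lipschitz first derivative, $e^{g(u)}=e^{g(s_{m})}\big(1+g'(s_{m})(u-s_{m})+O(\Delta t^{2})\big)$ with $s_{m}=t_{m}-\Delta t/2$. The linear term integrates to zero over the cell, so $\log A_{m}=g(s_{m})+O(\Delta t^{2})$ uniformly in $m\ge1$. The case $m=0$, where $A_{0}=\sigma(0)^{2}$, needs separate handling: $k-j=0$ happens only when $j=k$. In that case $g(0)=g(s_{0})+O(\Delta t)$ with $s_{0}=-\Delta t/2$ after extending $g$ linearly, so the error is only $O(\Delta t)$. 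However, it occurs only at $j=k$, and then $(j+1)^{-2}$ is not small in general. I would instead absorb this case by extending $\sigma$ as a $C^{1,1}$ function on $[-1,0]$, which gives $O(\Delta t^{2})$ after all. If that is not legitimate, I would accept a weaker bound at that single index.

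Next, the difference: $\log a_{k,j}=g(s_{k-j})-g(s_{k+1})+O(\Delta t^{2})$. Since $s_{k+1}-s_{k-j}=(j+1)\Delta t$, a second-order Taylor expansion around $t_{k}$ gives
$$g(s_{k-j})-g(s_{k+1})=-(j+1)\Delta t\,g'(t_{k})+O\big(\Vert g'\Vert_{C^{0,1}}(j+1)^{2}\Delta t^{2}\big).$$
The Lipschitz bound on $g'$ controls the remainder, as both points lie within $(j+1)\Delta t$ of $t_{k}$. Hence
$$|\log a_{k,j}-\log b_{k,j}|\le c\,(j+1)^{2}\Delta t^{2}.$$

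Finally, I convert the log bound to a bound on the values using $|e^{x}-e^{y}|\le e^{\max(x,y)}|x-y|$. Both $\log a_{k,j}$ and $\log b_{k,j}$ are at most $(j+1)\Delta t\,\Vert g'\Vert_{\infty}+c\le(1+T)\cdot 2\Vert\sigma'\Vert_{L^{\infty}}/\sigma_{\ast}+c$, because $(j+1)\Delta t\le t_{k+1}\le T+\Delta t$. This is exactly where an exponential constant like $e^{3(1+T)\Vert\sigma'\Vert_{L^{\infty}}/\sigma_{\ast}}$ in $L_{\sigma,T}$ comes from. The hypothesis $\Delta t<2^{-1/2}L_{\sigma,T}^{-1}$ keeps $\Delta t\le 1$ and keeps the $O(\Delta t^{2})$ corrections inside the logarithms below $1$, so the exponentials stay bounded. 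Multiplying through gives $|a_{k,j}-b_{k,j}|\le c(j+1)^{2}\Delta t^{2}$, and hence $\rho(a_{k},b_{k})\le c\Delta t^{2}$. The main obstacle I expect is the boundary index $j=k$ with $A_{0}=\sigma(0)^{2}$ rather than a cell average: it must be checked that it still yields $O((k+1)^{2}\Delta t^{2})$. Since $\log A_{0}-\log A_{1}=O(\Delta t)$ and $b_{k,k}$ differs from it by $O(k\Delta t)$ in general, this is the step I would verify most carefully, falling back on the extension of $\sigma$ to negative times.
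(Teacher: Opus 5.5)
Your argument for the indices $0\le j\le k-1$ is correct, but your repair at the boundary index $j=k$ fails. Extending $\sigma$ to $[-1,0]$ changes nothing there. The statement fixes $A_0=\sigma(0)^2$ as a point value, not as a cell average, so $\log A_0-g(s_0)=g(0)-g(-\Delta t/2)$ stays of order $\Delta t$ whatever extension you choose. Carrying your expansion through gives $\log a_{k,k}-\log b_{k,k}=\Delta t\,\sigma'(t_k)/\sigma(t_k)+O((k+1)^2\Delta t^2)$.

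This is not a weakness of your method: the claimed bound is false at $j=k$. Take $\sigma(t)=e^{t}$. Then $a_{k,k}/b_{k,k}=2\Delta t/(1-e^{-2\Delta t})=1+\Delta t+O(\Delta t^2)$ for every $k$. In particular $\rho(a_0,b_0)=|a_{0,0}-b_{0,0}|=\Delta t+O(\Delta t^2)$, which is not $O(\Delta t^2)$. The paper's proof has the same hole, hidden in the step ``integrating over $[t_{k-j-1},t_{k-j}]$''. For $j=k$ that interval is $[-\Delta t,0]$, and integrating over it does not produce $A_0$.

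So your fallback is the right conclusion, but it should be stated as a correction of the lemma rather than a weakening of your proof. What actually holds is the following.
\begin{itemize}
\item For $0\le j\le k-1$: $|a_{k,j}-b_{k,j}|\le c(j+1)^2\Delta t^2$. This is all Lemma \ref{lem:-22} uses, since $P_k(t,x)$ involves only $x_0,\dots,x_{k-1}$.
\item At $j=k$: only $|a_{k,k}-b_{k,k}|\le c(\Delta t+(k+1)^2\Delta t^2)$. This is the estimate that really enters (\ref{eq:-269}) in Corollary \ref{cor:-3}.
\end{itemize}
The weaker bound at $j=k$ is harmless. By Lemma \ref{lem:-23} with $m=0$, $n=1$, the extra $\Delta t$ term adds at most $c\lambda^{-4}\Delta t^2(1-\lambda)^{-1}\le c\lambda^{-4}\delta(\Delta t,\lambda)^2$ to (\ref{eq:-271}), so the corollary survives.

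For $0\le j\le k-1$, your route differs mildly from the paper's. The paper expands $\sigma(t)^2$ linearly about $t_k$ and integrates cell by cell to compare $A_{k-j}/\sigma(t_k)^2$ with $\exp(-(2j+1)\Delta t\,\sigma'(t_k)/\sigma(t_k))$. It then treats $A_{k+1}$ as the case $j=-1$ and divides, using $\Delta t<2^{-1/2}L_{\sigma,T}^{-1}$ to control the denominator. Your passage to $g=\log\sigma^2$ with the midpoint rule turns the ratio into a difference. It also makes the smallness condition on $\Delta t$ unnecessary, because $A_m e^{-g(s_m)}$ always lies between $(\sigma_\ast/\sigma^\ast)^2$ and its reciprocal.
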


\begin{proof}
Note that $\Vert\sigma^{\prime}\sigma\Vert_{C^{0,1}}\le\Vert\sigma^{\prime}\Vert_{C^{0,1}}\Vert\sigma\Vert_{L^{\infty}}+\Vert\sigma^{\prime}\Vert_{L^{\infty}}^{2}$.
By Taylor's theorem, 
\[
|\sigma(t)^{2}-\sigma(t_{k})^{2}-2\sigma^{\prime}(t_{k})\sigma(t_{k})(t-t_{k})|\le(\Vert\sigma^{\prime}\Vert_{C^{0,1}}\sigma^{\ast}+\Vert\sigma^{\prime}\Vert_{L^{\infty}}^{2})(t-t_{k})^{2},
\]
Integrating the above over $[t_{k-j-1},t_{k-j}]$ gives that
\[
|A_{k-j}\Delta t-\sigma(t_{k})^{2}\Delta t+(2j+1)\sigma^{\prime}(t_{k})\sigma(t_{k})\Delta t^{2}|\le(\Vert\sigma^{\prime}\Vert_{C^{0,1}}\sigma^{\ast}+\Vert\sigma^{\prime}\Vert_{L^{\infty}}^{2})\frac{3j^{2}+3j+1}{3}\Delta t^{3},
\]
Therefore, by $\sigma(t_{k})\ge\sigma_{\ast}$,
\begin{equation}
\Big|\frac{A_{k-j}}{\sigma(t_{k})^{2}}-1+(2j+1)\frac{\sigma^{\prime}(t_{k})}{\sigma(t_{k})}\Delta t\Big|\le(\Vert\sigma^{\prime}\Vert_{C^{0,1}}\sigma^{\ast}+\Vert\sigma^{\prime}\Vert_{L^{\infty}}^{2})(j+2)^{2}\Delta t^{2}.\label{eq:-52}
\end{equation}
Note that
\begin{equation}
\Big|(2j+1)\frac{\sigma^{\prime}(t_{k})}{\sigma(t_{k})}\Delta t\Big|\le3T\Big|\frac{\sigma^{\prime}(t_{k})}{\sigma(t_{k})}\Big|\le3T\Vert\sigma^{\prime}\Vert_{L^{\infty}}/\sigma_{\ast}.\label{eq:-61}
\end{equation}
Applying $|e^{x}-1-x|\le x^{2}\sum_{k=2}^{\infty}|x|^{k-2}/k!\le x^{2}e^{|x|}$
to $x=-(2j+1)\Delta t\sigma^{\prime}(t_{k})/\sigma(t_{k})$ gives
that
\begin{equation}
\begin{aligned} & \Big|\exp\Big(-(2j+1)\frac{\sigma^{\prime}(t_{k})}{\sigma(t_{k})}\Delta t\Big)-1+(2j+1)\frac{\sigma^{\prime}(t_{k})}{\sigma(t_{k})}\Delta t\Big|\\
 & \le(2j+1)^{2}\Big|\frac{\sigma^{\prime}(t_{k})}{\sigma(t_{k})}\Big|^{2}\Delta t^{2}\exp\Big((2j+1)\Big|\frac{\sigma^{\prime}(t_{k})}{\sigma(t_{k})}\Big|\Delta t\Big)\\
 & \le4(j+2)^{2}\frac{\Vert\sigma^{\prime}\Vert_{L^{\infty}}^{2}}{\sigma_{\ast}^{2}}e^{3T\Vert\sigma^{\prime}\Vert_{L^{\infty}}/\sigma_{\ast}}\Delta t^{2}\\
 & \le4e^{3(1+T)\Vert\sigma^{\prime}\Vert_{L^{\infty}}/\sigma_{\ast}}(j+1)^{2}\Delta t^{2}.
\end{aligned}
\label{eq:-53}
\end{equation}
Combining (\ref{eq:-52}) and (\ref{eq:-53}) gives 
\begin{equation}
\Big|\frac{A_{k-j}}{\sigma(t_{k})^{2}}-\exp\Big(-(2j+1)\frac{\sigma^{\prime}(t_{k})}{\sigma(t_{k})}\Delta t\Big)\Big|\le L_{\sigma,T}(j+2)^{2}\Delta t^{2},\label{eq:-59}
\end{equation}
where $L_{\sigma,T}$ is given by (\ref{eq:-58}). Or equivalently,
\begin{equation}
\frac{A_{k-j}}{\sigma(t_{k})^{2}}=\exp\Big(-(2j+1)\frac{\sigma^{\prime}(t_{k})}{\sigma(t_{k})}\Delta t\Big)+\theta_{k,j}(j+2)^{2}\Delta t^{2}\label{eq:-261}
\end{equation}
with
\begin{equation}
|\theta_{k,j}|\le L_{\sigma,T}.\label{eq:-260}
\end{equation}
Applying (\ref{eq:-59}) to $j=-1$ gives
\begin{equation}
\Big|\frac{A_{k+1}}{\sigma(t_{k})^{2}}-\exp\Big(\frac{\sigma^{\prime}(t_{k})}{\sigma(t_{k})}\Delta t\Big)\Big|\le L_{\sigma,T}\Delta t^{2},\label{eq:-60}
\end{equation}
or equivalently,
\[
\frac{A_{k+1}}{\sigma(t_{k})^{2}}=\exp\Big(\frac{\sigma^{\prime}(t_{k})}{\sigma(t_{k})}\Delta t\Big)(1-\alpha_{k}\Delta t^{2}),
\]
with
\begin{equation}
|\alpha_{k}|\le\exp\Big(-\frac{\sigma^{\prime}(t_{k})}{\sigma(t_{k})}\Delta t\Big)L_{\sigma,T}\le e^{3T\Vert\sigma^{\prime}\Vert_{L^{\infty}}/\sigma_{\ast}}L_{\sigma,T}\le L_{\sigma,T}^{2},\label{eq:-234}
\end{equation}
where the second inequality is due to (\ref{eq:-61}). When $\Delta t<2^{-1/2}L_{\sigma,T}^{-1}$,
we have $|\alpha_{k}|\Delta t^{2}<1/2$. Consequently, by (\ref{eq:-261}),
\[
\begin{aligned} & \Big|\frac{A_{k-j}}{A_{k+1}}-\exp\Big(-2(j+1)\frac{\sigma^{\prime}(t_{k})}{\sigma(t_{k})}\Delta t\Big)\Big|\\
 & \le\exp\Big(-2(j+1)\frac{\sigma^{\prime}(t_{k})}{\sigma(t_{k})}\Delta t\Big)|(1-\alpha_{k}\Delta t^{2})^{-1}-1|+|\theta_{k,j}|(j+2)^{2}\Delta t^{2}\exp\Big(-\frac{\sigma^{\prime}(t_{k})}{\sigma(t_{k})}\Delta t\Big)(1-\alpha_{k}\Delta t^{2})^{-1}\\
 & \le L_{\sigma,T}\frac{|\alpha_{k}|\Delta t^{2}}{1-\alpha_{k}\Delta t^{2}}+L_{\sigma,T}^{2}(j+2)^{2}\Delta t^{2}(1-\alpha_{k}\Delta t^{2})^{-1}\\
 & \le10(1+T)^{2}L_{\sigma,T}^{3}(j+1)^{2}\Delta t^{2},
\end{aligned}
\]
where the second inequality is due to (\ref{eq:-61}) and (\ref{eq:-260}),
and the last inequality is due to (\ref{eq:-234}) and $|\alpha_{k}|\Delta t^{2}<1/2$.
Therefore, we see that 
\[
|a_{k,j}-b_{k,j}|\le10(1+T)^{2}L_{\sigma,T}^{3}(j+1)^{2}\Delta t^{2},
\]
which implies (\ref{eq:-268}) readily.
\end{proof}
\begin{cor}
\label{cor:-3}Let
\begin{equation}
\beta_{k}=\exp[-2\sigma^{\prime}(t_{k})/\sigma(t_{k})],\label{eq:-233}
\end{equation}
and $L_{\sigma,T}$ be the constant given by (\ref{eq:-58}). For
any $1\le n\le N$, when $\Delta t<2^{-1/2}L_{\sigma,T}^{-1}$, we
have
\begin{equation}
\begin{aligned}\big|\mathbb{E}[\langle\log I\rangle_{t_{n}}]-s_{n}^{2}(\lambda,\sigma)\big| & \le c_{T,\sigma_{\ast},\sigma^{\ast},\Vert\sigma^{\prime}\Vert_{L^{\infty}},\Vert\sigma^{\prime}\Vert_{C^{0,1}}}\lambda^{-8}\delta(\Delta t,\lambda)^{2}[1+\delta(\Delta t,\lambda)].\end{aligned}
\label{eq:-262}
\end{equation}
where 
\begin{equation}
s_{n}^{2}(\lambda,\sigma)=\frac{\bar{\sigma}^{2}\Delta t}{2(1-\lambda)}\sum_{k=0}^{n-1}\int_{0}^{\infty}e^{-t(1-\lambda)^{-1}\beta_{k}^{(k+1)\Delta t}\lambda^{k}v_{0}\sigma(0)^{-2}/2}\prod_{j=0}^{k-1}(1+t\beta_{k}^{(j+1)\Delta t}\lambda^{j})^{-1/2}dt,\label{eq:-263}
\end{equation}
and $c_{T,\sigma_{\ast},\sigma^{\ast},\Vert\sigma^{\prime}\Vert_{L^{\infty}},\Vert\sigma^{\prime}\Vert_{C^{0,1}}}$
is a constant depending only on $T$, $\sigma_{\ast}$, $\sigma^{\ast}$,
$\Vert\sigma^{\prime}\Vert_{L^{\infty}}$ and $\Vert\sigma^{\prime}\Vert_{C^{0,1}}$.
\end{cor}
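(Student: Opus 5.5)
The starting point is the exact representation (\ref{eq:-57}):
\[
\mathbb{E}[\langle\log I\rangle_{t_{n}}]=\frac{\bar{\sigma}^{2}\Delta t}{2(1-\lambda)}\sum_{k=0}^{n-1}\int_{0}^{\infty}e^{-t(1-\lambda)^{-1}\lambda^{k}\theta_{k}}P_{k}(t,a_{k})\,dt,
\qquad \theta_{k}=\frac{v_{0}}{2A_{k+1}},\quad a_{k,j}=\frac{A_{k-j}}{A_{k+1}} .
\]
The plan is to replace each $a_{k,j}$ by $b_{k,j}=\beta_{k}^{(j+1)\Delta t}$ and each $\theta_{k}$ by $\beta_{k}^{(k+1)\Delta t}v_{0}\sigma(0)^{-2}/2$, and to show that the total error of both replacements is $\mathrm{O}(\lambda^{-8}\delta^{2}(1+\delta))$. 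Here $\delta=\delta(\Delta t,\lambda)$ and $P_{k}(t,x)$ is as in (\ref{eq:-29}).

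\textbf{Step 1: the product.} I would apply Lemma \ref{lem:-22} to the pair $(a_{k},b_{k})$, with $\theta_{k}$ kept as the exponential weight. The hypotheses need checking.
\begin{itemize}
\item Since $\sigma_{\ast}\le\sigma\le\sigma^{\ast}$, we have $a_{k,j}\ge(\sigma_{\ast}/\sigma^{\ast})^{2}$, so (\ref{eq:-51}) holds for $a_{k}$ with $\gamma=0$ and $\kappa=(\sigma_{\ast}/\sigma^{\ast})^{2}$.
\item For $b_{k}$, we have $b_{k,j}\ge e^{-2(j+1)\Delta t\Vert\sigma'\Vert_{L^\infty}/\sigma_\ast}$. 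Because $j\Delta t\le T$, this is bounded below by a constant depending only on $T$ and $\sigma$. So again $\gamma=0$ works, with $\kappa$ depending on $T,\sigma_{\ast},\Vert\sigma'\Vert_{L^\infty}$.
\item $\theta_{k}\ge v_{0}/(2(\sigma^{\ast})^{2})$ is bounded below.
\end{itemize}
Lemma \ref{lem:-22} then bounds the error by
\[
c\,(1-\lambda)^{-2}\lambda^{-4}(1+\delta)\sup_{k}\rho(a_{k},b_{k}).
\]
Lemma \ref{lem:-24} gives $\sup_{k}\rho(a_{k},b_{k})\le c\Delta t^{2}$ once $\Delta t<2^{-1/2}L_{\sigma,T}^{-1}$. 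The product is therefore $\mathrm{O}(\lambda^{-4}\delta^{2}(1+\delta))$. Note that the prefactor $\Delta t/(1-\lambda)$ in (\ref{eq:-49}) already matches the prefactor in (\ref{eq:-57}), which is why one power of $\delta$ is absorbed there.

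\textbf{Step 2: the exponent.} Next I would replace $\theta_{k}$, leaving the product $P_{k}(t,b_{k})$ fixed. First compare $A_{k+1}^{-1}$ with $\sigma(0)^{-2}\beta_{k}^{(k+1)\Delta t}$. Write
\[
\log\frac{\sigma(0)^{2}}{A_{k+1}}=-2\log\frac{\sigma(t_{k})}{\sigma(0)}+\mathrm{O}(\Delta t).
\]
Compare this with $-2(k+1)\Delta t\,\sigma'(t_{k})/\sigma(t_{k})$ by a Taylor expansion of $\log\sigma$ on $[0,t_{k}]$; the $C^{1,1}$ assumption controls the remainder. This comparison is only accurate to order $(k\Delta t)^{2}$, not $\Delta t^{2}$. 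The saving comes from the factor $\lambda^{k}$ that accompanies the exponent. By the mean value theorem together with $|e^{-x}-e^{-y}|\le|x-y|e^{-\min(x,y)}$, the error is dominated by
\[
\frac{\Delta t}{1-\lambda}\sum_{k}\lambda^{k}(1-\lambda)^{-1}\bigl|\theta_{k}-\tilde\theta_{k}\bigr|\int_{0}^{\infty}e^{-ct(1-\lambda)^{-1}\lambda^{k}}\,t\,P_{k}(t,b_{k})\,dt .
\]
I would bound $P_{k}(t,b_{k})\le P_{k,\lambda}(\kappa t)$ and then apply Lemma \ref{lem:-23} with $m=2$, $n=1$, using $|\theta_{k}-\tilde\theta_{k}|\le c(k+1)^{2}\Delta t^{2}$. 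Lemma \ref{lem:-23} gives $\sum_{k}k^{2}\lambda^{k}\int\cdots\le c\lambda^{-4}(1-\lambda)^{-1}$, so the total is $\mathrm{O}(\lambda^{-4}\delta^{2})$.

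\textbf{Main obstacle.} The delicate point is Step 2. The naive bound $|\theta_{k}-\tilde\theta_{k}|=\mathrm{O}((k\Delta t)^{2})$, summed against the weights, must still yield $\delta^{2}$ rather than something like $T\Delta t$. This works only if the $\lambda^{k}$ factors carried through by Lemma \ref{lem:-23} exactly compensate the $k^{2}$ growth, and the $(1-\lambda)$ powers have to be tracked carefully. If the $(1-\lambda)$ exponents do not close, my fallback is to split the sum at $k\sim\delta^{-1}\log\delta^{-1}$. For $k$ beyond the split, $\lambda^{k}$ is already smaller than $\delta^{2}$ and crude bounds suffice. For $k$ below the split, $(k\Delta t)^{2}$ is at most $\delta^{2}\log^{2}$, and a sharper local expansion of $\sigma$ near $0$ would remove the logarithm.

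Combining Steps 1 and 2, and collecting powers of $\lambda$ into $\lambda^{-8}$, gives (\ref{eq:-262}).
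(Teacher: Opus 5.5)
Your proposal is essentially the paper's proof. The same triangle-inequality split is used: the product error is handled by Lemma~\ref{lem:-22} with Lemma~\ref{lem:-24}, and the exponent error by $|e^{-x}-e^{-y}|\le e^{-\min(x,y)}|x-y|$ with Lemma~\ref{lem:-23}; you differ only in the choice of intermediate term and in the cleaner choice $\gamma=0$ in Lemma~\ref{lem:-22}, which gives $\lambda^{-4}$ rather than $\lambda^{-8}$.

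The worry in your last paragraph is unfounded, so the splitting fallback is unnecessary. Lemma~\ref{lem:-23} with $m=2$, $n=1$ makes the $(k+1)^{2}\Delta t^{2}$ part of the exponent error $\mathrm{O}(\lambda^{-4}\delta(\Delta t,\lambda)^{3})$, which is actually better than the $\delta^{2}$ you claimed. Your own expansion also produces an $\mathrm{O}(\Delta t)$ term in $|\theta_{k}-\tilde\theta_{k}|$, which you then dropped. This term is genuinely present for small $k$, e.g.\ $k=0$, and the paper's (\ref{eq:-269}) glosses over it too. It is harmless, however: by Lemma~\ref{lem:-23} with $m=0$ it contributes only $\mathrm{O}(\lambda^{-4}\delta(\Delta t,\lambda)^{2}(1-\lambda))$.
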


\begin{proof}
Let $a_{k,j}=A_{k-j}/A_{k+1}$ and $b_{k,j}=\beta_{k}^{(j+1)\Delta t}$
for $0\le j\le k$, and $a_{k,j}=b_{k,j}=0$ for $j>k$. Clearly,
$a_{k,j}\ge(\sigma_{\ast}/\sigma^{\ast})^{2}$ and $b_{k,j}\ge e^{-2(j+1)T\Vert\sigma^{\prime}\Vert_{L^{\infty}}/\sigma^{\ast}}$
for $j\le k$. Let 
\[
\kappa=\min[(\sigma_{\ast}/\sigma^{\ast})^{2},e^{-2T\Vert\sigma^{\prime}\Vert_{L^{\infty}}/\sigma^{\ast}}]
\]
and 
\[
\gamma=e^{-2T\Vert\sigma^{\prime}\Vert_{L^{\infty}}/\sigma^{\ast}}.
\]
Then $\{a_{k}\}_{k}$ and $\{b_{k}\}_{k}$ satisfy the condition (\ref{eq:-51}).
By Lemma \ref{lem:-24}, 
\begin{equation}
\rho(a_{k},b_{k})\le c_{T,\sigma}\Delta t^{2}.\label{eq:-267}
\end{equation}
where $c_{T,\sigma}$ is a constant depending only on $T$, $\sigma_{\ast}$,
$\sigma^{\ast}$, $\Vert\sigma^{\prime}\Vert_{L^{\infty}}$ and $\Vert\sigma^{\prime}\Vert_{C^{0,1}}$.In
particular,
\begin{equation}
\begin{aligned}|A_{k+1}^{-1}-\beta_{k}^{(k+1)\Delta t}\sigma(0)^{-2}| & =\sigma(0)^{-2}|a_{k}-b_{k}|\le c_{T,\sigma}(k+1)^{2}\Delta t^{2}.\end{aligned}
\label{eq:-269}
\end{equation}
Let $\theta_{\sigma}=(\sigma_{\ast}/\sigma^{\ast})^{2}(1+\sigma^{\ast})^{-2}e^{-2T\Vert\sigma^{\prime}\Vert_{L^{\infty}}/\sigma^{\ast}}<1$.
Then $A_{k+1}^{-1}\ge(\sigma^{\ast})^{-2}\ge\theta_{\sigma}$, and
\[
\beta_{k}^{(k+1)\Delta t}\ge e^{-(k+1)\Delta t\Vert\sigma^{\prime}\Vert_{L^{\infty}}/\sigma^{\ast}}\ge e^{-2T\Vert\sigma^{\prime}\Vert_{L^{\infty}}/\sigma^{\ast}}\ge\sigma(0)^{2}\theta_{\sigma}.
\]
Denote 
\[
I_{k}=\int_{0}^{\infty}\big|e^{-t(1-\lambda)^{-1}\lambda^{k}A_{k+1}^{-1}v_{0}/2}-e^{-t(1-\lambda)^{-1}\beta_{k}^{(k+1)\Delta t}\lambda^{k}v_{0}\sigma(0)^{-2}/2}\big|\prod_{j=0}^{k-1}(1+t\lambda^{j}a_{k,j})^{-1/2}dt.
\]
By $|e^{-x}-e^{-y}|\le e^{-\min(x,y)}|x-y|$ and $\beta_{k}^{(k+1)\Delta t}\sigma(0)^{-2}\ge\theta_{\sigma}$
and $A_{k+1}^{-1}\ge\theta_{\sigma}$, we have
\[
\begin{aligned}I_{k} & \le v_{0}(1-\lambda)^{-1}\lambda^{k}|A_{k+1}^{-1}-\beta_{k}^{(k+1)\Delta t}\sigma(0)^{-2}|\int_{0}^{\infty}e^{-t(1-\lambda)^{-1}\lambda^{k}\theta_{\sigma}v_{0}/2}t\prod_{j=0}^{k-1}(1+t\lambda^{j}a_{k,j})^{-1/2}dt\\
 & \le c_{\sigma,v_{0}}\theta_{\sigma}^{-2}(1-\lambda)^{-1}\Delta t^{2}(k+1)^{2}\lambda^{k}\int_{0}^{\infty}e^{-t(1-\lambda)^{-1}\lambda^{k}v_{0}/2}t\prod_{j=0}^{k-1}(1+t\lambda^{j})^{-1/2}dt,
\end{aligned}
\]
where the last inequality is due to $a_{k,j}\le(\sigma_{\ast}/\sigma^{\ast})^{2}\le\theta_{\sigma}$
and a simple change of variable. Therefore, by Lemma \ref{lem:-23},
\begin{equation}
\frac{\Delta t}{1-\lambda}\sum_{k=0}^{n-1}I_{k}\le c_{\sigma,v_{0}}\theta_{\sigma}^{-2}\lambda^{-4}(1-\lambda)^{-3}\Delta t^{3}\le c_{T,\sigma,v_{0}}\lambda^{-4}\delta(\Delta t,\lambda)^{3}.\label{eq:-271}
\end{equation}
Moreover, by (\ref{eq:-267}) and Lemma \ref{lem:-22} with $\theta_{k}=\beta_{k}^{(k+1)\Delta t}v_{0}\sigma(0)^{-2}/2\ge\theta_{\sigma}v_{0}/2$,
we see that 
\begin{equation}
\frac{\Delta t}{1-\lambda}\sum_{k=0}^{n-1}I_{k}^{\prime}\le c_{T,\sigma,v_{0}}\lambda^{-8}\delta(\Delta t,\lambda)^{2}[1+\delta(\Delta t,\lambda)],\label{eq:-270}
\end{equation}
where
\[
I_{k}^{\prime}=\int_{0}^{\infty}e^{-t(1-\lambda)^{-1}\lambda^{k}\beta_{k}^{(k+1)\Delta t}v_{0}\sigma(0)^{-2}/2}\Big|\prod_{j=0}^{k-1}(1+t\lambda^{j}a_{k,j})^{-1/2}-\prod_{j=0}^{k-1}(1+t\lambda^{j}b_{k,j})^{-1/2}\Big|dt.
\]
Combining (\ref{eq:-271}) and (\ref{eq:-270}) completes the proof.
\end{proof}
In view of Corollary \ref{cor:-3}, the first order approximation
of $\mathbb{E}[\langle\log I\rangle_{t_{n}}]$ in terms of $\delta(\Delta t,\lambda)$
is the same as that of $s_{n}^{2}(\lambda,\sigma)$. As will be seen
in the proof of Proposition \ref{prop:}, by writing 
\[
\prod_{j=0}^{k-1}(1+t\beta_{k}^{(j+1)\Delta t}\lambda^{j})^{-1/2}=\prod_{j=0}^{k-1}[1+t\beta_{k}^{\Delta t}(\beta_{k}^{\Delta t}\lambda)^{j}]^{-1/2},
\]
the factor $\beta_{k}^{(j+1)\Delta t}$ can be moved out from the
integrand by applying Taylor's theorem to $\int_{0}^{\infty}e^{-t\theta(1-\lambda)^{-1}\lambda^{k}}\prod_{j=0}^{k-1}(1+t\lambda^{j})^{-1/2}dt$
as a function of $\lambda$. 

We now turn to the proof of Proposition \ref{prop:} for the discrete
correction.
\begin{proof}[Proof of Proposition \ref{prop:}]
It suffices to prove (\ref{eq:-7}) for $\tau=t_{n}=n\Delta t$ satisfying
$t_{n}\ge\tau_{\ast}$. Throughout the proof, we denote by $c_{q_{1},\dots,q_{m},\sigma}$
a constant depending only on $q_{1},\dots,q_{m}$ and $\sigma_{\ast}$,
$\sigma^{\ast}$, $\Vert\sigma^{\prime}\Vert_{L^{\infty}}$ and $\Vert\sigma^{\prime}\Vert_{C^{0,1}}$,
and the value of $c_{q_{1},\dots,q_{m},\sigma}$ may vary at different
appearances. Let $s_{n}^{2}(\lambda,\sigma)$ be given by (\ref{eq:-263}).
When $\delta(\Delta t,\lambda)\le2^{-1}\sigma_{\ast}/(1+\Vert\sigma^{\prime}\Vert_{L^{\infty}})$,
by $\log\lambda^{-1}\ge1-\lambda$, we have
\[
\beta_{k}^{\Delta t}\le e^{\Delta t\Vert\sigma^{\prime}\Vert_{L^{\infty}}/\sigma_{\ast}}=e^{\delta(\Delta t,\lambda)(1-\lambda)\Vert\sigma^{\prime}\Vert_{L^{\infty}}/\sigma_{\ast}}\le\lambda^{-1/2}.
\]
Similarly, we have $\beta_{k}^{\Delta t}\ge\lambda^{1/2}$. Therefore,
\begin{equation}
\lambda^{1/2}\le\beta_{k}^{\Delta t}\le\lambda^{-1/2}.\label{eq:-264}
\end{equation}
Let $V_{k}(\lambda,\theta)$ be the function defined by (\ref{eq:-5});
that is,
\[
V_{k}(\lambda,\theta)=\frac{1}{2(1-\lambda)}\int_{0}^{\infty}e^{-t\theta(1-\lambda)^{-1}\lambda^{k}/2}\prod_{j=0}^{k-1}(1+t\lambda^{j})^{-1/2}dt.
\]
It is easily seen by a change of variable that 
\[
s_{n}^{2}(\lambda,\sigma)=\bar{\sigma}^{2}\Delta t\sum_{k=0}^{n-1}\frac{\beta_{k}^{-\Delta t}-\lambda}{1-\lambda}V_{k}\Big(\beta_{k}^{\Delta t}\lambda,\frac{1-\beta_{k}^{\Delta t}\lambda}{1-\lambda}\theta_{0}\Big),
\]
where $\theta_{0}=v_{0}\sigma(0)^{-2}$. Let 
\begin{equation}
\phi(x,\lambda)=\Big(x\lambda,\frac{1-x\lambda}{1-\lambda}\theta_{0}\Big).\label{eq:-265}
\end{equation}
Applying Taylor's theorem to the function 
\begin{equation}
f_{k}(x)=\frac{x^{-1}-\lambda}{1-\lambda}V_{k}(\phi(x,\lambda))\label{eq:-266}
\end{equation}
at $x=1$ gives 
\begin{equation}
\bar{\sigma}^{-2}s_{n}^{2}(\lambda,\sigma)=\Delta t\sum_{k=0}^{n-1}f_{k}(1)+\Delta t\sum_{k=0}^{n-1}f_{k}^{\prime}(1)(\beta_{k}^{\Delta t}-1)+\Delta t\sum_{k=0}^{n-1}\int_{1}^{\beta_{k}^{\Delta t}}f_{k}^{\prime\prime}(x)(\beta_{k}^{\Delta t}-x)dx.\label{eq:-289}
\end{equation}

For the first sum in the right-hand side of (\ref{eq:-289}), by (\ref{eq:-4}),
we have
\begin{equation}
\begin{aligned}\Delta t\sum_{k=0}^{n-1}f_{k}(1) & =\Delta t\sum_{k=0}^{n-1}V_{k}(\lambda,\theta_{0})=V(\lambda)n\Delta t+\Delta t\sum_{k=0}^{n-1}[V_{k}(\lambda,\theta_{0})-V(\lambda)]\\
 & =V(\lambda)t_{n}+Q_{V}(\lambda,\theta_{0})\Delta t+\Delta t\sum_{k=n}^{\infty}[V_{k}(\lambda,\theta_{0})-V(\lambda)].
\end{aligned}
\label{eq:-28}
\end{equation}
By Proposition \ref{prop:-9}, we have
\[
\begin{aligned}\Delta t\sum_{k=n}^{\infty}|V_{k}(\lambda,\theta_{0})-V(\lambda)| & \le c_{\lambda_{\ast},\theta_{0}}\lambda^{n}\delta(\Delta t,\lambda).\end{aligned}
\]
Note that, when $t_{n}\ge\tau_{\ast}$, we have
\[
t_{n}\Delta t^{-1}\log\lambda^{-1}=\frac{t_{n}}{\delta(\Delta t,\lambda)}\frac{\log\lambda^{-1}}{1-\lambda}\ge\frac{t_{n}\lambda}{\delta(\Delta t,\lambda)}\ge\log\delta(\Delta t,\lambda)^{-1},
\]
and therefore, $\lambda^{n}=e^{-t_{n}\Delta t^{-1}\log\lambda^{-1}}\le\delta(\Delta t,\lambda)$,
which implies that
\begin{equation}
\begin{aligned}\Delta t\sum_{k=n}^{\infty}|V_{k}(\lambda,\theta_{0})-V(\lambda)| & \le c_{\lambda_{\ast},\theta_{0}}\delta(\Delta t,\lambda)^{2}.\end{aligned}
\label{eq:-257}
\end{equation}
It follows from (\ref{eq:-28}) and (\ref{eq:-257}) that
\begin{equation}
\Delta t\sum_{k=0}^{n-1}f_{k}(1)=V(\lambda)t_{n}+Q_{V}(\lambda,\theta_{0})\Delta t+\text{O}(\delta(\Delta t,\lambda)^{2}).\label{eq:-230}
\end{equation}

For the second sum on the right-hand side of (\ref{eq:-289}), by
Taylor's theorem, 
\begin{equation}
\beta_{k}^{\Delta t}-1=-\frac{2\sigma^{\prime}(t_{k})}{\sigma(t_{k})}\Delta t+r_{k}\Delta t^{2},\label{eq:-297}
\end{equation}
with $|r_{k}|\le c_{\sigma}$. It is easily seen that
\[
\begin{aligned}f_{k}^{\prime}(1) & =-(1-\lambda)^{-1}V_{k}(\lambda,\theta_{0})+\lambda\partial_{\lambda}V_{k}(\lambda,\theta_{0})-\lambda\theta_{0}(1-\lambda)^{-1}\partial_{\theta}V_{k}(\lambda,\theta_{0}).\end{aligned}
\]
Therefore, 
\[
\begin{aligned}\Delta t\sum_{k=0}^{n-1}f_{k}^{\prime}(1)(\beta_{k}^{\Delta t}-1) & =E_{1}+E_{2}+E_{3}+E_{4},\end{aligned}
\]
where
\begin{align*}
E_{1} & =[-(1-\lambda)^{-1}V(\lambda)+\lambda V^{\prime}(\lambda)]\Delta t\sum_{k=0}^{n-1}(\beta_{k}^{\Delta t}-1),\\
E_{2} & =-(1-\lambda)^{-1}\Delta t\sum_{k=0}^{n-1}[V_{k}(\lambda,\theta_{0})-V(\lambda)](\beta_{k}^{\Delta t}-1),\\
E_{3} & =\lambda\Delta t\sum_{k=0}^{n-1}[\partial_{\lambda}V_{k}(\lambda,\theta_{0})-V^{\prime}(\lambda)](\beta_{k}^{\Delta t}-1),\\
E_{4} & =-\lambda\theta_{0}(1-\lambda)^{-1}\Delta t\sum_{k=0}^{n-1}\partial_{\theta}V_{k}(\lambda,\theta_{0})(\beta_{k}^{\Delta t}-1).
\end{align*}
For $E_{1}$, by (\ref{eq:-297}), we have
\begin{equation}
\begin{aligned}E_{1} & =[(1-\lambda)^{-1}V(\lambda)-\lambda V^{\prime}(\lambda)]\int_{0}^{t_{n}}\frac{2\sigma^{\prime}(t)}{\sigma(t)}dt+\text{O}(\Delta t^{2})\\
 & =2[(1-\lambda)^{-1}V(\lambda)-\lambda V^{\prime}(\lambda)]\log\frac{\sigma(t_{n})}{\sigma(0)}+\text{O}(\Delta t^{2}).
\end{aligned}
\label{eq:-298}
\end{equation}
For $E_{2}$, since $|\beta_{k}^{\Delta t}-1|\le c_{\sigma}\Delta t$,
by Proposition \ref{prop:-9}, we have
\begin{equation}
\begin{aligned}|E_{2}| & \le c_{\sigma}(1-\lambda)^{-1}\Delta t^{2}\sum_{k=0}^{\infty}|V_{k}(\lambda,\theta_{0})-V(\lambda,\theta_{0})|\\
 & \le c_{\lambda_{\ast},\sigma}\delta(\Delta t,\lambda)^{2}.
\end{aligned}
\label{eq:-299}
\end{equation}
Similarly, by Proposition \ref{prop:-8} and Proposition \ref{prop:-11},
we have
\begin{equation}
|E_{3}|\le c_{\lambda_{\ast},\sigma}\delta(\Delta t,\lambda)^{2},\label{eq:-300}
\end{equation}
\begin{equation}
|E_{4}|\le c_{\lambda_{\ast},\sigma}\delta(\Delta t,\lambda)^{2}.\label{eq:-301}
\end{equation}
Combining (\ref{eq:-298}), (\ref{eq:-299}), (\ref{eq:-300}) and
(\ref{eq:-301}) gives
\begin{equation}
\Delta t\sum_{k=0}^{n-1}f_{k}^{\prime}(1)(\beta_{k}^{\Delta t}-1)=2[(1-\lambda)^{-1}V(\lambda)-\lambda V^{\prime}(\lambda)]\log\frac{\sigma(t_{n})}{\sigma(0)}+\text{O}(\delta(\Delta t,\lambda)^{2}).\label{eq:-302}
\end{equation}

It remains to show that 
\begin{equation}
\Delta t\sum_{k=0}^{n-1}\Big|\int_{1}^{\beta_{k}^{\Delta t}}f_{k}^{\prime\prime}(x)(\beta_{k}^{\Delta t}-x)dx\Big|\le c_{\lambda_{\ast},\theta_{0},\sigma,T}\delta(\Delta t,\lambda)^{2}.\label{eq:-287}
\end{equation}
It is easily seen that
\begin{equation}
\begin{aligned}f^{\prime\prime}(x) & =\frac{2x^{-3}}{1-\lambda}V_{k}(\phi)-\frac{2x^{-2}\lambda}{1-\lambda}\Big(\partial_{\lambda}V_{k}(\phi)-\frac{\theta_{0}}{1-\lambda}\partial_{\theta}V_{k}(\phi)\Big)\\
 & \quad+\frac{x^{-1}-\lambda}{1-\lambda}\Big(\lambda^{2}\partial_{\lambda}^{2}V_{k}(\phi)-\frac{2\lambda^{2}\theta_{0}}{1-\lambda}\partial_{\lambda}\partial_{\theta}V_{k}(\phi)+\frac{\lambda^{2}\theta_{0}^{2}}{(1-\lambda)^{2}}\partial_{\theta}^{2}V_{k}(\phi)\Big),
\end{aligned}
\label{eq:-288}
\end{equation}
where we denote $V_{k}(\phi)=V_{k}(\phi(x,\lambda))$ to simplify
notation. Note that, by (\ref{eq:-264}), we have
\[
\frac{1}{2}\le\frac{1-\beta_{k}^{\Delta t}\lambda}{1-\lambda}\le1.
\]
Therefore, when $x$ is between $1$ and $\beta_{k}^{\Delta t}$,
we have
\begin{equation}
\lambda^{1/2}\le x\le\lambda^{-1/2},\label{eq:-303}
\end{equation}
and therefore,
\begin{equation}
\theta_{0}/2\le\frac{1-x\lambda}{1-\lambda}\theta_{0}\le\theta_{0},\label{eq:-272}
\end{equation}
\begin{equation}
(1-\lambda)/2\le1-x\lambda\le1-\lambda,\label{eq:-273}
\end{equation}
and 
\begin{equation}
\Big|\frac{x^{-1}-\lambda}{1-\lambda}\Big|\le\frac{\lambda^{-1/2}-\lambda}{1-\lambda}\le2\lambda_{\ast}^{-1/2}.\label{eq:-231}
\end{equation}
Moreover, by 
\[
e^{-\delta(\Delta t,\lambda)(1-\lambda)\Vert\sigma^{\prime}\Vert_{L^{\infty}}/\sigma^{\ast}}\le\beta_{k}^{\Delta t}\le e^{\delta(\Delta t,\lambda)(1-\lambda)\Vert\sigma^{\prime}\Vert_{L^{\infty}}/\sigma_{\ast}},
\]
and $\delta(\Delta t,\lambda)\le2^{-1}\sigma_{\ast}/(1+\Vert\sigma^{\prime}\Vert_{L^{\infty}})$,
we have
\begin{equation}
|\beta^{\Delta t}-1|\le2(1-\lambda)\delta_{1}(\Delta t,\lambda)\Vert\sigma^{\prime}\Vert_{L^{\infty}}/\sigma_{\ast}.\label{eq:-275}
\end{equation}

By Proposition \ref{prop:-9}, 
\[
\sum_{k=1}^{\infty}|V_{k}(\phi(x,\lambda))-V(x\lambda)|\le c_{0}(\theta_{0}+\theta_{0}^{-1})(x\lambda)^{-4}(1-x\lambda)^{-1}\le c_{\theta_{0},\lambda_{\ast}}(1-\lambda)^{-1},
\]
where $c_{\theta_{0}}$ is a constant depending only on $\theta_{0}$.
Consequently, 
\begin{equation}
\begin{aligned} & \Delta t\sum_{k=0}^{n-1}\Big|\int_{1}^{\beta_{k}^{\Delta t}}\frac{x^{-3}}{1-\lambda}V_{k}(\phi(x,\lambda))(\beta_{k}^{\Delta t}-x)dx\Big|\\
 & \le\Delta t\sum_{k=0}^{n-1}\Big|\int_{1}^{\beta_{k}^{\Delta t}}\frac{x^{-3}}{1-\lambda}[V_{k}(\phi(x,\lambda))-V(x\lambda)](\beta_{k}^{\Delta t}-x)dx\Big|+\Delta t\sum_{k=0}^{n-1}\Big|\int_{1}^{\beta_{k}^{\Delta t}}\frac{x^{-3}}{1-\lambda}V(x\lambda)(\beta_{k}^{\Delta t}-x)dx\Big|\\
 & \le c_{\sigma}\Delta t\lambda^{-3/2}\delta(\Delta t,\lambda)\Big(\int_{\lambda^{1/2}}^{\lambda^{-1/2}}\sum_{k=0}^{\infty}|V_{k}(\phi(x,\lambda))-V(x\lambda)|dx+\sum_{k=0}^{n-1}\Big|\int_{1}^{\beta_{k}^{\Delta t}}V(x\lambda)dx\Big|\Big)\\
 & \le c_{\lambda_{\ast},\theta_{0},\sigma}\Delta t\delta(\Delta t,\lambda)[(1-\lambda)^{-1}(\lambda^{-1/2}-\lambda^{1/2})+n(1-\lambda)\delta(\Delta t,\lambda)]\\
 & \le c_{\lambda_{\ast},\theta_{0},\sigma,T}\delta(\Delta t,\lambda)^{2},
\end{aligned}
\label{eq:-274}
\end{equation}
where the second inequality is due to $x\ge\lambda^{1/2}$ and $|\beta_{k}^{\Delta t}-x|\le|\beta_{k}^{\Delta t}-1|\le c_{\sigma}(1-\lambda)\delta(\Delta t,\lambda)$
for $x$ between $1$ and $\beta_{k}^{\Delta t}$, the third inequality
is due to (\ref{eq:-280}), and the last inequality follows from $n\Delta t\le T$.

Similarly, by Proposition \ref{prop:-8}, we have
\[
\sum_{k=1}^{\infty}|\partial V_{k}(\phi(x,\lambda))-V^{\prime}(x\lambda)|\le c_{\lambda_{\ast},\theta_{0}}(1-\lambda)^{-2}.
\]
Similar to (\ref{eq:-274}), we have
\begin{equation}
\begin{aligned} & \Delta t\sum_{k=0}^{n-1}\Big|\int_{1}^{\beta_{k}^{\Delta t}}\frac{x^{-2}}{1-\lambda}\partial_{\lambda}V_{k}(\phi(x,\lambda))(\beta_{k}^{\Delta t}-x)dx\Big|\\
 & \le c_{\sigma}\Delta t\lambda^{-1}\delta(\Delta t,\lambda)\Big(\int_{\lambda^{1/2}}^{\lambda^{-1/2}}\sum_{k=0}^{\infty}|\partial_{\lambda}V_{k}(\phi(x,\lambda))-V^{\prime}(\lambda)|dx+\sum_{k=0}^{n-1}\Big|\int_{1}^{\beta_{k}^{\Delta t}}V^{\prime}(x\lambda)dx\Big|\Big)\\
 & \le c_{\lambda_{\ast},\theta_{0},\sigma,T}\Delta t\delta(\Delta t,\lambda)[(1-\lambda)^{-2}(\lambda^{-1/2}-\lambda^{1/2})+n(1-\lambda)^{-1}(1-\lambda)\delta(\Delta t,\lambda)]\\
 & \le c_{\lambda_{\ast},\theta_{0},\sigma,T}\delta(\Delta t,\lambda)^{2},
\end{aligned}
\label{eq:-285}
\end{equation}
where the second inequality is due to (\ref{eq:-281}). 

By Proposition \ref{prop:-10}, 
\[
\sum_{k=1}^{\infty}|\partial_{\lambda}^{2}V_{k}(\phi(x,\lambda))-V^{\prime\prime}(x\lambda)|\le c_{\lambda_{\ast},\theta_{0}}(1-\lambda)^{-3}.
\]
Therefore, similar to (\ref{eq:-274}), we have
\begin{equation}
\begin{aligned} & \Delta t\sum_{k=0}^{n-1}\Big|\int_{1}^{\beta_{k}^{\Delta t}}\frac{x^{-1}-\lambda}{1-\lambda}\partial_{\lambda}^{2}V_{k}(\phi(x,\lambda))(\beta_{k}^{\Delta t}-x)dx\Big|\\
 & \le c_{\sigma}\Delta t(1-\lambda)\delta(\Delta t,\lambda)\Big(\int_{\lambda^{1/2}}^{\lambda^{-1/2}}\sum_{k=0}^{\infty}|\partial_{\lambda}^{2}V_{k}(\phi(x,\lambda))-V^{\prime\prime}(x\lambda)|dx+\sum_{k=0}^{n-1}\Big|\int_{1}^{\beta_{k}^{\Delta t}}V^{\prime\prime}(x\lambda)dx\Big|\Big)\\
 & \le c_{\lambda_{\ast},\theta_{0},\sigma}\Delta t(1-\lambda)\delta(\Delta t,\lambda)[(1-\lambda)^{-3}(\lambda^{-1/2}-\lambda^{1/2})+n(1-\lambda)\delta_{1}(\Delta t,\lambda)(1-\lambda)^{-2}]\\
 & \le c_{\lambda_{\ast},\theta_{0},\sigma,T}\delta(\Delta t,\lambda)^{2},
\end{aligned}
\label{eq:-284}
\end{equation}
where the first inequality is due to (\ref{eq:-231}) and (\ref{eq:-275}),
and the second inequality is due to (\ref{eq:-282}).

Similar to the above, using Proposition \ref{prop:-11}, it can be
easily shown that 
\begin{equation}
\Delta t\sum_{k=0}^{n-1}\Big|\int_{1}^{\beta_{k}^{\Delta t}}\frac{x^{-2}}{(1-\lambda)^{2}}\partial_{\theta}V_{k}(\phi(x,\lambda))(\beta_{k}^{\Delta t}-x)dx\Big|\le c_{\lambda_{\ast},\theta_{0},\sigma,T}\delta(\Delta t,\lambda)^{2},\label{eq:-286}
\end{equation}
\begin{equation}
\Delta t\sum_{k=0}^{n-1}\Big|\int_{1}^{\beta_{k}^{\Delta t}}\frac{x^{-1}-\lambda}{(1-\lambda)^{2}}\partial_{\lambda}\partial_{\theta}V_{k}(\phi(x,\lambda))(\beta_{k}^{\Delta t}-x)dx\Big|\le c_{\lambda_{\ast},\theta_{0},\sigma,T}\delta(\Delta t,\lambda)^{2},\label{eq:-304}
\end{equation}
and 
\begin{equation}
\Delta t\sum_{k=0}^{n-1}\Big|\int_{1}^{\beta_{k}^{\Delta t}}\frac{x^{-1}-\lambda}{(1-\lambda)^{3}}\partial_{\theta}^{2}V_{k}(\phi(x,\lambda))(\beta_{k}^{\Delta t}-x)dx\Big|\le c_{\lambda_{\ast},\theta_{0},\sigma,T}\delta(\Delta t,\lambda)^{2}.\label{eq:-17}
\end{equation}
Combining (\ref{eq:-288}), (\ref{eq:-274}), (\ref{eq:-285}), (\ref{eq:-284}),
(\ref{eq:-286}), (\ref{eq:-304}) and (\ref{eq:-17}) proves (\ref{eq:-287}).
\end{proof}

\section{\label{sec:}Numerical Results}

In this section, we present numerical results which support the conclusions
of the paper. 

Figure \ref{fig:} shows the probability density comparison of $\log I_{T}$
and $\log X_{T}$ computed from $100000$ simulated samples with $\Delta t=0.0002$
under the local volatility model; that is $dS_{t}=(r-q)S_{t}dt+\sigma(t,S_{t})S_{t}dW_{t}$,
where $\sigma(t,x)$ is a deterministic function. For this test, we
use the following surface SVI model (see \cite[Section 4]{GJ14})
\begin{equation}
w(T,K)=\frac{\theta_{T}}{2}\Big(1+\theta_{T}\log\frac{K}{F_{T}}+\sqrt{\Big(\varphi(\theta_{T})\log\frac{K}{F_{T}}+\rho\Big)^{2}+(1-\rho)^{2}}\Big)\label{eq:-32}
\end{equation}
for the implied volatility surface construction, where $w(T,K)$ is
the implied total variance, $\theta_{T}$ is the at-the-money variance,
$F_{T}=e^{(r-q)T}$ is the forward value, and $\varphi(\theta)=\theta^{-1/2}(1+\theta)^{-1/2}$.
The local volatility function $\sigma(t,x)$ is computed using the
above implied volatility and Dupire's formula
\[
\sigma(T,K)^{2}=\frac{\partial_{T}C+(r-q)K\partial_{K}C+qC}{2^{-1}K^{2}\partial_{K}^{2}C},
\]
where $C=C(T,K)$ is the price function of the European call option
with maturity $T$ and strike $K$. The parameters we use for (\ref{eq:-32})
are $r=0.05$, $q=0.03$, $\rho=-0.9$ and $\theta_{T}=\int_{0}^{T}[\sigma_{1}(1-e^{-\kappa t})/(\kappa t)+\sigma_{2}]^{2}dt$.
And the parameters for volatility target are $T=1.0$, $\lambda=0.95$,
$v_{0}=0.01$, $\bar{\sigma}=0.1$. As reference, the density of a
normal distribution with sample mean and sample variance computed
from the samples of $\log I_{T}$ is also shown in Figure \ref{fig:}.
\begin{figure}[H]

\begin{centering}
\includegraphics[scale=0.5]{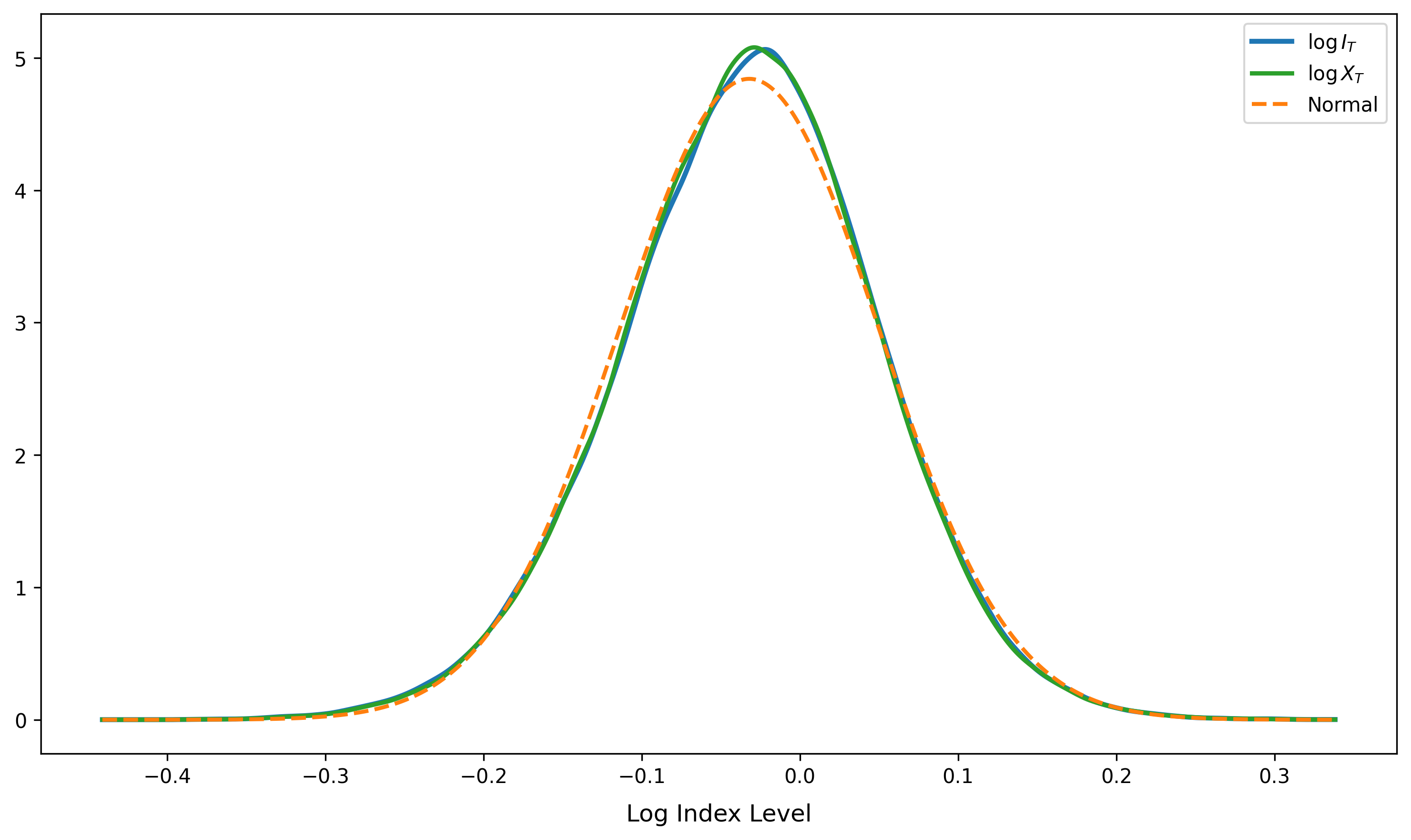}\caption{Probability Density of $\log I_{T}$ and $\log X_{T}$ under the Local
Volatility Model}
\label{fig:}
\par\end{centering}
\end{figure}

Figure \ref{fig:-1} shows the probability density comparison of $\log I_{T}$
and $\log X_{T}$ computed from $100000$ simulated samples with $\Delta t=0.0002$
under the Heston stochastic volatility model; that is 
\begin{equation}
\begin{aligned}dS_{t} & =(r-q)S_{t}dt+\sqrt{v_{t}}S_{t}dW_{t}^{1},\\
dv_{t} & =\kappa(\theta-v_{t})dt+\sigma\sqrt{v_{t}}dW_{t}^{2},
\end{aligned}
\label{eq:-6}
\end{equation}
with $d\langle W^{1},W^{2}\rangle_{t}=\rho dt$. We should point out
that, when Feller's condition $2\kappa\theta\ge\sigma^{2}$ is violated,
the volatility process $v_{t}$ can touch zero (see \cite[Lemma 4]{Feller51}).
In this case, a floor $\sigma_{\ast}=0.001$ is applied to the dynamics
of the underlying risky asset; that is, the dynamics (\ref{eq:-6})
is replaced by $dS_{t}=(r-q)S_{t}dt+\max(\sigma_{\ast},\sqrt{v_{t}})S_{t}dW_{t}^{1}$.
The parameters we use for (\ref{eq:-6}) are $r=0.05$, $q=0.03$,
$\theta=0.04$, $\kappa=0.3$, $\sigma=0.5$, $\rho=-0.8$, and the
parameters for volatility target are $T=1.0$, $\lambda=0.95$, $v_{0}=0.01$,
$\bar{\sigma}=0.1$. Similar to the test for the local volatility
model, the density of a normal distribution with sample mean and sample
variance computed from the samples of $\log I_{T}$ is also shown
in Figure \ref{fig:-1} as reference. 

\begin{figure}[H]
\centering{}\includegraphics[scale=0.5]{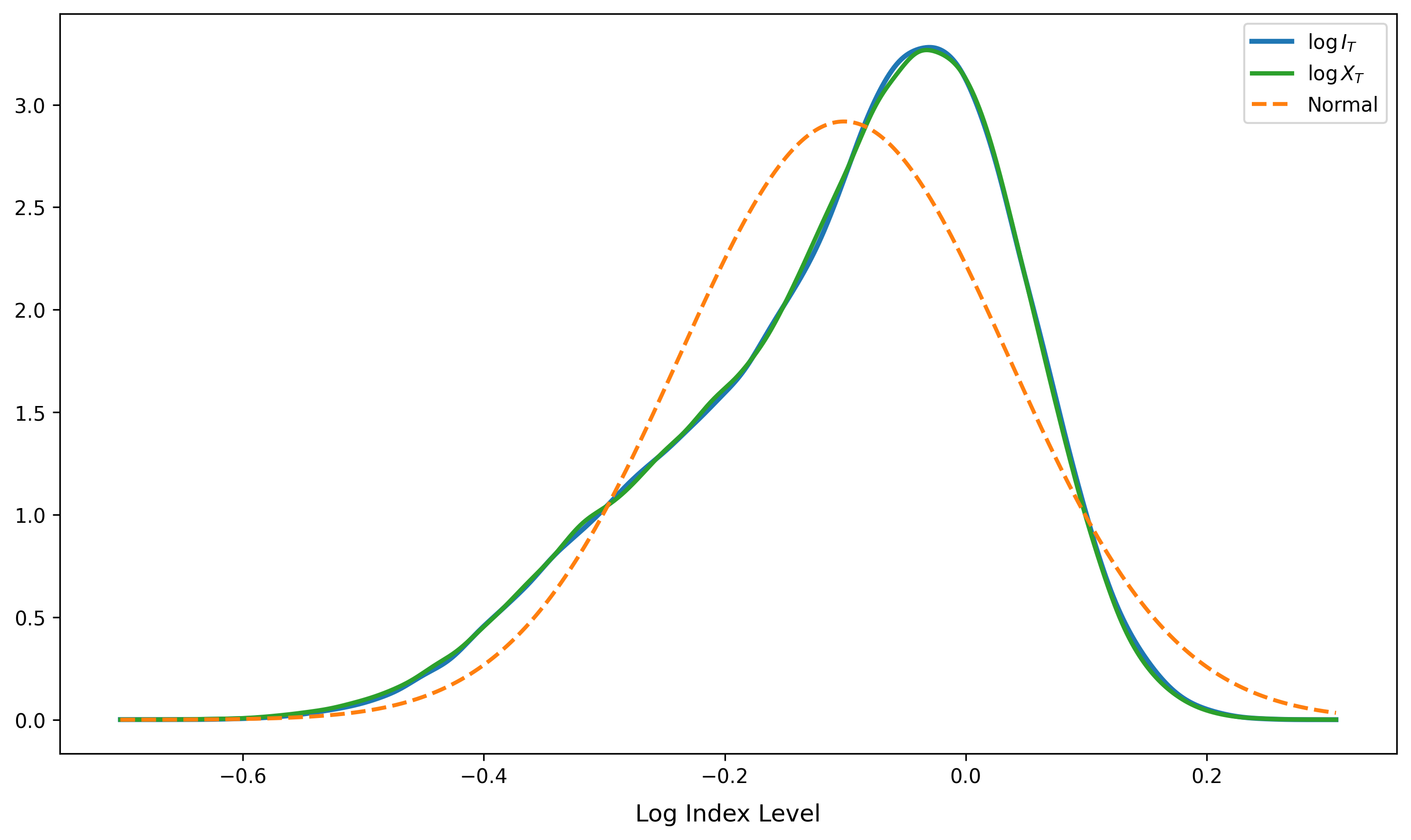}\caption{Probability Density of $\log I_{T}$ and $\log X_{T}$ under the Heston
Stochastic Volatility Model}
\label{fig:-1}
\end{figure}

Figure \ref{fig:-2} shows the approximation quality of the discrete
correction formula (\ref{eq:-7}). The parameters we use are $T=1.0$,
$\lambda=0.9$, $v_{0}=0.01$, $\bar{\sigma}=0.1$, $r=0.05$, $\rho=0.02$
and $\sigma(t)=\sigma_{1}(1-e^{-\kappa t})/(\kappa t)+\sigma_{2}$
with $\sigma_{1}=0.3$, $\sigma_{2}=0.2$ and $\kappa=3$. 
\begin{figure}[H]
\centering{}\includegraphics[scale=0.5]{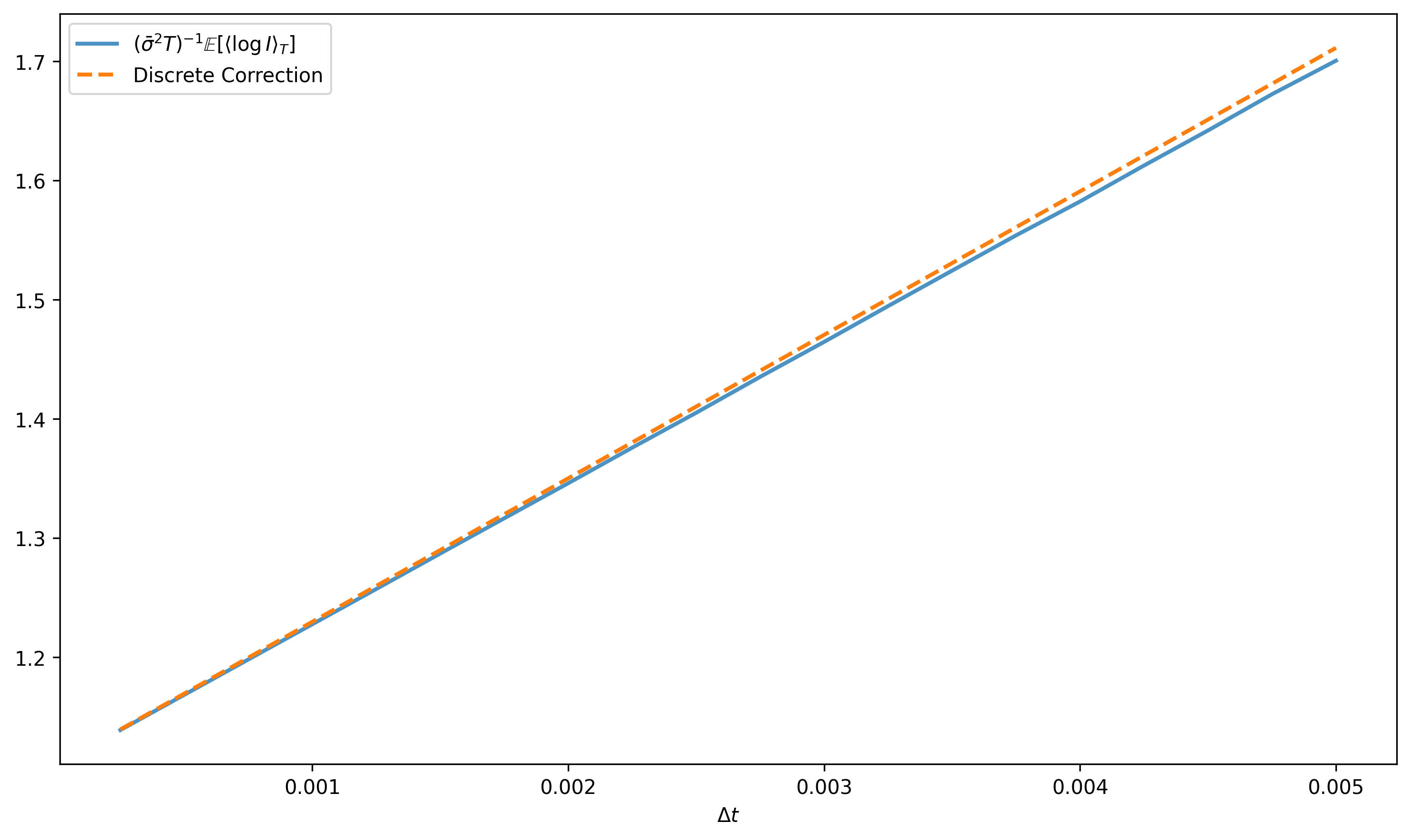}\caption{Approximation Quality of the Discrete Correction Formula}
\label{fig:-2}
\end{figure}

Figure \ref{fig:-3} compares the pricing of an European call option
with maturity $T=1.0$ and strike $K=1.0$ on volatility target indices
with different values of $\Delta t$ and $\lambda$. The rest of the
parameters we use are $v_{0}=0.01$, $\bar{\sigma}=0.1$, $r=0.05$,
$\rho=0.02$ and $\sigma(t)=\sigma_{1}(1-e^{-\kappa t})/(\kappa t)+\sigma_{2}$
with $\sigma_{1}=0.3$, $\sigma_{2}=0.2$ and $\kappa=3$. The solid
curves in Figure \ref{fig:-3} are computed using Monte Carlo simulations
with $150000$ samples of $I_{T}$, while the dotted curves are computed
using the Black--Scholes formula together with the discrete correction
formulae (\ref{eq:-22}) and (\ref{eq:-21}), and the dashed curve
is computed using the Black--Scholes formula and the limiting distribution
(\ref{eq:}). 

\begin{figure}[H]
\centering{}\includegraphics[scale=0.5]{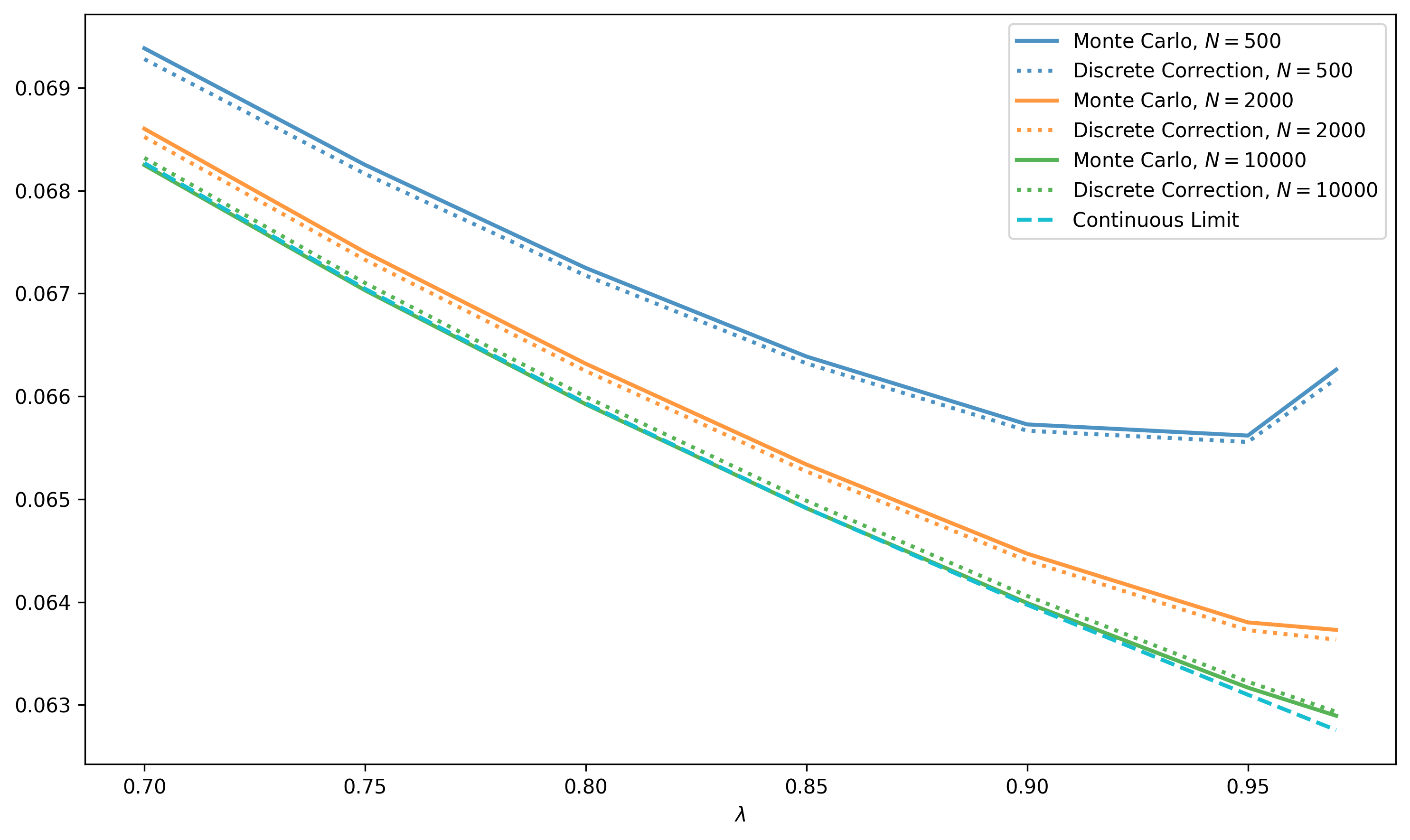}\caption{Pricing of European Call Options on $\{I_{t}\}_{t\ge0}$}
\label{fig:-3}
\end{figure}

\section{\label{sec:-6}Conclusion}

In this paper, under the mild boundedness assumption (\ref{eq:-10})
on the stochastic volatility process $\{\sigma_{t}\}_{t\ge0}$, we
have established the structural limit of volatility target indices
as the rebalancing time step $\Delta t\to0$. Compared to the limit
theorem in \cite{LG26}, Theorem \ref{thm:} shows that, when the
distributional limit of the joint process of the volatility target
and the driver Brownian motions is considered, an extra independent
Brownian motion must be involved; though the volatility, given by
$\sqrt{1-U(\lambda)^{2}/V(\lambda)}$ in (\ref{eq:-35}), in front
of this extra Brownian motion is small for commonly used values of
$\lambda$ in practice. It is also shown that the convergence in distribution
in Theorem \ref{thm:} cannot be improved to pathwise convergence
or convergence in probability. Besides the structural limit theorem,
a discrete correction for the expected quadratic variation of $\{\log I_{t}\}_{t\ge0}$
is derived, together with a similar correction formula for the expected
drift, which can be regarded as moment matching formulae. Moreover,
as a corollary of the discrete correction, we give a complete characterization
of the competing effect between $\Delta t\to0$ and $\lambda\to1$,
which is partially studied in \cite[Section 4]{LG26}. We should point
out that the structural limit theorem in this paper relies heavily
on the assumption that the risky asset is driven by Brownian motions.
This assumption is implicitly used when the martingale central limit
theorem is applied to derive the marginal limiting distribution (Proposition
\ref{prop:-4}); it is also used in the proof of Lemma \ref{lem:-7},
where the Wiener chaos decomposition is applied to give the approximating
volatility processes with simple explicit forms. Though the derivations
in this paper do not immediately extend to dynamics such as jump diffusion
which is not driven by Brownian motions only, it is reasonable to
expect that a similar structural limit remains valid for a large class
of dynamics for the underlying risky asset. 

\section{\label{sec:-4}Appendix}
\begin{lem}
\label{lem:-16}Suppose that $\mu$ and $\nu$ are positive Borel
measures on $[0,\infty$) such that $\int_{0}^{\infty}(1+t)^{-m}\mu(dt)<\infty$
and $\int_{0}^{\infty}(1+t)^{-n}\nu(dt)<\infty$ for some $m\ge0$
and $n\ge0$. Then 
\[
\int_{0}^{\infty}(1+t)^{-m-n}(\mu\ast\nu)(dt)<\Big(\int_{0}^{\infty}(1+t)^{-m}\mu(dt)\Big)\Big(\int_{0}^{\infty}(1+t)^{-n}\nu(dt)\Big).
\]
\end{lem}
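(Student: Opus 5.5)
The plan is to use the factorization of convolution integrals together with the elementary inequality $(1+s+u)^{-1}\le(1+s)^{-1}$ and its symmetric counterpart. By definition of the convolution of positive measures, for any nonnegative Borel function $g$,
\[
\int_{0}^{\infty}g(t)(\mu\ast\nu)(dt)=\int_{0}^{\infty}\int_{0}^{\infty}g(s+u)\mu(ds)\nu(du).
\]
We apply this to $g(t)=(1+t)^{-m-n}$. This step is justified by Tonelli's theorem, since everything is nonnegative, so no integrability issue arises beforehand.

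The key pointwise estimate is
\[
(1+s+u)^{m+n}\ge(1+s)^{m}(1+u)^{n},\qquad s,u\ge0.
\]
It follows from $1+s+u\ge1+s$ and $1+s+u\ge1+u$, raised to the powers $m\ge0$ and $n\ge0$ respectively and multiplied. Equivalently, $(1+s+u)^{-m-n}\le(1+s)^{-m}(1+u)^{-n}$. Integrating this against the product measure $\mu\otimes\nu$ and using Tonelli again to split the double integral gives
\[
\int_{0}^{\infty}(1+t)^{-m-n}(\mu\ast\nu)(dt)\le\Big(\int_{0}^{\infty}(1+s)^{-m}\mu(ds)\Big)\Big(\int_{0}^{\infty}(1+u)^{-n}\nu(du)\Big).
\]
In particular, the left side is finite.

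The only delicate point is that the statement is written with a strict inequality. Since $(1+s+u)^{m+n}=(1+s)^m(1+u)^n$ can hold when, for instance, both measures are $\delta_0$, equality is possible in degenerate cases. I would therefore prove the non-strict inequality above, which is all that is used (in Remark \ref{rem:-6} and in the proof of Proposition \ref{prop:-4}, to get condition (\ref{eq:-39})). I would also note that strict inequality holds whenever $m+n>0$ and $\mu\otimes\nu$ charges the set $\{s+u>0\}$, since there the pointwise inequality is strict. There is no real obstacle beyond this bookkeeping.
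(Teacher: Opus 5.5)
Your argument is the same as the paper's: rewrite the convolution integral as a double integral against $\mu\otimes\nu$, bound $(1+s+u)^{-m-n}\le(1+s)^{-m}(1+u)^{-n}$ pointwise, and factor. You are also right that this argument (like the paper's own proof) gives only ``$\le$'', and that the strict ``$<$'' in the statement fails for $\mu=\nu=\delta_0$.

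One correction to your side remark: $m+n>0$ is not enough for strictness. For example, $m=1$, $n=0$, $\mu=\delta_1$, $\nu=\delta_0$ gives $1/2=1/2$. The pointwise inequality at $(s,u)$ is strict only when $m>0$ and $u>0$, or $n>0$ and $s>0$, so you need $\mu\otimes\nu$ to charge that set (for instance $m,n>0$ and some mass off the origin).
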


\begin{proof}
By the definition of $\mu\ast\nu$,
\[
\begin{aligned}\int_{0}^{\infty}(1+t)^{-m-n}(\mu\ast\nu)(dt) & =\int_{0}^{\infty}\int_{0}^{\infty}(1+t+s)^{-m-n}\mu(dt)\nu(ds)\\
 & \le\int_{0}^{\infty}\int_{0}^{\infty}(1+t)^{-m}(1+s)^{-n}\mu(dt)\nu(ds)\\
 & =\Big(\int_{0}^{\infty}(1+t)^{-m}\mu(dt)\Big)\Big(\int_{0}^{\infty}(1+t)^{-n}\nu(dt)\Big).
\end{aligned}
\]
This completes the proof.
\end{proof}
\begin{lem}
\label{lem:-20}Suppose that $\mu$ and $\nu$ are positive Borel
measures on $[0,\infty$) and that $\int_{0}^{\infty}(1+t)^{-m}(\mu\ast\nu)(dt)<\infty$
for some $m\ge0$. Then $\int_{0}^{\infty}(1+t)^{-m}\mu(dt)<\infty$
and $\int_{0}^{\infty}(1+t)^{-m}\nu(dt)<\infty$.
\end{lem}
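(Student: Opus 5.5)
We argue by Tonelli's theorem, rewriting the finite integral against $\mu\ast\nu$ as a double integral. By definition of convolution,
\[
\int_{0}^{\infty}(1+t)^{-m}(\mu\ast\nu)(dt)=\int_{0}^{\infty}\int_{0}^{\infty}(1+t+s)^{-m}\mu(dt)\nu(ds)<\infty.
\]
Note first that $\mu$ and $\nu$ are implicitly nonzero measures; if, say, $\nu=0$, then $\mu\ast\nu=0$ and the conclusion for $\mu$ fails. So we assume both are nonzero, as in the application in Proposition \ref{prop:-4}, where $\mu=\nu=\mu_{G}$ with $G\not\equiv0$.

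The key step is to localize one of the variables. Since $\nu$ is a nonzero positive Borel measure on $[0,\infty)$, there exists $R>0$ with $\nu([0,R])>0$, by continuity from below applied to $[0,R]\uparrow[0,\infty)$. For $s\in[0,R]$ and $t\ge0$ we have $1+t+s\le(1+R)(1+t)$, hence
\[
(1+t+s)^{-m}\ge(1+R)^{-m}(1+t)^{-m}.
\]
Restricting the $s$-integration to $[0,R]$ then gives
\[
\infty>\int_{0}^{\infty}\int_{0}^{R}(1+t+s)^{-m}\nu(ds)\mu(dt)\ge(1+R)^{-m}\nu([0,R])\int_{0}^{\infty}(1+t)^{-m}\mu(dt).
\]
Since $\nu([0,R])>0$, this yields $\int_{0}^{\infty}(1+t)^{-m}\mu(dt)<\infty$.

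The bound for $\nu$ follows by the symmetric argument: interchange the roles of $\mu$ and $\nu$, using $\mu([0,R'])>0$ for some $R'>0$.

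The only delicate point is the positivity of $\nu([0,R])$ for some finite $R$, which follows from the nontriviality assumption as noted. The rest is an application of Tonelli's theorem, justified because all integrands are nonnegative.
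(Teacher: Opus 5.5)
Your proof is correct and is essentially the paper's argument. Both rewrite the integral as a double integral via Tonelli and bound the kernel from below. The paper uses the global bound $(1+t+s)^{-m}\ge(1+t)^{-m}(1+s)^{-m}$, which comes from $1+t+s\le(1+t)(1+s)$, to dominate the product $\int(1+t)^{-m}\mu(dt)\cdot\int(1+s)^{-m}\nu(ds)$; you instead localize $s$ to $[0,R]$.

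Your remark that both measures must be nonzero is well taken. The paper infers that each factor is finite from the finiteness of their product, which tacitly requires the same assumption.
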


\begin{proof}
By the definition of $\mu\ast\nu$,
\[
\begin{aligned}\int_{0}^{\infty}(1+t)^{-m}(\mu\ast\nu)(dt) & =\int_{0}^{\infty}\int_{0}^{\infty}(1+t+s)^{-m}\mu(dt)\nu(ds)\\
 & \ge\int_{0}^{\infty}\int_{0}^{\infty}(1+t)^{-m}(1+s)^{-m}\mu(dt)\nu(ds)\\
 & =\Big(\int_{0}^{\infty}(1+t)^{-m}\mu(dt)\Big)\Big(\int_{0}^{\infty}(1+t)^{-m}\nu(dt)\Big).
\end{aligned}
\]
This completes the proof.
\end{proof}
\begin{lem}
\label{lem:-11}Let $\{\xi_{n}\}_{n\ge1}$ be a sequence of random
variables and $\xi$ be a random variable such that
\begin{equation}
\sup_{u\in\mathbb{R}}|\mathbb{P}(\xi_{n}\ge u)-\mathbb{P}(\xi\ge u)|\le c_{\xi}n^{-\alpha},\label{eq:-154}
\end{equation}
and
\begin{equation}
\sup_{u\in\mathbb{R}}\mathbb{P}(u\le\xi<u+\delta)\le c_{\xi}\delta,\quad\delta>0,\label{eq:-157}
\end{equation}
for some constants $c_{\xi}>0$ and $\alpha>0$. Suppose that $\{\eta_{n}\}_{n\ge1}$
is a sequence of random variables and $\eta\in\mathbb{R}$ such that
\begin{equation}
\sup_{u>0}u^{p}\mathbb{P}(|\eta_{n}-\eta|>u)\le c_{\eta}n^{-\beta(p+1)},\label{eq:-127}
\end{equation}
for some constants $c_{\eta}>0$, $\beta>0$, and $p>0$. Then
\[
\sup_{u\in\mathbb{R}}|\mathbb{P}(\xi_{n}+\eta_{n}\ge u)-\mathbb{P}(\xi+\eta\ge u)|\le c_{\xi,\eta}n^{-\min(\alpha,\beta)},
\]
where $c_{\xi,\eta}$ is a constant depending only on the constants
$c_{\xi}$ and $c_{\eta}$.
\end{lem}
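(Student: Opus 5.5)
The plan is a standard smoothing argument that splits the event according to the size of the perturbation $\eta_n-\eta$. Fix $u\in\mathbb{R}$ and a threshold $\delta=\delta_n>0$, to be chosen as a power of $n$. Since $\eta$ is a deterministic constant, $\mathbb{P}(\xi+\eta\ge u)=\mathbb{P}(\xi\ge u-\eta)$. The task is therefore to compare $\mathbb{P}(\xi_n+\eta_n\ge u)$ with $\mathbb{P}(\xi\ge u-\eta)$.

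\textbf{Upper bound.} On the event $\{|\eta_n-\eta|\le\delta\}$ we have $\xi_n+\eta_n\ge u\Rightarrow\xi_n\ge u-\eta-\delta$. Hence
\[
\mathbb{P}(\xi_n+\eta_n\ge u)\le\mathbb{P}(\xi_n\ge u-\eta-\delta)+\mathbb{P}(|\eta_n-\eta|>\delta).
\]
I then apply the three hypotheses in turn. By (\ref{eq:-154}), $\mathbb{P}(\xi_n\ge u-\eta-\delta)\le\mathbb{P}(\xi\ge u-\eta-\delta)+c_\xi n^{-\alpha}$. The anti-concentration bound (\ref{eq:-157}) gives $\mathbb{P}(\xi\ge u-\eta-\delta)\le\mathbb{P}(\xi\ge u-\eta)+c_\xi\delta$. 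Finally, (\ref{eq:-127}) controls the bad event: $\mathbb{P}(|\eta_n-\eta|>\delta)\le c_\eta\delta^{-p}n^{-\beta(p+1)}$.

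\textbf{Lower bound.} This step is symmetric. On $\{|\eta_n-\eta|\le\delta\}$ we have $\xi_n\ge u-\eta+\delta\Rightarrow\xi_n+\eta_n\ge u$. Therefore
\[
\mathbb{P}(\xi_n+\eta_n\ge u)\ge\mathbb{P}(\xi_n\ge u-\eta+\delta)-\mathbb{P}(|\eta_n-\eta|>\delta).
\]
I then use (\ref{eq:-154}) and (\ref{eq:-157}) again, now with the interval $[u-\eta,u-\eta+\delta)$.

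\textbf{Choice of $\delta$.} Combining the two directions gives, uniformly in $u$,
\[
|\mathbb{P}(\xi_n+\eta_n\ge u)-\mathbb{P}(\xi+\eta\ge u)|\le c_\xi n^{-\alpha}+c_\xi\delta+c_\eta\delta^{-p}n^{-\beta(p+1)}.
\]
The last two terms have to be balanced, which is the only real step. Taking $\delta=n^{-\beta}$ gives $c_\xi n^{-\beta}$ for the middle term and $c_\eta n^{\beta p-\beta p-\beta}=c_\eta n^{-\beta}$ for the last term. The total bound is therefore $(2c_\xi+c_\eta)n^{-\min(\alpha,\beta)}$, as claimed.

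I don't expect a genuine obstacle. The only point needing care is to apply (\ref{eq:-154}) at the shifted points $u-\eta\pm\delta$, which is allowed because it holds uniformly in $u$. The exponent $\beta(p+1)$ in (\ref{eq:-127}) is exactly what makes $\delta=n^{-\beta}$ balance the two terms.
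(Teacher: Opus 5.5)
Your proof is correct and essentially matches the paper's: you split on whether $|\eta_n-\eta|\le\delta$, apply (\ref{eq:-154}), (\ref{eq:-157}) and (\ref{eq:-127}) in turn, and take $\delta=n^{-\beta}$. The only difference is that you prove the lower bound directly via the half-open interval $[u-\eta,u-\eta+\delta)$, while the paper applies its upper bound to $(-\xi_n,-\eta_n)$ and then uses that $\xi+\eta$ has no atoms to pass from $\le$ to $<$; your route is slightly cleaner because it avoids checking that the reflected variables satisfy the hypotheses.
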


\begin{proof}
For any $\delta>0$, by (\ref{eq:-127}),
\begin{equation}
\begin{aligned} & \mathbb{P}(\xi_{n}+\eta_{n}\ge u)-\mathbb{P}(\xi_{n}+\eta\ge u-\delta)\\
 & \le\mathbb{P}(\xi_{n}+\eta_{n}\ge u,\xi_{n}+\eta<u-\delta)\le\mathbb{P}(\eta_{n}-\eta>\delta)\le c_{\eta}\delta^{-p}n^{-\beta(p+1)}.
\end{aligned}
\label{eq:-138}
\end{equation}
By (\ref{eq:-154}), 
\begin{equation}
\mathbb{P}(\xi_{n}+\eta\ge u-\delta)-\mathbb{P}(\xi+\eta\ge u-\delta)\le c_{\xi}n^{-\alpha}.\label{eq:-156}
\end{equation}
By (\ref{eq:-157}), 
\begin{equation}
\mathbb{P}(\xi+\eta\ge u-\delta)-\mathbb{P}(\xi+\eta\ge u)\le c_{\xi}\delta.\label{eq:-158}
\end{equation}
Combining (\ref{eq:-138}), (\ref{eq:-156}) and (\ref{eq:-158})
gives 
\[
\mathbb{P}(\xi_{n}+\eta_{n}\ge u)-\mathbb{P}(\xi+\eta\ge u)\le c_{\xi}\delta+c_{\eta}\delta^{-p}n^{-\beta(p+1)}+c_{\xi}n^{-\alpha}.
\]
Setting $\delta=n^{-\beta}$ gives
\begin{equation}
\mathbb{P}(\xi_{n}+\eta_{n}\ge u)-\mathbb{P}(\xi+\eta\ge u)\le(c_{\xi}+c_{\eta})n^{-\beta}+c_{\xi}n^{-\alpha}\le c_{\xi,\eta}n^{-\min(\alpha,\beta)}.\label{eq:-159}
\end{equation}

Note that $\{-\xi_{n}\}_{n}$ also satisfies (\ref{eq:-154}) and
(\ref{eq:-157}). Applying (\ref{eq:-159}) to $\{-\xi_{n}\}_{n}$,
$\{-\eta_{n}\}_{n}$ gives 
\[
\mathbb{P}(-\xi_{n}-\eta_{n}\ge-u)-\mathbb{P}(-\xi-\eta\ge-u)\le c_{\xi,\eta}n^{-\min(\alpha,\beta)},
\]
or equivalently,
\[
\mathbb{P}(\xi_{n}+\eta_{n}\le u)-\mathbb{P}(\xi+\eta\le u)\le c_{\xi,\eta}n^{-\min(\alpha,\beta)}.
\]
Since $\mathbb{P}(\xi+\eta\le y)=\mathbb{P}(\xi+\eta<y)$ in view
of (\ref{eq:-157}), it follows from the above that 
\[
\mathbb{P}(\xi_{n}+\eta_{n}<u)-\mathbb{P}(\xi+\eta<u)\le c_{\xi,\eta}n^{-\min(\alpha,\beta)},
\]
or equivalently,
\[
\mathbb{P}(\xi+\eta\ge u)-\mathbb{P}(\xi_{n}+\eta_{n}\ge u)\le c_{\xi,\eta}n^{-\min(\alpha,\beta)},
\]
which, together with (\ref{eq:-159}), completes the proof.
\end{proof}


\begin{thebibliography}{10}
\bibitem{Aronson68}Aronson, D. G. (1968). Non-negative solutions
of linear parabolic equations. \emph{Annali della Scuola Normale Superiore
di Pisa-Scienze Fisiche e Matematiche}, 22(4), 607-694.

\bibitem{Feller51}Feller, W. (1951). Two singular diffusion problems.
\emph{Annals of Mathematics}, 54(1), 173-182.

\bibitem{GJ14}Gatheral, J., and Jacquier, A. (2014). Arbitrage-free
SVI volatility surfaces. \emph{Quantitative Finance}, 14(1), 59-71. 

\bibitem{Haeusler88}Haeusler, E. (1988). On the rate of convergence
in the central limit theorem for martingales with discrete and continuous
time. \emph{The Annals of Probability}, 275-299.

\bibitem{HHKRSV}Harvey, C. R., Hoyle, E., Korgaonkar, R., Rattray,
S., Sargaison, M., and Van Hemert, O. (2018). The impact of volatility
targeting. \emph{The Journal of Portfolio Management},\emph{ 45}(1),
14--33. 

\bibitem{Kakutani39}Kakutani, S. (1939). Weak convergence in uniformly
convex spaces. \emph{Tohoku Mathematical Journal, First Series}, 45,
188-193.

\bibitem{LG26}Liu, X., and Gauthier, M. (2026). On the exact limiting
distribution of a volatility target index. \emph{Mathematical Finance.}

\bibitem{Nualart06}Nualart, D. (2006). \emph{The Malliavin Calculus
and Related Topics}. Berlin, Heidelberg: Springer Berlin Heidelberg.

\bibitem{SSV12}Schilling, R. L., Song, R., and Vondra\v{c}ek, Z.
(2012). \emph{Bernstein Functions: Theory and Applications} (Vol.
37). Walter de Gruyter.

\bibitem{Stein70}Stein, E. M. (1970). \emph{Singular Integrals and
Differentiability Properties of Functions}. Princeton University Press.

\bibitem{SW71}Stein, E. M., and Weiss, G. L. (1971). \emph{Introduction
to Fourier Analysis on Euclidean Spaces}. Princeton University Press.

\end{thebibliography}
\end{document}